\date{}
\definecolor{sah}{rgb}{0.66,0.33, 0.04}
\definecolor{adel4}{cmyk}{1,0,0,0}
\definecolor{adel3}{rgb}{0.66,0.33, 0.04}
\definecolor{adel1}{cmyk}{0,0.20,1,0}
\definecolor{adel2}{cmyk}{0,0.40,1,0.30}
\definecolor{adel0}{rgb}{0.99,0.60, 0.30}
\definecolor{trut}{rgb}{0.99,0.80, 0.00}
\definecolor{trus}{rgb}{0.00, 0.50, 0.00}
 \definecolor{trust}{rgb}{0.99, 0.99, 0.80}
\definecolor{MaCouleur}{rgb}{0,0.9,0.3}
\newcommand{\CC}{\mathbb{C}}
\newcommand{\NN}{\mathbb{N}}
\newcommand{\RR}{\mathbb{R}}
\newcommand{\C}{\mathbb{C}}
\theoremstyle{plain}
\newtheorem{definition}{Definition}
\newtheorem{theorem}{Theorem}
\newtheorem{proposition}{Proposition}
\newtheorem{lemma}{Lemma}
\newtheorem{remark}{Remark}
\def\virgp{\raise 2pt\hbox{,}}
\def\Xint#1{\mathchoice
 {\XXint\displaystyle\textstyle{#1}}%
 {\XXint\textstyle\scriptstyle{#1}}%
 {\XXint\scriptstyle\scriptscriptstyle{#1}}%
 {\XXint\scriptscriptstyle\scriptscriptstyle{#1}}%
 \!\int}
\def\XXint#1#2#3{{\setbox0=\hbox{$#1{#2#3}{\int}$}
 \vcenter{\hbox{$#2#3$}}\kern-.5\wd0}}
\def\av_#1{\Xint-_{#1}}
\title[]{An analytical and numerical study of steady patches in the disc}
\author{}
\author[F. de la Hoz]{\tiny{ Francisco de la Hoz}}
\address{ Department of Applied Mathematics and Statistics and Operations Research, Faculty of Science and Technology \\
University of the Basque Country UPV/EHU, Barrio Sarriena S/N\\
 48940 Leioa, Spain
 }
\email{francisco.delahoz@ehu.eus}
\author[]{ZINEB HASSAINIA}
\address{The Courant Institute for Mathematical Sciences, New York University\\
251 Mercer Street
New York, N.Y. 10012-1185}
 \email{ zineb.hassainia@cims.nyu.edu}
 \author[T. Hmidi]{Taoufik Hmidi}
\address{IRMAR, Universit\'e de Rennes 1\\ Campus de
Beaulieu\\ 35~042 Rennes cedex\\ France}
\email{thmidi@univ-rennes1.fr}
\author[J. Mateu]{Joan Mateu}
\address{Departament de Matem\`{a}tiques\\
Universitat Aut\`{o}noma de Barcelona\\
08193 Bellaterra, Barcelona, Catalonia}
\email{mateu@mat.uab.cat}
\begin{document}
\begin{abstract}
In this paper, we prove the existence of $m$-fold rotating patches for the Euler equations in the disc, for both simply-connected and doubly-connected cases. Compared to the planar case, the rigid boundary introduces rich dynamics for the lowest symmetries $m=1$ and $m=2$. We also discuss some numerical experiments highlighting the interaction between the boundary of the patch and the rigid one.
 \end{abstract}
\maketitle{}
\tableofcontents
\section{Introduction}

In this paper, we shall discuss some aspects of the vortex motion for the Euler system in
the unit disc $\mathbb{D}$ of the Euclidean space $\mathbb{R}^2$. That system is described by the following equations:
\begin{equation}
\label{E1}
\begin{cases}
\partial_{t}v+v\cdot\nabla v+\nabla p=0,\quad(t,x)\in\RR_+\times\mathbb{D}, \\
\textnormal{div}\,v=0,\\
v\cdot \nu=0,\quad \textnormal{on}\quad \partial \mathbb{D}, \\
v_{|t=0}=v_0.
\end{cases}
\end{equation}
Here, $v=(v^1,v^2)$ is the velocity field, and the pressure $p$ is a scalar potential that can be related to the velocity using the incompressibility condition. The boundary equation means that there is no matter flow through the rigid boundary $\partial \mathbb{D}=\mathbb{T}$;
the vector $\nu$ is the outer unitary vector orthogonal to the boundary.
The main feature of two-dimensional flows is that they can be illustrated through their vorticity structure; this can be identified with the scalar function $\omega=\partial_1v_2-\partial_2 v_1$, and its evolution is governed by the nonlinear transport equation:
\begin{equation}\label{vorticity}
\partial_{t}\omega +v\cdot \nabla\, \omega=0.
\end{equation}
To recover the velocity from the vorticity, we use the stream function $\Psi$, which is defined as the unique solution of the Dirichlet problem on the unit disc:
\begin{equation*}
\begin{cases}
\Delta\Psi=\omega,
 \cr
\psi|_{\partial\mathbb{D}}=0.
\end{cases}
\end{equation*}
Therefore, the velocity is given by
\begin{equation*}
v=\nabla^\perp\Psi,\quad \nabla^\perp=(-\partial_2,\partial_1).
\end{equation*}
By using the Green function of the unit disc, we get the expression
\begin{equation}\label{Integ71}
\Psi(z)=\frac{1}{4\pi}\int_{\mathbb{D}}\log\Big|\frac{z-\xi}{1-z\overline{\xi}}\Big|^2\omega(\xi)dA(\xi),
\end{equation}
with $dA$ being the planar Lebesgue measure. In what follows, we shall identify the Euclidean and the complex planes; so the velocity field is identified with the complex function
$$v(z)=v_1(x_1,x_2)+i\, v_2(x_1,x_2),\quad\hbox{with}\quad z=x_1+ix_2.$$
Therefore, we get the compact formula
\begin{align}
\label{vcomplex}
v(t,z)&=2i\,\partial_{\overline{z}}\,\Psi(t,z)
 \cr
& = \frac{i}{2\pi} \int_{\mathbb{D}}
\frac{|\xi|^2-1}{\big(\overline{z}-\overline{\xi}\big)\big(\xi\overline{z}-1\big)}\omega(t,\xi)\,dA
(\xi)
 \cr
&=\frac{i}{2\pi} \int_{\mathbb{D}}
\frac{\omega(t,\xi)}{\overline{z}-\overline{\xi}}\,dA(\xi)+\frac{i}{2\pi} \int_{\mathbb{D}}
\frac{\xi}{1-\xi\overline{z}}\omega(t,\xi)\,dA(\xi).
\end{align}
We recognize in the first part of the last formula the structure of the Biot-Savart law in the plane $\RR^2$, which is given by
\begin{equation}
v(t,z)=\frac{i}{2\pi} \int_\C
\frac{\omega(t,\xi)}{\overline{z}-\overline{\xi}}\,dA(\xi),\quad z \in \C.
\end{equation}
The second term of \eqref{vcomplex} is absent in the planar case. It describes the contribution of the rigid boundary $\mathbb{T}$, and our main task is to investigate the boundary effects on the dynamics of special long-lived vortex structures. Before going further into details, we recall first that, from the equivalent formulation \eqref{vorticity}-\eqref{vcomplex} of the Euler system \eqref{E1}, Yudovich was able in \cite{Y1} to construct a unique global solution in the weak sense, provided that the initial vorticity
$\omega_0$ is compactly supported and bounded. This result is very important, because it allows to deal rigorously with vortex patches, which are vortices uniformly distributed in a bounded region $D$, i.e., $\omega_0=\chi_{D}$. These structures are preserved by the evolution and, at each time $t$, the vorticity is given by $\chi_{D_t}$, with $D_t=\psi(t,D)$ being the image of $D$ by the flow. As we shall see later in \eqref{vort-dy}, the contour dynamics equation of the boundary $\partial D_t$ is described by the following nonlinear integral equation. Let $\gamma_t :\mathbb{T}\to \partial D_t$ be the Lagrangian parametrization of the boundary, then
\begin{equation*}
\partial_t\gamma_t=-\frac{1}{2\pi}\int_{\partial D_t}\log|\gamma_t-\xi|d\xi+\frac{1}{4\pi}\int_{\partial D_t}\frac{|\xi|^2}{1-\overline{\gamma_t}\,\xi}d\xi.
\end{equation*}
We point out  that, when the initial boundary is smooth enough, roughly speaking more regular than $C^1$, then the regularity is propagated for long times without any loss. This was first achieved by Chemin \cite{Ch} in the plane, and extended in bounded domains by \mbox{Depauw \cite{Dep}.} Note also that we can find in \cite{B-C} another proof of Chemin's result.
It appears that the boundary dynamics of the patch is very complicate to tackle and, up to our knowledge, the only known explicit example is the stationary one given by a small disc centered at the origin. Even though explicit solutions form a poor class, one can try to find implicit patches with prescribed dynamics, such as rotating patches, also known as $V$-states. These patches are subject to perpetual rotation around some fixed point that we can assume to be the origin, and with uniform angular velocity $\Omega$; this means that $D_t=e^{it\Omega} D$.
We shall see in \mbox{Section \ref{bound135}} that the $V$-states equation, when $D$ is symmetric with respect to the real axis, is given by,
 \begin{equation}\label{rotsssq}
\,\textnormal{Re}\Big\{ \Big(2\Omega \overline{z}+\fint_\Gamma\frac{\overline{z}-\overline{\xi}}{z-\xi}d\xi-\fint_\Gamma\frac{|\xi |^2}{1-z\xi}d\xi\Big)\, z^\prime\Big\}=0, \quad z\in \Gamma\triangleq\partial D,
\end{equation}
with $z^\prime$ being a tangent vector to the boundary $\partial D_0$ at the point $z$; remark that we have used the notation $\fint_\Gamma\equiv\frac{1}{2i\pi}\int_\Gamma$.
In the flat case, the boundary equation \eqref{rotsssq} becomes
\begin{equation}\label{rotsssqq}
\,\textnormal{Re}\Big\{ \Big(2\Omega \overline{z}+\fint_\Gamma\frac{\overline{z}-\overline{\xi}}{z-\xi}d\xi\Big)\, z^\prime\Big\}=0, \quad z\in \Gamma.
\end{equation}
Note that circular patches are stationary solutions for \eqref{rotsssqq}; however, elliptical vortex patches perform a steady rotation about their centers without changing shape. This latter fact was discovered by Kirchhoff \cite{Kirc}, who proved that, when $D$ is an ellipse centered at zero, then $D_t = e^{i t \Omega}\,D$, where the angular
velocity $\Omega$ is determined by the semi-axes $a$ and $b$ through the formula $\Omega = ab/(a+b)^2$. These ellipses are often referred to in the literature as the Kirchhoff elliptic vortices; see for instance \cite[p. 304]{BM} or \cite [p. 232]{L}.

One century later, several examples of rotating patches were obtained by Deem and Zabusky \cite{DZ}, using contour dynamics simulations. Few years later, Burbea gave an analytical proof and showed the existence of
 $V$-states with $m$-fold symmetry for each integer $m \geq 2$. In this countable family, the case $m=2$ corresponds to the Kirchhoff elliptic vortices.
Burbea's approach consists in using complex analysis tools, combined with bifurcation theory. It should be noted that, from this standpoint, the rotating patches are arranged in a collection of countable curves bifurcating from Rankine vortices (trivial solution) at the discrete angular velocities set $\big\{\frac{m-1}{2m}, m\geq 2\big\}.$ The numerical analysis of limiting $V$-states which are the ends of each branch is done in \cite{Over,WOZ} and reveals interesting behavior: the boundary develops corners at right angles. Recently, the $C^\infty$ regularity and the convexity of the patches near the trivial solutions have been investigated in \cite{HMV}. More recently, this result has been improved by Castro, C\'ordoba and G\'omez-Serrano in \cite{Cor22}, who showed the analyticity of the $V$-states close to the disc. We point out that similar research has been carried out in the past few years for more singular nonlinear transport equations arising in geophysical flows, such as the surface quasi-geostrophic equations, or the quasi-geostrophic shallow-water equations; see for instance \cite{Cor1,Cor22,H-H, Hana}. It should be noted that the angular velocities of the bifurcating $V$-states for \eqref{rotsssqq} are contained in the interval $]0,\frac12[$. However, it is not clear whether we can find a $V$-state when $\Omega$ does not lie in this range. In \cite{Fran}, Fraenkel proved, always in the simply-connected case, that the solutions associated to $\Omega=0$ are trivial and reduced to Rankine patches. This was established by using the moving plane method, which seems to be flexible and has been recently adapted in \cite{Hm} to $\Omega<0$, but with a convexity restriction. The case $\Omega=\frac12$ was also solved in that paper, using the maximum principle for harmonic functions.

Another related subject is to see whether or not a second bifurcation occurs at the branches discovered by Deem and Zabusky. This has been explored for the branch of the ellipses corresponding to $m=2$. Kamm gave in \cite{Kam} numerical evidence of the existence of some branches bifurcating from the ellipses, see also \cite{Saf}. In the paper \cite{Luz} by Luzzatto-Fegiz and Willimason, one can find more details about the diagram for the first bifurcations, and some illustrations of the limiting $V$-states. The proof of the existence and analyticity of the boundary has been recently investigated in \cite{Cor22, HM}. Another interesting topic which has been studied since the pioneering work of Love \cite{Love} is the linear and nonlinear stability of the $m$-folds. For the ellipses, we mention \cite{Guo,Tang}, and for the general case of the $m$-fold symmetric $V$-states, we refer to \cite{Landau,Wan}. For further numerical discussions, see also \cite{Cerr,DR,Mit}.

Recently in \cite{HMV2,H-F-M-V}, a special interest has been devoted to the study of doubly-connected $V$-states which are bounded patches and delimited by two disjoint Jordan curves. For example, an annulus is doubly-connected and, by rotation invariance, it is a stationary $V$-state. No other explicit doubly-connected $V$-state is known in the literature. In
\cite{HMV2}, a full characterization of the $V$-states (with nonzero magnitude in the interior domain) with at least one elliptical interface has been achieved, complementing the results of Flierl and Polvani \cite{Flierl}. As a by-product, it is shown that the domain between two
ellipses is a $V$-state only if it is an annulus. The existence of nonradial doubly-connected $V$-states has been achieved very recently in \cite{H-F-M-V} by using bifurcation theory. More precisely, we get the following result.
Let $0 <b <1$ and $m\geq3$, such that
$$
 1+{b}^{{m}}-\frac{1-{b}^2}{2} {m}<0.
$$
Then, there exists two curves of $m$-fold symmetric doubly-connected $V$-states bifurcating from the annulus $\{z\in \CC, b < |z| < 1
\}$ at each of the angular velocities
\begin{equation}\label{Nonl1}
\Omega_m^{\pm} = \frac{1-b^2}{4}\pm
\frac{1}{2m}\sqrt{\Big(\frac{m(1-b^2)}{2}-1\Big)^2
-b^{2m}}.
\end{equation}

The main topic of the current paper is to explore the existence of rotating patches \eqref{rotsssq} for Euler equations posed on the unit disc $\partial \mathbb{D}$. We shall focus on the simply-connected and doubly-connected cases, and study the influence of the rigid boundary on these structures.
Before stating our main results, we define the set
$
\mathbb{D}_b=\big\{z\in\CC,\, |z|<b\big\}.
$
Our first result dealing with the simply-connected $V$-states reads as follows.
\begin{theorem}\label{thmV1}
Let $b\in]0,1[$ and $m\in \NN^\star$. Then, there exists a family of $m$-fold symmetric $V$-states $(V_m)_{m\geq1}$ for the equation \eqref{rotsssq} bifurcating from the trivial solution $\omega_0=\chi_{\mathbb{D}_b}$ at the angular velocity
$$
\Omega_m\triangleq\frac{m-1+b^{2m}}{2m}\cdot
$$
\end{theorem}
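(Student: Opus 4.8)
The plan is to recast the $V$-state equation \eqref{rotsssq} as an abstract bifurcation problem and apply the Crandall--Rabinowitz theorem. First I would parametrize the boundary $\Gamma=\partial D$ of an $m$-fold symmetric perturbation of $\mathbb{D}_b$ by the boundary values of an exterior conformal map, writing $\phi(w)=b\big(w+f(w)\big)$, $w\in\mathbb{T}$, with
\[
f(w)=\sum_{n\geq1}a_n\,w^{1-nm},\qquad a_n\in\RR,
\]
where the exponents $1-nm$ encode the $m$-fold symmetry and the reality of the $a_n$ the symmetry of $D$ about the real axis; the disc $\mathbb{D}_b$ is $f\equiv0$. Since $z'=iw\phi'(w)$ is a tangent vector to $\Gamma$ at $z=\phi(w)$, equation \eqref{rotsssq} is equivalent to $G(\Omega,f)=0$, where
\[
G(\Omega,f)(w):=\textnormal{Im}\Bigg\{\bigg(2\Omega\,\overline{\phi(w)}+\fint_\Gamma\frac{\overline{\phi(w)}-\overline\xi}{\phi(w)-\xi}\,d\xi-\fint_\Gamma\frac{|\xi|^2}{1-\phi(w)\,\xi}\,d\xi\bigg)\,w\,\phi'(w)\Bigg\},\qquad w\in\mathbb{T}.
\]
I would set this up as a map $G:\RR\times B_X(0,\varepsilon)\to Y$, with $X=\{f\in C^{1+\alpha}(\mathbb{T}):f(w)=\sum_{n\geq1}a_nw^{1-nm},\ a_n\in\RR\}$ and $Y$ the space of functions in $C^{\alpha}(\mathbb{T})$ of the form $\sum_{n\geq1}b_n\,\textnormal{Im}(w^{nm})$, $b_n\in\RR$. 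As in \cite{HMV,H-F-M-V} one checks that $G$ indeed takes values in $Y$ and that $G(\Omega,0)=0$ for every $\Omega$ — this last identity being exactly the verification, already done above, that $\mathbb{D}_b$ is a stationary patch.

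Next I would check that $G$ is well defined and of class $C^{\infty}$ (in particular $C^{1}$, with $\partial_\Omega\partial_fG$ continuous) near $(\Omega_m,0)$. The term $2\Omega\,\overline{\phi(w)}$ is polynomial in $f$; the Cauchy-type integral is the classical singular operator of the planar theory, whose kernel $\big(\overline{\phi(w)}-\overline{\phi(\sigma)}\big)/\big(\phi(w)-\phi(\sigma)\big)$ is regular across the diagonal because $\phi'$ never vanishes, and which is smooth on Hölder spaces by the arguments of \cite{HMV,Cor22}. The new, boundary-induced integral is in fact the easiest piece: since $D\subset\mathbb{D}$ and $b<1$, for $f$ in a small ball one has $|\phi(w)\,\xi|\leq(b+\varepsilon)^{2}<1$ on $\mathbb{T}\times\Gamma$, so the kernel $1/(1-\phi(w)\,\xi)$ is a smooth — indeed real-analytic — function of its arguments and of $f$, and the associated operator is trivially $C^\infty$.

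The heart of the matter is the linearized operator $L_\Omega:=\partial_fG(\Omega,0)$. By rotational invariance it is a Fourier multiplier, sending the mode $w^{1-nm}$ to a multiple of $\textnormal{Im}(w^{nm})$. Computing the first variation of the Cauchy integral (which reproduces the Burbea multiplier) and of the boundary integral (which produces the new contribution), keeping track in both of the term coming from the motion of the integration contour $\Gamma$, and evaluating the resulting circle integrals by residues, one obtains
\[
L_\Omega\big[w^{1-nm}\big]=a_{n,m}\left(\Omega-\frac{nm-1+b^{2nm}}{2nm}\right)\textnormal{Im}(w^{nm}),\qquad n\geq1,
\]
with $a_{n,m}\neq0$, $a_{n,m}\asymp nm$ as $n\to\infty$, and the term $b^{2nm}$ coming entirely from the boundary integral. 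Thus $L_{\Omega_m}$ kills the mode $n=1$, i.e. $h_0:=w^{1-m}$, and it remains to see that no higher mode is killed at $\Omega=\Omega_m$. This reduces to the claim that $\ell\mapsto\frac{\ell-1+b^{2\ell}}{2\ell}=\frac12-\frac{1-b^{2\ell}}{2\ell}$ is strictly increasing on $[1,\infty)$; with $x=b^{2\ell}\in(0,1)$ its derivative equals $\frac{1-x+x\log x}{2\ell^{2}}$, and $1-x+x\log x>0$ on $(0,1)$ since it vanishes at $x=1$ and has derivative $\log x<0$. Hence $\frac{nm-1+b^{2nm}}{2nm}>\Omega_m$ for all $n\geq2$, so $\ker L_{\Omega_m}=\RR h_0$ is one-dimensional, while $L_{\Omega_m}$ restricts to an isomorphism of the closed subspace of $X$ spanned by the modes $n\geq2$ onto $Y_{\geq2}:=\overline{\textnormal{span}}\{\textnormal{Im}(w^{nm}):n\geq2\}$; consequently $\textnormal{Range}\,L_{\Omega_m}=Y_{\geq2}$ is closed of codimension one and $L_{\Omega_m}$ is Fredholm of index zero.

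It then only remains to verify the transversality condition, which here is immediate: $\partial_\Omega L_{\Omega_m}[h_0]=a_{1,m}\,\textnormal{Im}(w^{m})$, and this lies outside $Y_{\geq2}=\textnormal{Range}\,L_{\Omega_m}$ because $a_{1,m}\neq0$. All hypotheses of the Crandall--Rabinowitz theorem being met at $(\Omega_m,0)$, one obtains a local $C^1$ curve of non-trivial $m$-fold symmetric solutions of $G(\Omega,f)=0$ issuing from $\mathbb{D}_b$ at $\Omega=\Omega_m$, which is the claimed family $(V_m)_{m\geq1}$. I expect the genuine difficulty to be the explicit determination of the multiplier $L_\Omega$: one must carry out carefully both first variations — in particular the contour-variation contribution in each integral — and the ensuing residue evaluations that pin down the eigenvalues. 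A secondary point needing attention is the low symmetries $m=1$ and $m=2$: there the kernel mode $h_0$ is respectively a constant (a ``translation'' of $\mathbb{D}_b$, which in the bounded domain $\mathbb{D}$ is genuinely non-stationary, unlike in the plane) and the ellipse mode $w^{-1}$, and one must make sure these modes are kept in $X$ and $Y$ and that $a_{1,m}\neq0$ in those cases as well.
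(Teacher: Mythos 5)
Your plan is essentially the paper's own proof: exterior conformal parametrization, Crandall--Rabinowitz in H\"older spaces with the $m$-fold symmetry built into $X$ and $Y$, the same Fourier multiplier for the linearization (with the $b^{2nm}$ correction indeed produced by the boundary integral, whose kernel is smooth since $|\phi(w)\phi(\tau)|<1$), strict monotonicity of $\ell\mapsto\frac{\ell-1+b^{2\ell}}{2\ell}$ giving a one-dimensional kernel, and the same transversality check. The one step you treat as automatic ("consequently the range is closed of codimension one") is where the paper still has to work: in $C^{1+\alpha}/C^{\alpha}$ the diagonal action on Fourier modes does not by itself yield surjectivity onto the complementary modes, and the paper proves it by solving $\partial_f F(\lambda_m,0)h=g$ explicitly and recovering $h\in C^{1+\alpha}$ via the Szeg\"o projection plus convolution with an $L^1$ kernel, using $\inf_{n\neq m}|\lambda_n-\lambda_m|>0$ and the linear growth of the symbol.
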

The proof of this theorem is done in the spirit of the works \cite{B,H-F-M-V}, using the conformal mapping parametrization $\phi:\mathbb{T}\to \partial D$ of the $V$-states, combined with bifurcation theory. As we shall see later in \eqref{F}, the function $\phi$ satisfies the following nonlinear equation, for all $w\in \mathbb{T}$:
\begin{equation*}
 \textnormal{Im}\Bigg\{ \bigg[2\Omega\, \overline{\phi(w)}+\fint_{\mathbb{T}}\frac{\overline{\phi(w)}-\overline{\phi(\tau)}}{\phi(w)-\phi(\tau)}\phi'(\tau)d\tau-\fint_{\mathbb{T}}\frac{|\phi(\tau) |^2\phi'(\tau)}{1-\phi(w)\phi(\tau)}d\tau\bigg]\, {w}\, {{\phi'(w)}}\Bigg\}= 0.
\end{equation*}
Denote by $F(\Omega,\phi)$ the left term of the preceding equality. Then, the linearized operator around the trivial solution $\phi=b\operatorname{Id}$ can be explicitly computed, and is given by the following Fourier multiplier: For $\displaystyle{h(w)=\sum_{n\in\NN} a_n \overline{w}^n}$,
$$
\partial_\phi F(\Omega,b\, \operatorname{Id })h(w)=
 {b}\sum_{n\geq 1}n\Big(\frac{n-1+b^{2n}}{n}-2\Omega\Big) a_{n-1}e_n,\quad e_n=\frac{1}{2i}(\overline{w}^n-w^n).
$$
Therefore the {\it nonlinear eigenvalues} leading to nontrivial kernels with one dimension are explicitly described by the quantity $\Omega_m$ appearing in Theorem \ref{thmV1}. Later on, we check that all the assumptions of the Crandall-Rabinowitz theorem stated in Subsection \ref{Sub34} are satisfied, and our result follows easily. In Subsection \ref{numsc} we implement some numerical experiments concerning the limiting $V$-states. We observe two regimes depending on the size of $b$: $b$ small and $b$ close to $1$. In the first case, as it is expected, corners do appear as in the planar case. However, for $b$ close to $1$, the effect of the rigid boundary is not negligible. We observe that the limiting $V$-states are touching tangentially the unit circle, see Figure \ref{f:limiting0holem1234}.
Now we shall give some remarks.

\begin{remark}
For the Euler equations in the plane, there are no curves of $1$-fold $V$-states close to Rankine vortices. However, we deduce from our main theorem that this mode appears for spherical bounded domains. Its existence is the fruit of the interaction between the patch and the rigid boundary $\mathbb{T}$. Moreover, according to the numerical experiments, these $V$-states are not necessary centered at the origin and this fact is completely new. For the symmetry $m\geq2$, all the discovered $V$-states are necessarily centered at zero, because they have at least two axes of symmetry passing through zero.
\end{remark}
\begin{remark}\label{rmq78}
By a scaling argument, when the domain of the fluid is the ball $B(0,R)$, with $R>1$, then, from the preceding theorem, the bifurcation from the unit disc occurs at the angular velocities
$$
\Omega_{m,R}\triangleq\frac{m-1+R^{-2m}}{2m}\cdot
$$
Therefore we retrieve Burbea's result \cite{B} by letting $R$ tend to $+\infty$.
\end{remark}

\begin{remark}
From the numerical experiments done in \cite{H-F-M-V}, we note that, in the plane, the bifurcation is pitchfork and occurs to the left of $\Omega_m$. Furthermore, the branches of bifurcation are ``monotonic'' with respect to the angular velocity. In particular, this means that, for each value of $\Omega$, we have at most only one $V$-state with that angular velocity. This behavior is no longer true in the disc, as it will be discussed later in the numerical experiments, see \mbox{Figure $\ref{f:bifurcationm3b0_89}$.}
\end{remark}
\begin{remark}
Due to the boundary effects, the ellipses are no longer solutions for the rotating patch equations \eqref{rotsssq}. Whether or not explicit solutions can be found for this model  is an interesting problem. However, we believe that the conformal mapping of any non trivial $V$-state has necessary an infinite expansion. Note that  Burbea proved in \cite{Burb22} that in  the planar  case  when the conformal mapping associated to the $V$-state has a finite expansion then necessary it is of  an ellipse. His approach is based on Faber polynomials and this could  give an insight on  solving the same problem in the disc.
\end{remark}
The second part of this paper deals with the existence of doubly-connected $V$-states for the system \eqref{E1}, and governed by the system \eqref{rotsssq}. Note that the annular patches centered at zero, which are given by
$$
\mathbb{A}_{b_1,b_2}=\big\{z\in\CC; b_1<|z|<b_2\}, \quad\hbox{with}\quad b_1<b_2<1,
$$
are in fact stationary solutions. Our main task is to study the bifurcation of the $V$-states from these trivial solutions in the spirit of the recent works \cite{H-H-H,H-F-M-V}. We shall first start with studying the existence with the symmetry $m\geq2$, followed by the special case $m=1$.

\begin{theorem}\label{thmV02}
Let $0 <b_2<b_1 <1$, and set $b\triangleq\frac{b_2}{b_1}$. Let $m\geq2$, such that
$$
m>\frac{2+2b^m-(b_1^m+b_2^m)^2}{1-b^2}\cdot
$$
Then, there exist two curves of $m$-fold symmetric doubly-connected $V$-states bifurcating from the annulus $\mathbb{A}_{b_1,b_2}$ at the angular velocities
$$
\Omega_m^{\pm} = \frac{1-b^2}{4}+\frac{b_1^{2m}-b_2^{2m}}{4m}\pm
\frac{1}{2}\sqrt{\Delta_m}\,,$$
with
$$
\Delta_m=\Big(\frac{1-b^2}{2}-\frac{2-b_1^{2m}-b_2^{2m}}{2m}\Big)^2-b^{2m}\Big(\frac{1-b_1^{2m}}{m}\Big)^2.
$$
\end{theorem}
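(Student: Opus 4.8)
The plan is to follow the scheme used for Theorem~\ref{thmV1} and for the planar doubly-connected result recalled above: reformulate the rotating-patch condition \eqref{rotsssq} as a coupled system of two nonlinear functional equations for the conformal parametrizations of the two interfaces, and apply the Crandall--Rabinowitz theorem stated in Subsection~\ref{Sub34}. Writing the vorticity of a doubly-connected patch as $\omega=\chi_{D_1}-\chi_{D_2}$, with $\Gamma_1=\partial D_1$ the outer interface and $\Gamma_2=\partial D_2$ the inner one, I would parametrize $\Gamma_j$ by a conformal map $\phi_j$ expanded around its trivial value $b_j\operatorname{Id}$ and impose $m$-fold symmetry, so that only frequencies that are multiples of $m$ appear (for $m\ge2$ the two axes of symmetry pin the center at the origin, so no translational degeneracy arises). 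Specializing \eqref{rotsssq} to each curve and using Green's formula to turn the area integral of $\chi_{D}$ into contour integrals, one gets a map $G=(G_1,G_2)$ in which $G_j$ collects, on the $j$-th interface, the rotation term $2\Omega\,\overline z$, the two planar Biot--Savart kernels $\fint_{\Gamma_k}\frac{\overline z-\overline\xi}{z-\xi}d\xi$, and the two disc kernels $\fint_{\Gamma_k}\frac{|\xi|^2}{1-z\xi}d\xi$, $k=1,2$. The first technical point is to fix Hölder spaces of $m$-fold symmetric perturbations on which $G$ is well defined and of class $C^1$ near $(\Omega,b_1\operatorname{Id},b_2\operatorname{Id})$; as in \cite{B,H-F-M-V} this relies on the smoothing properties of the singular integral operators, the new disc terms being in fact harmless because $1-z\xi$ stays away from zero on $\overline{\mathbb{D}}\times\overline{\mathbb{D}}$ when the patch is close to the annulus.

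The heart of the matter is the linearization at the trivial solution. Differentiating $G$ at $(b_1\operatorname{Id},b_2\operatorname{Id})$ produces a matrix Fourier multiplier: on the frequency-$n$ component it acts by a $2\times2$ matrix $M_n(\Omega)$ whose entries are affine in $\Omega$ with coefficients polynomial in $b_1^{2n}$ and $b_2^{2n}$ (the planar part of the kernels gives the coupling between the interfaces governed by $b^{2n}=(b_2/b_1)^{2n}$, while the disc part produces the $b_1^{2n}$ and $b_2^{2n}$ corrections that distinguish this result from \eqref{Nonl1}). A nontrivial kernel element at frequency $n$ exists exactly when $\det M_n(\Omega)=0$, and a computation shows this is the quadratic equation in $\Omega$ with roots $\Omega_n^{\pm}$ and discriminant $\Delta_n$ written in the statement. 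I would then show, using the hypothesis $m>\frac{2+2b^m-(b_1^m+b_2^m)^2}{1-b^2}$, that $\Delta_m>0$, so the two candidate eigenvalues are real and distinct, and that $\det M_{km}(\Omega_m^{\pm})\neq0$ for every integer $k\ge2$; the latter is a monotonicity analysis of the branches $n\mapsto\Omega_n^{\pm}$ and is precisely where the threshold on $m$ enters. Together these give that $\ker DG(\Omega_m^{\pm},\cdot)$ is one-dimensional, spanned by the mode-$m$ eigenvector $v_0$.

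To conclude, the multiplier structure together with the compactness of the lower-order part shows that $DG(\Omega_m^{\pm},\cdot)$ is Fredholm of index zero with closed range of codimension one; the transversality condition asks that $\partial_\Omega DG(\Omega_m^{\pm},\cdot)v_0$ lie outside the range of $DG(\Omega_m^{\pm},\cdot)$, which amounts to $\Omega_m^{\pm}$ being a simple root of $\Omega\mapsto\det M_m(\Omega)$, i.e.\ again to $\Delta_m\ne0$. The Crandall--Rabinowitz theorem then yields, for each sign, a local $C^1$ curve of nontrivial $m$-fold doubly-connected $V$-states through $(\Omega_m^{\pm},b_1\operatorname{Id},b_2\operatorname{Id})$, the univalence of the $\phi_j$ and the disjointness of $\Gamma_1,\Gamma_2$ being preserved on a sufficiently small piece of the curve by continuity. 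The step I expect to be the main obstacle is the one-dimensional-kernel argument: extracting a clean monotonicity of $\det M_{km}(\Omega_m^{\pm})$ in $k$ — in particular controlling the cross term $b^{2km}\big(\tfrac{1-b_1^{2km}}{km}\big)^2$ against the diagonal part — and checking that the single condition $m>\frac{2+2b^m-(b_1^m+b_2^m)^2}{1-b^2}$ forces both $\Delta_m>0$ and the absence of spurious kernel modes.
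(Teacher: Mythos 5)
Your proposal follows essentially the same route as the paper: conformal parametrization of the two interfaces, H\"older spaces of $m$-fold symmetric perturbations, the matrix Fourier multiplier $M_n(\lambda)$ whose determinant yields the quadratic with roots corresponding to $\Omega_n^{\pm}$ and discriminant $\Delta_n$, a one-dimensional kernel via monotonicity of the branches in $n$, and Crandall--Rabinowitz with transversality equivalent to $\Delta_m\neq 0$. The one step you flag as the main obstacle --- excluding kernel modes at the frequencies $km$, $k\geq 2$ --- is precisely what the paper settles in Lemma \ref{lemrsq1} and Lemma \ref{lem-mono} by extending the discriminant to a continuous variable $x$, proving that $\Delta_x$ is strictly increasing on a connected domain and deducing that $\lambda_x^{+}$ is strictly increasing and $\lambda_x^{-}$ strictly decreasing; note also that the closed, codimension-one range is obtained there not by a soft Fredholm/compactness argument but by explicit inversion of $M_{nm}(\lambda_m^{\pm})$ together with a Szeg\"{o}-projection and convolution estimate (using $\lambda_m^{\pm}\notin\{1,(b_2/b_1)^2\}$), as in Proposition \ref{propspec10}.
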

Before outlining the ideas of the proof, a few remarks are necessary.
\begin{remark}
As it was discussed in Remark $\ref{rmq78}$, one can use a scaling argument and obtain the result previously established in \cite{H-F-M-V} for the planar case. Indeed, when the domain of the fluid is the ball $B(0,R)$, with $R>1$, then the bifurcation from the annulus $\mathbb{A}_{b,1}$ amounts to make the changes $b_1=\frac{1}{R}$ and $b_2=\frac{b}{R}$ in Theorem $\ref{thmV02}$. Thus, by letting $R$ tend to infinity, we get exactly the nonlinear eigenvalues of the Euler equations in the plane \eqref{Nonl1}.\end{remark}
\begin{remark}
Unlike in the plane, where the frequency $m$ is assumed to be larger than $3$, we can reach $m=2$ in the case of the disc. This can be checked for $b_2$ small with respect to $b_1$. This illustrates once again the fruitful interaction between the rigid boundary and the $V$-states.
\end{remark}
Now, we shall sketch the proof of Theorem \ref{thmV02}, which follows the same lines of \cite{H-F-M-V}, and stems from bifurcation theory. The first step is to write down the analytical equations of the boundaries of the $V$-states. This can be done for example through the conformal parametrization of the domains $D_1$ and $D_2$, which are close to the discs $b_1\mathbb{D}$ and $b_2\mathbb{D}$, respectively. Set $\phi_j:\mathbb{D}^c\to D_j^c$, the conformal mappings which have the following expansions:
$$
\forall \,|w|\geq1,\quad \phi_1(w)=b_1\,w+\sum_{n\in\NN} \frac{a_{1,n}}{w^n},\quad \phi_2(w)= b_2\, w+\sum_{n\in\NN} \frac{a_{2,n}}{w^n}.
$$
In addition, we assume that the Fourier coefficients are real, which means that we are looking only for $V$-states that are symmetric with respect to the real axis. As we shall see later in Section \ref{bound548}, the conformal mappings are subject to two coupled nonlinear equations defined as follows: for $j\in\{1,2\}$ and $w\in \mathbb{T}$,
\begin{equation*}
F_j(\lambda, \phi_1,\phi_2)(w) \triangleq \textnormal{Im}\bigg\{ \Big((1-\lambda)\overline{\phi_j(w)}+I(\phi_j(w))-J(\phi_j(w))\Big)\, {w}\, {{\phi_j'(w)}}\bigg\}=0,
\end{equation*}
with
$$
I(z)=\fint_{\mathbb{T}}\frac{\overline{z}-\overline{\phi_1(\xi)}}{z-\phi_1(\xi)}\phi_1^\prime(\xi)d\xi-\fint_{\mathbb{T}}\frac{\overline{z}-\overline{\phi_2(\xi)}}{z-\phi_2(\xi)}\phi_2^\prime(\xi)d\xi,
$$
and
$$
J(z)=\fint_{\mathbb{T}}\frac{|\phi_1(\xi) |^2}{1-z\phi_1(\xi)}\phi_1^\prime(\xi)d\xi-\fint_{\mathbb{T}}\frac{|\phi_2(\xi) |^2}{1-z\phi_2(\xi)}\phi_2^\prime(\xi)d\xi,\quad \lambda\triangleq1-2\Omega.
$$
In order to apply bifurcation theory, we should understand the structure of the linearized operator around the trivial solution $(\phi_1,\phi_2)=(b_1\operatorname{Id},b_2\,\operatorname{Id})$ corresponding to the annulus with radii $b_1$ and $b_2$, and identify the range of $\Omega $ where this operator has a one-dimensional kernel. The computations of the linear operator $DF(\Omega,b_1\operatorname{Id},b_2\,\operatorname{Id})$ with $F=(F_1,F_2)$ in terms of the Fourier coefficients are fairly long, and we find that it acts as Fourier multiplier matrix. More precisely, for
$$
{ h_1(w)=\sum_{n\geq1}\frac{a_{1,n}}{w^n}}, \quad \displaystyle h_2(w)=\sum_{n\geq1}\frac{a_{2,n}}{w^n},
$$
we obtain the formula
\begin{equation*}
DF(\lambda,b_1\,\operatorname{Id},b_2\,\operatorname{Id})\big(h_1,h_2\big)=\sum_{n\geq 1}M_{n}(\lambda)
\begin{pmatrix}
a_{1,n-1} \\
a_{2,n-1}
\end{pmatrix}e_{n}, \quad e_n(w)\triangleq \frac{1}{2i}(\overline{w}^n-w^n),
\end{equation*}
where the matrix $M_n$ is given by
\begin{equation*}
M_{n}(\lambda)=\begin{pmatrix}
b_1\Big[n\lambda-1+b_1^{2n}-n\Big(\dfrac{b_2}{b_1}\Big)^2\Big] & b_2\Big[\Big(\dfrac{b_2}{b_1}\Big)^{n}-(b_1b_2)^{n}\Big] \\
 -b_1\Big[\Big(\dfrac{b_2}{b_1}\Big)^{n}-(b_1b_2)^{n}\Big] & b_2\Big[n\lambda-n+1-{b_2^{2n}}\Big]
\end{pmatrix}.
\end{equation*}
Therefore, the values of $\Omega$ associated to nontrivial kernels are the solutions of a second-degree polynomial in $\lambda$,
\begin{equation}\label{det227}
P_n(\lambda)\triangleq\hbox{det }M_n(\lambda)=0.
\end{equation}
The polynomial $P_n$ has real roots when the discriminant $\Delta_n(\alpha,b)$ introduced in Theorem \ref{thmV02} is positive. The calculation of the dimension of the kernel is rather more complicated than the cases raised before in the references \cite{B,H-F-M-V}. The matter reduces to count, for a given $\lambda$, the following discrete set:
$$
\big\{n\geq2, P_n(\lambda)=0\big\}.
$$
Note that, in \cite{B,H-F-M-V}, this set has only one element and, therefore, the kernel is one-dimensional. This follows from the monotonicity of the roots of $P_n$ with respect to $n$. In the current situation, we get similar results, but with a more refined analysis, see Lemma \ref{lem-mono} and Proposition \ref{asympbeh}.

Now, we shall move to the existence of $1$-fold symmetries, which is completely absent in the plane. The study in the general case is slightly more subtle, and we have only carried out partial results, so some other cases are left open and deserve to be explored. Before stating our main result, we need to make some preparation. As we shall see in Section \ref{first11}, the equation $P_1(\lambda)=0$ admits exactly two solutions given by
$$
\lambda_1^-=(b_2/b_1)^2\quad\textnormal{or}\quad\lambda_1^+=1+b_2^2-b_1^2.
$$
Similarly to the planar case \cite{H-F-M-V}, there is no hope to bifurcate from the first eigenvalue $\lambda_1^-$, because the range of the linearized operator around the trivial solution has an infinite co-dimension and, thus, C-R theorem stated in Subsection \ref{Sub34} is useless. However, for the second eigenvalue $\lambda_1^+$, the range is at most of co-dimension two and, in order to bifurcate, we should avoid a special set of $b_1$ and $b_2$ that we shall describe now. Fix $b_1$ in $]0,1[$, and set
$$
\mathcal{E}_{b_1}\triangleq \Big\{ b_2\in]0,b_1[; \exists n\geq2\,\, s.t.\,\, P_n\Big({1+b_2^2-b_1^2}\Big)=0\Big\}.
$$
 where $P_n$ is defined in \eqref{det227}.
As we shall see in Proposition \ref{prop-one}, this set is countable and composed of a strictly increasing sequence $(x_m)_{m\geq1}$ converging to $b_1$. Now, we state our result.
\begin{theorem}\label{thm-1fold}
Given $b_1\in]0,1[$, then, for any $b_2 \notin\mathcal{E}_{b_1}$, there exists a curve of nontrivial $1$-fold doubly
connected $V$-states bifurcating from the annulus $\mathbb{A}_{b_1,b_2}$ at the angular velocity
$$
\Omega_1 =\frac{b_1^2-b_2^2}{2}.
$$
\end{theorem}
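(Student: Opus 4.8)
\section*{Proof proposal for Theorem \ref{thm-1fold}}

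The plan is to apply the Crandall--Rabinowitz theorem (Subsection \ref{Sub34}) to the functional $F=(F_1,F_2)$ at the point $(\lambda_1^+,b_1\operatorname{Id},b_2\operatorname{Id})$, where $\lambda_1^+=1+b_2^2-b_1^2$ is one of the two roots of $P_1(\lambda)=0$ recalled in Section \ref{first11}, and corresponds through $\lambda=1-2\Omega$ to $\Omega_1=\frac{b_1^2-b_2^2}{2}$. The feature that distinguishes the $1$-fold case from the symmetric modes $m\ge2$ is that the conformal maps need no longer be centered, so the working space must contain the constant (center) mode: I would take $\phi_j$ in a small ball around $b_j\operatorname{Id}$ in the space of maps $\phi_j(w)=b_j w+\sum_{n\ge0}a_{j,n}w^{-n}$ with $a_{j,n}\in\RR$ and $\sum_{n\ge0}a_{j,n}w^{-n}\in C^{1+\alpha}(\mathbb{T})$, the target being pairs of $C^{\alpha}(\mathbb{T})$ functions of the form $\sum_{n\ge1}c_ne_n$, with $e_n(w)=\frac{1}{2i}(\overline{w}^{\,n}-w^n)$. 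The first two steps are routine and parallel \cite{H-F-M-V}: (i) check that $F$ is well defined and $C^1$ (in fact $C^\infty$) near that point --- the only genuinely new term, $J$, has the kernel $(1-z\phi_j(\xi))^{-1}|\phi_j(\xi)|^2$, which is smooth because $|z|,|\phi_j(\xi)|<1$ there, so $J$ contributes a smoothing operator; (ii) recall the Fourier--multiplier form $D_\phi F(\lambda,b_1\operatorname{Id},b_2\operatorname{Id})(h_1,h_2)=\sum_{n\ge1}M_n(\lambda)\binom{a_{1,n-1}}{a_{2,n-1}}e_n$, so that the values of $\Omega$ producing a nontrivial kernel are the roots of $P_n(\lambda)=\det M_n(\lambda)$.

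The heart of the argument is the spectral picture at $\lambda=\lambda_1^+$. Evaluating the multiplier at $n=1$ gives $M_1(\lambda_1^+)=\frac{(1-b_1^2)b_2}{b_1}\left(\begin{smallmatrix}-b_2&b_2\\-b_1&b_1\end{smallmatrix}\right)$, a rank-one matrix with kernel $\RR(1,1)^{\top}$ and image $\RR(b_2,b_1)^{\top}$. The hypothesis $b_2\notin\mathcal{E}_{b_1}$ says precisely that $P_n(\lambda_1^+)\ne0$, i.e.\ $M_n(\lambda_1^+)$ is invertible, for every $n\ge2$; together with the asymptotics showing that $M_n(\lambda_1^+)$ is invertible with $\|M_n(\lambda_1^+)^{-1}\|=O(1/n)$ for large $n$, this yields that $D_\phi F(\lambda_1^+,\cdot)$ has closed range, that its kernel is the one-dimensional space $\RR(1,1)^{\top}$ sitting in the constant mode (a rigid--translation direction of the annulus), and that its range has codimension exactly $2-1=1$, a complement being $\RR(b_2,-b_1)^{\top}e_1$. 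Transversality then amounts to checking that $\partial_\lambda D_\phi F(\lambda_1^+,\cdot)\big[(1,1)^{\top}\big]=\partial_\lambda M_1(\lambda_1^+)\binom{1}{1}e_1=(b_1,b_2)^{\top}e_1\notin\RR(b_2,b_1)^{\top}e_1$, which holds since $(b_1,b_2)\parallel(b_2,b_1)$ would force $b_1=b_2$, excluded by $0<b_2<b_1<1$. Crandall--Rabinowitz then furnishes a local curve of nontrivial solutions through $(\lambda_1^+,b_1\operatorname{Id},b_2\operatorname{Id})$, tangent to the kernel direction; since a rigidly translated annulus does not solve $F=0$ (the rigid--boundary term $J$ destroys the planar translation invariance), the curve genuinely leaves the family of annuli, and reading off $\Omega_1=(1-\lambda_1^+)/2=\frac{b_1^2-b_2^2}{2}$ completes the proof.

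The step I expect to be the real obstacle is controlling the codimension of the range. One must both (a) be careful to enlarge the ansatz space by the center coefficients $a_{j,0}$ --- without them the $n=1$ block is simply absent, the range misses the whole $e_1$-component, the codimension is two, and the theorem fails --- and (b) rely on the structural input (Proposition \ref{prop-one}, in the spirit of Lemma \ref{lem-mono} and Proposition \ref{asympbeh}) that $\mathcal{E}_{b_1}$ is a thin set, namely a strictly increasing sequence accumulating only at $b_1$, so that the admissible set $]0,b_1[\,\setminus\mathcal{E}_{b_1}$ is open and co-countable and, for such $b_2$, the codimension drops from two to one. By contrast, the eigenvalue $\lambda_1^-$ has to be discarded because there the analogous count produces a range of infinite codimension, and no amount of care with the center modes repairs this. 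The remaining ingredients --- $C^1$-regularity of $F$, the explicit form of $M_1(\lambda_1^+)$, and the transversality check --- are technical but straightforward, the last reducing to the elementary inequality $b_1\ne b_2$.
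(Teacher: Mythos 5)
Your proposal is correct and follows essentially the same route as the paper: Crandall--Rabinowitz applied in the $1$-fold spaces containing the constant mode, the explicit rank-one matrix $M_1(\lambda_1^+)$ giving the kernel $\mathbb{R}(1,1)^{\top}$, invertibility of $M_n(\lambda_1^+)$ for $n\ge 2$ (the condition $b_2\notin\mathcal{E}_{b_1}$) together with the $\det M_n\sim cn^2$ asymptotics to get a closed range of codimension one, and transversality reducing to $b_1\neq b_2$. Your direct check $(b_1,b_2)^{\top}\notin\mathbb{R}(b_2,b_1)^{\top}$ is just the explicit form of the paper's "simple eigenvalue" criterion, so there is no substantive difference.
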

The proof is done in the spirit of Theorem \ref{thmV02}. When $b_2 \notin\mathcal{E}_{b_1}$, then all the conditions of C-R theorem are satisfied. However, when $b_2 \in\mathcal{E}_{b_1}$, then the range of the linearized operator has co-dimension two. Whether or not the bifurcation occurs in this special case is an interesting problem which is left open here.

{\bf {Notation.}}
We need to collect some useful notation that will be frequently used along this paper. We shall use the symbol $\triangleq$ to define an object. Crandall-Rabinowitz theorem is sometimes shorten to CR theorem.
The unit disc is denoted by $\mathbb{D}$, and its boundary, the unit circle, by $\mathbb{T}$.
For a given continuous complex function $f:\mathbb{T}\to \CC$, we define its mean value by
$$
\fint_{\mathbb{T}} f(\tau)d\tau\triangleq \frac{1}{2i\pi}\int_{\mathbb{T}} f(\tau)d\tau,
$$
where $d\tau$ stands for complex integration.

Let $X$ and $Y$ be two normed spaces. We denote by $\mathcal{L}(X,Y)$ the space of all continuous linear maps $T: X\to Y$ endowed with its usual strong topology.
We denote by $\operatorname{Ker} T$ and $R(T)$ the null space and the range of $T$, respectively. Finally, if $F$ is a subspace of $Y$, then $Y/ F$ denotes the quotient space.

\section{Preliminaries and background}
 In this introductory section we shall collect some basic facts on H\"older spaces, bifurcation theory and see how to use the conformal mappings in the equations of the $V$-states.
\subsection{Function spaces}
In this paper as well as in the preceding ones \cite{HMV, H-F-M-V} we find more convenient   to think of  $2\pi$-periodic function $g:\RR\to\CC$ as a function of the complex variable $w=e^{i\theta}$. To be more precise, let  $f:\mathbb{T}\to \RR^2$ be a smooth  function, then it  can be assimilated to  a $2\pi-$periodic function $g:\RR\to\RR^2$ via the relation
$$
f(w)=g(\eta),\quad w=e^{i\eta}.
$$
By Fourier expansion there exist complex numbers $(c_n)_{n\in\mathbb{Z}}$ such that
$$
f(w)=\sum_{n\in \mathbb{Z}} c_n w^n
$$
and the differentiation with respect to $w$ is understood in the complex sense.
Now we shall introduce  H\"older spaces  on the unit circle $\mathbb{T}$.
\begin{definition}
Let 
$0<\gamma<1$. We denote by $C^\gamma(\mathbb{T}) $  the space of continuous functions $f$ such that
$$
\Vert f\Vert_{C^\gamma(\mathbb{T})}\triangleq\Vert f\Vert_{L^\infty(\mathbb{T})}+\sup_{\tau\neq w\in \mathbb{T}}\frac{\vert f(\tau)-f(w)\vert}{\vert \tau-w\vert^\alpha}<\infty.
$$
For any nonnegative integer $n$, the space $C^{n+\gamma}(\mathbb{T})$ stands for the set of functions $f$ of class $C^n$ whose $n-$th order derivatives are H\"older continuous  with exponent $\gamma$. It is equipped with the usual  norm,
$$
\Vert f\Vert_{C^{n+\gamma}(\mathbb{T})}\triangleq  \sum_{k=0}^n\Big\Vert \frac{d^k f}{d^kw}\Big\Vert_{L^\infty(\mathbb{T})}+\Big\Vert \frac{d^n f}{d^nw}\Big\Vert_{C^\gamma(\mathbb{T})}.
$$ 
\end{definition}
Recall that the Lipschitz semi-norm is defined by,
$$
\|f\|_{\textnormal{Lip}(\mathbb{T})}=\sup_{\tau\neq w\in\mathbb{T}}\frac{|f(\tau)-f(w)|}{|\tau-w|}\cdot
$$
Now we list some classical properties that will be useful later.
\begin{enumerate}
\item For $n\in \mathbb{N}, \gamma\in ]0,1[$ the space $C^{n+\gamma}(\mathbb{T})$ is an algebra.
\item For $K\in L^1(\mathbb{T})$ and $f\in C^{n+\gamma}(\mathbb{T})$ we have the convolution law,
$$
\|K*f\|_{C^{n+\gamma}(\mathbb{T})}\le \|K\|_{L^1(\mathbb{T})}\|f\|_{C^{n+\gamma}(\mathbb{T})}.
$$
\end{enumerate}

 \subsection{Elements of the bifurcation theory}\label{Sub34}
 We  shall now recall an important theorem of the bifurcation theory which plays a central role in the proofs of our main results. This theorem was established by Crandall and Rabinowitz in \cite{CR} and sometimes will be referred to as   C-R theorem for abbreviation.  Consider a continuous function $F:\RR\times X\to Y$  with $X$ and $Y$ being   two Banach spaces. Assume that $F(\lambda, 0)=0$ for any $\lambda$ belonging to  non trivial interval $I.$ C-R theorem gives sufficient conditions for the existence of branches of non trivial solutions to  the equation  $
 F(\lambda,x)=0
 $ bifurcating at some point $(\lambda_0, 0)$. For more general results  we refer the reader to  the book of Kielh\"{o}fer \cite{Kil}. 
 
\begin{theorem}\label{C-R} Let $X, Y$ be two Banach spaces, $V$ a neighborhood of $0$ in $X$ and let 
$
F : \RR \times V \to Y.
$ Set $ \mathcal{L}_0\triangleq \partial_x F(0,0)$ then  the following  properties are satisfied.
\begin{enumerate}
\item $F (\lambda, 0) = 0$ for any $\lambda\in \RR$.
\item The partial derivatives $F_\lambda$, $F_x$ and $F_{\lambda x}$ exist and are continuous.
\item The spaces $N(\mathcal{L}_0)$ and $Y/R(\mathcal{L}_0)$ are one-dimensional. 
\item {\it Transversality assumption}: $\partial_\lambda\partial_xF(0, 0)x_0 \not\in R(\mathcal{L}_0)$, where
$$
N(\mathcal{L}_0) = span\{x_0\}.
$$
\end{enumerate}
If $Z$ is any complement of $N(\mathcal{L}_0)$ in $X$, then there is a neighborhood $U$ of $(0,0)$ in $\RR \times X$, an interval $(-a,a)$, and continuous functions $\varphi: (-a,a) \to \RR$, $\psi: (-a,a) \to Z$ such that $\varphi(0) = 0$, $\psi(0) = 0$ and
$$
F^{-1}(0)\cap U=\Big\{\big(\varphi(\xi), \xi x_0+\xi\psi(\xi)\big)\,;\,\vert \xi\vert<a\Big\}\cup\Big\{(\lambda,0)\,;\, (\lambda,0)\in U\Big\}.
$$
\end{theorem}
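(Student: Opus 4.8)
The plan is to prove this by the classical Lyapunov--Schmidt reduction, which converts the abstract equation $F(\lambda,x)=0$ into a single scalar equation in two real variables amenable to the implicit function theorem. First I would install the finite-dimensional splittings dictated by hypothesis (iii). Since $N(\mathcal{L}_0)=\mathrm{span}\{x_0\}$ is one-dimensional and $Z$ is a fixed complement, write $X=N(\mathcal{L}_0)\oplus Z$ and decompose every $x\in X$ uniquely as $x=t\,x_0+z$ with $t\in\RR$ and $z\in Z$. Because $Y/R(\mathcal{L}_0)$ is one-dimensional, $R(\mathcal{L}_0)$ is closed of codimension one, so I may fix a one-dimensional complement $W$ with $Y=R(\mathcal{L}_0)\oplus W$ and let $Q:Y\to R(\mathcal{L}_0)$ be the associated continuous projection. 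The structural fact underpinning everything is that the restriction $\mathcal{L}_0|_Z:Z\to R(\mathcal{L}_0)$ is a Banach-space isomorphism, being injective (as $Z\cap N(\mathcal{L}_0)=\{0\}$) and surjective onto its range.

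Next I would split $F(\lambda,x)=0$ into the pair $QF=0$ and $(I-Q)F=0$ and solve the first (auxiliary) equation for $z$. Setting $G(\lambda,t,z)=QF(\lambda,t x_0+z)$, one has $G(0,0,0)=0$ and $\partial_z G(0,0,0)=Q\mathcal{L}_0|_Z=\mathcal{L}_0|_Z$, which is invertible. By hypothesis (ii) the map $F$ is jointly $C^1$, so the implicit function theorem produces a $C^1$ map $z=z(\lambda,t)$ with $z(0,0)=0$ solving $QF(\lambda,tx_0+z(\lambda,t))=0$ near the origin. Since hypothesis (i) gives $QF(\lambda,0)=0$, the uniqueness clause of the theorem forces $z(\lambda,0)=0$ for all $\lambda$; differentiating the auxiliary equation in $t$ at the origin and using injectivity of $\mathcal{L}_0|_Z$ yields in addition $\partial_t z(0,0)=0$.

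The heart of the argument is the reduced (bifurcation) equation. Substituting $z(\lambda,t)$ produces the scalar map $\Phi(\lambda,t)=(I-Q)F(\lambda,tx_0+z(\lambda,t))$ valued in the one-dimensional space $W$. From $z(\lambda,0)=0$ and hypothesis (i) one gets $\Phi(\lambda,0)=0$, which recovers the trivial branch, so I can factor out the trivial zeros by writing $\Phi(\lambda,t)=t\,\Psi(\lambda,t)$ with $\Psi(\lambda,t)=\int_0^1\partial_t\Phi(\lambda,st)\,ds$; continuity of $F_{\lambda x}$ in hypothesis (ii) is precisely what makes $\Psi$ continuously differentiable. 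A direct computation using $\mathcal{L}_0 x_0=0$ and $\partial_t z(0,0)=0$ gives $\Psi(0,0)=(I-Q)\mathcal{L}_0[x_0+\partial_t z(0,0)]=0$, while differentiating in $\lambda$ and discarding the terms lying in $R(\mathcal{L}_0)=\ker(I-Q)$ leaves $\partial_\lambda\Psi(0,0)=(I-Q)\,\partial_\lambda\partial_x F(0,0)x_0$. Here the transversality hypothesis (iv) is decisive: it states exactly $\partial_\lambda\partial_x F(0,0)x_0\notin R(\mathcal{L}_0)$, hence $\partial_\lambda\Psi(0,0)\neq0$ in the one-dimensional space $W$. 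The implicit function theorem then solves $\Psi(\lambda,t)=0$ by a $C^1$ function $\lambda=\varphi(t)$ with $\varphi(0)=0$ near $t=0$.

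Finally I would reassemble the solution set and check it equals the stated union. The nontrivial zeros are $x=t x_0+z(\varphi(t),t)$; since $z(\lambda,0)=0$ I can write $z(\lambda,t)=t\,\tilde z(\lambda,t)$ with $\tilde z$ continuous and $\tilde z(0,0)=\partial_t z(0,0)=0$, so setting $\psi(\xi)=\tilde z(\varphi(\xi),\xi)$ yields the required form $\xi x_0+\xi\psi(\xi)$ with $\psi$ continuous and $\psi(0)=0$. Completeness follows from the two uniqueness statements: any zero of $F$ near $(0,0)$, written $x=tx_0+z$, must satisfy $z=z(\lambda,t)$ by the auxiliary implicit function theorem, whence $t\,\Psi(\lambda,t)=0$ forces either $t=0$ (the trivial branch $x=0$) or $\lambda=\varphi(t)$ (the bifurcating branch). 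The main obstacle is not any single step but the bookkeeping under the minimal regularity of hypothesis (ii): one must verify that mere continuity of $F_\lambda$, $F_x$ and $F_{\lambda x}$ suffices to legitimize both applications of the implicit function theorem and the factorization $\Phi=t\Psi$, and in particular that $\partial_\lambda\Psi(0,0)$ may be computed by differentiating through the reduction without invoking higher derivatives of $F$.
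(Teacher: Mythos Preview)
The paper does not prove this statement: Theorem~\ref{C-R} is quoted as background from Crandall and Rabinowitz \cite{CR} and is used later as a black box, so there is no ``paper's own proof'' to compare against. Your sketch via Lyapunov--Schmidt reduction is exactly the classical argument from \cite{CR}; the splittings, the auxiliary implicit-function step yielding $z(\lambda,t)$ with $z(\lambda,0)=0$ and $\partial_t z(0,0)=0$, the factorization $\Phi=t\Psi$, and the identification $\partial_\lambda\Psi(0,0)=(I-Q)\partial_\lambda\partial_x F(0,0)x_0$ are all correct, and you are right to flag that the only delicate point is checking the minimal regularity in hypothesis~(ii) suffices for the second implicit-function step (the conclusion asks only for continuous $\varphi,\psi$, which is what survives).
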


Before proceeding further with the consideration of the $V$-states,  we shall recall Riemann mapping theorem which  is one of the most important results in complex analysis. To restate this result we need to recall the definition of {\it simply-connected} domains. Let $\widehat{\mathbb{C}}\triangleq \mathbb{C}\cup\{\infty\}$ denote the Riemann sphere. We  say that a domain $\Omega\subset \widehat{\mathbb{C}}$ is {\it simply-connected} if the set $ \widehat{\mathbb{C}}\backslash \Omega$ is connected.
Let $\mathbb{D}$ denote the unit open ball and  $\Omega\subset \CC$ be a simply-connected bounded domain. Then according to Riemann Mapping Theorem there is a unique bi-holomorphic map called also conformal map,  $\Phi: \CC\backslash\overline{\mathbb{D}}\to  \CC\backslash\overline{\Omega}$ taking the form
$$
\Phi(z)= az+\sum_{n\in\NN} \frac{a_n}{z^n} \quad \textnormal{with}\quad a>0.
$$
 In this theorem the regularity of the boundary has no effect regarding the existence of the conformal mapping  but  it contributes in the boundary behavior of the conformal mapping, see for instance \cite{P,WS}. Here, we shall recall the following result. 
\vspace{0,2cm}


{\it  Kellogg-Warschawski's theorem.} It can be found in  \cite{WS} or in  \cite[Theorem 3.6]{P}.  It asserts that if   the conformal map
$\Phi:\CC\backslash\overline{\mathbb{D}}\to  \CC\backslash\overline{\Omega}$ has a continuous
extension to $\CC\backslash\mathbb{D}$  which is of \mbox{class
$C^{n+\beta},$} with
$n\in \NN$  and $ 0<\beta<1$,  then the
boundary $\Phi(\mathbb{T})$ is  of \mbox{class $C^{n+\beta}.$}

\subsection{Boundary equations}\label{bound135}
Our next task is to write down the equations of the rotating patches using the conformal parametrization. First  recall that the vorticity  $\omega=\partial_1v_2-\partial_2 v_1$  satisfies the transport equation,\begin{equation*}
\partial_{t}\omega+v\cdot\nabla\omega=0
\end{equation*}
and the associated velocity is related to the vorticity through  the stream function $\Psi$  as follows, 
\begin{equation*}
v=2i\,\partial_{\overline z}\Psi.
\end{equation*}
with  \begin{equation*}
\Psi(z)=\frac{1}{4\pi}\int_{\mathbb{D}}\log\Big|\frac{z-\xi}{1-z\overline{\xi}}\Big|^2\omega(\xi)dA(\xi).
\end{equation*}
When the vorticity is a patch of the form $\omega=\chi_{D}$ with $D$ a bounded domain strictly contained in $\mathbb{D}$, then
 \begin{equation*}
\Psi(z)=\frac{1}{4\pi}\int_{{D}}\log\Big|\frac{z-\xi}{1-z\overline{\xi}}\Big|^2dA(\xi).
\end{equation*}
For  a complex function $\varphi: \CC\to\CC$   of class $C^1$ in the Euclidean  variables (as a function of $\RR^2$) we define 
$$
{\partial_z \varphi}=\frac{1}{2}\Big(\frac{\partial \varphi}{\partial x}-i\frac{\partial \varphi}{\partial y}\Big)\quad\textnormal{and}\quad{\partial_{\overline{z}}}\varphi=\frac{1}{2}\Big(\frac{\partial \varphi}{\partial x}+i\frac{\partial \varphi}{\partial y}\Big).
$$

As we have previously seen in the Introduction a rotating patch is a special solution of the vorticity equation \eqref{vorticity} with   initial data $\omega_0=\chi_D$ and such that $$
\omega(t)=\chi_{D_t},\quad \hbox{with}\quad D_t=e^{it\Omega}D.
$$
In this definition and for the simplicity  we have only considered patches rotating around zero. 
According to  \cite{B, HMV, H-F-M-V}  the boundary equation of the rotating patches  is given by 
\begin{equation}\label{rotsq1}
 \textnormal{Re}\Big\{ \Big(\Omega\overline{z}-2\partial_z\Psi\Big)\, z^\prime\Big\}=0\quad z\in\Gamma \triangleq \partial D,
\end{equation}
where $z'$ denotes a tangent vector to the boundary at the point $z$. We point out  that the existence of rigid boundary  does not alter this equation which in fact was established in the planar case. The purpose now is to transform the equation \eqref{rotsq1} into an equation involving  only on the boundary $\partial D$ of the $V$-state.
To do so,  we need  to write $\partial_z\Psi$ as an integral on the boundary $\partial D$ based on the use of   Cauchy-Pompeiu's formula : Consider a finitely-connected domain $D$ bounded by finitely many smooth Jordan curves and let $\Gamma$ be the boundary $\partial D$ endowed with the positive orientation, then 
\begin{equation}\label{CP}
\forall \, z\in \CC,\quad \fint_\Gamma\frac{\varphi(z)-\varphi(\xi)}{z-\xi}{d\xi}=-\frac1\pi\int_D\partial_{\overline{\xi}}\varphi(\xi)\frac{dA(\xi)}{z-\xi}.
\end{equation}
Differentiating \eqref{Integ71} with respect to the variable ${z}$ yields
\begin{equation}\label{Integ011}
\partial_{{z}}\Psi(z)=\frac{1}{4\pi}\int_{{D}}\frac{\overline{\xi}}{1-z\overline{\xi}}\,dA(\xi)+\frac{1}{4\pi}\int_{{D}}\frac{1}{z-\xi}dA(\xi).
\end{equation}
Applying  Cauchy-Pompeiu's formula with $\varphi(z)=\overline{z}$ we find
\begin{equation*}
\frac{1}{\pi}\int_{{D}}\frac{1}{z-\xi}dA(\xi)=-\fint_\Gamma\frac{\overline{z}-\overline{\xi}}{z-\xi}d\xi, \quad\forall z\in \overline{D}.
\end{equation*}
Using the change of variable $\xi\to\overline{\xi}$ which keeps invariant the  Lebesgue measure we get
$$
\frac{1}{\pi}\int_{{D}}\frac{\overline{\xi}}{1-z\overline{\xi}}\,dA(\xi)=
\frac{1}{\pi z}\int_{\widetilde{D}}\frac{{\xi}}{1/z-{\xi}}\,dA(\xi).
$$
with  $\widetilde{D}$ being  the image of $D$ by the complex conjugate.
A second application of  the Cauchy-Pompeiu formula using  that $\frac1z\notin \mathbb{D}$ for $z\in D$ yields
\begin{equation*}
\frac{1}{\pi z}\int_{\widetilde{D}}\frac{{\xi}}{1/z-{\xi}}\,dA(\xi)=\fint_{\widetilde{\Gamma}}\frac{|\xi |^2}{1-z\xi}d\xi, \quad\forall z\in \overline{D},\quad \widetilde{\Gamma}=\partial \widetilde{D}.
\end{equation*}
Using more again  the change of variable $\xi\to\overline{\xi}$ which reverses the orientation we get
\begin{equation*}
\fint_{\widetilde{\Gamma}}\frac{|\xi |^2}{1-z\xi}d\xi=-\fint_{{\Gamma}}\frac{|\xi |^2}{1-z\overline \xi}d\overline\xi, \quad\forall z\in \overline{D}.
\end{equation*}

 Therefore we obtain 
 \begin{equation}\label{Integ1}
4\partial_{z}\Psi(z)=-\fint_\Gamma\frac{|\xi |^2}{1-z\overline\xi}d\overline\xi-\fint_\Gamma\frac{\overline{z}-\overline{\xi}}{z-\xi}d\xi.
\end{equation}
Inserting the last identity in \eqref{rotsq1} we get an  equation making  appeal only to the boundary
\begin{equation*}
\,\textnormal{Re}\Bigg\{ \Big(2\Omega \overline{z}+\fint_\Gamma\frac{\overline{z}-\overline{\xi}}{z-\xi}d\xi+\fint_\Gamma\frac{|\xi |^2}{1-z\,\overline \xi}d\overline\xi\Big)\, z^\prime\Bigg\}=0, \quad \forall\, z\in \Gamma.
\end{equation*}
It is more convenient in the formulas  to replace in the preceding equation the angular velocity $\Omega$ by the parameter $\lambda = 1-2\Omega$ leading to the $V$-states equation
\begin{equation}\label{rotsqzw}
\,\textnormal{Re}\Bigg\{ \Big((1-\lambda) \overline{z}+\fint_\Gamma\frac{\overline{z}-\overline{\xi}}{z-\xi}d\xi+\fint_\Gamma\frac{|\xi |^2}{1-z\,\overline\xi}d\overline\xi\Big)\, z^\prime\Bigg\}=0, \quad \forall\, z\in \Gamma.
\end{equation}
It is worthy to point out that the equation \eqref{rotsqzw} characterizes $V$-states among domains with
$C^1$ boundary, regardless of the number of boundary components. If
the domain is simply-connected then there is only one boundary component
and so only one equation. However, if the domain is doubly-connected then
\eqref{rotsqzw} gives rise to  two coupled equations, one for each boundary
component. We note that all the $V$-states that we shall consider admit at least one axis of symmetry passing through zero and without loss of generality it can be supposed to be the real axis. This implies that the boundary $\partial D$ is invariant by the reflection \mbox{symmetry   $\xi\to\overline{\xi}$.} Therefore  using in the last integral term of the equation \eqref{rotsqzw} this   change of variables   which reverses  the orientation  we obtain
\begin{equation}\label{rotsq}
\,\textnormal{Re}\Big\{ \Big((1-\lambda) \overline{z}+\fint_\Gamma\frac{\overline{z}-\overline{\xi}}{z-\xi}d\xi-\fint_\Gamma\frac{|\xi |^2}{1-z\,\xi}d\xi\Big)\, z^\prime\Big\}=0, \quad \forall\, z\in \Gamma.
\end{equation}
To end this section, we mention that in the general framework the dynamics of any vortex patch can be described by its Lagrangian parametrization $\gamma_t:\mathbb{T}\to \partial D_t\triangleq\Gamma_t$ as follows
$$
\partial_t \gamma_t =v(t,\gamma_t).
$$
Since $\Psi$ is real-valued function then
$$
\partial_{\overline z}\Psi=\overline{\partial_z\Psi},
$$
which   implies according to \eqref{Integ1}
\begin{eqnarray*}
v(t,z)&=&2 i\,\partial_{\overline{z}}\Psi(t,z)\\
&=&
-\frac{1}{4\pi}\int_{\Gamma_t}\log|z-\xi|^2 d\xi+\frac{1}{4\pi}\int_{\Gamma_t}\frac{|\xi |^2}{1-\overline z\,\xi}d\xi.
\end{eqnarray*}
Consequently, we find that  the Lagrangian parametrization   satisfies the nonlinear ODE,
\begin{eqnarray}\label{vort-dy}
\partial_t\gamma_t=-\frac{1}{4\pi}\int_{\Gamma_t}\log|\gamma_t-\xi|^2 d\xi+\frac{1}{4\pi}\int_{\Gamma_t}\frac{|\xi |^2}{1-\overline \gamma_t\,\xi}d\xi.
\end{eqnarray}
\\

The ultimate goal of this section is to relate the $V$-states described above  to stationary solutions for Euler equations when the  the rigid boundary rotates at some specific angular velocity. To do so, suppose that the disc $\mathbb{D}$ rotates with a constant angular velocity $\Omega$ then the equations \eqref{E1} written in the frame of the rotating disc   take the form :

$$
\partial_t u+u\cdot \nabla  u-\Omega y^\perp\cdot\nabla u+\Omega u^\perp+\nabla q=0
$$
with 
$$
y=e^{-it\Omega} x,\quad v(t,x)=e^{-it\Omega} u(t,y)\quad\hbox{and}\quad q(t,y)=p(t,x).
$$
For more details about the derivation of this equation we refer the reader for instance  to the paper \cite{To}. Here the variable in the rotating frame is denoted by $y$. Applying the {\it curl} operator to the equation of $u$ we find that the vorticity of $u$ which still denoted by $\omega$ is governed by the transport equation
$$
\partial_t\omega+(u-\Omega y^\perp)\cdot\nabla \omega=0.
$$
Consequently any stationary solution in the patch form is actually a $V$-state rotating with the angular velocity $\Omega$. Relating  this observation to Theorem \ref{thmV1} and  Theorem \ref{thmV02} we deduce that by rotating the disc at some suitable angular velocities creates  stationary patches with $m$-fold symmetry.

\section{Simply-connected $V$-states}
In this section we shall gather all the pieces needed for the proof of  Theorem \ref{thmV1}. The strategy is analogous to \cite{B, HMV, H-F-M-V}. It consists first  in writing down the $V$-states equation through the conformal parametrization and second  to apply C-R theorem. As it can be noted  from Theorem \ref{thmV1} the result is local meaning that we are looking  for $V$-states which are smooth and being small perturbation of the Rankine patch $\chi_{\mathbb{D}_b}$ with $\mathbb{D}_b=b\mathbb{D}.$  We also assume  that the patch is symmetric with respect to the real axis and this fact  has been crucially used to derive the equation \eqref{rotsq}. Note that  as $D\Subset \mathbb{D}$, then  the exterior conformal mapping
$\phi: \mathbb{D}^c\to D^c$  has the expansion
\begin{equation*}
\phi(w)=bw+\sum_{n\geq 0}\frac{b_n}{w^n},\quad b_n\in \RR
\end{equation*}
and satisifies  $0<b<1.$
This latter  fact  follows from Schwarz lemma. Indeed, let 
$$
\psi(z)\triangleq\frac{1}{\phi(1/z)},
$$
then $\psi:\mathbb{D}\to \widehat D$ is conformal, with $\widehat D$ the image of $D$  by the map $z\mapsto \frac1z$. Clearly $\mathbb{D}\subset \widehat D$ and therefore the restriction $\psi^{-1}:\mathbb{D}\to \mathbb{D}$ is well-defined, holomorphic  and satisfies $\psi(0)=0$. From Schwarz lemma we deduce that $ |(\psi^{-1})^\prime(0)|<1$, otherwise $D$ will coincide with $\mathbb{D}$. It suffices now to use that $(\psi^{-1})^\prime(0)=b$.\\
Now we shall transform the equation \eqref{rotsq} into an equation on the unit circle $\mathbb{T}.$ For this purpose we make the change of variables: $z=\phi(w)$ and $\xi=\phi(\tau)$.  Note that  for $w\in \mathbb{T}$ a tangent vector at the point $z=\phi(w)$ is given by
$$
z'=i{w}\, {{\phi'(w)}}
$$
and thus  the  equation \eqref{rotsq} becomes
\begin{equation}\label{F}
 \,\textnormal{Im}\Bigg\{ \bigg[(1-\lambda)\overline{\phi(w)}+\fint_{\mathbb{T}}\frac{\overline{\phi(w)}-\overline{\phi(\tau)}}{\phi(w)-\phi(\tau)}\phi'(\tau)d\tau-\fint_{\mathbb{T}}\frac{|\phi(\tau) |^2\phi'(\tau)}{1-\phi(w)\phi(\tau)}d\tau\bigg]\, {w}\, {{\phi'(w)}}\Bigg\}= 0,
\end{equation}
Set  $\phi\triangleq b\,\hbox{Id}+f$ then the foregoing functional can be split  into three parts :
\begin{eqnarray*}
F_1(f)(w)&\triangleq &\textnormal{Im}\Big\{ \overline{\phi(w)}\, {w}\, {{\phi'(w)}}\Big\},
\end{eqnarray*}
\begin{eqnarray*}
F_2(f)(w)&\triangleq &\textnormal{Im}\bigg\{\fint_{\mathbb{T}}\frac{\overline{\phi(w)}-\overline{\phi(\tau)}}{\phi(w)-\phi(\tau)}\phi'(\tau)d\tau\, {w}\, {{\phi'(w)}}\bigg\},
\end{eqnarray*}
\begin{eqnarray}\label{F3}
F_3(f)(w)&\triangleq &\textnormal{Im}\bigg\{ \fint_{\mathbb{T}}\frac{|\phi(\tau) |^2\phi'(\tau)}{1-\phi(w)\phi(\tau)}d\tau\, {w}\, {{\phi'(w)}}\bigg\}
\end{eqnarray}
and consequently the equation \eqref{F} becomes 
\begin{equation}\label{zqqq1}
F(\lambda,f)=0,\quad\hbox{with}\quad F(\lambda,f)\triangleq (1-\lambda)F_1(f)+F_2(f)-F_3(f).
\end{equation}
Observe that we can decompose $F$ into two parts $F(\lambda,f)=G(\lambda,f)-F_3(f)$ where  $G(\lambda,f)$ is the functional appearing in the flat space $\RR^2$ and the new term $F_3$ describes the interaction between the patch and the rigid  boundary $\mathbb{T}.$
Now it is easy from the complex formulation to check  that the disc $\mathbb{D}_b$ is a rotating patch for any $\Omega\in\RR$. Indeed, as the disc is a trivial solution for the full space $\RR^2$ then  $G(\lambda,0)=0$. Moreover,
\begin{eqnarray*}
F_3(0)(w)&\triangleq &\textnormal{Im}\bigg\{ {b^4w}\fint_{\mathbb{T}}\frac{d\tau}{1-b^2w\tau}\bigg\}=0
\end{eqnarray*} 
because the integrand is analytic in the   open disc $\frac{1}{b^2}\mathbb{D}$ and therefore we apply residue theorem.
\subsection{Regularity of the functional $F$}

This section is devoted to the study  of the regularity assumptions stated in  C-R Theorem for the functional $F$ introduced in \eqref{zqqq1}. The application of this theorem requires at this stage of the presentation to fix  the function spaces $X$ and $Y$. We should look for Banach spaces $X$ and $Y$ of H\"older type in the spirit of the \mbox{papers \cite{HMV, H-F-M-V}} and they  are given by,
$$
X=\Big\{f\in C^{1+\alpha}(\mathbb{T}),\, f(w)=\sum_{n\geq 0}a_n\overline{w}^n, a_n\in \RR,\, w\in\mathbb{T}\Big\}
$$
and
$$
Y=\Big\{g\in C^{\alpha}(\mathbb{T}),\, g(w)=\sum_{n\geq 1}b_ne_n\ b_n\in \RR,\, w\in\mathbb{T}\Big\}, \quad e_n\triangleq\frac{1}{2i}\big(\overline{w}^n-w^n\big),
$$
with $\alpha\in ]0,1[$. For  $r\in (0,1)$ we denote by $B_r$  the open ball of $X$ with center $0$ and radius $r$,
$$
B_r=\Big\{f\in X,\quad \Vert f\Vert_{C^{1+\alpha}}\leq r\Big\}.
$$
It is straightforward that for any $f\in B_r$ the function $w\mapsto \phi(w)=bw+f(w)$ is conformal on  $\CC\backslash\overline{\mathbb{D}}$ provided that $r<b$. Moreover according to Kellog-Warshawski result \cite{WS}, the boundary of $\phi(\CC\backslash\overline{\mathbb{D}})$  is a Jordan curve of \mbox{class $C^{1+\alpha} $.} 
We propose to  prove  the following result concerning the regularity of $F.$
\begin{proposition}\label{reg}
Let $b\in]0,1[$ and $0<r <\min(b,1-b)$, then the following holds true. 
\begin{enumerate}
\item $F: \RR\times B_r\to Y$ is  $C^1$ $($it is in fact $C^\infty)$.
\item The partial derivative  $\partial_\lambda \partial_fF: \mathbb{R}\times B_r\to \mathcal{L}(X, Y)$  exists and is continuous $($it is in fact $C^\infty).$
 \end{enumerate}
\end{proposition}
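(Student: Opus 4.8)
The plan is to verify the two regularity claims in Proposition \ref{reg} by decomposing $F(\lambda,f) = (1-\lambda)F_1(f) + F_2(f) - F_3(f)$ and treating each summand separately, then reassembling. The dependence on $\lambda$ is affine (only $F_1$ carries a $\lambda$ factor, linearly), so once we show $F_1, F_2, F_3 : B_r \to Y$ are each $C^\infty$ in $f$, both statements follow immediately: smoothness of $F$ in $(\lambda,f)$ is automatic, and $\partial_\lambda \partial_f F = -\partial_f F_1$, which is a bounded linear map valued in $\mathcal{L}(X,Y)$ depending smoothly (in fact, not at all) on the base point. So the heart of the matter is the $f$-regularity of the three functionals.

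For $F_1(f)(w) = \textnormal{Im}\{\overline{\phi(w)}\,w\,\phi'(w)\}$ with $\phi = b\,\textnormal{Id} + f$, this is a polynomial (degree two) in the pair $(f, f')$ with the conjugation, so it is trivially $C^\infty$ from $C^{1+\alpha}(\mathbb{T})$ to $C^\alpha(\mathbb{T})$ once we know $C^\alpha$ is a Banach algebra (property (i) in the list) and multiplication is bounded bilinear; one also checks the output lies in $Y$, i.e. has only the $e_n$-frequencies and real coefficients, which is a symmetry computation using that the coefficients $a_n$ of $f$ are real — the reflection $w \mapsto \overline w$ acting on $\phi$ is exactly what forces the imaginary part into $\textnormal{span}\{e_n\}$. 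The term $F_3(f)(w) = \textnormal{Im}\{\fint_{\mathbb{T}} \frac{|\phi(\tau)|^2\phi'(\tau)}{1-\phi(w)\phi(\tau)}d\tau\, w\,\phi'(w)\}$ is the easiest of the genuinely nonlocal ones: since $D \Subset \mathbb{D}$ and $r < b$ keeps $|\phi|$ strictly below $1$ on $\mathbb{T}$, the kernel $\frac{1}{1-\phi(w)\phi(\tau)}$ is uniformly bounded and, crucially, has no singularity on $\mathbb{T}\times\mathbb{T}$ — it is a smooth (indeed real-analytic) function of $\phi(w)$ and $\phi(\tau)$. One expands $\frac{1}{1-\phi(w)\phi(\tau)} = \sum_{k\geq 0} \phi(w)^k\phi(\tau)^k$, integrates term by term, and recognizes a convergent series of products of $C^{1+\alpha}$ functions; smoothness in $f$ then comes from differentiating under the integral and the series sign, the geometric tail giving uniform control. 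The mapping into $Y$ again follows by the reflection symmetry.

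The main obstacle is $F_2(f)$, the Cauchy-type term $\textnormal{Im}\{\fint_{\mathbb{T}} \frac{\overline{\phi(w)}-\overline{\phi(\tau)}}{\phi(w)-\phi(\tau)}\phi'(\tau)\,d\tau\, w\,\phi'(w)\}$, because the kernel $\frac{\overline{\phi(w)}-\overline{\phi(\tau)}}{\phi(w)-\phi(\tau)}$ is genuinely singular on the diagonal $\tau = w$. This is precisely the term that already appears in the planar problem, so the argument is standard but must be carried out: one writes $\phi(w)-\phi(\tau) = (w-\tau)\,\widetilde{\phi}(w,\tau)$ with $\widetilde{\phi}(w,\tau) = b + \frac{f(w)-f(\tau)}{w-\tau}$, which is bounded below in modulus on $\mathbb{T}\times\mathbb{T}$ for $r<b$ and is $C^\alpha$ jointly (difference quotients of $C^{1+\alpha}$ functions), and similarly for the numerator; the ratio becomes $\frac{\overline{\widetilde{\phi}}(w,\tau)\,(\overline w - \overline\tau)}{\widetilde\phi(w,\tau)\,(w-\tau)}$, and on $\mathbb{T}$ one has $\overline w - \overline\tau = -\frac{w-\tau}{w\tau}$, so the singular factors cancel exactly and one is left with a genuinely $C^\alpha$ kernel times $\phi'(\tau)$. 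Then $F_2$ is, up to the outer $C^\infty$ multiplication by $w\phi'(w)$ and the projection $\textnormal{Im}$, an integral operator with a $C^\alpha$-in-$(w,\tau)$, $C^\infty$-in-$f$ kernel acting on $C^\alpha$ data, whence the convolution-type bound (property (ii) in the list) and differentiation under the integral give the $C^\infty$ regularity in $f$. The bookkeeping — showing each Gâteaux derivative exists, is continuous, and that $F_2$ lands in $Y$ — is routine once this cancellation is in hand; I would present the $k$-th derivative of $F_2$ explicitly as a finite sum of such integral operators and bound each, rather than arguing abstractly. Finally, combining the three pieces and using that $Y$ is closed under the operations involved completes the proof.
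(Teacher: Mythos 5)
Your proposal is correct and follows essentially the same route as the paper: the same decomposition $F=(1-\lambda)F_1+F_2-F_3$, the same geometric-series treatment of the nonsingular boundary term $F_3$ together with the reflection symmetry $\overline{\phi(w)}=\phi(\overline w)$ to land in $Y$, and the same observation that the affine $\lambda$-dependence reduces (ii) to $\partial_\lambda\partial_f F=-\partial_f F_1$; the only difference is that you spell out the standard diagonal-cancellation argument for the planar terms $F_1,F_2$, which the paper simply cites from \cite{HMV,H-F-M-V}. One cosmetic slip: it is $r<1-b$ (not $r<b$) that yields $\|\phi\|_{L^\infty}\le b+r<1$ and hence the convergence of the series for $F_3$, while $r<b$ ensures conformality --- both being guaranteed by the hypothesis $r<\min(b,1-b)$.
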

\begin{proof} 
{{(\hbox{i})}} We shall only sketch the proof because most of the details are done in the papers \cite{HMV, H-F-M-V}. First recall from \eqref{zqqq1} the decomposition
$$
F(\lambda,f)=(1-\lambda)F_1(f)+F_2(f)-F_3(f).
$$
The part $(1-\lambda)F_1(f)+F_2(f)$ coincides with  the nonlinear functional appearing in the plane and its regularity was studied in \cite{HMV, H-F-M-V}. Therefore  it remains to check the regularity assumptions for the term $F_3$ given in \eqref{F3}. Since $C^{\alpha}(\mathbb{T})$ is an algebra then it suffices to prove that the mapping $F_4: \phi \in b\hbox{Id}+B_r\to C^\alpha$ defined  by 
\begin{equation}\label{trs1}
F_4(\phi(w))=\fint_{\mathbb{T}}\frac{|\phi(\tau) |^2\phi'(\tau)}{1-\phi(w)\phi(\tau)}d\tau
\end{equation}
is $C^1$ and admits real Fourier coefficients.
Observe that this functional is well-defined and is given by the series expansion
$$
F_4(\phi(w))=\sum_{n\in \NN}\phi^n(w)\fint_{\mathbb{T}}\phi^n(\tau){|\phi(\tau) |^2\phi'(\tau)}d\tau.
$$
This sum is defined pointwisely because $\|\phi\|_{L^\infty}\le b+r<1$. This series converges absolutely in $C^\alpha(\mathbb{T})$. To get this we use the law product which can be proved by induction
$$
\|\phi^n\|_{C^\alpha}\le n\|\phi\|_{L^\infty}^{n-1}\|\phi\|_{C^\alpha}
$$
and therefore we obtain
\begin{eqnarray*}
\|F_4(\phi)\|_{C^\alpha}&\le &\sum_{n\in \NN}n\|\phi\|_{L^\infty}^{n-1}\|\phi\|_{C^\alpha}\Big\vert\fint_{\mathbb{T}}\phi^n(\tau){|\phi(\tau) |^2\phi'(\tau)}d\tau\Big\vert\\
&\le &\|\phi^\prime\|_{L^\infty}\|\phi\|_{C^\alpha}\sum_{n\in \NN}n\|\phi\|_{L^\infty}^{2n+1}\\
&\le &\|\phi^\prime\|_{L^\infty}\|\phi\|_{C^\alpha}\sum_{n\in \NN}n(b+r)^{2n+1}<\infty.
\end{eqnarray*}
From the completeness of $C^\alpha(\mathbb{T})$ we obtain that $F_4(\phi)$ belongs to this space. Again from the series expansion we can check that $\phi\mapsto F_4(\phi)$ is not only $C^1$ but also $C^\infty$. To end the proof we need to check that all the Fourier coefficients of $F_4(\phi)$ are real and this fact is equivalent to show that 
$$
\overline{F_4(\phi(w))}=F_4(\phi(\overline{w})),\quad \forall w\in \mathbb{T}.
$$
As $\overline{\phi(w)}=\phi(\overline{w})$ and $\overline{\phi^\prime(w)}=\phi^\prime(\overline{w})$ then we may  write successively 
\begin{eqnarray*}
\overline{F_4(\phi(w))}&=&-\fint_{\mathbb{T}}\frac{|\phi(\overline{\tau}) |^2\phi'(\overline{\tau})}{1-\phi(\overline{w})\phi(\overline{\tau})}d\overline{\tau}\\
&=&\fint_{\mathbb{T}}\frac{|\phi(\tau) |^2\phi'(\tau)}{1-\phi(w)\phi(\tau)}d\tau
\end{eqnarray*}
where in the last equality we have used the change of variables $\tau\mapsto \overline{\tau}$.
\\

{(\hbox{ii})} Following the arguments developed in   \cite{HMV, H-F-M-V} we get what is expected formally, that is
\begin{eqnarray*}
\partial_\lambda\partial_fF(\lambda, f)h&=&-\partial_fF_1(f)\\
&=&\textnormal{Im}\Big\{ \overline{\phi(w)}\, {w}\, {{h'(w)}}+\overline{h(w)}\, {w}\, {{\phi'(w)}}\Big\}.
\end{eqnarray*}
From which  we deduce that $\partial_\lambda\partial_fF(\lambda, f)\in \mathcal{L}(X,Y)$ and the mapping $f\mapsto  \partial_\lambda\partial_fF(\lambda, f)$ is in fact  $C^\infty$ which is clearly more better than the statement of the proposition.
\end{proof}

\subsection{Spectral study}
This part is crucial for implementing C-R theorem. We shall in particular compute the linearized operator $\partial_f F(\lambda,0)$ around the trivial solution  and look for the values of $\lambda $ associated to non trivial kernel. For these values of $\lambda$ we shall see that  the linearized operator has a one-dimensional kernel and is in fact of  Fredholm type with zero index. Before giving the main result of this paragraph we recall  the notation
$
e_n=\frac{1}{2i}(\overline{w}^n-w^n).
$

\begin{proposition}\label{propspec01}
Let $h\in X$ taking the form $\displaystyle{h(w)=\sum_{n\geq0}\frac{a_n}{{w}^n}}.$ Then the following holds true.
\begin{enumerate}
\item Structure of $\partial_f F(\lambda,0) :$
$$
\partial_f F(\lambda,0)h(w)=
 {b}\sum_{n\geq 1}n\Big(\lambda-\frac{1-b^{2n}}{n}\Big) a_{n-1}e_n. 
$$
\item The kernel of $\partial_f F(\lambda,0)$ is non trivial if and only if there exists $m\in \NN^\star$ such that
$$
\lambda=\lambda_m\triangleq\frac{1-b^{2m}}{m}, m\in \NN^\star
$$
and in this case the kernel  is one-dimensional generated by $v_m(w)= \overline{w}^{m-1}$.
\item The range of $\partial_f F(\lambda_m,0)$ is of co-dimension one 

\item Transversality condition $:$ for $m\in \NN^\star$
$$
\partial_\lambda\partial_f F(\lambda_m,0) v_m\notin R\partial_f F(\lambda_m,0) 
$$

\end{enumerate}
\end{proposition}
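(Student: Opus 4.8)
The plan is to first establish the explicit Fourier--multiplier formula of $(i)$; once it is in hand, $(ii)$--$(iv)$ follow almost mechanically, so that is where the real work lies. By linearity, $\partial_f F(\lambda,0)=(1-\lambda)\partial_f F_1(0)+\partial_f F_2(0)-\partial_f F_3(0)$. The sum of the first two terms is the linearization at the trivial solution of the planar functional $G(\lambda,f)=(1-\lambda)F_1(f)+F_2(f)$, whose value
$$
\partial_f\big[(1-\lambda)F_1+F_2\big](0)h(w)=b\sum_{n\geq1}(n\lambda-1)\,a_{n-1}e_n
$$
is recorded in \cite{B,HMV,H-F-M-V}, and is recovered directly by writing $\phi=b\,\operatorname{Id}+f$, expanding the Cauchy kernel $\frac{\overline{\phi(w)}-\overline{\phi(\tau)}}{\phi(w)-\phi(\tau)}$ to first order in $f$, and evaluating the resulting integrals via $\fint_{\mathbb{T}}\tau^k\,d\tau=\delta_{k,-1}$. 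The genuinely new term is $\partial_f F_3(0)$: starting from the absolutely convergent series $F_4(\phi(w))=\sum_{n\geq0}\phi^n(w)\fint_{\mathbb{T}}\phi^n(\tau)|\phi(\tau)|^2\phi'(\tau)\,d\tau$ of \eqref{trs1} and differentiating termwise at $\phi=b\,\operatorname{Id}$, the key observation is that each coefficient $\fint_{\mathbb{T}}\phi^n(\tau)|\phi(\tau)|^2\phi'(\tau)\,d\tau$ vanishes at the trivial solution (again because $\fint_{\mathbb{T}}\tau^k\,d\tau=\delta_{k,-1}$), which annihilates one half of every product--rule term; what remains is $\fint_{\mathbb{T}}$ of an explicit Laurent polynomial in $\tau$, only its $\tau^{-1}$--coefficient contributing. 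This gives $\partial_f F_3(0)h(w)=-b\sum_{n\geq1}b^{2n}a_{n-1}e_n$, and summing the three pieces, the coefficient of $a_{n-1}e_n$ collapses to $b\,(n\lambda-1+b^{2n})=bn\big(\lambda-\frac{1-b^{2n}}{n}\big)$, which is $(i)$.

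Granting $(i)$, part $(ii)$ follows from the linear independence of $(e_n)_{n\geq1}$ in $Y$: for $h=\sum_{n\geq0}a_n\overline{w}^n$ the relation $\partial_f F(\lambda,0)h=0$ is equivalent to $n(\lambda-\lambda_n)\,a_{n-1}=0$ for all $n\geq1$, with $\lambda_n\triangleq\frac{1-b^{2n}}{n}$; hence the kernel is nontrivial precisely when $\lambda=\lambda_m$ for some $m\in\NN^\star$, and to see that it is then spanned by $v_m=\overline{w}^{m-1}$ it is enough that $n\mapsto\lambda_n$ be injective. I would prove the stronger fact that $\lambda_n$ is strictly decreasing: with $g(x)=\frac{1-b^{2x}}{x}$ and $u=b^{2x}\in(0,1)$ one computes $x^{2}g'(x)=u-1-u\ln u$, which is negative on $(0,1)$ since its derivative $-\ln u$ is positive there and it vanishes at $u=1$. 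Consequently, when $\lambda=\lambda_m$ every $a_{n-1}$ with $n\neq m$ must vanish, so $\operatorname{Ker}\partial_f F(\lambda_m,0)=\mathrm{span}\{v_m\}$ is one--dimensional.

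For $(iii)$, the diagonal formula gives at once $R\big(\partial_f F(\lambda_m,0)\big)\subseteq Y_m\triangleq\{g=\sum_{n\neq m}b_ne_n\in Y\}$, a closed subspace of codimension one in $Y$ (complement $\mathrm{span}\{e_m\}$), because the $e_m$--component of any image is killed by $n(\lambda_m-\lambda_m)=0$. For the reverse inclusion, given $g=\sum_{n\neq m}b_ne_n\in Y_m$ I would exhibit the preimage $h=\sum_{n\neq m}\frac{b_n}{bn(\lambda_m-\lambda_n)}\overline{w}^{n-1}$: since $n(\lambda_m-\lambda_n)=n\lambda_m-1+b^{2n}$ never vanishes for $n\neq m$ (because $\lambda_n\neq\lambda_m$ there) and tends to $+\infty$, one has $\inf_{n\neq m}|n(\lambda_m-\lambda_n)|>0$, and the multiplier $n\mapsto\frac{n-1}{n(\lambda_m-\lambda_n)}$ is bounded with $O(n^{-2})$ successive differences, hence bounded on $C^{\alpha}(\mathbb{T})$ by the standard multiplier estimate of \cite{HMV,H-F-M-V}; thus $h\in X$ and $\partial_f F(\lambda_m,0)h=g$. (Equivalently, $\partial_f F(\lambda_m,0)=\partial_f G(\lambda_m,0)-\partial_f F_3(0)$ is a diagonal operator whose symbol tends to infinity perturbed by the smoothing, hence compact, operator $\partial_f F_3(0)$, so it is Fredholm of index zero and $\operatorname{codim}R=\dim\operatorname{Ker}=1$ by $(ii)$.)

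Finally $(iv)$ is a one--line verification: by Proposition \ref{reg}$(ii)$, at the trivial solution $\partial_\lambda\partial_f F(\lambda,0)=-\partial_f F_1(0)$, which by the computation in $(i)$ sends $h=\sum_{n\geq0}a_n\overline{w}^n$ to $b\sum_{n\geq1}n\,a_{n-1}e_n$; evaluating at $v_m=\overline{w}^{m-1}$ yields $\partial_\lambda\partial_f F(\lambda_m,0)v_m=bm\,e_m$, which has nonzero $e_m$--component and so, by $(iii)$, does not lie in $R\big(\partial_f F(\lambda_m,0)\big)=Y_m$ --- this is the transversality assumption. The one delicate ingredient in the whole argument is the computation of the boundary contribution $\partial_f F_3(0)$ in $(i)$ together with the clean reassembly of the multiplier; thereafter $(ii)$ is elementary calculus, $(iii)$ is the routine Hölder--multiplier (or Fredholm) bookkeeping of the cited works, and $(iv)$ is immediate.
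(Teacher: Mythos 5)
Your proposal is correct and follows essentially the same route as the paper: compute the three linearizations $\partial_fF_1,\partial_fF_2,\partial_fF_3$ at the trivial solution to obtain the Fourier multiplier $b\,n\big(\lambda-\tfrac{1-b^{2n}}{n}\big)$, prove strict monotonicity of $n\mapsto\lambda_n$ to get a one-dimensional kernel, characterize the range via the explicit preimage together with a Szeg\"o-projection/convolution ($L^1\star C^\alpha$) bound, and read off transversality from $\partial_\lambda\partial_fF(\lambda_m,0)v_m=bm\,e_m\notin R(\partial_fF(\lambda_m,0))$. The only cosmetic differences are that you linearize $F_3$ by termwise differentiation of the series expansion of $F_4$, exploiting that the coefficients $\fint_{\mathbb{T}}\phi^n(\tau)|\phi(\tau)|^2\phi'(\tau)\,d\tau$ vanish at $\phi=b\,\operatorname{Id}$ (the paper instead expands the derivative into four integrals handled by the residue theorem), and that you prove monotonicity by differentiating the continuous extension $x\mapsto\frac{1-b^{2x}}{x}$ (the paper argues by contradiction with $\alpha=n/m$, $x=b^{2m}$); both yield the same multiplier and the same eigenvalue ordering.
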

\begin{proof} 
(\hbox{i}) The computations of the terms $\partial_f F_i(\lambda,0)h$ were almost done in \cite{H-F-M-V} and we shall only give  some  details. By straightforward computations we obtain
\begin{eqnarray}\label{pf1}
\partial_f F_1(0,0)h(w)&=& \textnormal{Im}\Big\{ b\overline{h(w)} {w}+bh'(w)\Big\}\notag\\ &=&  b\textnormal{Im}\Big\{\sum_{n\geq 0}a_nw^{n+1}-\sum_{n\geq 1}na_n\overline{w}^{n+1}\Big\}\notag\\ &=& -\frac{b}{2i}\sum_{n\geq 0}(n+1)a_n(\overline{w}^{n+1}-w^{n+1})\notag\\
&=&-{b}\sum_{n\geq 0}(n+1)a_{n} \, e_{n+1}.
\end{eqnarray}
Concerning  $\partial_f F_2(0,0)$ one may write
\begin{eqnarray*}
\partial_fF_2(0,0)h(w)&=&\textnormal{Im}\Big\{ b w\fint_{\mathbb{T}}\frac{\overline{h(\tau)}-\overline{h(w)}}{\tau-w}d\tau+b \fint_{\mathbb{T}}\frac{{h(\tau)}-{h(w)}}{\tau-w}\overline{\tau}d\tau\\
&-&b\fint_{\mathbb{T}}h^\prime(\tau)\overline{\tau}d\tau-bh^\prime(w)\Big\}.
\end{eqnarray*}
Therefore using residue theorem at infinity we get
\begin{eqnarray*}
\partial_fF_2(0,0)h(w)&=&\textnormal{Im}\Big\{ b w\fint_{\mathbb{T}}\frac{\overline{h(\tau)}-\overline{h(w)}}{\tau-w}d\tau-bh^\prime(w)\Big\}\\
&=&-\textnormal{Im}\big\{ bh^\prime(w)\big\}
\end{eqnarray*}
and where we have used in the last line the fact
\begin{eqnarray*}
\fint_{\mathbb{T}}\frac{\overline{h(\tau)}-\overline{h(w)}}{\tau-w}d\tau&=&\sum_{n\in \NN} a_n\fint_{\mathbb{T}}\frac{w^n-\tau^n}{\tau-w}d\tau\\
&=&0.\end{eqnarray*}
Consequently we obtain
\begin{equation}\label{pf3}
\partial_f F_2(0,0)h(w)={b}\sum_{n\geq 1}na_ne_{n+1}.
\end{equation}


As  to the third term $\partial_f F_3(0,0)h$ we get by plain computations,
\begin{eqnarray}\label{pf2}
\nonumber\partial_f F_3(0,0)h(w)&=&\, \textnormal{Im}\,\bigg\{{b^3w}\fint_{\mathbb{T}}\frac{d\tau}{1-b^2w\tau}h'(w)+{b^3w}\fint_{\mathbb{T}}\frac{h'(\tau)d\tau}{1-b^2w\tau}+{2b^3w}\fint_{\mathbb{T}}\frac{\textnormal{Re}\{h(\tau)\overline{\tau}\}}{1-b^2w\tau}d\tau\\
&+&{b^5w}\fint_{\mathbb{T}}\frac{wh(\tau)+\tau h(w)}{(1-b^2w\tau)^2}d\tau\bigg\}\notag\\ &\triangleq&\,  \textnormal{Im}\,\Big\{ I_1(w)+I_2(w)+I_3(w)+I_4(w)\Big\}.
\end{eqnarray}
By invoking once again residue theorem we get
\begin{eqnarray}\label{id1}
I_1(w)=0.
\end{eqnarray}
To compute the second term $I_2(w)$ we  use the Taylor series of $\frac{1}{1-\zeta}$ leading to
\begin{eqnarray*}
I_2(w)&=&{b^3w}\fint_{\mathbb{T}}\frac{h'(\tau)d\tau}{1-b^2w\tau}\notag\\ &=&\sum_{n\geq 0}b^{2n+3}w^{n+1}\fint_{\mathbb{T}}\tau^nh'(\tau)d\tau.
\notag\end{eqnarray*}
From the Fourier expansions of $h$  we infer that
$$
\fint_{\mathbb{T}}\tau^nh'(\tau)d\tau=-n\,a_n
$$
which implies that
\begin{eqnarray}\label{id2}
I_2(w)  &=&-\sum_{n\geq 1}na_nb^{2n+3}w^{n+1}.
\end{eqnarray}
In regard to the third term $I_3(w)$ it may be written in the form
\begin{eqnarray*}
I_3(w)={b^3w}\fint_{\mathbb{T}}\frac{\tau\overline{h(\tau)}}{1-b^2w\tau}d\tau+{b^3w}\fint_{\mathbb{T}}\frac{\overline{\tau}h(\tau)}{1-b^2w\tau}d\tau.
\end{eqnarray*}
The first integral term is zero due to the fact that the integrand is analytic in the open unit disc and continuous up to the boundary. Therefore we get similarly to $I_2(w)$
\begin{eqnarray*}
I_3(w)&=&{b^3w}\fint_{\mathbb{T}}\frac{\overline{\tau}h(\tau)}{1-b^2w\tau}d\tau\notag\\ &=& \sum_{n\geq 0}b^{2n+3}w^{n+1}\fint_{\mathbb{T}}\tau^{n-1}h(\tau)d\tau.
\end{eqnarray*}
Remark that
$$
\fint_{\mathbb{T}}\tau^{n-1}h(\tau)d\tau=a_n
$$
which implies in turn that 
\begin{equation}\label{id3}
I_3(w)=\sum_{n\geq 0}a_nb^{2n+3}w^{n+1}.
\end{equation}
Now we come  back to the last term  $I_4(w)$ and one may write using  again residue theorem
\begin{eqnarray*}
I_4(w)&=&{b^5w^2}\fint_{\mathbb{T}}\frac{h(\tau)d\tau}{(1-b^2w\tau)^2}+{b^5wh(w)}\fint_{\mathbb{T}}\frac{\tau d\tau}{(1-b^2w\tau)^2}\notag\\ &=& {b^5w^2}\fint_{\mathbb{T}}\frac{h(\tau)d\tau}{(1-b^2w\tau)^2}+0.
\end{eqnarray*}
Using Taylor expansion
\begin{equation}\label{Tay11}
\frac{1}{(1-\zeta)^2}=\sum_{n\geq 1}n\zeta^{n-1},\quad |\zeta|<1.
\end{equation}
we deduce that
\begin{eqnarray}\label{id4}
I_4(w)
&=&  \sum_{n\geq 1}nb^{2n+3}w^{n+1}\fint_{\mathbb{T}}\tau^{n-1}h(\tau)d\tau\notag\\ &=& \sum_{n\geq 1} na_{n}b^{2n+3}w^{n+1}.
\end{eqnarray}

Inserting the identities  \eqref{id1},\eqref{id2},\eqref{id3} and \eqref{id4} into \eqref{pf2} we find
\begin{eqnarray}\label{Eqswq}
\nonumber\partial_f F_3(0,0)h(w)&=&
\textnormal{Im}\Big\{\sum_{n\geq 0}a_nb^{2n+3}w^{n+1}\Big\}\\ &=& -\sum_{n\geq 0}a_nb^{2n+3}e_{n+1}. 
\end{eqnarray}
Hence by plugging  \eqref{pf1}, \eqref{pf3}, \eqref{Eqswq} into \eqref{zqqq1} we obtain
\begin{eqnarray}\label{ff}
\nonumber\partial_f F(\lambda,0)h(w)&=&
b\sum_{n\geq 0}(n+1)\Big(\lambda-\frac{1-b^{2n+2}}{n+1}\Big)a_ne_{n+1}\\
&=&b\sum_{n\geq 1} n\Big(\lambda-\frac{1-b^{2n}}{n}\Big)a_{n-1}e_{n}. 
\end{eqnarray}
This achieves the proof of the first part $(i)$.
\\

${(\hbox{ii})}$
From \eqref{ff} we  immediately deduce that the kernel of $\partial_f F(\lambda,0)$ is non trivial if and only if there exists $m\geq 1$ such that
$$
\lambda=\lambda_{m}\triangleq \frac{1-b^{2m}}{m}\cdot
$$
We shall prove that the sequence $n \mapsto \lambda_n$ is strictly decreasing from which  we conclude immediately  that the kernel is one-dimensional. Assume that for two  integers  $n> m\geq 1$ one has
$$
\frac{1-b^{2m}}{m}=\frac{1-b^{2n}}{n}\cdot
$$
This implies that
$$
\frac{1-b^{2n}}{1- b^{2m}}=\frac{n}{m}\cdot
$$

Set $\alpha=\frac{n}{m}$ and $ x=b^{2m}$ then the preceding equality becomes
$$
f(x)\triangleq\frac{1- x^\alpha}{1-x}=\alpha.
$$
If we  prove that this equation has no solution $x\in ]0,1[$ for any $\alpha>1$ then the result follows without difficulty. To do so, we get after  differentiating  $f$
$$
f^\prime(x)=\frac{(\alpha-1) x^\alpha-\alpha x^{\alpha-1}+1}{(1-x)^2}\triangleq\frac{g(x)}{(1-x)^2}\cdot
$$
Now we note that
$$
g^\prime(x)=\alpha(\alpha-1)x^{\alpha-2}(x-1)<0.
$$
As $g(1)=0$ then we deduce
$$
g(x)>0, \quad \forall x\in ]0,1[.
$$
Thus $f$ is strictly increasing. Furthermore
$$
\lim_{x\to 1} f(x)=\alpha.
$$
This implies that
$$
\forall x\in ]0,1[,\quad f(x)<\alpha.
$$
Therefore we get the strict monotonicity of the "eigenvalues" and consequently 
 the kernel of $\partial_f F(\lambda_m,0)$ is one-dimensional vector space generated by the function $v_m(w) = \overline{w}^{m-1}$.
 \\
 
 ${(\hbox{iii})}$ We shall prove that the range of $\partial_f F(\lambda_m,0)$ is described by
 $$
 R\partial_f F(\lambda_m,0)=\Big\{g\in Y; \quad g(w)=\sum_{n\geq1\\
 \atop n\neq m} b_n e_n\Big\}\triangleq\mathcal{Z}.
 $$
 Combining  Proposition \ref{reg} and Proposition \ref{propspec01}-$i)$ we  conclude that the range is contained in the right space. So what is left is to prove the converse. Let $g\in \mathcal{Z}$, we will solve in $X$ the equation
 $$
 \partial_f F(\lambda_m,0)h=g, \quad h=\sum_{n\geq0} a_n \overline{w}^n
 $$
 By virtue of \eqref{ff} this equation is equivalent to
 $$
 a_{n-1}=\frac{b_n}{bn(\lambda_m-\lambda_n)},\quad n\geq1, n\neq m.
 $$
 Thus the problem reduces to showing that
 $$
 h: w\mapsto\sum_{n\geq1\\
 \atop n\neq m}\frac{b_n}{bn(\lambda_m-\lambda_n)} \overline{w}^{n-1}\in C^{1+\alpha}(\mathbb{T}).
 $$
 Observe that
 $$
 \inf_{n\neq m}|\lambda_n-\lambda_m|\triangleq c_0>0
 $$
 and thus we deduce by Cauchy-Schwarz
 \begin{eqnarray*}
 \|h\|_{L^\infty}&\le&\frac1b\sum_{n\geq1,
 \atop n\neq m}\frac{|b_n|}{n|\lambda_m-\lambda_n|}\\
 &\le&\frac{1}{c_0 b}\sum_{n\geq1,
 \atop n\neq m}\frac{|b_n|}{n}\\
 &\lesssim& \|g\|_{L^2}\lesssim\|g\|_{C^\alpha}.
 \end{eqnarray*}
 To achieve the proof we shall check that $h^\prime \in C^\alpha(\mathbb{T})$ or equivalently $(\overline{w} h)^\prime \in C^\alpha(\mathbb{T})$. It is obvious that
  \begin{eqnarray*}
 (\overline{w} h(w))^\prime&=&-\sum_{n\geq1
 \atop n\neq m}\frac{b_n}{b(\lambda_m-\lambda_n)}\overline{w}^{n+1}\\
 &=&-\frac{1}{b\lambda_m}\sum_{n\geq1
 \atop n\neq m}{b_n} \overline{w}^{n+1}+\frac{1}{b\lambda_m}\sum_{n\geq1
 \atop n\neq m}\frac{\lambda_n}{\lambda_n-\lambda_m} b_n\overline{w}^{n+1}.
 \end{eqnarray*}
We shall  write the preceding expression with  Szeg\"o projection 
$$\Pi:\sum_{n\in \mathbb{Z}}a_n w^n\mapsto \sum_{n\in -\mathbb{N}}a_n w^n,
$$
\begin{eqnarray*}
 (\overline{w} h(w))^\prime&=&-\frac{\overline{w}}{2i\,b\lambda_m}\Pi g(w)+\frac{\overline{w}}{2i\,b\lambda_m}(K\star\Pi g)(w), \end{eqnarray*}
 with
 $$
 K(w)\triangleq\sum_{n\geq1
 \atop n\neq m}\frac{\lambda_n}{\lambda_n-\lambda_m} \overline{w}^{n}.
 $$
 Notice that
 $$
\frac{\lambda_n}{|\lambda_n-\lambda_m|}\le c_0^{-1} \frac{1}{n}
 $$
 and therefore $K\in L^2(\mathbb{T})$ which implies  in particular that $K\in L^1(\mathbb{T})$. Now to complete the proof of  $(\overline{w} h)^\prime \in C^\alpha(\mathbb{T})$ it suffices to use the continuity of Szeg\"o projection on $ C^\alpha(\mathbb{T})$ combined with $L^1\star C^\alpha(\mathbb{T})\subset C^\alpha(\mathbb{T})$.
 \\
 
 $({\hbox{iv}})$ To check the  transversality assumption, we differentiate  \eqref{ff} with respect to $\lambda$
 $$
 \partial_\lambda\partial_f F(\lambda_m,0) h=b\sum_{n\geq1} n a_{n-1} e_n
 $$
 and therefore
 $$
  \partial_\lambda\partial_f F(\lambda_m,0)v_m=b m e_m\notin R(\partial_f F(\lambda_m,0)).
 $$
 This completes the proof of the proposition.
\end{proof}
\subsection{Proof of Theorem \ref{thmV1}}
According to   Proposition \ref{propspec1} and Proposition \ref{reg} all the assumptions of Crandall-Rabinowitz theorem are satisfied and therefore we conclude for each $m\geq 1$ the existence of only one  non trivial curve bifurcating from the trivial one at the angular velocity 
$$\Omega_m=\frac{1-\lambda_m}{2}=\frac{m-1+b^{2m}}{2m}\cdot$$
To complete the proof it remains to check the $m$-fold symmetry of the $V$-states. This can be done by including the required symmetry in the function spaces. More precisely, instead of dealing with $X$ and $Y$ we should work with the spaces
$$
X_m=\Big\{f\in C^{1+\alpha}(\mathbb{T}),\, f(w)=\sum_{n=1}^{\infty}a_{n}\overline{w}^{nm-1},\, a_n\in \mathbb{R}  \Big\}
$$
and
$$
Y_m=\Big\{g\in C^{\alpha}(\mathbb{T}),\, g(w)=\sum_{n\geq 1}b_{n} e_{nm},\, b_n\in\RR\Big\}, e_n=\frac{1}{2i}(\overline{w}^n-w^n).
$$
The conformal mapping describing the $V$-state takes the form
$$
\phi(w)=bw+\sum_{n=1}^{\infty}a_{n}\overline{w}^{nm-1}
$$
and the $m$-fold symmetry of the $V$-state  means that
$$
\phi(e^{2i\pi/m}w)=e^{2i\pi/m}\phi(w),\,\forall\, w\in \mathbb{T}.
$$
The ball $B_r$ is changed to  $B_r^m=\big\{f\in X_m, \|f\|_{C^{1+\alpha}}<r\big\}$. Then Proposition \ref{reg} holds true according to this adaptation and the only point that one must check is the stability of the spaces, that is, for $f\in B_r^m$ we have $F(\lambda, f)\in Y_m.$
This result was checked   in the paper \cite{H-F-M-V}  for the terms $F_1$ and $F_2$ and it remains to check that $F_3(f)$ belongs to $Y_m$. Recall that 
$$
F_3(f(w))=\hbox{Im}\big\{ F_4(\phi(w)) w\phi^\prime(w)\big\},\quad  \phi(w)= bw+ f(w)
$$
and where $F_4$ is defined in \eqref{trs1}. By change of variables and using the symmetry of $\phi$ we get
\begin{eqnarray*}
F_4(\phi(e^{i\frac{2\pi}{m}}w)\big)&=&\fint_{\mathbb{T}}\frac{|\phi(\xi) |^2\phi'(\xi)}{1-\phi(e^{i\frac{2\pi}{m}}w)\phi(\xi)}d\xi\\
&=&{e^{-i\frac{2\pi}{m}} }\fint_{\mathbb{T}}\frac{|\phi(e^{-i\frac{2\pi}{m}}\zeta) |^2\phi'(e^{-i\frac{2\pi}{m}}\zeta)}{1-\phi(e^{i\frac{2\pi}{m}}w)\phi(e^{-i\frac{2\pi}{m}}\zeta)}d\zeta\\
&=&{e^{-i\frac{2\pi}{m}}}\fint_{\mathbb{T}}\frac{|\phi(\tau) |^2\phi'(\tau)}{1-\phi(w)\phi(\tau)}d\tau\\
&=&e^{-i\frac{2\pi}{m}} F_4(\phi(w)).
\end{eqnarray*}
Consequently we obtain
$$
F_3(f(e^{i\frac{2\pi}{m}}w))=F_3(f(w))
$$
and this shows the stability result.
\section{Doubly-connected $V$-states}
In this section we shall establish all the ingredients required for the proofs of Theorem \ref{thmV02} and Theorem \ref{thm-1fold} and this will be carried out in several steps. First we shall write the equations governing the  doubly-connected $V$-states which are described by two coupled nonlinear equations. Second we  briefly discuss the regularity of the functionals and compute the linearized operator around the trivial solution. The delicate  part to which we will pay  careful attention is the computation of the kernel dimension. This will be implemented through the study of the  monotonicity  of the nonlinear eigenvalues. As we shall see the fact that we have multiple parameters introduces much more complications to this study compared to the result of \cite{H-F-M-V}.  Finally, we shall achieve  the proof of Theorem \ref{thmV02} in  Subsection \ref{TT23}.
\subsection{Boundary equations}\label{bound548}
Let $D$ be a doubly-connected domain of the form $D=D_1\backslash D_2$ with $D_2\subset D_1$ being  two simply-connected domains. Denote by  $\Gamma_j$ the boundary of the domain  $D_j$.  In this case the $V$-states equation \eqref{rotsq} reduces to two coupled equations, one for each  boundary component $\Gamma_j$. More precisely,
\begin{equation}\label{rotsqz1}
\,\textnormal{Re}\Big\{ \Big((1-\lambda) \overline{z}+ I(z)-J(z)\Big)\, z^\prime\Big\}=0, \quad \forall\, z\in \Gamma_1\cup \Gamma_2,
\end{equation}
with
$$
I(z)=\fint_{\Gamma_1}\frac{\overline{z}-\overline{\xi}}{z-\xi}d\xi-\fint_{\Gamma_2}\frac{\overline{z}-\overline{\xi}}{z-\xi}d\xi
$$
and
$$
J(z)=\fint_{\Gamma_1}\frac{|\xi |^2}{1-z\xi}d\xi-\fint_{\Gamma_2}\frac{|\xi |^2}{1-z\xi}d\xi.
$$
As for the simply-connected case we prefer using the conformal parametrization of the boundaries.  Let  $\phi_j : \mathbb{D}^c \to D_j^c$ satisfying
\begin{equation*}
\phi_j(w)=b_jw+\sum_{n\geq 0}\frac{a_{j,n}}{w^n}.
\end{equation*}
with $0<b_j<1$, $j=1,2$ and $b_2<b_1$. We assume moreover that all the Fourier coefficients are real because we shall look for $V$-states which are symmetric with respect to the real axis. Then  by change of variables we obtain
$$
I(z)=\fint_{\mathbb{T}}\frac{\overline{z}-\overline{\phi_1(\xi)}}{z-\phi_1(\xi)}\phi_1^\prime(\xi)d\xi-\fint_{\mathbb{T}}\frac{\overline{z}-\overline{\phi_2(\xi)}}{z-\phi_2(\xi)}\phi_2^\prime(\xi)d\xi
$$
and
$$
J(z)=\fint_{\mathbb{T}}\frac{|\phi_1(\xi) |^2}{1-z\phi_1(\xi)}\phi_1^\prime(\xi)d\xi-\fint_{\mathbb{T}}\frac{|\phi_2(\xi) |^2}{1-z\phi_2(\xi)}\phi_2^\prime(\xi)d\xi.
$$
Setting $\phi_j=b_j \hbox{Id}+f_j$, the  equation \eqref{rotsqz1} becomes
\begin{equation*}
\forall w\in\mathbb{T}\quad G_j(\lambda,f_1,f_2)(w)=0;\quad j=1,2, 
\end{equation*}
where
\begin{alignat*}{3}
G_{j}(\lambda,f_1,f_2)(w) &\triangleq \textnormal{Im}\bigg\{ \Big((1-\lambda)\overline{\phi_j(w)}+I(\phi_j(w))-J(\phi_j(w))\Big)\, {w}\, {{\phi_j'(w)}}\bigg\}.
\end{alignat*}
Note that one can easily check that
$$
G(\lambda,0,0)=0,\quad \forall \lambda\in \RR.
$$
This is coherent with the fact that  the annulus is a stationary solution and therefore it rotates with any angular velocity since the shape is rotational invariant. 
\subsection{Regularity of the functional $G$}

In this short subsection  we shall quickly state the regularity result  of the  functional $G\triangleq(G_1,G_2)$ needed in CR Theorem.   Following  the simply-connected case the spaces $X$ and $Y$ involved in the bifurcation will be chosen in a similar way : Set
$$
X=\Big\{f\in (C^{1+\alpha}(\mathbb{T}))^2,\, f(w)=\sum_{n\geq 0}A_n\overline{w}^n, A_n\in \RR^2,\, w\in\mathbb{T}\Big\}
$$
and
$$
Y=\Big\{g\in (C^{\alpha}(\mathbb{T}))^2,\, g(w)=\sum_{n\geq 1}B_ne_n\ B_n\in \RR^2,\, w\in\mathbb{T}\Big\}, \quad e_n\triangleq\frac{1}{2i}\big(\overline{w}^n-w^n\big),
$$
with $\alpha\in ]0,1[$. For  $r\in (0,1)$ we denote by $B_r$  the open ball of $X$ with center $0$ and radius $r$,
$$
B_r=\Big\{f\in X,\quad \Vert f\Vert_{C^{1+\alpha}}\leq r\Big\}.
$$
Similarly to Proposition \ref{reg} one can establish the regularity assumptions needed for C-R Theorem. Compared to the simply-connected case, the only terms that one should  care about are those describing the interaction between the boundaries of the patches which  are supposed to be disjoint.  Therefore  the involved kernels are sufficiently  smooth and actually they do not carry significant difficulties in their treatment. For this reason we prefer skip the details and restrict  ourselves to the following statement. 
\begin{proposition}\label{regg1}
Let $b\in]0,1[$ and $0<r <\min(b,1-b)$, then the following holds true. 
\begin{enumerate}
\item $G: \RR\times B_r\to Y$ is  $C^1$ $($it is in fact $C^\infty)$.
\item The partial derivative  $\partial_\lambda \partial_fG: \mathbb{R}\times B_r\to \mathcal{L}(X, Y)$  exists and is continuous $($it is in fact $C^\infty).$
 \end{enumerate}
\end{proposition}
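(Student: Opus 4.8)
The plan is to mirror the proof of Proposition \ref{reg}, exploiting the fact that each component of $G=(G_1,G_2)$ splits additively into a self-interaction part, identical to the simply-connected functional, and a cross-interaction part whose kernels are nonsingular because the two interfaces are disjoint. Since $G$ takes values in a product of H\"older spaces, it is $C^1$ (indeed $C^\infty$) exactly when each scalar component is, so I would treat $G_1$ and $G_2$ separately; by the symmetric roles of the indices $1,2$ it suffices to analyze $G_1$, and throughout I keep $f=(f_1,f_2)\in B_r$ with $\phi_j=b_j\operatorname{Id}+f_j$, $b_2<b_1<1$.

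The first step is to isolate the self-interaction. Collecting in $G_1$ the term $(1-\lambda)\overline{\phi_1(w)}$ together with the $\phi_1$-only pieces of $I(\phi_1(w))$ and $J(\phi_1(w))$, namely $\fint_{\mathbb{T}}\frac{\overline{\phi_1(w)}-\overline{\phi_1(\xi)}}{\phi_1(w)-\phi_1(\xi)}\phi_1'(\xi)\,d\xi$ and $\fint_{\mathbb{T}}\frac{|\phi_1(\xi)|^2\phi_1'(\xi)}{1-\phi_1(w)\phi_1(\xi)}\,d\xi$, one recovers verbatim the simply-connected functional $F(\lambda,f_1)$ of \eqref{zqqq1} (here $|\phi_1(w)\phi_1(\xi)|\leq(b_1+r)^2<1$ since $r<1-b_1$, and the only genuine singularity, at $w=\xi$ in the $I$-term, is of Cauchy type). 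Its $C^\infty$ regularity, and that of $\partial_\lambda\partial_f$, are therefore granted directly by Proposition \ref{reg}, applied to the boundary $\partial D_1$.

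The remaining contribution is the cross-interaction, built from $\phi_2$ evaluated at $z=\phi_1(w)$: the second integrals in $I(\phi_1(w))$ and in $J(\phi_1(w))$. Here the key point is that $\overline{D_2}\subset D_1$, so the Jordan curves $\partial D_1=\phi_1(\mathbb{T})$ and $\partial D_2=\phi_2(\mathbb{T})$ stay disjoint; shrinking $r$ if necessary there is a constant $c>0$ with $|\phi_1(w)-\phi_2(\xi)|\geq c$ uniformly for all $w,\xi\in\mathbb{T}$ and all $f\in B_r$. Consequently the $I$-cross kernel is a jointly smooth function of $(w,\xi)$ with no singularity, so the associated integral operator depends smoothly, in fact real-analytically, on $(f_1,f_2)$, and by differentiation under the integral sign together with the algebra property of $C^\alpha(\mathbb{T})$ its contribution to $G_1$ is $C^\infty$. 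The $J$-cross term is treated exactly as $F_4$ in \eqref{trs1}: since $|\phi_1(w)\phi_2(\xi)|\leq(b_1+r)(b_2+r)<1$ for $r$ small, one expands $\frac{1}{1-\phi_1(w)\phi_2(\xi)}=\sum_{n\geq0}\phi_1^n(w)\phi_2^n(\xi)$ and verifies absolute convergence in $C^\alpha(\mathbb{T})$ via the product law $\|\phi^n\|_{C^\alpha}\leq n\|\phi\|_{L^\infty}^{n-1}\|\phi\|_{C^\alpha}$, yielding $C^\infty$ dependence on $f$.

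Finally I would confirm that $G_1$ lands in $Y$ and deduce (ii). Reality of the Fourier coefficients and the odd $e_n$-structure follow, as in Proposition \ref{reg}, from the reflection symmetry $\overline{\phi_j(w)}=\phi_j(\overline{w})$, which after the change of variables $\tau\mapsto\overline{\tau}$ gives, for each cross piece, the identity relating its conjugate at $w$ to its value at $\overline{w}$; taking the imaginary part then produces only $e_n$ modes with real coefficients. For (ii) one notes that $\lambda$ enters $G$ solely through the term $(1-\lambda)\overline{\phi_j(w)}\,w\,\phi_j'(w)$, so $\partial_\lambda\partial_fG$ reduces to minus the Fr\'echet derivative of the diagonal $F_1$-type functional, a bounded linear map depending $C^\infty$-ly on $f$. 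The only, and mild, obstacle is the bookkeeping required to obtain uniform-in-$B_r$ lower bounds on the cross denominators and to check the image symmetry; no genuine singular-integral analysis arises precisely because the two interfaces remain disjoint.
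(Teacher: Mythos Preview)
Your proposal is correct and follows essentially the same approach as the paper, which in fact omits the proof entirely, remarking only that it proceeds as in Proposition~\ref{reg} and that the sole new ingredients---the cross-interaction terms between the disjoint interfaces---involve smooth kernels and hence present no difficulty. Your outline fills in precisely these details (the split into self- and cross-interaction, the uniform separation of the two curves, the series expansion for the $J$-cross term, and the reflection-symmetry check), so there is nothing to add.
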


\subsection{Structure of the linearized operator}
In this section we shall compute the linearized operator $\partial_f G(\lambda,0)$ around the annulus $\mathbb{A}_{b_1,b_2}$ of radii $b_1$ and $b_2$. The study of the eigenvalues is postponed to the next subsections.  From the regularity assumptions of $G$ we assert  that  Fr\'echet derivative and G\^ateaux derivatives coincide and
$$
DG(\lambda,0,0)(h_1,h_2)=\frac{d}{dt}G(\lambda, t\,h_1,t\, h_2)_{|t=0}.
$$
Note that  $DG(\lambda,0,0)$ is nothing but  the partial derivative  $\partial_f G(\lambda,0, 0)$.  Our main result reads as follows.
\begin{proposition}\label{propspec1}
Let $h=(h_1,h_2)\in X$ taking the form $\displaystyle{h_j(w)=\sum_{n\geq0}\frac{a_{j,n}}{{w}^n}}.$ Then,
 \begin{eqnarray*}
DG(\lambda,0,0)\big(h_1,h_2\big)=\sum_{n\geq 1}M_{n}(\lambda)\left( \begin{array}{c}
a_{1,n-1} \\
a_{2,n-1}
\end{array} \right)e_{n},
\end{eqnarray*}
where the matrix $M_n$ is given   by
\begin{equation*}
M_{n}(\lambda)=\begin{pmatrix}
b_1\Big[n\lambda-1+b_1^{2n}-n\Big(\dfrac{b_2}{b_1}\Big)^2\Big] & b_2\Big[\Big(\dfrac{b_2}{b_1}\Big)^{n}-(b_1b_2)^{n}\Big] \\
 -b_1\Big[\Big(\dfrac{b_2}{b_1}\Big)^{n}-(b_1b_2)^{n}\Big] & b_2\Big[n\lambda-n+1-{b_2^{2n}}\Big]
\end{pmatrix}, \quad e_n(w)= \frac{1}{2i}(\overline{w}^n-w^n).
\end{equation*}
\end{proposition}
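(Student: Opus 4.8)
The plan is to linearize each of the two functionals $G_j$ at the trivial annular solution $(f_1,f_2)=(0,0)$ and extract the Fourier multiplier structure. Recall that $G_j(\lambda,f_1,f_2)(w)=\textnormal{Im}\{[(1-\lambda)\overline{\phi_j(w)}+I(\phi_j(w))-J(\phi_j(w))]\,w\,\phi_j'(w)\}$ with $\phi_j=b_j\operatorname{Id}+f_j$. Writing $h_j(w)=\sum_{n\geq 0}a_{j,n}\overline{w}^n$ for the perturbation directions, I would compute $DG_j(\lambda,0,0)(h_1,h_2)=\frac{d}{dt}G_j(\lambda,tf_1,tf_2)|_{t=0}$ by the product rule, so that each of the three summands $(1-\lambda)\overline{\phi_j}$, $I(\phi_j)$, $-J(\phi_j)$ contributes a term in which either the bracket is differentiated and $w\phi_j'$ frozen at $b_j$, or the bracket is frozen at its value on the annulus and $w\phi_j'$ is differentiated into $b_j w h_j'(w)$. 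The value of the bracket on the annulus $\phi_j=b_j\operatorname{Id}$ must be recorded first: one checks, by residues exactly as in the simply-connected computation of $F_3(0)$ and of $\partial_f F_2(0,0)$ in the excerpt, that $I(b_j w)$ and $J(b_j w)$ reduce to elementary quantities (constants or $O(1/w)$-type pieces annihilated by $\textnormal{Im}\{\cdot\, b_j w\}$), so the only surviving ``frozen-bracket'' contribution is the diagonal one coming from $(1-\lambda)\overline{b_j w}\cdot b_j w h_j'(w)$.

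The core of the argument is then four essentially independent residue computations: the linearization of $I(\phi_j(w))$ splits into a ``self'' part (the $\Gamma_j$-integral with $j$ matching the evaluation curve) and a ``cross'' part (the $\Gamma_{3-j}$-integral), and likewise for $J(\phi_j(w))$. The self-interaction terms are computed exactly as in the simply-connected Proposition \ref{propspec01}: differentiating the Cauchy-type kernel $\frac{\overline{z}-\overline{\xi}}{z-\xi}$ and the Schwarz-type kernel $\frac{|\xi|^2}{1-z\xi}$ in the direction $h_j$, expanding $\frac{1}{1-b_j^2 w\tau}$ and $\frac{1}{(1-b_j^2w\tau)^2}$ in Taylor series via \eqref{Tay11}, and using $\fint_{\mathbb{T}}\tau^n h_j'(\tau)d\tau=-n a_{j,n}$, $\fint_{\mathbb{T}}\tau^{n-1}h_j(\tau)d\tau=a_{j,n}$. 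These reproduce the $b_1^{2n}$ and $b_2^{2n}$ terms on the diagonal of $M_n$. For the cross terms one expands $\frac{1}{z-\phi_{3-j}(\tau)}$ and $\frac{1}{1-z\phi_{3-j}(\tau)}$ around the respective trivial values using that $b_2<b_1$ (so $|\phi_2|<|\phi_1|$ on $\mathbb{T}$ and the geometric expansions converge), again landing on monomials $w^{n+1}$ after residues; these produce the off-diagonal entries $\pm\big[(b_2/b_1)^n-(b_1b_2)^n\big]$ together with the $-n(b_2/b_1)^2$ correction appearing in the $(1,1)$ entry (which comes from the $n=1$, or rather the lowest-order, cross contribution to $I(\phi_1)$, of the type already seen in \cite{H-F-M-V}). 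Throughout, each $\textnormal{Im}\{\sum c_n w^{n+1}\}$ with real $c_n$ collapses to $-\sum c_n e_{n+1}$ by the identity $\textnormal{Im}\,w^{n+1}=-e_{n+1}$, and reality of all coefficients is guaranteed by the reflection symmetry $\overline{\phi_j(w)}=\phi_j(\overline{w})$ exactly as in Proposition \ref{reg}(i).

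Collecting the diagonal self-terms (carrying $n\lambda-1+b_1^{2n}$ resp.\ $n\lambda-n+1-b_2^{2n}$, after the shift $n\mapsto n-1$ that relabels $a_{j,n}e_{n+1}$ as $a_{j,n-1}e_n$), the diagonal cross-correction $-n(b_2/b_1)^2$ in the first row, and the two off-diagonal cross-terms, and factoring $b_1$ out of the first component and $b_2$ out of the second, yields precisely the matrix $M_n(\lambda)$ in the statement. The main obstacle is purely bookkeeping: keeping the four interaction channels separate, getting every geometric-series index and every residue sign right, and correctly tracking which expansions are valid (self-interactions need $b_j<1$; cross-interactions of $I$ and $J$ need $b_2<b_1<1$). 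I expect no genuine analytic difficulty beyond what is already handled in the simply-connected case; the novelty is only that the cross-terms couple the two components, turning the scalar multiplier of Proposition \ref{propspec01} into the $2\times 2$ matrix $M_n(\lambda)$.
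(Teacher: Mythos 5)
Your overall strategy is the same as the paper's: split each $G_j$ into the self-interaction part (recycled from the simply-connected computation), the cross Cauchy-kernel part and the two boundary-kernel parts, evaluate each linearized channel by residues and geometric expansions using $\fint_{\mathbb{T}}\tau^n h_j'(\tau)d\tau=-na_{j,n}$ and $\fint_{\mathbb{T}}\tau^{n-1}h_j(\tau)d\tau=a_{j,n}$, and reassemble the resulting Fourier multipliers into $M_n(\lambda)$. However, one step of your first paragraph is wrong as stated, and it is not cosmetic. You claim that, in the product rule, the ``frozen-bracket'' terms (bracket evaluated on the annulus, multiplied by the differentiated tangent) survive only through $(1-\lambda)\overline{b_jw}$, because $I(b_jw)$ and $J(b_jw)$ are ``annihilated by $\textnormal{Im}\{\cdot\,b_jw\}$''. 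That annihilation is the trivial-solution identity, i.e.\ the pairing with the \emph{frozen} tangent $b_jw$; the linearization pairs the frozen bracket with $wh_j'(w)$ (note also that the derivative of $w\phi_j'$ is $wh_j'$, not $b_jwh_j'$), and there $I$ does not drop out on the outer interface. A one-line residue computation gives $I(b_1w)=\big(\tfrac{b_2^2}{b_1}-b_1\big)\overline{w}$ (the self part contributes $-b_1\overline{w}$, the inner-boundary part $+\tfrac{b_2^2}{b_1}\overline{w}$), while $J(b_1w)=0$, so the frozen bracket at the outer interface is $\big(\tfrac{b_2^2}{b_1}-\lambda b_1\big)\overline{w}$ and
$$\textnormal{Im}\Big\{\Big(\tfrac{b_2^2}{b_1}-\lambda b_1\Big)\overline{w}\,wh_1'(w)\Big\}=b_1\sum_{n\geq1}n\Big(\lambda-(b_2/b_1)^2\Big)a_{1,n}\,e_{n+1},$$
which is precisely where the factors $n\lambda$ and $-n(b_2/b_1)^2$ of the $(1,1)$ entry come from. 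If you discard this contribution as your prescription dictates, the first diagonal entry degenerates (depending on how literally one also drops the frozen self-Cauchy piece) to $b_1[n\lambda-1+b_1^{2n}-(b_2/b_1)^2]$ or $b_1[n\lambda-n+b_1^{2n}-(b_2/b_1)^2]$, neither of which is the stated entry; your second paragraph then quietly re-attributes the $-n(b_2/b_1)^2$ term to the cross channel, i.e.\ it uses exactly the term you declared negligible.

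Two further remarks to set this straight. First, the coincidence that misleads you does hold on the inner interface: $I(b_2w)=0$ and $J(b_2w)=0$, so there the frozen bracket really is $(1-\lambda)b_2\overline{w}$, which is why the $(2,2)$ entry carries $-n+1$ rather than $-1$; the asymmetry between the two rows of $M_n$ is entirely due to the nonvanishing of $I(b_1w)$. Second, the same mechanism is already present in the simply-connected case: the term $-bh'(w)$ in the paper's computation of $\partial_fF_2(0,0)$ is nothing but the frozen self-Cauchy value $-b\overline{w}$ paired with $wh'$, and dropping it would likewise ruin the eigenvalues $\lambda_m$. So the fix is simply to keep all frozen-bracket pairings with $wh_j'$ (as the paper does, channel by channel, in the formulas for $\partial_{f_j}G_j^2$); with that correction your computation reproduces $M_n(\lambda)$.
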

\begin{proof}
 Since $G=(G_1,G_2)$ then for  a given couple of functions $(h_1,h_2) \in X$ we have
\begin{eqnarray*}
DG(\lambda,0,0)(h_1,h_2)&=& \begin{pmatrix} \partial_{f_1}G_1(\lambda,0,0)h_1+\partial_{f_2}
G_1(\lambda,0,0)h_2 \\*[4pt] \partial_{f_1} G_2(\lambda,0,0)h_1+\partial_{f_2}
G_2(\lambda,0,0)h_2
\end{pmatrix}.
\end{eqnarray*}
We shall split  $G_j$ into three terms,
$$
G_j(\lambda,f_1,f_2)=G_j^1(\lambda,f_j)+G_j^2(f_1,f_2)+G_j^3(f_1,f_2),
$$
where
\begin{alignat*}{2}
G_j^1(\lambda,f_j)(w)&\triangleq \textnormal{Im}\Bigg\{ \bigg[(1-\lambda)\overline{\phi_j(w)}&& +(-1)^{j+1}\fint_{\mathbb{T}}\frac{\overline{\phi_j(w)}-\overline{\phi_j(\tau)}}{\phi_j(w)-\phi_j(\tau)}\phi'_j(\tau)d\tau\notag\\  & &&+(-1)^j\fint_{\mathbb{T}}\frac{|\phi_j(\tau) |^2\phi_j'(\tau)}{1-\phi_j(w)\phi_j(\tau)}d\tau\bigg]\, {w}\, {{\phi_j'(w)}}\Bigg\},
\end{alignat*}

\begin{equation*}
G_j^2(f_1,f_2)\triangleq(-1)^j\textnormal{Im}\bigg\{\fint_{\mathbb{T}}\frac{\overline{\phi_j(w)}-\overline{\phi_i(\tau)}}{\phi_j(w)-\phi_i(\tau)}\phi'_i(\tau)d\tau\, {w}\, {{\phi_j'(w)}}\bigg\},\quad i\neq j,
\end{equation*}
and
\begin{equation*}
G_j^3(f_1,f_2)\triangleq (-1)^{j+1}\textnormal{Im}\bigg\{\fint_{\mathbb{T}}\frac{|\phi_i(\tau) |^2\phi_i'(\tau)}{1-\phi_j(w)\phi_i(\tau)}d\tau\, {w}\, {{\phi_j'(w)}}\bigg\},\quad i\neq j,
\end{equation*}
with  $\phi_j=b_j \hbox{Id}+f_j$; $j=1,2$.
\\

$\bullet$ {\it Computation of  $\partial_{f_j}G_j^1(\lambda,0,0)h_j$.} First observe that
\begin{alignat*}{2}
G_1^1(\lambda,f_1)(w)&= \textnormal{Im}\Bigg\{ \bigg[(1-\lambda)\overline{\phi_1(w)}&& +\fint_{\mathbb{T}}\frac{\overline{\phi_1(w)}-\overline{\phi_1(\tau)}}{\phi_1(w)-\phi_1(\tau)}\phi'_1(\tau)d\tau\notag\\  & &&-\fint_{\mathbb{T}}\frac{|\phi_1(\tau) |^2\phi_1'(\tau)}{1-\phi_1(w)\phi_1(\tau)}d\tau\bigg]\, {w}\, {{\phi_1'(w)}}\Bigg\}.
\end{alignat*}
This functional  is exactly the defining function in the simply-connected case and thus using merely  \eqref{ff} we get
\begin{eqnarray}\label{lh1}
\partial_{f_1} G_1^1(\lambda,0)h_1&=&
{b_1}\sum_{n\geq 0}\Big(\lambda(n+1)-{1+b_1^{2n+2}}\Big)\,a_{1,n}\, e_{n+1}. 
\end{eqnarray}
In regard to $G_2^1(\lambda,f_2)$ we get from the definition
\begin{alignat*}{2}
G_2^1(\lambda,f_2)(w)&= \textnormal{Im}\Bigg\{ \bigg[(1-\lambda)\overline{\phi_2(w)}&& -\fint_{\mathbb{T}}\frac{\overline{\phi_2(w)}-\overline{\phi_2(\tau)}}{\phi_2(w)-\phi_2(\tau)}\phi'_2(\tau)d\tau\notag\\  & &&+\fint_{\mathbb{T}}\frac{|\phi_2(\tau) |^2\phi_2'(\tau)}{1-\phi_2(w)\phi_2(\tau)}d\tau\bigg]\, {w}\, {{\phi_2'(w)}}\Bigg\}.
\end{alignat*}
It is easy to check the algebraic relation $G_2^1(\lambda,f_2)=-G_1^1(2-\lambda,f_2)$ and  thus we get by applying \eqref{lh1},
\begin{eqnarray}\label{lh2}
\partial_{f_2} G_2^1(\lambda,0)h_2&=&
{b_2}\sum_{n\geq 0}\Big(\lambda(n+1)-{2n-1-b_2^{2n+2}}\Big)a_{2,n}\,e_{n+1}. 
\end{eqnarray}
\\

$\bullet$ {\it Computation of  $\partial_{f_j}G_j^2(\lambda,0,0)h_j$.}
This quantity is  given by 
$$
\partial_{f_j} G_j^2(0,0)h_j=(-1)^j\frac{d}{dt}_{|t=0}\textnormal{Im}\bigg\{b_i w\fint_{\mathbb{T}}\frac{{b_j \overline{w}-b_i\overline{\tau}+ t \overline{h_j(w)}}}{b_j w-b_i \tau+th_j(w)}d\tau\, \, {{(b_j+ t h_j'(w))}}\bigg\}.
$$
Straightforward computations yield 
\begin{alignat*}{2}
\partial_{f_j} G_j^2(0,0)h_j&=(-1)^j b_i\,\textnormal{Im}\bigg\{  h'_j(w)w\fint_{\mathbb{T}}\frac{b_j\overline{w}-b_i\overline{\tau}}{b_jw-b_i\tau}d\tau&&+b_jw\overline{h_j(w)}\fint_{\mathbb{T}}\frac{d\tau}{b_jw-b_i\tau}\notag\\ & &&-b_jwh_j(w)\fint_{\mathbb{T}}\frac{b_j\overline{w}-b_i\overline{\tau}}{(b_jw-b_i\tau)^2}d\tau\bigg\}.
\end{alignat*}
According to  residue theorem  we get
\begin{equation*}
\int_{\mathbb{T}}\frac{d\tau}{b_1w-b_2\tau}=0\quad \textnormal{and}\quad \int_{\mathbb{T}}\frac{d\tau}{(b_1w-b_2\tau)^2}=0,\quad\forall\, w\in\mathbb{T} ,
\end{equation*}
and therefore 
\begin{eqnarray}\label{lL1}
\partial_{f_1} G_1^2(0,0)h_1(w)&=&- b_2^2\,\textnormal{Im}\bigg\{ -\fint_{\mathbb{T}}\frac{w\,h'_1(w)}{b_1w-b_2\tau}\frac{d\tau}{\tau} +b_1\fint_{\mathbb{T}}\frac{wh_1(w)}{(b_1w-b_2\tau)^2}\frac{d\tau}{\tau}\bigg\}\notag\\ 
&=&-\, b_2^2\,\textnormal{Im}\Big\{  -\frac{1}{b_1}h'_1(w)+\frac{1}{b_1}\overline{w}h_1(w)\Big\}\notag\\ 
&=&-\,\frac{b_2^2}{b_1}\sum_{n\geq 0}(n+1)a_{1,n}e_{n+1}.
\end{eqnarray}
Now using the vanishing integrals
\begin{equation*}
\int_{\mathbb{T}}\frac{\overline{\tau}\,d\tau}{b_2w-b_1\tau}=0,\quad \int_{\mathbb{T}}\frac{\overline{\tau}\,d\tau}{(b_2w-b_1\tau)^2}=0\quad  \textnormal{and}\quad\int_{\mathbb{T}}\frac{d\tau}{(b_2w-b_1\tau)^2}\,d\tau=0
\end{equation*}
we may obtain
\begin{eqnarray}\label{lL2}
\nonumber\partial_{f_2} G_2^2(0,0)h_2(w)&=&
\, b_1\,\textnormal{Im}\bigg\{b_2\, h'_2(w)\fint_{\mathbb{T}}\frac{d\tau}{b_2w-b_1\tau}+b_2w\overline{h_2(w)}\fint_{\mathbb{T}}\frac{d\tau}{b_2w-b_1\tau}\bigg\}\\
\nonumber&=&
  b_1\textnormal{Im}\bigg\{ -\frac{b_2}{b_1} h'_2(w)-\frac{b_2}{b_1}w\overline{h_2(w)}\bigg\}\\
  &=& {b_2}\sum_{n\geq 0}(n+1)a_{2,n} e_{n+1} .
\end{eqnarray}
$\bullet$ {\it Computation of  $\partial_{f_i}G_j^2(\lambda,0,0)h_i,\, i\neq j$.}
By straightforward computations we obtain
\begin{alignat}{2}\label{dfilj}
\partial_{f_i}G_j^2(0,0) h_i(w)&=(-1)^j b_j\textnormal{Im}\bigg\{ w\fint_{\mathbb{T}}\frac{(b_j\overline{w}-b_i\overline{\tau})}{b_jw-b_i\tau}{h_i'(\tau)}d\tau&&-b_iw\fint_{\mathbb{T}}\frac{\overline{h_i(\tau)}}{b_jw-b_i\tau}d\tau\notag\\ & &&+b_iw\fint_{\mathbb{T}}\frac{(b_j\overline{w}-b_i\overline{\tau})h_i(\tau)d\tau}{(b_jw-b_i\tau)^2}\bigg\}.
\end{alignat}
As $\overline{h_i}$ is holomorphic inside the open unit disc then by residue theorem we deduce that
\begin{equation*}
\fint_{\mathbb{T}}\frac{\overline{h_i(\tau)}}{b_1w-b_2\tau}d\tau=0,\quad  w\in \mathbb{T}.
\end{equation*}
It follows that
\begin{alignat}{2}\label{j}
\partial_{f_2} G_1^2(0,0)h_2(w)&=
- b_1\textnormal{Im}\bigg\{&& b_1\fint_{\mathbb{T}}\frac{h_2'(\tau)}{b_1w-b_2\tau}d\tau -b_2w\fint_{\mathbb{T}}\frac{\overline{\tau}{h_2^\prime(\tau)}}{b_1w-b_2\tau}d\tau \notag \\ & && +b_1b_2\fint_{\mathbb{T}}\frac{h_2(\tau)d\tau}{(b_1w-b_2\tau)^2}
  -b_2^2w\fint_{\mathbb{T}}\frac{\overline{\tau}h_2(\tau)d\tau}{(b_1w-b_2\tau)^2}\bigg\}\notag\\ &\triangleq- b_1\textnormal{Im}\Big\{&& J_1+J_2+J_3+J_4\Big\}.
  \end{alignat}
  To compute the first term $J_1(w)$ we write after using the series expansion of  $\frac{1}{1-\frac{b_2}{b_1}\overline{w}\tau}$
\begin{align*}
J_1 &={\overline{w}}\fint_{\mathbb{T}}\frac{h_2^\prime(\tau)}{1-\big(\frac{b_2}{b_1}\big)\overline{w}\tau}d\tau\notag\\ &=\sum_{n\geq 0}\Big(\frac{b_2}{b_1}\Big)^{n}\overline{w}^{n+1} \fint_{\mathbb{T}}\tau^nh_2'(\tau)d\tau.\notag\end{align*}
Note that
$$
\fint_{\mathbb{T}}\tau^nh_2'(\tau)d\tau=-n a_{2,n}
$$
which enables to get
\begin{equation}\label{j1}
J_1 =
-\sum_{n\geq 1}na_{2,n}\Big(\frac{b_2}{b_1}\Big)^n\overline{w}^{n+1}.
\end{equation}
As to the term $J_2(w)$ we write in a similar way
\begin{eqnarray*}
J_2 &=&- \frac{b_2}{b_1}\fint_{\mathbb{T}}\frac{\overline{\tau}h_2^\prime(\tau)}{1-\big(\frac{b_2}{b_1}\big)\overline{w}\tau}d\tau\notag\\
 &=&-\sum_{n\geq 0}\Big(\frac{b_2}{b_1}\Big)^{n+1}\overline{w}^{n} \fint_{\mathbb{T}}\tau^{n-1}h_2^\prime(\tau)d\tau.\notag
\end{eqnarray*}
Since $\displaystyle{\fint_{\mathbb{T}}\tau^{-k}h_2^\prime(\tau)d\tau=0}$ for $k\in\{0,1\}$ then the preceding sum starts at $n=2$ and by shifting the summation index we get
\begin{align}\label{j2}
J_2 
&=-\sum_{n\geq 1}\Big(\frac{b_2}{b_1}\Big)^{n+2}\overline{w}^{n+1} \fint_{\mathbb{T}}\tau^{n}h_2^\prime(\tau)d\tau\notag\\ 
&=\sum_{n\geq 1}na_{2,n}\Big(\frac{b_2}{b_1}\Big)^{n+2}\overline{w}^{n+1}. 
\end{align}
Concerning the third term  $J_3$  we write by virtue  of \eqref{Tay11}
\begin{align*}
J_3 &=\frac{b_2}{b_1}{\overline{w}^2}\fint_{\mathbb{T}}\frac{h_2(\tau)}{\big(1-\frac{b_2}{b_1}\,\overline{w}\tau\big)^2}d\tau\notag\\ &=\sum_{n\geq 1}n\Big(\frac{b_2}{b_1}\Big)^{n}\overline{w}^{n+1} \fint_{\mathbb{T}}\tau^{n-1}h_2(\tau)d\tau.
\end{align*}
Therefore  we  find
\begin{equation}\label{j3}
J_3=\sum_{n\geq 1}na_{2,n}\Big(\frac{b_2}{b_1}\Big)^n\overline{w}^{n+1}.
\end{equation}
Similarly we get
\begin{align}\label{j4}
J_4 &=-\Big(\frac{b_2}{b_1}\Big)^2{\overline{w}}\fint_{\mathbb{T}}\frac{\overline{\tau}h_2(\tau)}{\big(1-\big(\frac{b_2}{b_1}\big)\overline{w}\tau\big)^2}d\tau\notag\\ &=-\sum_{n\geq 1}n\Big(\frac{b_2}{b_1}\Big)^{n+1}\overline{w}^{n} \int_{\mathbb{T}}\tau^{n-2}h_2(\tau)d\tau\notag\\ &=-\sum_{n\geq 0}(n+1)a_{2,n}\Big(\frac{b_2}{b_1}\Big)^{n+2}\overline{w}^{n+1}.
\end{align}
Inserting the identities \eqref{j1},\eqref{j2},\eqref{j3} and \eqref{j4} into \eqref{j} we find
\begin{alignat}{2}\label{df2l1}
\partial_{f_2} G_1^2(0,0)h_2(w)&=
b_1\textnormal{Im}\bigg\{&&\sum_{n\geq 0}a_{2,n}\Big(\frac{b_2}{b_1}\Big)^{n+2}\overline{w}^{n+1}\bigg\}  \notag\\ &= {b_1}\sum_{n\geq 0}&&a_{2,n}\Big(\frac{b_2}{b_1}\Big)^{n+2}\,e_{n+1}(w).
\end{alignat}
Next, we shall move to the computation of $\partial_{f_1} G_2^2(0,0)h_1$. In view of \eqref{dfilj} one has 
\begin{eqnarray*}
\partial_{f_1} G_2^2(0,0)h_1(w)&=& b_2\textnormal{Im}\bigg\{ w\fint_{\mathbb{T}}\frac{(b_2\overline{w}-b_1\overline{\tau})}{b_2w-b_1\tau}{h_1'(\tau)}d\tau-b_1w\fint_{\mathbb{T}}\frac{\overline{h_1(\tau)}}{b_2w-b_1\tau}d\tau\notag\\ 
&+&b_1w\fint_{\mathbb{T}}\frac{(b_2\overline{w}-b_1\overline{\tau})h_1(\tau)d\tau}{(b_2w-b_1\tau)^2}\bigg\}.
\end{eqnarray*}
Residue theorem at infinity enables to  get rid of the first and the third integrals in the right-hand side and thus
\begin{eqnarray*}
\partial_{f_1} G_2^2(0,0)h_1(w)&=&-b_1 b_2\textnormal{Im}\bigg\{ w\fint_{\mathbb{T}}\frac{\overline{h_1(\tau)}}{b_2w-b_1\tau}d\tau\bigg\}.\notag\\ 
\end{eqnarray*}
A second application of residue theorem in the disc yields
\begin{eqnarray}\label{df1l2}
\partial_{f_1} G_2^2(0,0)h_1(w)
 &=&b_2\textnormal{Im}\bigg\{ w\overline{h_1}\Big(\frac{b_2w}{b_1}\Big)\bigg\}\notag\\ 
 &=&-{b_2}\sum_{n\geq 0}a_{1,n}\Big(\frac{b_2}{b_1}\Big)^n\, e_{n+1}(w).
\end{eqnarray}

$\bullet$ {\it Computation of  $\partial_{f_i}G_j^3(\lambda,0,0)h_i$.} The diagonal terms  $i=j$ can be easily computed, 
\begin{align}\label{lNj}
\partial_{f_i} G_i^3(0,0)h_j(w)&= (-1)^{i+1}b_i^3\textnormal{Im}\bigg\{  w\fint_{\mathbb{T}}\frac{h^\prime_i(w)d\tau}{1-b_i^2w\tau}+wh_i(w)\fint_{\mathbb{T}}\frac{\tau d\tau}{(1-b_i^2w\tau)^2}\bigg\}\notag\\ &=0.
\end{align}
Let us now calculate $\partial_{f_i}G_j^3(\lambda,0,0)h_i,$ for $i\neq j$. One can check with difficulty that
\begin{alignat*}{2}
\partial_{f_i} G_j^3(0,0)h_i(w)&=(-1)^{j+1} b_jb_i^2\textnormal{Im}\bigg\{ w\fint_{\mathbb{T}}\frac{{h_i'(\tau)}}{1-b_ib_jw\tau}d\tau&&+2w\fint_{\mathbb{T}}\frac{\textnormal{Re}\{\tau\,\overline{h_i(\tau)}\}}{1-b_ib_jw\tau}d\tau\notag\\ & &&+b_ib_jw^2\fint_{\mathbb{T}}\frac{h_i(\tau)d\tau}{(1-b_ib_jw\tau)^2}\bigg\}.
\end{alignat*}
Invoking once again  residue theorem  we find
\begin{eqnarray}\label{lNi}
\nonumber\partial_{f_i} G_j^3(0,0)h_i(w)&=& (-1)^{j+1} b_jb_i^2\textnormal{Im}\bigg\{ -\sum_{n\geq 0}na_{i,n}(b_jb_i)^nw^{n+1}+\sum_{n\geq 0}a_{i,n}(b_jb_i)^nw^{n+1}
\\&+&\sum_{n\geq 0}na_{i,n}(b_jb_i)^nw^{n+1}\bigg\}\notag\\ &=&(-1)^{j} {b_i}\sum_{n\geq 0}a_{i,n}(b_jb_i)^{n+1}e_{n+1}.
\end{eqnarray}
The details are left to the reader because most of them were done previously. Now putting together the identities \eqref{lh1},\eqref{lL1}and\eqref{lNj} we get
\begin{alignat}{2}\label{dg11}
\partial_{f_1}G_1(\lambda,0,0)h_1&=\sum_{n\geq 0}&&b_1\Big[(n+1)\lambda-1+b_1^{2n+2}-(n+1)\Big(\dfrac{b_2}{b_1}\Big)^2\Big]a_{1,n}\, e_{n+1}.\end{alignat}
From  \eqref{lh2},\eqref{lL2} and \eqref{lNj} one obtains
\begin{alignat}{2}\label{dg22}
\partial_{f_2}G_2(\lambda,0,0)h_2&=\sum_{n\geq 0}&&b_2\Big((n+1)\lambda-n-{b_2^{2n+2}}\Big)a_{2,n}\, e_{n+1}.
\end{alignat}
On the other hand, we observe that 
for $i\neq j$ one has
\begin{equation}\label{lhi}
\partial_{f_i} G_j^1(\lambda,0)h_i(w)=0.
\end{equation}
Gathering the identities \eqref{lhi},\eqref{df2l1} and \eqref{lNi} yields
\begin{alignat*}{2}
\partial_{f_2}G_1(\lambda,0,0)h_2&=\sum_{n\geq 0}&&b_2\Big[\Big(\dfrac{b_2}{b_1}\Big)^{n+1}-(b_1b_2)^{n+1}\Big]a_{2,n}\, e_{n+1}.
\end{alignat*}
Furthermore, combining   \eqref{lhi},\eqref{df1l2} and \eqref{lNi}  we can assert that
\begin{alignat*}{2}
\partial_{f_1}G_2(\lambda,0,0)h_1&=\sum_{n\geq 0}&&b_1\Big[(b_1b_2)^{n+1}-\Big(\dfrac{b_2}{b_1}\Big)^{n+1}\Big]a_{1,n}\, e_{n+1}.
\end{alignat*}
Consequently, we get in view of the  last two expressions combined with \eqref{dg22} and \eqref{lhi}
\begin{eqnarray}\label{dff}
DG(\lambda,0,0)\big(h_1,h_2\big)=\sum_{n\geq 0}M_{n+1}\left( \begin{array}{c}
a_{1,n} \\
a_{2,n}
\end{array} \right)\, e_{n+1},
\end{eqnarray}
where the matrix $M_n$ is given for each $n\geq1$  by
\begin{equation}\label{matrix}
M_{n}\triangleq\begin{pmatrix}
b_1\Big[n\lambda-1+b_1^{2n}-n\Big(\dfrac{b_2}{b_1}\Big)^2\Big] & b_2\Big[\Big(\dfrac{b_2}{b_1}\Big)^{n}-(b_1b_2)^{n}\Big] \\
 -b_1\Big[\Big(\dfrac{b_2}{b_1}\Big)^{n}-(b_1b_2)^{n}\Big] & b_2\Big[n\lambda-n+1-{b_2^{2n}}\Big]
\end{pmatrix}.
\end{equation}
This completes the proof of Proposition \ref{propspec1}.
\end{proof}
\subsection{Eigenvalues study}
The current task  will be  devoted to the study of the structure of the {\it nonlinear eigenvalues} which are the values $\lambda$ such that the linearized operator $DG(\lambda,0,0)$  given by \eqref{dff} has a non trivial kernel. Note that these eigenvalues correspond exactly  to matrices $M_n$ which are  not invertible for some integer $n\geq1.$ In other words, $\lambda$ is an eigenvalue if and only if there exists $n\geq1$ such that $\hbox{det } M_n=0$, that is,
\begin{eqnarray*}
\textnormal{det}\, M_{n}(\lambda)&=&
b_1b_2\Bigg[n^2\lambda^2-n\Big(n+b^{2n}_2-b^{2n}_1+n\Big(\frac{b_2}{b_1}\Big)^2\Big)\lambda+ 
 (n-1)\Big(1-b_1^{2n}+n\Big(\frac{b_2}{b_1}\Big)^{2}\Big)\\ &+&\Big(\frac{b_2}{b_1}\Big)^{2n}+nb_2^{2n}\Big(\frac{b_2}{b_1}\Big)^2-b_2^{2n}\Bigg]\\
&=&0.
\end{eqnarray*}
This is equivalent to 
\begin{eqnarray}\label{discr25}
 P_n(\lambda)&\triangleq& \lambda^2-\Big[1+\Big(\frac{b_2}{b_1}\Big)^2- \Big(\frac{b_1^{2n}-b_2^{2n}}{n}\Big)\Big]\lambda\\
 \nonumber&+&\Big(\frac{b_2}{b_1}\Big)^{2}-\frac{1-\big(\frac{b_2}{b_1}\big)^{2n}}{n^2}+\frac{1-\big(\frac{b_2}{b_1}\big)^{2}}{n}-\frac{b_1^{2n}-b_2^{2n}\big(\frac{b_2}{b_1}\big)^{2}}{n}+\frac{b_1^{2n}-b_2^{2n}}{n^2}\\
 \nonumber&=&0.
\end{eqnarray}

The reduced discriminant of this second-degree polynomial on $\lambda$ is given by
\begin{alignat}{2}
\Delta_n&=&&
 \bigg(\frac{1-\big(\frac{b_2}{b_1}\big)^2}{2}-\frac{2-b_2^{2n}-b_1^{2n}}{2n}\bigg)^2-\Big(\frac{b_2}{b_1}\Big)^{2n}\Big(\frac{1-b_1^{2n}}{n}\Big)^2.
\end{alignat}
Thereby $P_n$ admits two real roots if and only if $\Delta_n\geq0,$ and  they  are given by
$$
\lambda_n^\pm=\frac{1+\big(\frac{b_2}{b_1}\big)^2}{2}- \Big(\frac{b_1^{2n}-b_2^{2n}}{2n}\Big)\pm\sqrt{\Delta_n}\,.
$$
To  understand the structure of the eigenvalues and their dependence with respect to the involved parameters, it would be better to fix the radius $b_1$ and to vary $n$ and $b_2\in]0,b_1[$. We shall distinguish the cases $n\geq2$ from $n=1$ which is very special.  For given $n\geq2$ we wish to draw the curves $b_2\mapsto \lambda_n^{\pm}(b_2)$. As we shall see in Proposition \ref{asympbeh} the maximal domain  of existence of these curves are a common   connected set  of the form $[0, b_n^\star]$ and $b_n^\star$ is defined as the unique $b_2\in ]0,b_1[$ such that $\Delta_n=0.$ 
We introduce the graphs $\mathcal{C}_n^\pm$ of $ \lambda_n^\pm(b_2)$,
\begin{equation}\label{curvn}
\mathcal{C}_n^\pm\triangleq\Big\{ \big(b_2,\lambda_n^\pm(b_2)\big), b_2\in[0, b_n^\star]\Big\}\quad\hbox{and}\quad \mathcal{C}_n=\mathcal{C}_n^-\cup \mathcal{C}_n^+\quad\hbox{with}\quad n\geq2.
\end{equation}
It is not hard to check that  $\mathcal{C}_n^+$ intersects  $\mathcal{C}_n^-$ at only one point whose abscissa is $b_n^\star$, that is, when the discriminant vanishes. Furthermore, and this is not trivial, we shall see that the domain enclosed by the curve $\mathcal{C}_n$ and located in the first quadrant of the plane is a strictly  increasing set on $n.$ This will give in particular the monotonicity of the eigenvalues with respect to $n$. Nevertheless, the dynamics of the first eigenvalues corresponding to  $n=1$ is completely different from the preceding ones. Indeed, according to  Subsection \ref{first11} we find for $n=1$ two eigenvalues given explicitly by
$$
\lambda_1^-=(b_2/b_1)^2\quad\textnormal{or}\quad\lambda_1^+=1+b_2^2-b_1^2. 
$$
It turns out that for  the first one  the range of the linearized operator has an infinite co-dimension and therefore there  is no hope to bifurcate using only the classical results in the bifurcation theory. However for the  second eigenvalue the range is ``almost everywhere'' of co-dimension one  and the bifurcation is likely to happen.   As to the structure of this eigenvalue, it  is strictly increasing with respect to $b_2$ and by working more we prove  that  the curve $\mathcal{C}_1^+$ of $b_2\in]0,b_1[\mapsto \lambda_1^+$   intersects $\mathcal{C}_n$ if and only if  $n\geq b_1^{-2}$. We can now make precise statements  of these results and for the  complete ones  we refer the reader  to Lemma \ref{lem-mono}, Proposition \ref{asympbeh} and \mbox{Proposition \ref{prop-one}.}
\begin{proposition}\label{propxx23}
Let $b_1\in ]0,1[$ then the following holds true. 
\begin{enumerate}
\item The sequence $n\geq2\mapsto b_n^\star$ is strictly increasing.
\item Let $2\le n<m$ and $b_2\in [0, b_n^\star[$, then
$$
\lambda_m^-<\lambda_n^-<\lambda_n^+<\lambda_m^+.
$$
\item The curve $\mathcal{C}_1^+$ intersects $\mathcal{C}_n$ if and only if $n\geq\frac{1}{b_1^2}.$ In this case we have a single point $(x_n,\lambda_1^+(x_n))$, with $x_n\in ]0,b_n^\star]$ being the only solution $b_2$ of the equation
$$
P_n(1+b_2^2-b_1^2)=0.
$$
where $P_n$ is defined in \eqref{discr25}.
\end{enumerate}

\end{proposition}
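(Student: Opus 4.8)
The plan is to reduce the three assertions to an analysis of the quadratic $P_n$ of \eqref{discr25}, written as $P_n(\lambda)=\lambda^2-S_n\lambda+Q_n$ with $S_n=\lambda_n^-+\lambda_n^+$ and $Q_n=\lambda_n^-\lambda_n^+$, and of its reduced discriminant $\Delta_n$, all regarded as functions of $b_2\in]0,b_1[$ for fixed $b_1$; here $b\triangleq b_2/b_1$. The whole argument rests on two elementary monotonicity facts in the index $n$. First, $b_2\mapsto\frac{b_1^{2n}-b_2^{2n}}{n}$ is strictly decreasing in $n$, being the product of the positive decreasing sequences $b_1^{2n}$ and $\frac{1-b^{2n}}{n}$ (the latter already shown to be decreasing in the simply-connected study); consequently $S_m>S_n$ for $n<m$. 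Second, at $b_2=0$ one has the explicit values $\lambda_n^-(0)=\frac{1-b_1^{2n}}{n}$, decreasing in $n$ (again by the simply-connected lemma), and $\lambda_n^+(0)=\frac{n-1}{n}$, increasing in $n$.

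For (i), I would first invoke the qualitative shape of $b_2\mapsto\Delta_n(b_2)$ on $[0,b_1]$ recorded in Proposition~\ref{asympbeh}: $\Delta_n(0)>0$, $\Delta_n(b_1)=0$, and $\Delta_n$ vanishes exactly once in $]0,b_1[$, namely at $b_n^\star$, being positive on $[0,b_n^\star[$ and negative on $]b_n^\star,b_1[$. Granting this, $b_{n+1}^\star>b_n^\star$ is equivalent to $\Delta_{n+1}(b_n^\star)>0$, since $\Delta_{n+1}$ is positive at $0$ and has a unique interior zero, so positivity at $b_n^\star$ pushes that zero to the right. To establish $\Delta_{n+1}(b_n^\star)>0$ I would pass to the cleared quantity $n^2\Delta_n=\big(\tfrac{n(1-b^2)}{2}-1+\tfrac{b_1^{2n}+b_2^{2n}}{2}\big)^2-b^{2n}(1-b_1^{2n})^2$, compare it with $(n+1)^2\Delta_{n+1}$, and use that at $b_2=b_n^\star$ the polynomial $P_n$ has there a double root $r_n=S_n(b_n^\star)/2$; the inequality then comes out of a direct sign analysis in which the exponentially small terms $b_1^{2n},b_2^{2n}$ are dominated by the polynomial part.

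For (ii), on $]0,b_n^\star[\subset]0,b_m^\star[$ (here (i) enters) both $\Delta_n$ and $\Delta_m$ are positive, so the four numbers $\lambda_n^\pm,\lambda_m^\pm$ are real and $\lambda_n^-<\lambda_n^+$, $\lambda_m^-<\lambda_m^+$. At $b_2=0$ the full chain $\lambda_m^-(0)<\lambda_n^-(0)<\lambda_n^+(0)<\lambda_m^+(0)$ holds by the explicit values above. This ordering can only break down if, as $b_2$ grows, a branch of $\mathcal{C}_n$ meets a branch of $\mathcal{C}_m$, i.e.\ if $P_n$ and $P_m$ acquire a common root $r$. Subtracting the two quadratics, such an $r$ would have to satisfy $(S_m-S_n)r=Q_m-Q_n$, so $r=\frac{Q_m-Q_n}{S_m-S_n}$ is uniquely determined (recall $S_m-S_n>0$); inserting this value back into $P_n(r)=0$ leads to a contradiction for $2\le n<m$ after simplification. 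Hence no branch of $\mathcal{C}_n$ ever crosses a branch of $\mathcal{C}_m$, and the ordering at $b_2=0$ propagates to all of $[0,b_n^\star[$ by continuity, which is exactly (ii).

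For (iii), $\mathcal{C}_1^+$ meets $\mathcal{C}_n$ at $b_2$ precisely when $\lambda_1^+(b_2)=1+b_2^2-b_1^2$ is a root of $P_n$, i.e.\ when $\Phi_n(b_2)\triangleq P_n\big(1+b_2^2-b_1^2\big)=0$ with $b_2\in]0,b_n^\star]$. Using $P_n=(\lambda-\lambda_n^-)(\lambda-\lambda_n^+)$ one finds $\Phi_n(0^+)=\big(\lambda_1^+(0)-\lambda_n^-(0)\big)\big(\lambda_1^+(0)-\lambda_n^+(0)\big)$ with $\lambda_1^+(0)-\lambda_n^-(0)=(1-b_1^2)-\frac{1-b_1^{2n}}{n}>0$ (because $n(1-b_1^2)>1-b_1^{2n}$) and $\lambda_1^+(0)-\lambda_n^+(0)=\frac1n-b_1^2=\frac{1-nb_1^2}{n}$, so $\operatorname{sign}\Phi_n(0^+)=\operatorname{sign}(1-nb_1^2)$; moreover $\Phi_n(b_n^\star)=\big(\lambda_1^+(b_n^\star)-r_n\big)^2\ge0$. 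Hence, for $n>1/b_1^2$ we get $\Phi_n(0^+)<0\le\Phi_n(b_n^\star)$ and the intermediate value theorem furnishes a zero $x_n\in]0,b_n^\star]$, whereas for $n<1/b_1^2$ we get $\Phi_n>0$ on $]0,b_n^\star]$ and no intersection; the borderline case $n=1/b_1^2$ (possible only when $1/b_1^2\in\NN$) is settled directly. Uniqueness of $x_n$ follows once $\Phi_n$ is shown to change sign at most once on $]0,b_n^\star[$, which I would obtain from strict monotonicity (or convexity) of $\Phi_n$ there by differentiating its explicit expression. The two delicate points are the sign computations underlying (i) (that the negativity interval of $\Delta_n$ shrinks with $n$) and (ii) (the absence of common roots of $P_n,P_m$); both amount to balancing polynomial-in-$n$ terms against the exponentially small $b_1^{2n},b_2^{2n}$, and are precisely what is carried out in Lemma~\ref{lem-mono}, Proposition~\ref{asympbeh} and Proposition~\ref{prop-one}.
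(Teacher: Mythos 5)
Your reduction of the three statements to the quadratic $P_n=\lambda^2-S_n\lambda+Q_n$, the endpoint values $\lambda_n^-(0)=\frac{1-b_1^{2n}}{n}$, $\lambda_n^+(0)=\frac{n-1}{n}$, and the sign of $\Phi_n(0)=\big(\lambda_1^+(0)-\lambda_n^-(0)\big)\big(\lambda_1^+(0)-\lambda_n^+(0)\big)$ is correct, and the IVT part of (iii) is fine. But the two steps that carry the entire weight of the proposition are exactly the ones you do not prove. For (i) you reduce to $\Delta_{n+1}(b_n^\star)>0$ and assert that it ``comes out of a direct sign analysis in which the exponentially small terms $b_1^{2n},b_2^{2n}$ are dominated by the polynomial part''; this heuristic is not available here, because $b_1$ is an arbitrary fixed number in $]0,1[$ and $b_n^\star$ is close to $b_1$, so for $b_1$ near $1$ and moderate $n$ the terms $b_1^{2n},b_2^{2n}$ are not small and the competing contributions (e.g.\ $(b_1^n+x^n)^2-(b_1^{n+1}+x^{n+1})^2$ against $1-(x/b_1)^2$) genuinely have to be balanced. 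For (ii) you reduce to ``$P_n$ and $P_m$ have no common root'' and assert that substituting $r=(Q_m-Q_n)/(S_m-S_n)$ back into $P_n(r)=0$ ``leads to a contradiction after simplification''; this resultant identity is a transcendental relation in $(b_1,b_2,n,m)$ and no simplification is exhibited. In the paper this separation is the content of Lemma \ref{lemrsq1} and Lemma \ref{lem-mono}: one extends $n$ to a continuous variable $x$, proves $\partial_x\Delta_x>0$ and (via the splitting $\partial_x\lambda_x^-=\mathrm{I}+\mathrm{II}+\mathrm{III}$ and the inequality \eqref{pdel}) that $x\mapsto\lambda_x^+$ increases and $x\mapsto\lambda_x^-$ decreases; point (i) is then deduced from this monotonicity by the geometric argument of Proposition \ref{asympbeh}(iii) (if $b_m^\star\le b_n^\star$ the closed-up curve $\mathcal{C}_m$ would have to meet $\mathcal{C}_n$). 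Your plan inverts this logic, proving (i) first and (ii) from it, with both pivotal inequalities left as declarations; closing them would essentially force you to redo the paper's monotonicity analysis, so as written there is a genuine gap at the core of (i) and (ii).

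There is also a flaw in your uniqueness argument for (iii). You propose to show that $\Phi_n=\big(\lambda_1^+-\lambda_n^-\big)\big(\lambda_1^+-\lambda_n^+\big)$ changes sign at most once by proving $\Phi_n$ monotone or convex; but by Proposition \ref{asympbeh}(iv)--(v) the first factor is strictly decreasing and the second strictly increasing, so their product has no reason to be monotone or convex, and differentiating $\Phi_n$ will not give a sign. The correct route (the paper's) is a case analysis on which branch is hit: monotonicity of $\lambda_n^+-\lambda_1^+$ (decreasing) and of $\lambda_n^--\lambda_1^+$ (increasing) shows each factor vanishes at most once and that they cannot both vanish except in the degenerate case at $b_n^\star$, and the same monotonicity (not just the endpoint sign) is what justifies your unproved claim that $\Phi_n>0$ on all of $]0,b_n^\star]$ when $n<b_1^{-2}$. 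Finally, your closing remark that the delicate points ``amount to balancing polynomial-in-$n$ terms against exponentially small $b_1^{2n},b_2^{2n}$'' and are ``precisely what is carried out'' in the cited lemmas both mischaracterizes those proofs (they are exact monotonicity arguments, uniform in $b_1$) and amounts to deferring to the results you are supposed to establish.
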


The properties mentioned in the preceding proposition can be illustrated  by Figure \ref{Figg15}. Further illustrations will be given in Figure \ref{f:lambda0.250.50.750.99}.
\begin{figure}[!htb]
\center


\includegraphics[width=0.5\textwidth, clip=true]{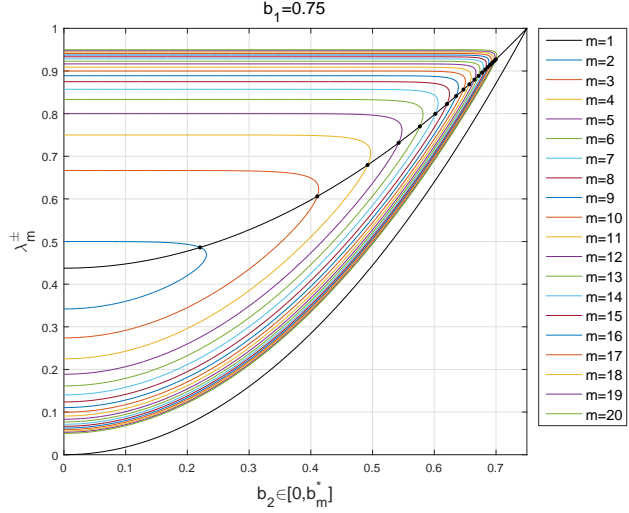}~
\caption{$\lambda_m^\pm$ as a function of $b_2\in[0, b_m^\star]$, for $m = 2, \ldots, 20$, together with the case $m = 1$ (black), for $b_1=0,75$.}
\label{Figg15}
\end{figure}
 
For the proof of Proposition \ref{propxx23} it appears  to be more convenient to work  with a continuous variable instead of the discrete one $n$. This is advantageous especially  in the study of the variations of the eigenvalues with respect to $n$ and the radius $b_2$ for $b_1$ fixed. To do so, we extend  in a natural way  $(\Delta_n)_{n\geq1}$  to a smooth function defined on $[1,+\infty[$   as follows
$$
\Delta_x= \bigg(\frac{1-\big(\frac{b_2}{b_1}\big)^2}{2}-\frac{2-b_2^{2x}-b_1^{2x}}{2x}\bigg)^2-\Big(\frac{b_2}{b_1}\Big)^{2x}\Big(\frac{1-b_1^{2x}}{x}\Big)^2,\quad x\in[1,+\infty[.
$$
It is easy to see that  $\Delta_x$ is  positive if and only if
\begin{equation}\label{det}
\Big(1-\Big(\frac{b_2}{b_1}\Big)^2\Big)x-\Big(2-b_2^{2x}-b_1^{2x}\Big)-2\Big(\frac{b_2}{b_1}\Big)^{x}\Big(1-b_1^{2x}\Big)\geq 0
\end{equation}
or
$$
E_x \triangleq \Big(1-\Big(\frac{b_2}{b_1}\Big)^2\Big)x-\Big(2-b_2^{2x}-b_1^{2x}\Big)+2\Big(\frac{b_2}{b_1}\Big)^{x}\Big(1-b_1^{2x}\Big)<0.
$$
We shall prove  that the last possibility $E_x<0$ is excluded for $x\geq 2$. Indeed,
\begin{alignat}{2}
E_x &= \Big(1-(b_2/b_1)^2\Big)x-2\Big(1-(b_2/b_1)^{x}\Big)+\big(b_2^{x}-b_1^{x}\big)^2\notag\\ &= 2 \Big(1-(b_2/b_1)^2\Big)\Big[\frac x2-\frac{1-\big((b_2/b_1)^2\big)^{\frac x2}}{1-(b_2/b_1)^2}\Big)\Big]+\big(b_2^{x}-b_1^{x}\big)^2\notag\\ &\geq  \big(b_2^{x}-b_1^{x}\big)^2>0,\notag
\end{alignat}
where we have used the classical inequality
$$
\forall b\in (0,1),\,\forall x\geq1\quad \frac{1-b^x}{1-b}\le x.
$$
Thus for $x\geq2$ the condition $\Delta_x\geq0$ is equivalent to  the first one of   \eqref{det} or, in other words,
\begin{equation}\label{det1}
x\geq \frac{2+2\big(\frac{b_2}{b_1}\big)^{x}-{\big(b_1^{x}+b_2^{x}\big)^2}}{1-(b_2/b_1)^2}\triangleq g_x(b_1,b_2).
\end{equation}
In this case the roots of the polynomial  $P_n$ can be also continuously extended as follows
\begin{equation*}
\lambda_x^+=\frac{1+\big(\frac{b_2}{b_1}\big)^2}{2}- \Big(\frac{b_1^{2x}-b_2^{2x}}{2x}\Big)+\sqrt{\Delta_x}
\end{equation*}
and
\begin{equation*}
\lambda_x^-=\frac{1+\big(\frac{b_2}{b_1}\big)^2}{2}- \Big(\frac{b_1^{2x}-b_2^{2x}}{2x}\Big)-\sqrt{\Delta_x}.
\end{equation*}

\subsubsection{Monotonicity for $n\geq2$}\label{secz1}
To settle the proof of the second point $(ii)$ of Proposition \ref{propxx23}  we should   look for the variations of the eigenvalues with respect to $x$ but with fixed radii $b_1$ and $b_2.$ For this purpose we need first to understand  the topological structure of the domain of definition of $x\mapsto\lambda_x^\pm$,
$$
\mathcal{I}_{b_1,b_2}\triangleq\big\{x\geq 2,\,\Delta_x>0\big\}
$$
and to see in particular whether this set is connected or not. We shall establish the following.
\begin{lemma}\label{lemrsq1}
Let $0<b_2<b_1<1$ two fixed numbers, then the following holds true.
\begin{enumerate}
\item The set $\mathcal{I}_{b_1,b_2}$ is connected and of the form $(\mu_{b_1,b_2},\infty[$.
\item The map $x\in \mathcal{I}_{b_1,b_2}\mapsto \Delta_x$ is strictly increasing.

\end{enumerate}
\end{lemma}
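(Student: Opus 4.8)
The strategy is to reduce everything to the sign of the auxiliary function $E_x$ introduced just before the statement. Recall that for $x\ge 2$ the condition $\Delta_x>0$ has already been shown to be equivalent to the strict inequality in \eqref{det1}, namely $h(x)\triangleq x-g_x(b_1,b_2)>0$, where
$$
h(x)=\Big(1-\tfrac{b_2^2}{b_1^2}\Big)x-\Big(2-b_1^{2x}-b_2^{2x}\Big)-2\Big(\tfrac{b_2}{b_1}\Big)^{x}\big(1-b_1^{2x}\big).
$$
So $\mathcal{I}_{b_1,b_2}=\{x\ge 2:\ h(x)>0\}$. For part (i) it suffices to prove that $h$ is strictly increasing on $[2,\infty[$, or at least that $h$ is strictly increasing on the set where it is negative; then $\mathcal{I}_{b_1,b_2}$ is an interval of the form $(\mu_{b_1,b_2},\infty[$ (with the convention $\mu_{b_1,b_2}=2$ if $h(2)>0$, in which case $\mathcal{I}_{b_1,b_2}=[2,\infty[$, consistent with the half-open notation once we note $\Delta_x>0$ there too, or simply that the set is connected and unbounded above). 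The unboundedness is clear since the linear term $(1-b_2^2/b_1^2)x$ dominates and all the exponential terms $b_1^{2x}, b_2^{2x}, (b_2/b_1)^x$ tend to $0$, so $h(x)\to\infty$ and $h(x)>0$ for $x$ large.

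The first step, then, is to differentiate $h$. Writing $b_1=e^{-s_1}$, $b_2=e^{-s_2}$ with $0<s_1<s_2$, one has $b_1^{2x}=e^{-2s_1 x}$, $b_2^{2x}=e^{-2s_2 x}$, $(b_2/b_1)^x=e^{-(s_2-s_1)x}$, and
$$
h'(x)=\Big(1-e^{-2(s_2-s_1)}\Big)+2s_1 e^{-2s_1 x}\big(1+(b_2/b_1)^x\big)+2s_2 e^{-2s_2 x}-2(s_2-s_1)e^{-(s_2-s_1)x}\big(1-e^{-2s_1 x}\big)+\text{(lower-order corrections)}.
$$
I would organize this more cleanly by grouping the $(b_2/b_1)^x(1-b_1^{2x})$ term: its derivative is $-(s_2-s_1)e^{-(s_2-s_1)x}+(s_2+s_1)e^{-(s_2+s_1)x}$ after expanding $(b_2/b_1)^x - (b_1 b_2)^x$. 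The key inequality to exploit is the one already used in the excerpt, $\frac{1-b^x}{1-b}\le x$ for $b\in(0,1)$, $x\ge 1$, together with its infinitesimal analogue; the cleanest route is probably to show directly that each potentially-negative contribution to $h'(x)$ is dominated by $1-(b_2/b_1)^2 = 1-e^{-2(s_2-s_1)}$, using that $e^{-(s_2-s_1)x}\le e^{-2(s_2-s_1)}$ for $x\ge 2$ and that the remaining exponentials are small. I expect that one can in fact prove the stronger statement $h'(x)>0$ for all $x\ge 2$ without case analysis, by bounding $2(s_2-s_1)e^{-(s_2-s_1)x}\le (s_2-s_1)\cdot\frac{1-e^{-2(s_2-s_1)}}{s_2-s_1}\cdot(\text{something}\le 1)$ — i.e. comparing the exponential to the chord, which is exactly what the stated inequality $\frac{1-b}{?}$ encodes.

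For part (ii), the map $x\mapsto\Delta_x$ on $\mathcal{I}_{b_1,b_2}$: from the defining formula, on $\mathcal{I}_{b_1,b_2}$ we have $\Delta_x=\big(A_x\big)^2-\big(B_x\big)^2=(A_x-B_x)(A_x+B_x)$ where $A_x=\tfrac12\big(1-(b_2/b_1)^2\big)-\tfrac{2-b_1^{2x}-b_2^{2x}}{2x}$ and $B_x=\big(\tfrac{b_2}{b_1}\big)^{x}\tfrac{1-b_1^{2x}}{x}>0$. Note $A_x-B_x=\tfrac{1}{2x}h(x)$ and $A_x+B_x$ is (up to the same factor) the companion expression with the sign of the last term flipped, which is $E_x/(2x)$ up to constants — and we proved $E_x>0$ for $x\ge2$. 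So $\Delta_x=\frac{1}{4x^2}h(x)\,\widetilde E_x$ with both factors positive on $\mathcal{I}_{b_1,b_2}$. The monotonicity of $\Delta_x$ then follows from: $h(x)>0$ and strictly increasing (part (i)); $\widetilde E_x>0$ and one checks it too is increasing (again the dominant term is the linear $(1-(b_2/b_1)^2)x$, and the remaining exponential terms, now all entering with favourable signs or controlled as before, do not spoil monotonicity); and $1/x^2$ is decreasing but multiplied against a quadratically-growing product, so the product grows. To be careful I would write $\Delta_x = \tfrac14\big(1-(b_2/b_1)^2\big)^2 - \tfrac{1}{x}\big(1-(b_2/b_1)^2\big)\big(1-\tfrac{b_1^{2x}+b_2^{2x}}{2}\big) + \tfrac{1}{x^2}\big[\cdots\big]$, isolate the constant leading term, and show the $x$-dependent remainder is increasing in $x$ by differentiating term by term — each term is a product of a negative power of $x$ with a bounded decaying exponential, and the derivative is handled by the same $e^{-cx}$ versus $1/x$ comparisons.

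**Main obstacle.** The genuine difficulty is the sign of $h'(x)$ (equivalently, making part (i) rigorous): the derivative is a sum of a positive constant and several exponentially small terms of both signs, and one must show the negative ones never overwhelm. The honest danger is that $h'$ could dip negative for small $x$ near $2$ when $b_2$ is close to $b_1$ (so $1-(b_2/b_1)^2$ is tiny); in that regime all terms are comparably small and a crude bound may fail. I expect the resolution is to factor out $1-(b_2/b_1)^2$ from $h$ entirely — writing $h(x)=(1-(b_2/b_1)^2)\big[x - \tfrac{1-(b_2/b_1)^{2x}}{1-(b_2/b_1)^2}\big] + (b_2^x-b_1^x)^2$, exactly the identity already displayed for $E_x$ but with the cross term's sign reversed — and then noting the bracket $x-\frac{1-q^x}{1-q}$ (with $q=(b_2/b_1)^2$) is nonnegative and increasing in $x$ by the stated chord inequality and its derivative form, while $(b_2^x-b_1^x)^2$ has derivative $2(b_2^x-b_1^x)(b_2^x\ln b_2 - b_1^x\ln b_1)$ which, since $b_2<b_1<1$ gives $b_2^x<b_1^x$ and $\ln b_2<\ln b_1<0$, is a product of two negatives times a positive... this term can be slightly decreasing, so the argument must show the bracket's increase dominates. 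This is the crux and will require the careful elementary estimate; everything else is bookkeeping.
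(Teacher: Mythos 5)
Your reduction of part (i) to the sign of $h(x)=(1-b^2)x-2(1+b^x)+(b_1^x+b_2^x)^2$, $b=b_2/b_1$, is fine (given $E_x>0$), and the factorization $\Delta_x=\frac{h(x)E_x}{4x^2}$ is correct; but the monotonicity your plan rests on is false. The function $h$ is not increasing on $[2,\infty[$, nor even on the set where it is negative: for $b_1=0.9$, $b_2=0.89$ one finds $h(2)\approx-1.34<0$ and $h'(2)\approx-0.53<0$, i.e.\ exactly the regime ($b_2$ close to $b_1$) you flagged as the ``honest danger'' does occur. The identity you propose to rescue this, $h(x)=(1-b^2)\big[x-\frac{1-b^{2x}}{1-b^2}\big]+(b_2^x-b_1^x)^2$, is not an identity for $h$ — it is the paper's identity for $E_x$; the correct analogue is $h(x)=(1-b^2)\big[x-\frac{1-b^{2x}}{1-b^2}\big]-(1+b^x)^2\big(1-b_1^{2x}\big)$, whose correction term is \emph{negative}. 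Indeed, if your identity held, the chord inequality would give $h\ge0$, hence $\Delta_x\ge0$, for all $x\ge2$ and all $0<b_2<b_1$, contradicting the very existence of the thresholds $\mu_{b_1,b_2}$ and $b_n^\star$. Part (ii) suffers similarly: ``the product grows quadratically so the quotient grows'' is wrong, since $\Delta_x$ tends to the finite limit $\frac{(1-b^2)^2}{4}$, and the term-by-term differentiation of your expansion fails because the term $\frac{(2-b_1^{2x}-b_2^{2x})^2}{4x^2}$ is \emph{decreasing} in $x$; also the asserted monotonicity of $E_x$ is left unchecked.

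The idea you are missing is that the monotonicity of $\Delta_x$ only needs to be proved — and is only available — on the set where $\Delta_x\ge0$, and that this restricted statement already yields connectedness by a continuation argument. The paper keeps the square intact, writing (as in \eqref{exdes}) $\Delta_x=\frac14\big(f_1(b)-f_x(b_1)-f_x(b_2)\big)^2-b^{2x}f_x^2(b_1)$ with $f_x(t)=\frac{1-t^{2x}}{x}$, proves $\partial_xf_x(t)<0$, and then uses \eqref{det1} to deduce \eqref{pdel}, namely $f_1(b)-f_x(b_1)-f_x(b_2)>0$ wherever $\Delta_x\ge0$ (this is where the extra positive term $2b^x(1-b_1^{2x})$ distinguishing \eqref{det1} from \eqref{pdel} is exploited). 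With this sign information both contributions to $\partial_x\Delta_x$ are positive on that set. Finally, for (i) one argues on a maximal interval $[a,\eta^\star[\,\subset\mathcal{I}_{b_1,b_2}$: if $\eta^\star<\infty$ then $\Delta_{\eta^\star}=0$, contradicting that $\Delta_x$ is positive and strictly increasing on $[a,\eta^\star[$; hence $\eta^\star=\infty$ and (i) and (ii) are obtained simultaneously. Without restricting to $\{\Delta_x\ge0\}$ and invoking \eqref{pdel}, your differentiation cannot be closed, so as it stands the proposal has a genuine gap at its crux.
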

\begin{remark}
If the discriminant $\Delta_x$ admits a zero then it will be unique and coincides with the value  $\mu_{b_1,b_2}$. Otherwise  $\mu_{b_1,b_2}$  will be equal to  $2$.
\end{remark}
\begin{proof}
To get this result it suffices to check the following: for any $a\in \mathcal{I}_{b_1,b_2}$ we have
$$[a,+\infty[\subset \mathcal{I}_{b_1,b_2}.
$$
 By the continuity of the discriminant, there exists $\eta>a$ such that  $[a,\eta[\subset\mathcal{I}_{b_1,b_2}$ and let $[a,\eta^\star[$ be the maximal interval contained in $\mathcal{I}_{b_1,b_2}$. If $\eta^\star$ is finite then necessarily $\Delta_{\eta^\star}=0$. If we could show that the discriminant is strictly increasing in this interval then this will contradict the preceding assumption. 
 To see this, observe that   $\Delta_x$ can be rewritten in the form,
 \begin{alignat}{2}\label{exdes}
\Delta_x&=&& \frac14\Big(f_1\Big(\frac{b_2}{b_1}\Big)-{f_x(b_1)-f_x(b_2)}\Big)^2-\Big(\frac{b_2}{b_1}\Big)^{2x}f^2_x(b_1)
\end{alignat}
with the notation
$$
f_x(t)\triangleq\frac{1-t^{2x}}{x}.
$$

Differentiating  $\Delta_x$ with respect to $x $ we get
\begin{alignat}{2}\label{Deltazqq}
\nonumber\partial_x\Delta_x&=&& -\frac12\Big({\partial_x f_x}(b_1)+{\partial_x f_x}(b_2)\Big) \Big(f_1\Big(\frac{b_2}{b_1}\Big)-{f_x(b_1)-f_x(b_2)}\Big)\\
& &&-2f_x(b_1)\Big(\frac{b_2}{b_1}\Big)^{2x}\Big(f_x(b_1)\log \Big(\frac{b_2}{b_1}\Big)+{\partial_x f_x}(b_1)\Big).
\end{alignat}
We shall prove that for all $t\in ]0,1[$, the mapping   $x\in [2,\infty[\mapsto f_x(t)$ is strictly  decreasing. It is clear that
\begin{equation}
{\partial_x f_x(t)}=\frac{t^{2x}\big(1-2x\log t\big)-1}{x^2}\triangleq \frac{g_x(t)}{x^2}.
\end{equation}
To study the variation of $t\mapsto g_x(t)$, note that
$$
g_x'(t)=-4x^2t^{2x-1}\log t>0,\quad\forall t\in]0,1[
$$
and therefore  $g_x$ is strictly increasing which implies that
$$
{\partial_x f_x(t)}<  \frac{g_x(1)}{x^2}=0.
$$
Using this fact we deduce that the last term of \eqref{Deltazqq} is positive and  consequently
$$
\partial_x\Delta_x\geq -\frac12\Big({\partial_x f_x}(b_1)+{\partial_x f_x}(b_2)\Big) \Big(f_1\Big(\frac{b_2}{b_1}\Big)-{f_x(b_1)-f_x(b_2)}\Big).
$$
Hence,  to get $\partial_x\Delta_x>0$ it suffices to establish that 
\begin{equation}\label{pdel}
f_1\Big(\frac{b_2}{b_1}\Big)-{f_x(b_1)-f_x(b_2)}>0,
\end{equation}
which is equivalent to
$$
x> \frac{2-b_1^{2x}-b_2^{2x}}{1-b^2}\quad \hbox{with}\quad b=\frac{b_2}{b_1}\cdot
$$
Note that we have already seen that the positivity of  $\Delta_x$ for $ x\geq2$ is equivalent to  the condition \eqref{det1} which actually implies  the preceding one owing  to the strict inequality 
$$
b^x-(b_1 b_2)^x>0.
$$
 This shows that \eqref{pdel} is true and consequently 
 $$
 \forall x\in [a, \eta^\star[,\quad \partial_x\Delta_x>0.
 $$
 This shows that the discriminant, which is positive,  is strictly increasing in  $[a, \eta^\star[$ and this excludes the fact that $\Delta_{\eta^\star}$ vanishes. Therefore $\eta^\star=\infty$ and the thus the proofs of $(\hbox{i})$ and $(\hbox{ii})$ are simultaneously proved.

\end{proof}

The next goal is to establish the monotonicity of the eigenvalues. 
\begin{lemma} \label{lem-mono}
Let $0<b_2<b_1<1$, then we have: \begin{enumerate}
\item The mapping $x\in\mathcal{I}_{b_1,b_2}\mapsto\lambda_x^+$ is strictly increasing.
\item The mapping $x\in\mathcal{I}_{b_1,b_2}\mapsto\lambda_x^-$ is strictly decreasing.
\item For any $x<y\in\mathcal{I}_{b_1,b_2}$ we have
$$
\lambda_y^-<\lambda_x^-<\lambda_x^+<\lambda_y^+.
$$
\end{enumerate}
\end{lemma}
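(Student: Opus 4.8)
The plan is to write $\lambda_x^{\pm}=S_x\pm\sqrt{\Delta_x}$, where $S_x\triangleq\frac{1+(b_2/b_1)^2}{2}-\frac{b_1^{2x}-b_2^{2x}}{2x}$ is the half‑sum of the two roots of $P_x$, and to combine this with the facts already produced in the proof of Lemma~\ref{lemrsq1}: that $x\mapsto\Delta_x$ is strictly increasing on $\mathcal{I}_{b_1,b_2}$; that $f_x(t)\triangleq\frac{1-t^{2x}}{x}$ satisfies $\partial_x f_x(t)=\frac{g_x(t)}{x^2}<0$ for $t\in]0,1[$, with $g_x(t)\triangleq t^{2x}(1-2x\log t)-1$ \emph{strictly increasing in $t$}; and that $P\triangleq f_1\big(\frac{b_2}{b_1}\big)-f_x(b_1)-f_x(b_2)>0$ on $\mathcal{I}_{b_1,b_2}$. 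The first observation is that $S_x$ is strictly increasing: since $\frac{b_1^{2x}-b_2^{2x}}{2x}=\frac12\big(f_x(b_2)-f_x(b_1)\big)$, one gets $\partial_x S_x=\frac12\big(\partial_x f_x(b_1)-\partial_x f_x(b_2)\big)=\frac{g_x(b_1)-g_x(b_2)}{2x^2}>0$ because $b_2<b_1$ and $g_x$ is increasing. Since $x\mapsto\sqrt{\Delta_x}$ is also strictly increasing (square root of a positive, strictly increasing function), assertion $(i)$ follows at once: $\lambda_x^+=S_x+\sqrt{\Delta_x}$ is a sum of strictly increasing functions.

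The heart of the matter is $(ii)$, where $\lambda_x^-=S_x-\sqrt{\Delta_x}$ is a difference of two increasing quantities, so the two terms cannot be handled separately; instead I will prove the sharper comparison $2\sqrt{\Delta_x}\,\partial_x S_x<\partial_x\Delta_x$, which is exactly $\partial_x\lambda_x^-<0$. On one side, the representation \eqref{exdes} gives $\Delta_x=\tfrac14P^2-(b_2/b_1)^{2x}f_x(b_1)^2<\tfrac14P^2$, hence $0<\sqrt{\Delta_x}<\tfrac12P$ and therefore $2\sqrt{\Delta_x}\,\partial_x S_x<P\,\frac{g_x(b_1)-g_x(b_2)}{2x^2}$. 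On the other side, in the formula \eqref{Deltazqq} for $\partial_x\Delta_x$ the last summand, namely $-2f_x(b_1)(b_2/b_1)^{2x}\big(f_x(b_1)\log(b_2/b_1)+\partial_x f_x(b_1)\big)$, is strictly positive (because $\log(b_2/b_1)<0$ and $\partial_x f_x(b_1)<0$), so $\partial_x\Delta_x>-\tfrac12\big(\partial_x f_x(b_1)+\partial_x f_x(b_2)\big)P=-\frac{g_x(b_1)+g_x(b_2)}{2x^2}\,P$. The proof then closes by chaining
\[
2\sqrt{\Delta_x}\,\partial_x S_x<\frac{(g_x(b_1)-g_x(b_2))P}{2x^2}\le\frac{-(g_x(b_1)+g_x(b_2))P}{2x^2}<\partial_x\Delta_x,
\]
where the middle inequality, after cancelling the positive factor $P/(2x^2)$, is just $2g_x(b_1)\le0$, i.e. $g_x(b_1)\le0$ (true since $b_1\in]0,1[$). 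Dividing the resulting strict inequality by $2\sqrt{\Delta_x}>0$ gives $\partial_x S_x<\partial_x\sqrt{\Delta_x}$, hence $\partial_x\lambda_x^-<0$, which is $(ii)$.

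Finally $(iii)$ is immediate from $(i)$ and $(ii)$: for $x<y$ in $\mathcal{I}_{b_1,b_2}$ one has $\lambda_y^-<\lambda_x^-$ by $(ii)$, $\lambda_x^-<\lambda_x^+$ since $\Delta_x>0$, and $\lambda_x^+<\lambda_y^+$ by $(i)$. The only genuine obstacle is the estimate in $(ii)$: the naive argument fails precisely because $S_x$ and $\sqrt{\Delta_x}$ both increase, so one is forced to compare their $x$‑derivatives, and the two reductions above — the bound $\sqrt{\Delta_x}<\tfrac12P$ coming from \eqref{exdes} and the lower bound for $\partial_x\Delta_x$ coming from \eqref{Deltazqq} — are exactly what is needed to collapse everything to the already‑known sign of $g_x(b_1)$.
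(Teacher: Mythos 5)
Your proposal is correct and takes essentially the same route as the paper: it rests on the same ingredients, namely the monotonicity of $x\mapsto\Delta_x$ and the sign facts $\partial_x f_x(t)=g_x(t)/x^2<0$ from Lemma \ref{lemrsq1}, the bound $2\sqrt{\Delta_x}<f_1\big(\tfrac{b_2}{b_1}\big)-f_x(b_1)-f_x(b_2)$ coming from \eqref{exdes}, and the strict positivity of the last term in \eqref{Deltazqq}. The only difference is organizational: the paper splits $\partial_x\lambda_x^-$ into the three terms I, II, III and shows each is negative, whereas you package the same estimates into the single chained inequality $2\sqrt{\Delta_x}\,\partial_x S_x<\partial_x\Delta_x$.
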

\begin{proof}
{ (i)}  Note that
\begin{equation*}
\lambda_x^+=\frac{1+b^2}{2}-\frac{b_1^{2x}}{2}f_x\big(b\big)+\sqrt{\Delta_x},\quad b=\frac{b_2}{b_1}\cdot
\end{equation*}
We have already seen in the proof of Lemma \ref{lemrsq1} that for any $t\in ]0,1[$  the mapping   $x\in [2,\infty[\mapsto f_x(t)$ is strictly decreasing and therefore  $x\mapsto {b_1^{2x}}f_x\big(\frac{b_2}{b_1}\big)$ is also strictly decreasing. To get the strict increasing of $x\mapsto \lambda_x^+$ it suffices to  combine this last fact with the increasing property of $ x\mapsto \Delta_x$.
\\

{ (ii)}
It is plain that
\begin{equation*}
\lambda_x^-=\frac{1+b^2}{2}+\frac{f_x(b_1)-f_x(b_2)}{2}-\sqrt{\Delta_x}.
\end{equation*}
The derivative of  $\lambda_x^-$ with respect to $x$  is given by
$$
{\partial_x \lambda_x^-}= \frac12{\partial_x f_x}(b_1)-\frac12\partial_x f_x(b_2)-\frac{\partial_x \Delta_x}{2\sqrt{\Delta_x}}.
$$
By virtue of \eqref{Deltazqq} we can split the preceding function into three parts:
$$
{\partial_x \lambda_x^-}= \textnormal{I}+\textnormal{II}+\textnormal{III},
$$
where
\begin{equation*}
\textnormal{I}\triangleq \frac12\partial_x f_x(b_1)\Big(1+\frac{f_1\big(b\big)-f_x(b_1)-f_x(b_2)}{2\sqrt{\Delta_x}}\Big),
\end{equation*}
\begin{equation*}
\textnormal{II}\triangleq \frac12\partial_x f_x(b_2)\Big(-1+\frac{f_1\big(b\big)-f_x(b_1)-f_x(b_2)}{2\sqrt{\Delta_x}}\Big)
\end{equation*}
and
\begin{equation*}
\textnormal{III}\triangleq \frac{b^{2x}f_x(b_1)\big(f_x(b_1)\log (b)+\partial_x f_x(b_1)\big)}{\sqrt{\Delta_x}}\cdot
\end{equation*}
Have in mind the inequality \eqref{pdel} and $\partial_x f_x(t)<0$ for any $t\in ]0,1[$ we can see that  I is negative. 
To prove that the term II is also negative it suffices to check that
$$
\frac{f_1\big(b\big)-f_x(b_1)-f_x(b_2)}{2\sqrt{\Delta_x}}>1.
$$
From \eqref{pdel} we can deduce  by squaring that the last expression  is actually equivalent to
$$
\frac14\Big(f_1\Big(\frac{b_2}{b_1}\Big)-{f_x(b_1)-f_x(b_2)}\Big)^2>{\Delta_x}.
$$
From \eqref{exdes}  we immediately conclude   that  the last inequality is always verified.\\
In regard the negativity of the third term $\hbox{III}$ we just use the facts $0<b<1$ and the decreasing of the function $x\mapsto f_x(t)$.
\\

{(iii)} This follows easily from $(\hbox{i}), (\hbox{ii})$ and the obvious fact
$$
\forall x\in\mathcal{I}_{b_1,b_2},\quad  \lambda_x^-<\lambda_x^+.
$$
\end{proof}

\subsubsection{Lifespan of the eigenvalues with respect to $b_2$}
We shall study in this section   some properties of the  eigenvalues functions  $b_2\mapsto\lambda_n^{\pm}$ for $n\geq 2$ and   $b_1$  fixed. This will be crucial for studying the dynamics of the first eigenvalue $\lambda_1^+$ and especially in counting the intersection between the curves $\mathcal{C}_1^+$ and $\mathcal{C}_n$ which has been the subject of the part $(\hbox{iii})$ of Proposition \ref{propxx23}.  
Note that in this paragraph  we shall give up using the continuous version $\lambda_x^\pm$ of the roots $\lambda_n^\pm,$ as it has been done in the preceding section. The results that we shall state can actually be proved with the  continuous parameter, however    this does not matter a lot for our final purpose.  We define the following set; for $n\geq 2$ and $b_1\in]0,1[,$ 
$$
\mathcal{J}_{n, b_1}\triangleq\Bigg\{ b_2\in[0, b_1[, n\geq \frac{2+2\big(\frac{b_2}{b_1}\big)^{n}-{\big(b_1^{n}+b_2^{n}\big)^2}}{1-(b_2/b_1)^2}\Bigg\}.
$$
We shall prove the following.
\begin{proposition}\label{asympbeh}
Let $b_1\in ]0,1[$ fixed and  $n\geq2$,  then the following holds true.
\begin{enumerate}
\item The set  $\mathcal{J}_{n, b_1}$ is an interval of the form $[0, b_n^\star]$, with $b_n^\star\in ]0,b_1[$.
\item The eigenvalues $b_2\mapsto \lambda_n^{\pm}$ are together defined in $[0, b_n^\star].$ 
\item The sequence $n\mapsto b_n^\star$ is strictly increasing and we have the asymptotics
$$
b_n^\star= b_1(1-\alpha/n)+o(\frac1n),\quad\hbox{with}\quad  e^{-\alpha}+1=\alpha\quad \hbox{and}\quad \alpha\approx 1,27846.
$$

\item The function  $b_2\in[0, b_n^\star]\mapsto \lambda_n^{-}(b_2)-b_2^2$ is strictly increasing.

\item The function $b_2\in[0, b_n^\star]\mapsto \lambda_n^{+}(b_2)-b_2^2$ is strictly decreasing.
\end{enumerate}

\end{proposition}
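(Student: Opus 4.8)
The plan is to pass to the variable $b=b_2/b_1\in[0,1[$ ($b_1$ fixed, $b_2$ the running parameter) and to exploit a factorisation of the reduced discriminant. Writing $\Delta_n=(P-R)(P+R)$ with $P=\frac{1-b^2}{2}-\frac{2-b_1^{2n}-b_2^{2n}}{2n}$ and $R=b^n\,\frac{1-b_1^{2n}}{n}$, one checks, after the substitution $b_2^{2n}=b_1^{2n}b^{2n}$, that $2n(P+R)=E_n$ — the quantity already shown to be $>0$ for $n\ge2$ in the excerpt — and $2n(P-R)=\Phi_n(b)$, where
$$\Phi_n(b)\triangleq n(1-b^2)-2-2b^n+b_1^{2n}(1+b^n)^2 .$$
Hence $\Delta_n=\frac{1}{4n^2}\,\Phi_n(b)\,E_n$; since $E_n>0$, $\Delta_n\ge0\iff\Phi_n(b)\ge0$, and the very inequality defining $\mathcal J_{n,b_1}$ is $\Phi_n(b)\ge0$ after the same substitution, so \textbf{(i)--(ii)} reduce to locating the zeros of $\Phi_n$.

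For \textbf{(i)} I would prove $\Phi_n$ is strictly decreasing on $[0,1[$: a direct computation gives $\Phi_n'(b)=2nb\,\psi_n(b)$ with $\psi_n(b)=-1+b^{n-2}\big(b_1^{2n}(1+b^n)-1\big)$, and $\psi_n<0$ on $[0,1[$ by a short sign analysis (when $b_1^{2n}(1+b^n)-1>0$ it is $<b^n\le1$, while $b^{n-2}\le1$; otherwise the bracket is $\le0$). Since $\Phi_n(0)=n-2+b_1^{2n}>0$ and $\Phi_n(1^-)=-4(1-b_1^{2n})<0$, $\Phi_n$ has a unique zero $\beta_n\in]0,1[$, whence $\mathcal J_{n,b_1}=[0,b_n^\star]$ with $b_n^\star\triangleq b_1\beta_n\in]0,b_1[$. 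Part \textbf{(ii)} is then immediate: on $[0,b_n^\star]$ one has $\Delta_n\ge0$, so $\sqrt{\Delta_n}\in\RR$ and both $\lambda_n^\pm$ are defined, while $\Delta_n<0$ for $b_2>b_n^\star$.

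For the monotonicity in \textbf{(iii)} I would invoke Lemma \ref{lemrsq1} rather than differentiate: for fixed $(b_1,b_2)$, $\{x\ge2:\Delta_x\ge0\}=[\mu_{b_1,b_2},\infty[$, i.e. $\Delta_n\ge0\iff n\ge\mu_{b_1,b_2}$. Evaluating at $b_2=b_n^\star$ (where $\Delta_n=0$) gives $n\ge\mu_{b_1,b_n^\star}$, hence $n+1>\mu_{b_1,b_n^\star}$, hence $\Delta_{n+1}>0$ there; since $b_2\mapsto\Delta_{n+1}$ is decreasing and vanishes at $b_{n+1}^\star$, this forces $b_n^\star<b_{n+1}^\star$. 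For the asymptotics, from $\Phi_n(\beta_n)=0$ written as $n(1-\beta_n)(1+\beta_n)=2+2\beta_n^n-b_1^{2n}(1+\beta_n^n)^2$: the right side lies in $[2-4b_1^{2n},4]$ and $1+\beta_n\in]1,2]$, so $c_n\triangleq n(1-\beta_n)\in[\tfrac12,4]$ for $n$ large and $\beta_n\to1$; along any subsequence with $c_n\to c$ one has $\beta_n^n=(1-c_n/n)^n\to e^{-c}$ and $b_1^{2n}\to0$, giving $c=1+e^{-c}$, an equation with a unique root $\alpha\approx1.27846$ ($c\mapsto1+e^{-c}$ is a contraction). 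Hence $c_n\to\alpha$ and $b_n^\star=b_1\beta_n=b_1(1-\alpha/n)+o(1/n)$.

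For \textbf{(iv)--(v)}, differentiating in $b$ (the sign is that of $\partial_{b_2}$) and using $\lambda_n^\pm=\frac{1+b^2}{2}-\frac{b_1^{2n}(1-b^{2n})}{2n}\pm\sqrt{\Delta_n}$ together with $b_2^2=b_1^2b^2$,
$$\partial_b\big(\lambda_n^\pm-b_2^2\big)=b(1-2b_1^2)+b_1^{2n}b^{2n-1}\pm\partial_b\sqrt{\Delta_n}.$$
On $]0,b_n^\star[$ we have $P-R=\Phi_n(b)/(2n)>0$, $P+R=E_n/(2n)>0$, $R>0$, so $\sqrt{\Delta_n}=\sqrt{(P-R)(P+R)}<P$ by AM--GM; together with $\partial_bP=-b(1-b_1^{2n}b^{2n-2})$, $\partial_bR=b^{n-1}(1-b_1^{2n})$ and the positivity of $P$ and $1-b_1^{2n}b^{2n-2}$ this yields
$$-\partial_b\sqrt{\Delta_n}=\frac{Pb(1-b_1^{2n}b^{2n-2})+Rb^{n-1}(1-b_1^{2n})}{\sqrt{(P-R)(P+R)}}>b\big(1-b_1^{2n}b^{2n-2}\big)=b-b_1^{2n}b^{2n-1}.$$
Inserting this, $\partial_b(\lambda_n^--b_2^2)>b(1-2b_1^2)+b_1^{2n}b^{2n-1}+\big(b-b_1^{2n}b^{2n-1}\big)=2b(1-b_1^2)>0$, which is \textbf{(iv)}, and $\partial_b(\lambda_n^+-b_2^2)<b(1-2b_1^2)+b_1^{2n}b^{2n-1}-\big(b-b_1^{2n}b^{2n-1}\big)=2b_1^2b\big((b_1b)^{2n-2}-1\big)<0$ since $b_1b=b_2<1$, which is \textbf{(v)}; both being strict on $]0,b_n^\star[$, continuity of $b_2\mapsto\lambda_n^\pm-b_2^2$ on $[0,b_n^\star]$ upgrades the monotonicity to the closed interval. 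The step I expect to demand the most care is the asymptotics in (iii): one must first secure the two-sided bound on $n(1-\beta_n)$ to justify passing to the limit in $\Phi_n(\beta_n)=0$, and then identify $\alpha$ as the unique solution of $\alpha=1+e^{-\alpha}$; everything else is organised around the identity $\Delta_n=\Phi_nE_n/(4n^2)$ and the inequality $\sqrt{(P-R)(P+R)}\le P$.
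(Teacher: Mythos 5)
Your proposal is correct. For (i), (ii), (iv) and (v) it is essentially the paper's own argument written in the rescaled variable $b=b_2/b_1$: your $\Phi_n(b)$ is exactly the paper's function $h(b_2)$ (with $b_2=b_1b$), and the paper proves (i) by the same analysis ($h(0)>0$, $h(b_1)<0$, $h'<0$, intermediate value theorem); your factorisation $\Delta_n=(P-R)(P+R)$ with $2n(P+R)=E_n>0$ is precisely how the paper reduces the positivity of $\Delta_n$ to the condition \eqref{det1}; and for (iv)--(v) the paper works with $x=(b_2/b_1)^2$, bounds $f_\pm'(x)$ using the same key inequality $\sqrt{\Delta_n}<P$, and concludes that $f_-(x)-b_1^2x$ increases and $f_+(x)-b_1^2x$ decreases, which is the same mechanism as your derivative bounds in $b$. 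The genuine differences are in (iii). For the monotonicity of $n\mapsto b_n^\star$ the paper argues geometrically: if $b_{n+1}^\star\le b_n^\star$ the curves $\mathcal{C}_{n+1}$ and $\mathcal{C}_n$ would intersect, contradicting the strict interlacing of eigenvalues from Lemma \ref{lem-mono}; you instead apply Lemma \ref{lemrsq1} at $b_2=b_n^\star$ to obtain $\Delta_{n+1}(b_n^\star)>0$, which is more direct. One wording caveat there: you then invoke ``$b_2\mapsto\Delta_{n+1}$ is decreasing'', which you have not established and do not need -- your own factorisation already gives the sign characterisation $\Delta_{n+1}>0\iff\Phi_{n+1}>0\iff b_2<b_{n+1}^\star$, and that alone yields $b_n^\star<b_{n+1}^\star$. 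For the asymptotics, the paper explicitly settles for a formal first-order expansion, whereas your two-sided bound on $c_n=n(1-\beta_n)$, the subsequence extraction with $(1-c_n/n)^n\to e^{-c}$, and the uniqueness of the root of $c=1+e^{-c}$ make the expansion $b_n^\star=b_1(1-\alpha/n)+o(1/n)$ rigorous; this is a genuine strengthening of the printed argument.
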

\begin{proof}
$(\hbox{i})$ This follows from studying the function $h:[0,b_1]\to \RR,$ defined by
$$
h(x)=n(1-(x/b_1)^2)-2-2\big(\frac{x}{b_1}\big)^{n}+{\big(b_1^{n}+x^{n}\big)^2}.
$$
We claim that $h$ is strictly decreasing. Indeed, by differentiating we get
\begin{eqnarray*}
h^\prime(x)&=&\frac{2n x}{b_1^2}\Big(-1+b_1^2 x^{2n-2}\Big)+\frac{2n x^{n-1}}{b_1^n}\Big(-1+b_1^{2n}\Big)\\
&<&0.
\end{eqnarray*}
As $h(0)=n-2+b_1^{2n}>0 $ and $h(b_1)=4(-1+b_1^{2n})<0$  then we deduce from the intermediate value theorem that the  set $\mathcal{J}_{n, b_1}$ is in fact  an interval of the form $[0, b_n^\star].$ The number $b_n^\star\in [0, b_1[$ is defined by the unique solution of the equation
\begin{equation}\label{ss1}
h(b_n^\star)=0.
\end{equation}
\\
$(\hbox{ii})$ Observe that the domain of definition of the  eigenvalues $\lambda_n^\pm$ coincides with the domain of the discriminant $\Delta_n$, which is in turn given by  $\mathcal{J}_{n, b_1}$ according to \eqref{det1}.  that of the  set $\mathcal{J}_{n, b_1}$. Therefore the equation \eqref{ss1} do imply the vanishing of $\Delta_n$ at the point $b_n^\star$, and consequently both eigenvalues coincide. 
\\

$(\hbox{iii})$  Recall from \eqref{curvn} the definitions of  the curves  $\mathcal{C}_n^\pm$  and $\mathcal{C}_n=\mathcal{C}_n^-\cup \mathcal{C}_n^+$.
Since  the eigenvalues $\lambda_n^+(b_n^\star)$ and $\lambda_n^-(b_n^\star)$ coincide then   curves $\mathcal{C}_n^+$ and $\mathcal{C}_n^-$ end at the same point which is a turning point for $\mathcal{C}_n.$ Furthermore, 
we can see that $\mathcal{C}_n$  lies in the left side of the vertical axis $x=b_n^\star.$ Now let $m>n\geq2$  and we intend to check  by some elementary geometric considerations  that $b_m^\star>b_n^\star.$ From the monotonicity of the eigenvalues $n\mapsto \lambda^\pm_n$ we have
$$
\lambda_m^-(0)<\lambda_n^-(0),\quad\hbox{and}\quad \lambda_m^+(0)>\lambda_n^+(0).
$$ 
If $b_m^\star\le b_n^\star$ then the curve $\mathcal{C}_m$ will intersect $\mathcal{C}_n$ at some point and this contradicts the strict monotonicity  of the eigenvalues  with respect to $n$. Thus we deduce that   $n\mapsto b_n^\star$ is strictly increasing and  therefore it should  converge to some value $b^\star\le b_1.$  Assume that $b^\star<b_1$ then from the equation \eqref{ss1} and the continuity of $h$  we find by letting $n\to+\infty$ that
\begin{eqnarray*}
\lim_{n\to+\infty}h(b_n^\star)&=&0.
\end{eqnarray*}
On the other hand,
\begin{eqnarray*}
\lim_{n\to+\infty}h(b_n^\star)
&=&\lim_{n\to+\infty}n(1-(b_n^\star/b_1)^2)-2\\
&=&+\infty
\end{eqnarray*}
which is clearly a  contradiction and thus $b^\star=b_1.$ For the asymptotic  behavior of $b_n^\star$, which is a marginal part here, we shall settle for a formal reasoning by making a Taylor expansion  at the first order on $\frac1n$. We shall look for $\alpha$ such that,
$$
b_n^\star =b_1(1-\frac\alpha n)+o(1/n).
$$
At the first order of $h$ we get
$$
h(b_n^\star)=\alpha(2-\alpha/n)-2-2(1-\alpha/n)^n+o(1).
$$
By taking the limit as $n\to\infty$ we find that $\alpha$ must satisfy
$$
e^{-\alpha}+1=\alpha.
$$
This equation admits a unique solution lying in the interval $]1,2[$ and can given explicitly by the Lambert W function,
$$
\alpha=W(e^{-1})+1\approx 1,27846.
$$
\vspace{0,2cm}

$(\hbox{iv})$ Set $x=\big(\frac{b_2}{b_1}\big)^2$ and define the functions
$$
f_\pm(x)=\lambda_n^\pm(b_2)=\frac{1+x}{2}+\frac{b_1^{2n}}{2n}\big(x^n-1\big)\pm\sqrt{\Delta_n(x)},\quad  x\in \Big[0, \frac{{b_n^\star}^2}{b_1^2}\Big],
$$
with 
$$
\Delta_n(x)=
 \bigg(\frac{1-x}{2}-\frac{2-b_1^{2n}\big(1+x^n\big)}{2n}\bigg)^2-x^{n}\Big(\frac{1-b_1^{2n}}{n}\Big)^2.
$$
Differentiating with respect to $x$ yields,
$$
\Delta_n^\prime(x)=-\bigg(\frac{1-x}{2}-\frac{2-b_1^{2n}\big(1+x^n\big)}{2n}\bigg)\bigg(1-{b_1^{2n}}x^{n-1}\bigg)-n x^{n-1}\Big(\frac{1-b_1^{2n}}{n}\Big)^2.
$$
Note from the assumption \eqref{det1}, by switching  the parameters $n$ and $x$, that 
$$\frac{1-x}{2}-\frac{2-b_1^{2n}\big(1+x^n\big)}{2n}>0
$$ and therefore we get
$$
\Delta_n^\prime(x)<0,\quad \forall  x\in \Big[0, \frac{{b_n^\star}^2}{b_1^2}\Big]\subset[0,1[.
$$
Coming back to the function $f_\pm$ and taking the derivative we find
$$
{f^\prime_\pm}(x)=\frac12+\frac{b_1^{2n}}{2} x^{n-1}\pm\frac{\Delta_n^\prime(x)}{2\sqrt{\Delta_n(x)}}\cdot
$$
Using the definition of $\Delta_n$ and \eqref{det} one has
$$
\bigg(\frac{1-x}{2}-\frac{2-b_1^{2n}\big(1+x^n\big)}{2n}\bigg)>\sqrt{\Delta_n(x)}
$$
and consequently,
\begin{eqnarray*}
\frac{\Delta_n^\prime(x)}{\sqrt{\Delta_n(x)}}&\le& -\frac{\bigg(\frac{1-x}{2}-\frac{2-b_1^{2n}\big(1+x^n\big)}{2n}\bigg)}{\sqrt{\Delta_n(x)}}\bigg(1-{b_1^{2n}}x^{n-1}\bigg)\\
&<&-\bigg(1-{b_1^{2n}}x^{n-1}\bigg).
\end{eqnarray*}
Therefore  we obtain  that for all $x\in \Big[0, \frac{{b_n^\star}^2}{b_1^2}\Big]$,
$$
f^\prime_{-}(x)> 1,
$$
and
$$
f^\prime_+(x)\leq b_1^{2n} x^{n-1}<b_1^{2}. 
$$
This shows that the function $g_-:x\mapsto f_-(x)-b_1^2x$ is strictly increasing, however  the function $g_+:x\mapsto f_+(x)-b_1^2x$ is strictly decreasing.
This achieves the proof of the desired result.
\end{proof}

\subsubsection{Dynamics of the first eigenvalue}\label{first11}
We shall in this paragraph discuss the behavior of the first eigenvalues corresponding to $n=1.$ Note from the equation \eqref{discr25} that these eigenvalues are in fact the solutions of the polynomial
$$
P_1(\lambda)=\lambda^2-\Big(1+b^{2}_2-b^{2}_1+\big(b_2/b_1)^2\Big)\lambda+(b_2/b_1)^2+b_2^{2}(b_2/b_1)^2-b_2^{2}
$$
which vanishes exactly at the points,
$$
\lambda_1^-=(b_2/b_1)^2\quad\textnormal{or}\quad\lambda_1^+=1+b_2^2-b_1^2.
$$
Recall from the preceding sections the following definition
$$
\mathcal{C}_n^\pm\triangleq\Big\{ (b_2,\lambda_n^\pm(b_2)), b_2\in[0, b_n^\star]\Big\},  \quad \mathcal{C}_n= \mathcal{C}_n^-\cup \mathcal{C}_n^+,
$$
and the graph of  the first eigenvalue $\lambda_1^+$ is given by:
\begin{equation*}
\mathcal{C}_1^+\triangleq\Big\{ (b_2,1+b_2^2-b_1^2), b_2\in[0, b_1]\Big\}.
\end{equation*}
As we have already mentioned it is not clear whether or not the bifurcation occurs with $\lambda_1^-$ because the range of the linearized operator has an infinite co-dimension. The main result reads as follows.
\begin{proposition}\label{prop-one}
Let $b_1\in]0,1[$ and $n\geq2$. Then the following holds true. 
\begin{enumerate}
\item For any $0<b_2<b_1$  we have $\lambda_1^-<\lambda_n^{\pm}.$
\item If $n<b_1^{-2},$ then
$$
\mathcal{C}_n\cap \mathcal{C}_1^+=\varnothing.
$$
\item If  $n\geq b_1^{-2},$ then $\mathcal{C}_n\cap \mathcal{C}_1^+$ is a single point, that is, there exists $x_n\in [0, b_n^\star]$ such that
$$
\mathcal{C}_n\cap \mathcal{C}_1^+=\big\{(x_n,\lambda_1^+(x_n))\big\}.
$$
\item If $b_2\not\in \big\{x_m, m\geq b_1^{-2}\big\}$ then for all $n\geq 2,$ $\lambda_1^+\neq \lambda_n^\pm$.
\item The sequence $\{x_m\}_{m\geq b_1^{-2}}$ is increasing and  converges to $b_1.$

\end{enumerate}

\end{proposition}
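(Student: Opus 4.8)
\emph{Proof proposal.} The whole statement is driven by the quadratic $P_n$ of \eqref{discr25}, and the plan is to evaluate it at the two values $\lambda_1^-=(b_2/b_1)^2$ and $\lambda_1^+=1+b_2^2-b_1^2$ carried by the first eigenvalues, and then to feed the resulting sign information into the monotonicity already proved in Propositions \ref{asympbeh} and \ref{propxx23}. For part (i), I would substitute $\lambda=\lambda_1^-=b^2$ (with $b=b_2/b_1$) into \eqref{discr25}; after collecting the $1/n$ and $1/n^2$ contributions this should collapse to
\[
P_n(\lambda_1^-)=\frac{1-b_1^{2n}}{n}\Big(1-b^2-\tfrac{1-b^{2n}}{n}\Big),
\]
whose two factors are strictly positive for $n\ge2$ (the first because $b_1<1$, the second because $\tfrac{1-b^{2n}}{1-b^2}=1+b^2+\cdots+b^{2n-2}<n$). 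Hence $\lambda_1^-$ lies outside $[\lambda_n^-,\lambda_n^+]$; to place it to the left it is enough to check $\lambda_1^-<\tfrac12(\lambda_n^-+\lambda_n^+)=\tfrac12\big(1+b^2-\tfrac{b_1^{2n}-b_2^{2n}}{n}\big)$, which after dividing by $b_1^2-b_2^2>0$ reduces to $\tfrac1n\sum_{k=0}^{n-1}b_1^{2(n-1-k)}b_2^{2k}<b_1^{-2}$, true because each of the $n$ summands is at most $b_1^{2(n-1)}\le1<b_1^{-2}$. This gives $\lambda_1^-<\lambda_n^-\le\lambda_n^+$.

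For parts (ii)--(iv), note that $(b_2,1+b_2^2-b_1^2)\in\mathcal C_n$ exactly when $b_2\in[0,b_n^\star]$ and $P_n(1+b_2^2-b_1^2)=0$, so $\mathcal C_n\cap\mathcal C_1^+$ is the set of such points. The key observation is that the vertical offset $\lambda-b_2^2$ is the \emph{constant} $1-b_1^2$ along $\mathcal C_1^+$, whereas by Proposition \ref{asympbeh}(iv)--(v) the offsets $\lambda_n^-(b_2)-b_2^2$ and $\lambda_n^+(b_2)-b_2^2$ of the branches $\mathcal C_n^-$ and $\mathcal C_n^+$ are strictly increasing and strictly decreasing on $[0,b_n^\star]$ and agree at $b_n^\star$. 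Hence the offsets realized along $\mathcal C_n$ fill exactly the interval $[\lambda_n^-(0),\lambda_n^+(0)]$, each value being attained at a single point, so $\mathcal C_n\cap\mathcal C_1^+$ is empty if $1-b_1^2\notin[\lambda_n^-(0),\lambda_n^+(0)]$ and a single point $x_n$ otherwise; and since $P_n|_{b_2=0}$ is monic with roots $\lambda_n^\pm(0)$, the alternative is governed by the sign of $P_n|_{b_2=0}(1-b_1^2)$. Computing this is the crux: with $t=b_1^2$ and $P_n|_{b_2=0}(\lambda)=\lambda^2-(1-\tfrac{t^n}{n})\lambda+\tfrac{(1-t^n)(n-1)}{n^2}$, I expect the factorization
\[
n^2\,P_n|_{b_2=0}(1-t)=(t-1)(nt-1)\sum_{k=1}^{n-1}\big(1-t^k\big),
\]
so that, the sum being positive on $(0,1)$, $P_n|_{b_2=0}(1-b_1^2)\le0$ iff $nb_1^2\ge1$, i.e.\ iff $n\ge b_1^{-2}$; this yields (ii) and (iii). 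Part (iv) is then immediate: if $\lambda_1^+=\lambda_n^\pm$ for some $n\ge2$, then $P_n(1+b_2^2-b_1^2)=0$, which by (ii) forces $n\ge b_1^{-2}$ and by (iii) forces $b_2=x_n$.

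For part (v), the monotonicity $x_n<x_{n+1}$ should follow from nesting: by Proposition \ref{propxx23}(ii) one has $\lambda_{n+1}^-<\lambda_n^-\le\lambda_n^+<\lambda_{n+1}^+$ on $[0,b_n^\star]$, hence $\{b_2:P_n(1+b_2^2-b_1^2)\le0\}\subset\{b_2:P_{n+1}(1+b_2^2-b_1^2)\le0\}$, i.e.\ $[0,x_n]\subset[0,x_{n+1}]$, and the strict interlacing prevents $\mathcal C_n$ and $\mathcal C_{n+1}$ from meeting $\mathcal C_1^+$ at the same abscissa, so $x_n<x_{n+1}$. Being increasing and bounded by $b_n^\star<b_1$, the sequence converges to some $x^\star\le b_1$; if $x^\star<b_1$ I would pass to the limit in $P_m(1+x_m^2-b_1^2)=0$: with $b_2=x_m\to x^\star<1$ every term $b_1^{2m}$, $b_2^{2m}$, $(b_2/b_1)^{2m}$, $1/m$, $1/m^2$ in \eqref{discr25} tends to $0$, so $P_m\to(\lambda-1)\big(\lambda-(x^\star/b_1)^2\big)$ uniformly on compacts, whence $\big((x^\star)^2-b_1^2\big)\big(1+(x^\star)^2-b_1^2-(x^\star/b_1)^2\big)=0$; a short computation shows both factors vanish only for $x^\star=b_1$, a contradiction, so $x^\star=b_1$.

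The step I expect to be the real obstacle is the factorization of $n^2P_n|_{b_2=0}(1-t)$: one must recognize this degree-$(n+1)$ polynomial in $t$ as divisible by $(t-1)(nt-1)$ --- the factor $(nt-1)$ being precisely what produces the threshold $n=b_1^{-2}$ --- with a quotient $\sum_{k=1}^{n-1}(1-t^k)$ of constant sign on $(0,1)$. Everything else is bookkeeping with the monotonicity lemmas; a minor point that still requires care is the boundary case $x_n=b_n^\star$ in the strict monotonicity of part (v).
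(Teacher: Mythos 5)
Your proposal is correct, and it reaches the same conclusions by a route that shares the paper's backbone but replaces several steps with direct algebra on $P_n$. For (i) the paper simply observes $\lambda_1^-=(b_2/b_1)^2=\lim_{n\to\infty}\lambda_n^-<\lambda_n^-\le\lambda_n^+$ using the strict monotonicity of Lemma \ref{lem-mono}; your evaluation $P_n(\lambda_1^-)=\frac{1-b_1^{2n}}{n}\big(1-b^2-\frac{1-b^{2n}}{n}\big)>0$ together with the check that $\lambda_1^-$ lies left of the vertex is a valid (if longer) substitute — I verified both identities. For (ii)--(iii) you and the paper both run on Proposition \ref{asympbeh}(iv)--(v); your "constant offset $1-b_1^2$ along $\mathcal{C}_1^+$ versus monotone offsets along $\mathcal{C}_n^\pm$" bijection is a clean packaging that avoids the paper's three-case discussion at $b_n^\star$, and your factorization $n^2P_n|_{b_2=0}(1-t)=(1-t)(1-nt)\sum_{k=1}^{n-1}(1-t^k)$ is correct (it follows at once from the roots $\lambda_n^+(0)=1-\frac1n$, $\lambda_n^-(0)=\frac{1-t^n}{n}$), though the paper gets the same threshold more directly from $\lambda_n^+(0)-\lambda_1^+(0)=b_1^2-\frac1n$ and $\lambda_n^-(0)<\lambda_1^+(0)$. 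For (v), your sublevel-set inclusion is exactly the paper's nesting argument $\widehat{\mathcal{C}}_n\Subset\widehat{\mathcal{C}}_{n+1}$ in different clothing, and your limit passage in $P_m\big(1+x_m^2-b_1^2\big)=0$, using that the coefficients converge so that $P_m\to(\lambda-1)\big(\lambda-(x^\star/b_1)^2\big)$, is slightly cleaner than the paper's split into the subsequences where $\lambda_m^+$ or $\lambda_m^-$ meets $\lambda_1^+$, since it handles both cases at once. The endpoint issue you flag (strict interlacing at $b_2=b_n^\star$, where $\Delta_n=0$ and Lemma \ref{lem-mono} formally requires $\Delta>0$) is real but harmless — it is settled by continuity of $x\mapsto\lambda_x^\pm$ at the vanishing point of $\Delta_x$ — and the paper's own proof of (v) glosses over the same point when it applies the interlacing on all of $\widehat{\mathcal{C}}_n$ including the turning point.
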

\begin{proof}
{ (i)} This follows easily from the monotonicity of the eigenvalue $n\mapsto\lambda_n^-$ and the fact that $\lambda_n^-\leq\lambda_n^+$. Indeed, for all $n\geq 2$ we have 
$$
\lambda_1^-=\Big(\frac{b_2}{b_1}\Big)^2=\lim_{n\to +\infty}\lambda_n^-<\lambda_n^-\leq \lambda_n^+.
$$
 \\
 
{(ii)} In view of  (v) from Proposition \ref{asympbeh} the mapping $b_2\in [0,b_n^\star]\mapsto \lambda_n^+(b_2)-\lambda_1^+(b_2)$ is strictly decreasing, and therefore for $b_2\in]0,b_n^\star]$ we get
\begin{equation*}
\lambda_n^+(b_2)-\lambda_1^+(b_2)<\lambda_n^+(0)-\lambda_1^+(0)=b_1^2-\frac{1}{n}\cdot
\end{equation*}
Therefore for $n<b_1^{-2}$ the last term in the right-hand side  is negative and consequently,
$$
\lambda_n^-(b_2)\leq \lambda_n^+(b_2)<\lambda_1^+(b_2),\quad \forall b_2\in ]0,b_n^\star].
$$
\\
{(iii)} When $n\geq b_1^{-2}$ then  $\lambda_n^+(0)-\lambda_1^+(0)\geq0$ and since  $b_2\in [0,b_n^\star]\mapsto \lambda_n^+(b_2)-\lambda_1^+(b_2)$ is is strictly decreasing then the equation $\lambda_n^+(b_2)-\lambda_1^+(b_2)=0$ has at most one solution in $[0,b_n^\star]$. We shall distinguish three cases: the first one is when $\lambda_n^+(b_n^\star)-\lambda_1^+(b_n^\star)<0$, in which case the foregoing equation  admits a unique  solution denoted by $x_n$. This implies that $\mathcal{C}_n^+\cap \mathcal{C}_1^+$ is a single point  whose abscissa is $x_n$ and  the next step is to check  that $\mathcal{C}_n^-\cap \mathcal{C}_1^+$ is empty. Thus, we get
$$
\lambda_n^+(b_n^\star)-\lambda_1^+(b_n^\star)\leq \lambda_n^+(x_n)-\lambda_1^+(x_n)= 0.
$$
Combining  the last inequality with the fact that $\lambda_n^+(b_n^\star)=\lambda_n^-(b_n^\star)$ and  the monotonicity of the mapping $b_2\in [0,b_n^\star]\mapsto \lambda_n^-(b_2)-\lambda_1^+(b_2)$, which follows from (iv) of Proposition \ref{asympbeh}, we conclude that  for all $b_2\in]0,b_n^\star]$ we have
\begin{eqnarray*}
 \lambda_n^-(b_2)-\lambda_1^+(b_2)&\le&\lambda_n^-(b_n^\star)-\lambda_1^+(b_n^\star)\\
  &\le&\lambda_n^+(b_n^\star)-\lambda_1^+(b_n^\star)\\
  &<& 0.
\end{eqnarray*}
Therefore,
$
\mathcal{C}_n^-\cap \mathcal{C}_1^+=\varnothing
$ and the set $\mathcal{C}_n\cap\, \mathcal{C}_1^+$ reduces to a single point. 
The second case is when $\lambda_n^+(b_n^\star)-\lambda_1^+(b_n^\star)>0$ then $\mathcal{C}_n^+\cap C_1^+$ is empty and we shall prove that $\mathcal{C}_n^-\cap \mathcal{C}_1^+$ is a single point.  Observe first that $$
\lambda_n^-(b_n^\star)-\lambda_1^+(b_n^\star)=\lambda_n^+(b_n^\star)-\lambda_1^+(b_n^\star)>0.
$$
Moreover,
$$
\lambda_n^-(0)-\lambda_1^+(0)=\frac{1-b_1^{2n}}{n}-(1-b_1^2)<0,\quad\forall n\geq2.
$$
Since $b_2\mapsto \lambda_n^-(b_2)-\lambda_1^+(b_2)$ is strictly increasing then  by the intermediate value theorem, there exists only one solution $x_n\in ]0,b_n^\star[$  of the equation $\lambda_n^-(b_2)-\lambda_1^+(b_2)=0$.
The third and last case to analyze  is when $\lambda_n^+(b_n^\star)-\lambda_1^+(b_n^\star)=0$. This means that all the curves $\mathcal{C}_n^+, \mathcal{C}_n^-$ and $\mathcal{C}_1^+$ meet each other at the single point of abscissa $b_n^\star.$
\\

{(iv)} It follows immediately from  (ii) and (iii).  \\
\\

{(v)}  Let $n\geq b_1^{-1}$ and  define the set enclosed by $\mathcal{C}_n$ and located at the first quadrant of the plane,
$$
\widehat{\mathcal{C}}_n\triangleq \Big\{(x,y)\in\RR^2;\, x\in [0,b_n^\star],\, \lambda_n^-(x)\le y \le \lambda_n^+(x)\Big\}.
$$
From the monotonicity of the eigenvalues $n\mapsto \lambda_{n}^\pm$ seen in Lemma \ref{lem-mono}  we note  that
$$
\forall (x,y)\in \widehat{\mathcal{C}}_n, \quad  \lambda_{n+1}^-(x)< \lambda_n^-(x)\le y \le \lambda_n^+(x)< \lambda_{n+1}^+(x).
$$
Hence, we  get
\begin{equation}\label{imb}
\widehat{\mathcal{C}}_n\Subset \widehat{\mathcal{C}}_{n+1}\quad \hbox{and}\quad \mathcal{C}_{n+1}\cap \widehat{\mathcal{C}}_n=\varnothing.
\end{equation}
Now, from the point (iii) and the monotonicity of the mappings $b_2\mapsto \lambda_n^\pm(b_2)- \lambda_1^+(b_2)$ stated in Proposition \ref{asympbeh} we deduce that 
$$
\forall x\in [0,x_n[;\quad  \lambda_n^-(x)< \lambda_1^+(x)< \lambda_n^+(x).
$$
Then we have the inclusion
$$
\mathcal{C}_{1,n}^+\triangleq\Big\{\big(x,\lambda_1^+(x)\big);\, x\in[0, x_n]\Big\}\subset \widehat{\mathcal{C}}_n.
$$
 It follows from \eqref{imb}  that $\mathcal{C}_{n+1}\cap \mathcal{C}_{1,n}^+=\varnothing$ and consequently the abscissa of the single   point intersection  $\mathcal{C}_{n+1}\cap\, \mathcal{C}_1^+$  must satisfy $x_{n+1}> x_n.$
This proves that $\{x_n\}_{n\geq b_1^{-2}}$ is strictly increasing and thereby this sequence converges to some value $x_\star\leq  b_1$. Assume that $x_\star<b_1$ and define the subsequences 
$$
\{x_n^\pm\}_{n\geq b_1^{-2}}\triangleq \Big\{x_n; \lambda_n^\pm(x_n)=\lambda_1^+(x_n)\Big\},
$$
Clearly, one of the two sequences is infinite and assume first   that $\{x_n^+\}$ is infinite and up to an extraction this sequence converges also to $x_\star$ and for the simplicity we still denote this sequence by $\{x_n\}_{n\geq b_1^{-2}}.$
 Then from the definition of $\lambda_n^+$ we can easily check that 
\begin{align*}
\lim_{n\to+\infty}\lambda^+_n(x_n)&=\frac{1+(x_\star/b_1)^2}{2}+\frac{1-(x_\star/b_1)^2}{2}\\
&=1.
\end{align*}
On the other hand
\begin{align*}
\lim_{n\to+\infty}\lambda^+_1(x_n) &=1+x_\star^2-b_1^2.
\end{align*}
This is possible only if $x_\star=b_1$, which is a contradiction and thus $x_\star=b_1.$ Now in the case where only the sequence $\{x_n^-\}$ is infinite then we follow the same reasoning as before. We suppose  that  $x_\star<b_1$ and  one can verify that,
\begin{align*}
\lim_{n\to+\infty}\lambda^-_n(x_n)&=\big(\frac{x_\star}{b_1}\big)^2
\end{align*}
and 
\begin{align*}
\lim_{n\to+\infty}\lambda^+_1(x_n)=1+x_\star^2-b_1^2.
\end{align*}
By equating these numbers we obtain,
$$
(1-b_1^2)(x_\star^2-b_1^2)=0
$$
which is impossible since $b_1<1$ and consequently $x_\star=b_1$.
 Hence the proof of (v) is achieved.

\end{proof}
\subsection{Bifurcation for $m\geq1$}
Now we shall see how to implement the preceding results to prove Theorem \ref{thmV02} and Theorem \ref{thm-1fold}  by using Crandall-Rabinowitz theorem.  The proofs will be broken in several steps. First, we  introduce the spaces of bifurcation which capture  the m-fold symmetry and they  are of H\"{o}lderian  type. Second, we rewrite Proposition  \ref{regg1} dealing with the regularity of the nonlinear functional defining the $V$-states in the new setting. We end this section with the proofs of the properties of the linearized operator around the annulus required by C-R theorem.
\subsubsection{Function spaces} 
 We shall make use of the same spaces of \cite{H-F-M-V}. For  $m\geq1$, we  introduce the spaces $X_m$ and $ Y_m$ as follows:
$$
X_m=C^{1+\alpha}_{m}(\mathbb{T})\times C^{1+\alpha}_{m}(\mathbb{T}),
$$
where $C^{1+\alpha}_{m}(\mathbb{T})$ is the space of the $2\pi-$periodic functions $f\in C^{1+\alpha}(\mathbb{T})$ whose  Fourier series is given by 
$$
f(w)=\sum_{n=1}^{\infty}a_{n}\overline{w}^{nm-1},\quad w\in\mathbb{T},\quad a_n\in \mathbb{R}.
$$
This space is equipped with the usual strong  topology of $C^{1+\alpha}(\mathbb{T})$. We can easily see that $X_m$ is identified to 
\begin{equation}\label{X_m}
X_m=\Big\{f\in (C^{1+\alpha}(\mathbb{T}))^2,\, f(w)=\sum_{n=1}^{\infty}A_{n}\overline{w}^{nm-1},\, A_n\in \mathbb{R}^2  \Big\}.
\end{equation}
We define the ball of radius $r\in(0,1)$ by
$$
B_r^m=\Big\{f\in (C^{1+\alpha}_m(\mathbb{T}))^2, \,\|f\|_{C^{1+\alpha}(\mathbb{T})}< r\Big\}.
$$
 Take  $(f_1,f_2)\in B_r^m$ then  the expansions of the associated conformal mappings $\phi_1, \phi_2$ in the exterior 
 unit disc $\big\{w \in \CC; \, \vert w\vert > 1\big\}$ are given successively  by
$$
\phi_1(w)=b_1w+f_1(w)=w\Big(b_1+\sum_{n=1}^{\infty}\frac{a_{1,n}}{w^{nm}}\Big)
$$
and
$$
\phi_2(w)=b_2w+f_2(w)=w\Big(b_2+\sum_{n=1}^{\infty}\frac{a_{2,n}}{w^{nm}}\Big).
$$
This captures the m$-$fold symmetry of the associated boundaries $\phi_1(\mathbb{T})$ and $\phi_2(\mathbb{T})$ , via the relation
\begin{equation}\label{mfo1}
\phi_j\big(e^{2i\pi/m}w\big)=e^{2i\pi/m}\phi_j(w),\quad j=1,2 \quad\textnormal{and}\quad w\in \mathbb{T}.
\end{equation}
Set
\begin{equation}\label{Y_m}
Y_m=\Big\{g\in \big(C^{\alpha}(\mathbb{T})\big)^2,\, g=\sum_{n\geq 1}C_{n}\,e_{nm},\, C_n\in\RR^2\Big\}.
\end{equation}
With the help of  Proposition \ref{regg1} we deduce that the functional  $G=(G_1,G_2)$ is well-defined and smooth from $\RR\times B_r^m$ to $Y_m$ with $r$ small enough. The only thing that one should care about, which has been already discussed in the simply-connected case, is the persistence  of the symmetry which comes from the  rotational invariance of the functional $G$. As the proofs are very close to the simply-connected case without any substantial difficulties we prefer  to skip them an  state only  the desired results.

\begin{proposition}\label{regg17}
Let $b\in]0,1[$ and $0<r <\min(b,1-b)$, then the following holds true. 
\begin{enumerate}
\item $G: \RR\times B_r^m\to Y_m$ is  $C^1$ $($it is in fact $C^\infty)$.
\item The partial derivative  $\partial_\lambda DG: \mathbb{R}\times B_r^m\to \mathcal{L}(X_m, Y_m)$  exists and is continuous $($it is in fact $C^\infty).$
 \end{enumerate}
\end{proposition}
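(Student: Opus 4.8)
The plan is to reduce everything to the non‑symmetric regularity statement already recorded in Proposition \ref{regg1}, the only genuinely new ingredient being the stability of the symmetric subspaces $X_m$ and $Y_m$ under $G$. Since $X_m$ (resp. $Y_m$) is a closed subspace of the space $X$ (resp. $Y$) appearing in Proposition \ref{regg1}, and since the restriction of a $C^\infty$ map to a closed subspace of its source is still $C^\infty$ with values in the same target $Y$, assertion (i) will follow at once provided we show that $G$ maps $\RR\times B_r^m$ into $Y_m$. For assertion (ii) we note, exactly as in the simply‑connected case, that the dependence of $G$ on $\lambda$ is affine: $\lambda$ enters only through the factor $(1-\lambda)\overline{\phi_j(w)}$ in each diagonal piece $G_j^1$. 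Hence $\partial_\lambda DG$ is independent of $\lambda$, equals minus the linearization of the map $(f_1,f_2)\mapsto\big(\textnormal{Im}\{\overline{\phi_1(w)}w\phi_1'(w)\},\textnormal{Im}\{\overline{\phi_2(w)}w\phi_2'(w)\}\big)$, and is therefore $C^\infty$ in $f$; its continuity and the fact that it belongs to $\mathcal{L}(X_m,Y_m)$ follow from the same stability argument applied to this linear operator.

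The heart of the matter is thus the claim: if $(f_1,f_2)\in B_r^m$, so that the conformal maps obey the $m$‑fold relation \eqref{mfo1}, then $G_j(\lambda,f_1,f_2)(e^{2i\pi/m}w)=G_j(\lambda,f_1,f_2)(w)$ for $w\in\mathbb{T}$ and $j=1,2$. Combined with the fact — already supplied by Proposition \ref{regg1} — that $G_j(\lambda,f_1,f_2)$ lies in $C^\alpha(\mathbb{T})$ and has the $Y$‑structure $\sum_{n\geq1}b_n e_n$ with real $b_n$, this rotational invariance forces all coefficients with $m\nmid n$ to vanish, since $e_n(e^{2i\pi/m}w)=e_n(w)$ exactly when $m\mid n$; hence $G(\lambda,f_1,f_2)\in Y_m$. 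To prove the claim I would split each $G_j$ into the three pieces $G_j^1,G_j^2,G_j^3$ of Subsection \ref{bound548}. For the diagonal piece $G_j^1$ — which involves only $\phi_j$ and is the defining functional of the simply‑connected problem — the argument is verbatim the one carried out for $F_1,F_2,F_3$ in Subsection \ref{bound135}: using $\phi_j(e^{2i\pi/m}w)=e^{2i\pi/m}\phi_j(w)$, $\overline{\phi_j(w)}=\phi_j(\overline w)$, and the change of variables $\tau\mapsto e^{-2i\pi/m}\tau$ in the Cauchy‑type integrals, the prefactor $w\phi_j'(w)$ contributes a phase $e^{2i\pi/m}\cdot e^{-2i\pi/m}=1$ while each bracketed term is reproduced unchanged. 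For the off‑diagonal pieces $G_j^2,G_j^3$ ($i\neq j$), which involve both $\phi_i$ and $\phi_j$, the identical substitution $\tau\mapsto e^{-2i\pi/m}\tau$ works, because $\phi_i$ carries the \emph{same} $m$‑fold symmetry \eqref{mfo1} and the kernels $\tfrac1{\phi_j(w)-\phi_i(\tau)}$ and $\tfrac1{1-\phi_j(w)\phi_i(\tau)}$ are invariant under the simultaneous rotation of $w$ and $\tau$; again $w\phi_j'(w)$ supplies the compensating phase. This gives $G_j^k(\lambda,f_1,f_2)(e^{2i\pi/m}w)=G_j^k(\lambda,f_1,f_2)(w)$ for $k=1,2,3$, hence the claim.

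There is no serious obstacle here; the only point deserving care — and the one I would actually write out — is the treatment of the off‑diagonal interaction terms $G_j^2,G_j^3$, which are precisely the terms absent in the simply‑connected discussion. One must check that, alongside the $m$‑fold invariance, the disjointness of the curves $\phi_1(\mathbb{T})$ and $\phi_2(\mathbb{T})$ — guaranteed by $b_2<b_1$ and $r$ small — keeps the relevant kernels smooth, so that the term‑by‑term manipulation of the series expansions (in the spirit of the analysis of $F_4$ in the proof of Proposition \ref{reg}) is legitimate and yields real Fourier coefficients, and that this smoothness is the source of the $C^\infty$ regularity claimed in Proposition \ref{regg1} and inherited here. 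Once the invariance and the $C^\alpha$ membership are in place, both assertions of the proposition are immediate: restrict the $C^\infty$ maps $G$ and $\partial_\lambda DG$ of Proposition \ref{regg1} to the closed subspaces $X_m$ and $Y_m$, on which they have just been shown to be stable.
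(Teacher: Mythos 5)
Your proposal matches the paper's own (largely omitted) argument: the paper likewise deduces Proposition \ref{regg17} from Proposition \ref{regg1} together with the persistence of the $m$-fold symmetry under $G$, checked by the same rotational change of variables used for $F_3$, $F_4$ in the simply-connected case, so your route is essentially identical. One bookkeeping caveat: the two kernels require opposite rotations of $\tau$ (the Cauchy-type kernel works with $\tau\mapsto e^{2i\pi/m}\tau$, whereas $1-\phi_j(w)\phi_i(\tau)$ is preserved under $w\mapsto e^{2i\pi/m}w$, $\tau\mapsto e^{-2i\pi/m}\tau$, exactly as in the paper's computation of $F_4$), so a single simultaneous rotation does not leave both kernels invariant as you state — but with the appropriate substitution in each term your conclusion stands.
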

\vspace{0,3cm}

Now using  \eqref{dff} and \eqref{matrix} we deduce  that the restriction of $DG(\lambda,0)$ to the space $X_m$ leads to a well-defined continuous  operator
$DG(\lambda,0):X_m\to Y_m.$ It  takes the form,

\begin{eqnarray}\label{dff00}
DG(\lambda,0)\big(h_1,h_2\big)=\sum_{n\geq1}M_{nm}(\lambda)\left( \begin{array}{c}
a_{1,n} \\
a_{2,n}
\end{array} \right)\, e_{nm},
\end{eqnarray}
with $(h_1,h_2)\in X_m$ having the expansion
$$
h_j(w)=\sum_{n\geq 1}a_{j,n} \overline{w}^{nm-1}$$
and  the matrix $M_n$ is given for $n\geq1$  by
\begin{equation}\label{matrix00}
M_{n}(\lambda)\triangleq\begin{pmatrix}
b_1\Big[n\lambda-1+b_1^{2n}-n\Big(\dfrac{b_2}{b_1}\Big)^2\Big] & b_2\Big[\Big(\dfrac{b_2}{b_1}\Big)^{n}-(b_1b_2)^{n}\Big] \\
 -b_1\Big[\Big(\dfrac{b_2}{b_1}\Big)^{n}-(b_1b_2)^{n}\Big] & b_2\Big[n\lambda-n+1-{b_2^{2n}}\Big]
\end{pmatrix}.
\end{equation}
\subsubsection{Proof of Theorem $\ref{thmV02}$}\label{TT23}
The main goal of this section is to prove Theorem \ref{thmV02}. This will be an immediate consequence  of  Crandall-Rabinowitz theorem as soon as we check its  conditions which require a careful study. Concerning the regularity assumptions, they  were  discussed in  Proposition \ref{regg17}. As to  the properties required for the linearized operator, they are the object of  following proposition.
\begin{proposition}\label{propspec10}
Let $0 <b_2<b_1 <1$ and set $b\triangleq\frac{b_2}{b_1}$.  Let  $m\geq2$ satisfying
 $$
m\geq\frac{2+2b^m-(b_1^m+b_2^m)^2}{1-b^2}\cdot
$$Then the following results hold true.
\begin{enumerate}
\item 
The kernel of $DG(\lambda_m^\pm,0)$ is one-dimensional  and generated by the vector
$$ v_m(w)=\left( \begin{array}{c}
  b_2\Big[m\lambda_m^\pm-m+1-{b_2^{2m}}\Big] \\
b_1\Big[b^{m}-(b_1b_2)^{m}\Big]
\end{array} \right)\overline{w}^{m-1}.
$$
\item The range of $DG(\lambda_m^\pm,0)$ is closed and  of co-dimension one.

\item Transversality assumption:  the condition
$$
\partial_\lambda DG(\lambda_m^\pm,0) v_m\notin R(DG(\lambda_m^\pm,0))
$$
is satisfied if and only if
$$
m>\frac{2+2b^m-(b_1^m+b_2^m)^2}{1-b^2}\cdot
$$
\end{enumerate}
\end{proposition}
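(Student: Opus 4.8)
The plan is to verify the three Crandall--Rabinowitz hypotheses one at a time, exploiting the explicit matrix $M_n(\lambda)$ computed in Proposition \ref{propspec1} and the spectral analysis carried out in Proposition \ref{propxx23}. Throughout, $\lambda_m^\pm$ denotes a root of the quadratic $P_m(\lambda)=\det M_m(\lambda)=0$, which is real because the standing hypothesis $m\geq (2+2b^m-(b_1^m+b_2^m)^2)/(1-b^2)$ is exactly condition \eqref{det1}, i.e. $\Delta_m\geq0$.

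\textbf{One-dimensionality of the kernel.} By \eqref{dff00}, a couple $(h_1,h_2)\in X_m$ lies in $\operatorname{Ker} DG(\lambda,0)$ if and only if for every $n\geq1$ the vector $(a_{1,n},a_{2,n})^t$ lies in $\operatorname{Ker} M_{nm}(\lambda)$. At $\lambda=\lambda_m^\pm$, the matrix $M_{nm}(\lambda_m^\pm)$ is singular precisely when $P_{nm}(\lambda_m^\pm)=0$. For $n=1$ this holds by construction, and $\operatorname{Ker}M_m(\lambda_m^\pm)$ is one-dimensional (it cannot be $\{0\}$, and it is not all of $\RR^2$ since the off-diagonal entry $b_2[b^m-(b_1b_2)^m]$ is nonzero because $b<1$ forces $b^m>(b_1b_2)^m$). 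A direct inspection of the first row of $M_m(\lambda_m^\pm)$ shows that $v_m$ as displayed spans this kernel. For $n\geq2$ one must show $P_{nm}(\lambda_m^\pm)\neq0$: this is exactly where part $(ii)$ of Proposition \ref{propxx23} enters, giving the strict inequalities $\lambda_{nm}^-<\lambda_m^-\leq\lambda_m^+<\lambda_{nm}^+$, so $\lambda_m^\pm$ is never a root of $P_{nm}$ for $n\geq2$; hence $M_{nm}(\lambda_m^\pm)$ is invertible and $a_{j,n}=0$ there. This pins the kernel to $\operatorname{span}\{v_m\}$.

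\textbf{Range and co-dimension.} The natural guess is that $R(DG(\lambda_m^\pm,0))$ is the closed subspace of $Y_m$ obtained by deleting the $e_m$-coordinate in the direction transverse to $\operatorname{Im}M_m(\lambda_m^\pm)$. To prove this one solves, given $g=\sum_{n\geq1}C_n e_{nm}\in Y_m$, the equations $M_{nm}(\lambda_m^\pm)(a_{1,n},a_{2,n})^t=C_n$. For $n\geq2$ this is uniquely solvable with $(a_{1,n},a_{2,n})^t=M_{nm}(\lambda_m^\pm)^{-1}C_n$; the key point is to show the resulting series defines an element of $X_m$, which follows from a uniform lower bound $|\det M_{nm}(\lambda_m^\pm)|\gtrsim n$ together with the entrywise bounds on $M_{nm}$ — the same Cauchy--Schwarz/Szeg\H{o}-projection argument as in Proposition \ref{propspec01}$(iii)$, adapted to the $2\times2$ setting. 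For $n=1$, solvability of $M_m(\lambda_m^\pm)(a_{1,1},a_{2,1})^t=C_1$ imposes exactly one scalar linear condition on $C_1$ (since the kernel is one-dimensional and the matrix is real $2\times2$ of rank one), so the co-dimension is exactly one; closedness is automatic from the explicit description.

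\textbf{Transversality.} Differentiating \eqref{dff00} in $\lambda$ gives $\partial_\lambda DG(\lambda,0)(h_1,h_2)=\sum_{n\geq1}M_{nm}'(\lambda)(a_{1,n},a_{2,n})^te_{nm}$, where $M_n'(\lambda)=\operatorname{diag}(nb_1,nb_2)$. Applying this to $v_m$ produces a vector supported only on the $e_m$-mode, namely $m\operatorname{diag}(b_1,b_2)v_{m,0}\,e_m$ where $v_{m,0}$ is the constant vector part of $v_m$. Since the $n\geq2$ modes of the range already exhaust those coordinates, transversality reduces to the single assertion that $\operatorname{diag}(b_1,b_2)v_{m,0}\notin\operatorname{Im}M_m(\lambda_m^\pm)$, equivalently that this vector is not proportional to the image of $v_{m,0}$. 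Writing everything out, this condition becomes a scalar inequality which, after simplification using $P_m(\lambda_m^\pm)=0$, is equivalent to $\Delta_m>0$ (strict), i.e. to $m>(2+2b^m-(b_1^m+b_2^m)^2)/(1-b^2)$; when $\Delta_m=0$ the two roots coincide, $\lambda_m^+=\lambda_m^-$, and the transversality fails, which is why the strict inequality is imposed in the theorem. I expect this last computation — reducing the non-membership condition to the sign of $\Delta_m$ — to be the main technical obstacle, since it requires carefully eliminating $\lambda_m^\pm$ via the quadratic relation rather than substituting its messy radical expression; once that algebraic identity is in hand, Crandall--Rabinowitz applies verbatim and yields the two bifurcating curves at $\Omega_m^\pm=(1-\lambda_m^\pm)/2$, which matches the formula stated in Theorem \ref{thmV02}.
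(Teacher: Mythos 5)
Your overall strategy coincides with the paper's: kernel via invertibility of $M_{nm}(\lambda_m^\pm)$ for $n\geq2$ (eigenvalue monotonicity), range by solving mode-by-mode and checking the preimage lands in $C^{1+\alpha}$, transversality by reducing to a proportionality condition between $\partial_\lambda DG(\lambda_m^\pm,0)v_m$ and a column of $M_m(\lambda_m^\pm)$. However, there is a genuine quantitative gap in your treatment of the range. You invoke ``a uniform lower bound $|\det M_{nm}(\lambda_m^\pm)|\gtrsim n$'' as the key input, but that is not enough: the adjugate entries of $M_{nm}$ themselves grow like $n$, so with only linear growth of the determinant you get $a_{j,n}\approx \alpha\, c_{j,n}+o(1)$, the preimage is merely $C^{\alpha}$ rather than $C^{1+\alpha}$, and the claimed description of the range collapses. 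This is precisely the mechanism behind Proposition \ref{propspec11}(i): at $\lambda_1^-$ one has $\det M_n(\lambda_1^-)\sim b_1b_2\,n$ and the range has \emph{infinite} co-dimension. What the paper actually uses is $\det M_{nm}(\lambda_m^\pm)= b_1b_2m^2(\lambda_m^\pm-1)\big(\lambda_m^\pm-b^2\big)n^2+O(n)$ with the nondegeneracy $\lambda_m^\pm\notin\{1,b^2\}$ (which itself requires a word of justification, e.g.\ $\lambda_m^-> \lambda_1^-=b^2$ and $\lambda_m^+<1$); only then does one gain the factor $1/n$, i.e.\ one derivative, and the Szeg\H{o}-projection/convolution argument of Proposition \ref{propspec01}(iii) goes through. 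So you must upgrade your bound to the $n^2$ asymptotics and record why $\lambda_m^\pm\neq 1,b^2$.

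For the transversality, your reduction is set up correctly (the image of $\partial_\lambda DG(\lambda_m^\pm,0)v_m$ sits on the $e_m$-mode, and membership in the range amounts to proportionality with the second column of $M_m(\lambda_m^\pm)$), but the decisive algebraic step is only announced, not carried out. The paper closes it exactly along the lines you anticipate: proportionality gives $\big(m\lambda_m^\pm-m+1-b_2^{2m}\big)^2=\big(b^m-(b_1b_2)^m\big)^2$, and combining this with $\det M_m(\lambda_m^\pm)=0$ factors into either $m\lambda_m^\pm-m+1-b_2^{2m}=0$ (excluded, since then the right-hand side would force $b^m=(b_1b_2)^m$) or $2m\lambda_m^\pm=m(1+b^2)+b_2^{2m}-b_1^{2m}$, which is the double-root condition $\Delta_m=0$; hence transversality holds iff $\Delta_m>0$, i.e.\ iff the strict inequality on $m$ holds. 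Writing out these few lines would complete your part (iii). A last cosmetic point: the nonvanishing of the off-diagonal entry follows from $b_1<1$ (equivalently $b>b_1b_2$), not from $b<1$ as you wrote.
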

\begin{proof}
${(\hbox{i})}$ According to  \eqref{det1} the positivity of the discriminant $\Delta_n$ that guarantees the existence of real eigenvalues is equivalent for $m\geq2$ to 
$$
m\geq \frac{2+2b^m-(b_1^m+b_2^m)^2}{1-b^2}\cdot
$$
To prove that the kernel of $DG(\lambda_m^\pm,0)$ is one-dimensional it suffices to check that for $n\geq2$ the matrix $M_{nm}(\lambda_m^\pm)$ defined in \eqref{matrix00} is invertible. This follows from Lemma \ref{lem-mono} which asserts that $\lambda_{nm}^\pm\neq\lambda_m^\pm$ for $n\geq2$ and therefore 
$$
\hbox{det } M_{nm}(\lambda_m^\pm)\neq0.
$$
To get  a generator for the kernel it suffices to take an orthogonal vector to the second row of the matrix $M_m(\lambda_m^\pm)$.
\vspace{0,3cm}

${(\hbox{ii})}$ We are going to show that for any $m \geq 2$ the range $R\big(DG(\lambda_m^\pm,0)\big)$ coincides with the following subspace
\begin{equation}\label{g1g2}
\mathcal{Z}_m\triangleq \Big\{ g\in Y_m, g(w)=\sum_{n\geq 1}C_{n}\,e_{nm}, C_1\in R(M_m)\quad \hbox{and}\quad\forall n\geq2\,,C_n\in\RR^2\Big\}.
\end{equation}
Assume for a while this result,  then it is easy to check that  $R\big(DG(\lambda_m^\pm,0)\big)$  is closed in $Y_m$ and is of co-dimension one. Now to get the description of the range  we first observe that from \eqref{dff00} and \eqref{matrix00} the range is  included in the space $\mathcal{Z}_m.$ Therefore what is left is  to check is the inclusion $\mathcal{Z}_m\subset R\big(DG(\lambda_m^\pm,0)\big)$. Take $g=(g_1,g_2)\in \mathcal{Z}_m$ with the form
$$
g_j(w)=\sum_{n\geq1}c_{j,n} e_{nm}
$$
and let us  prove that the equation
$$
 DG(\lambda_m^\pm,0)h=g
$$
admits a solution $h=(h_1,h_2)$ in the space $X_m.$ Note that $h_j$ has the following structure,
$$
h_j(w)=\sum_{n\geq1}a_{j,n} \overline{w}^{nm-1}.
$$
According to \eqref{dff00}, the preceding equation  is equivalent to
$$
M_{mn}\left( \begin{array}{c}
a_{1,n} \\
a_{2,n}
\end{array} \right)=\left( \begin{array}{c}
c_{1,n} \\
c_{2,n}
\end{array} \right),\quad \forall n\geq1.
$$
For $n=1$, this equation is fulfilled because from the definition of $\mathcal{Z}_m$ we assume that the vector  $C_1\triangleq \left( \begin{array}{c}
c_{1,n} \\
c_{2,n}
\end{array} \right)$ belongs to the range of the matrix $M_m.$ With regard to  $n\geq2$, we use the fact that $M_{nm}$ is invertible and therefore   the sequences $(a_{j,n})_{n\geq2}$  are uniquely determined by  the formulae
\begin{equation}\label{m1}
\left( \begin{array}{c}
a_{1,n} \\
a_{2,n}
\end{array} \right)=M_{nm}^{-1}\left( \begin{array}{c}
c_{1,n} \\
c_{2,n}
\end{array} \right),\quad n\geq2.
\end{equation}
By computing the matrix $M_{mn}^{-1}(\lambda_m^\pm)$  we deduce that for all $n\geq2,$  
\begin{eqnarray}\label{invert-coef}
a_{1,n}&=&\frac{b_2\big[nm(\lambda_m^\pm-1)+1-b_2^{2nm}\big]}{\textnormal{det}\big({M}_{nm}(\lambda_m^\pm)\big)}\,c_{1,n}-\frac{b_2\big[(\frac{b_2}{b_1})^{nm}-(b_1 b_2)^{nm}\big]}{\textnormal{det}\big({M}_{nm}(\lambda_m^\pm)\big)}\,c_{2,n}
\end{eqnarray}
and
\begin{eqnarray*}
a_{2,n}&=&\frac{b_1\big[(\frac{b_2}{b_1})^{nm}-(b_1 b_2)^{nm}\big]}{\textnormal{det}\big({M}_{nm}(\lambda_m^\pm)\big)}\,c_{1,n}+\frac{b_1\big[nm(\lambda_m^\pm-(\frac{b_2}{b_1})^2)-1+b_1^{2nm}\big]}{\textnormal{det}\big({M}_{nm}(\lambda_m^\pm)\big)}\,c_{2,n}.
\end{eqnarray*}
Hence  the proof of  $(h_1,h_2)\in X_m$ amounts to showing that  
$$
w\mapsto \left( \begin{array}{c}
h_1(w)-a_{1,1}\overline{w}^{m-1} \\
h_2(w)-a_{2,1}\overline{w}^{m-1}
\end{array} \right)\in C^{1+\alpha}(\mathbb{T})\times C^{1+\alpha}(\mathbb{T}).
$$
We shall develop the computations only for the first component and the second one can be done in a similar way. Notice that $\textnormal{det}\big({M}_{nm}(\lambda_m^\pm))$
 does not vanish for $n\geq2$ and behaves for large $n$ like
 $$
\textnormal{det}\big({M}_{nm}(\lambda_m^\pm))= b_1 b_2m^2(\lambda_m^\pm-1)\big[\lambda_m^\pm-({b_2}/{b_1})^2\big] n^2+b_1 b_2m\big(1-(b_2/b_1)^2\big)n-1+o(1).
$$ 
Since   $\lambda_m^\pm\notin\big\{1, (b_2/b_1)^2\big\}$, then by Taylor expansion we get
$$
 a_{1,n}=\frac{1}{b_1m(\lambda_m^\pm-(\frac{b_2}{b_1})^2)} \frac{c_{1,n}}{n}+\gamma_{1,n}c_{1,n}+ \gamma_{2,n}c_{2,n} $$
 with
 $$
|\gamma_{j,n}|\le \frac{C}{n^2}\cdot
$$
Set ${\displaystyle \widetilde{h}_1(w)=h_1(w)-a_{1,1}\overline{w}^{m-1}}$ and define the functions
$$
K_{j}(w)=\sum_{n\geq2}n\gamma_{j,n} \overline{w}^{nm}, \quad \quad \widetilde{g}_j=\sum_{n\geq2} \frac{c_{j,n}}{n} e_{nm}.
$$
Then one can check that,
\begin{equation}\label{exp11}
\overline{w}\widetilde{h}_1(w)=\frac{1}{ mb_1(\lambda_m^\pm-(\frac{b_2}{b_1})^2)}\sum_{n\geq2}\frac{c_{1,n}}{n}\overline{w}^{nm}+\{K_1\star(\Pi\,\widetilde g_1)\}(w)+\{K_2\star(\Pi\,\widetilde g_2)\}(w).
\end{equation}
The convolution  is understood in the usual one: for two continuous  functions $f,g;\mathbb{T}\to \CC$ we define
$$
\forall w\in \mathbb{T},\quad f*g(w)=\fint_{\mathbb{T}}f(\tau) g({\tau}\overline{w})\frac{d\tau}{\tau}\cdot
$$
The notation $\Pi$ is used for Szeg\"{o} projection defined by
$$
\Pi(\sum_{n\in \mathbb{Z}} c_n w^n)=\sum_{n\in -\mathbb{N}}c_n w^n,
$$
which  acts continuously  on $C^{1+\alpha}(\mathbb{T})$. 
One can easily see that the first term  in the right-hand side of \eqref{exp11} belongs to $C^{1+\alpha}(\mathbb{T})$. With regard to the last two terms,
 note that  $K_j\in L^2(\mathbb{T})\subset  L^1(\mathbb{T})$ and $\widetilde g_j\in C^{1+\alpha}(\mathbb{T})$, then using   the classical convolution law $L^1(\mathbb{T})*C^{1+\alpha}(\mathbb{T})\to C^{1+\alpha}(\mathbb{T})$ combined with  the continuity of  $\Pi$  we deduce that  those terms belong to $C^{1+\alpha}(\mathbb{T})$ and the function $w\mapsto \overline{w}\widetilde{h}_1(w)$ belongs to this space too. This achieves the proof of the range \mbox{of $DG(\lambda_m^\pm,0)$.}
\vspace{0,3cm}

${(\hbox{iii})}$
Recall from the part $(\hbox{i})$  that  the kernel of $DG(\lambda_m^\pm,0)$ is one-dimensional and generated by the vector $v_m$ defined by
$$
w\in \mathbb{T}\mapsto v_m(w)=\left( \begin{array}{c}
  b_2\Big[m\lambda_m^\pm-m+1-{b_2^{2m}}\Big] \\
b_1\Big[\Big(\dfrac{b_2}{b_1}\Big)^{m}-(b_1b_2)^{m}\Big]
\end{array} \right)\overline{w}^{m-1}.
$$
We shall prove that
$$
\partial_\lambda DG(\lambda_m^\pm,0)v_m\not\in R\Big( DG(\lambda_m^\pm,0)\Big)
$$
if and only if $\lambda_m^+\neq\lambda_m^-$,  which is equivalent  to,
$$
m>\frac{2+2b^m-(b_1^m+b_2^m)^2}{1-b^2}\cdot
$$
Let  $(h_1,h_2)\in X_m$ with the following expansion
$$
h_j(w)=\sum_{n\geq1}a_{j,n} \overline{w}^{nm-1}.
$$
Then differentiating \eqref{dff00} with respect to $\lambda$ we get
\begin{eqnarray}\label{ddff}
\partial_\lambda DG(\lambda,0)(h_1,h_2)=m\sum_{n\geq 1}n\left( \begin{array}{c}
b_1a_{1,n} \\
b_2a_{2,n}
\end{array} \right)\,e_{nm}.
\end{eqnarray}
Hence, we get
\begin{eqnarray*}
\partial_\lambda DG(\lambda_m^\pm,0)v_m&=& mb_1b_2\left( \begin{array}{c}
m\lambda_m^\pm-m+1-{b_2^{2m}} \\
\Big(\dfrac{b_2}{b_1}\Big)^{m}-(b_1b_2)^{m}
\end{array} \right)\, e_m\\ &\triangleq&mb_1b_2\,\mathbb{W}_m\, e_m.
\end{eqnarray*}
This pair of functions is in the range of $DG(\lambda_m^\pm,0)$ if and only if the vector $\mathbb{W}_m$ is a scalar multiple of the second  column of the matrix $M_m(\lambda_m^\pm)$  defined by \eqref{matrix00}. This happens if and only if 
\begin{equation}\label{ction}
\Big(m\lambda_m^\pm-m+1-{b_2^{2m}}\Big)^2-\Big(\Big(\dfrac{b_2}{b_1}\Big)^{m}-(b_1b_2)^{m}\Big)^2=0.
\end{equation}
Combining this equation with det $ M_m=0$ we find
$$
\Big(m\lambda_m^\pm-m+1-{b_2^{2m}}\Big)^2+\Big(m\lambda_m^\pm-m+1-{b_2^{2m}}\Big)\Big(m\lambda_m^\pm-1+b_1^{2m}-m\Big(\dfrac{b_2}{b_1}\Big)^2\Big)=0,
$$
which is equivalent to
$$
\Big(m\lambda-m+1-{b_2^{2m}}\Big)\bigg(2m\lambda-m\Big(1+\Big(\dfrac{b_2}{b_1}\Big)^2\Big)-{b_2^{2m}}+b_1^{2m}\bigg)=0.
$$
Thus we find that
$$
m\lambda_m^\pm-m+1-{b_2^{2m}}=0\quad \textnormal{or}\quad 2m\lambda_m^\pm-m\Big(1+\Big(\dfrac{b_2}{b_1}\Big)^2\Big)-{b_2^{2m}}+b_1^{2m}=0.
$$
The first possibility is excluded by \eqref{ction} and the second one corresponds to a multiple eigenvalue condition: $\lambda_m^+=\lambda_m^-,$ that is, $\Delta_m=0.$
This completes the proof of Proposition \ref{propspec10}.

\end{proof}
\subsubsection{Proof of Theorem $\ref{thm-1fold}$}
Our next purpose is to study the bifurcation of $1$-fold rotating patches. 
Recall from Section \ref{first11} that for $m=1$ there are two different  eigenvalues given by
$$
\lambda_1^-=(b_2/b_1)^2\quad \hbox{and}\quad \lambda_1^+=1+b_2^2-b_1^2.
$$
In that section we observed   significant difference in their behaviors and we shall see next how this fact does affect the bifurcation problem. It appears that the bifurcation with  $\lambda_1^-$ is   very complicate due to the range of the linearized operator which is of infinite co-dimension. Nevertheless,  with  $\lambda_1^+$ the situation is actually more tractable  and the bifurcation occurs frequently. Before stating the basic results of this section, we need to make some  notation. 
Let $b_1\in ]0,1[$ being a fixed real number and define the set
$$
\mathcal{E}_{b_1}\triangleq\Big\{ b_2\in]0,b_1[; \exists m\geq2\,\, s.t.\,\, P_m(\lambda_1^+)=0\Big\}.
$$
The polynomial $P_m$ was defined in \eqref{discr25}, which is up to a factor  the characteristic polynomial of the matrix $M_m(\lambda).$
The set $\mathcal{E}_{b_1}$ corresponds to the abscissa of the points of intersection between the collection  of the curves $\{\mathcal{C}_m, m\geq2\}$ and $\mathcal{C}_1^+$ which were defined in \eqref{curvn}. 
Recall from Proposition \ref{prop-one}-(ii)-(iii)that for each $m\geq2$ there is at most   one value $x_m$ of $b_2$ such that   $P_m(\lambda_1^+)=0$. Moreover, the sequence $(x_m)_{m\geq b_1^{-2}}$ is strictly increasing and  converges to $b_1.$
Now we will  prove the following result.
\begin{proposition}\label{propspec11}
The following assertions hold true.
\begin{enumerate}
\item The range  of $DG(\lambda_1^-,0)$ has an infinite co-dimension.
\item  If $b_2\in \mathcal{E}_{b_1}$, then the kernel of  $DG(\lambda_1^+,0)$ is two-dimensional and generated by the vectors $v_1=\left( \begin{array}{c}
 1 \\
1\end{array} \right)$ and $v_m$ of Proposition $\ref{propspec10}$, with  $m\geq2$ being  the only  integer such that $P_m(\lambda_1^+)=0.$  
In addition, the range of $DG(\lambda_1^+,0)$ is closed and has a co-dimension two.
\item If $b_2\notin \mathcal{E}_{b_1},$ then the kernel of $DG(\lambda_1^+,0)$ is one-dimensional  and it is generated by the vector
$ v_1$ seen before.
Furthermore, the range of $DG(\lambda_1^+,0)$ has  a co-dimension one and the transversality assumption is satisfied,
$$
\partial_\lambda DG(\lambda_1^+,0) v_1\notin R(DG(\lambda_1^+,0)).
$$

\end{enumerate}
\end{proposition}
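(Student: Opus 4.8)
The plan is to read off everything from the block structure of the linearized operator in the $1$-fold setting. By \eqref{dff00}--\eqref{matrix00} with $m=1$, the operator $DG(\lambda,0):X_1\to Y_1$ is $DG(\lambda,0)(h_1,h_2)=\sum_{n\geq1}M_n(\lambda)(a_{1,n},a_{2,n})^{T}e_n$ for $h_j(w)=\sum_{n\geq1}a_{j,n}\overline w^{\,n-1}$, with $\textnormal{det}\,M_n(\lambda)=b_1b_2\,n^2P_n(\lambda)$ ($P_n$ as in \eqref{discr25}). Three elementary facts organize the proof. First, the $n=1$ block $M_1(\lambda_1^{\pm})$ is singular of rank one (since $P_1(\lambda_1^{\pm})=0$ while its $(1,2)$-entry $b_2[(b_2/b_1)-b_1b_2]=b_2(b_2/b_1)(1-b_1^2)\neq0$); a direct substitution of $\lambda_1^{+}=1+b_2^2-b_1^2$ gives $M_1(\lambda_1^{+})(1,1)^{T}=0$ and $R(M_1(\lambda_1^{+}))=\RR\,(b_2,b_1)^{T}$. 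Second, for $n\geq2$ one has $P_n(\lambda_1^-)\neq0$ because $\lambda_1^-<\lambda_n^{\pm}$ by Proposition \ref{prop-one}(i), whereas $P_n(\lambda_1^+)=0$ holds precisely when $b_2=x_n$, i.e.\ when $b_2\in\mathcal E_{b_1}$, and by Proposition \ref{prop-one}(iii)--(v) there is then \emph{exactly one} such $n\geq2$. Third, $\lambda_1^{+}\notin\{1,(b_2/b_1)^2\}$ — indeed $\lambda_1^+-\lambda_1^-=\tfrac{(1-b_1^2)(b_1^2-b_2^2)}{b_1^2}>0$ and $\lambda_1^+=1$ would force $b_1=b_2$ — so $\textnormal{det}\,M_n(\lambda_1^+)\sim b_1b_2(\lambda_1^+-1)(\lambda_1^+-(b_2/b_1)^2)\,n^2$ and $M_n(\lambda_1^+)^{-1}$ gains a factor $1/n$, while at $\lambda_1^-$ the quadratic term degenerates and $\textnormal{det}\,M_n(\lambda_1^-)\sim b_1b_2(1-(b_2/b_1)^2)\,n$ only, so $M_n(\lambda_1^-)^{-1}$ stays bounded.

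For part (i): a short cofactor computation gives $(M_n(\lambda_1^-)^{-1})_{11}\to-\tfrac1{b_1}$ with $O(1/n)$ error, $(M_n(\lambda_1^-)^{-1})_{12},(M_n(\lambda_1^-)^{-1})_{21}$ exponentially small, and $(M_n(\lambda_1^-)^{-1})_{22}=O(1/n)$. Hence for $g=(g_1,g_2)=\sum C_ne_n\in R(DG(\lambda_1^-,0))$ any preimage has $a_{1,n}=-c_{1,n}/b_1+O(1/n)c_{1,n}+(\text{exp.\ small})c_{2,n}$; since the second component lands automatically in $C^{1+\alpha}(\mathbb{T})$, membership $h_1\in C^{1+\alpha}(\mathbb{T})$ is equivalent to $\Pi g_1$ being one derivative smoother, i.e.\ in $C^{1+\alpha}(\mathbb{T})$ rather than merely $C^{\alpha}(\mathbb{T})$. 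This is a condition of infinite codimension, so $R(DG(\lambda_1^-,0))$ has infinite codimension and C-R theorem is inapplicable, exactly as in \cite{H-F-M-V}.

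For part (iii), with $b_2\notin\mathcal E_{b_1}$: $M_n(\lambda_1^+)$ is invertible for all $n\geq2$, so the kernel reduces to the $n=1$ block and is one-dimensional, generated by $v_1=(1,1)^{T}$. The range equals $\mathcal Z_1\triangleq\{g=\sum C_ne_n\in Y_1:\ C_1\in R(M_1(\lambda_1^+)),\ C_n\in\RR^2\ (n\geq2)\}$: the inclusion $R(DG(\lambda_1^+,0))\subset\mathcal Z_1$ is read from \eqref{dff00}, and $\mathcal Z_1\subset R(DG(\lambda_1^+,0))$ follows as in Proposition \ref{propspec10}(ii), since $\textnormal{det}\,M_n(\lambda_1^+)\sim cn^2$ gives $a_{j,n}=O(c_{j,n}/n)$ and the Szegő-projection together with $L^1(\mathbb{T})*C^{1+\alpha}(\mathbb{T})\hookrightarrow C^{1+\alpha}(\mathbb{T})$ puts the corresponding series in $X_1$. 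As $R(M_1(\lambda_1^+))$ is one-dimensional, $\mathcal Z_1$ is closed of codimension one. For transversality, \eqref{ddff} with $m=1$ gives $\partial_\lambda DG(\lambda_1^+,0)v_1=(b_1,b_2)^{T}e_1$, which lies in $\mathcal Z_1$ iff $(b_1,b_2)^{T}\in R(M_1(\lambda_1^+))=\RR\,(b_2,b_1)^{T}$; since $b_1\neq b_2$ this fails. Thus C-R theorem applies and yields the bifurcating curve at $\Omega_1=\tfrac12(1-\lambda_1^+)=\tfrac{b_1^2-b_2^2}{2}$, as in Theorem \ref{thm-1fold}.

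For part (ii), with $b_2\in\mathcal E_{b_1}$: let $m\geq2$ be the unique integer with $P_m(\lambda_1^+)=0$. Then $M_1(\lambda_1^+)$ and $M_m(\lambda_1^+)$ are singular of rank one and all other blocks invertible, so the kernel is $\RR v_1\oplus\RR v_m$ — two-dimensional, the generators being independent because they carry the distinct powers $\overline w^{\,0}$ and $\overline w^{\,m-1}$, and $v_m$ being the kernel vector of $M_m(\lambda_1^+)$ exhibited in Proposition \ref{propspec10} — while the range is $\{g=\sum C_ne_n:\ C_1\in R(M_1(\lambda_1^+)),\ C_m\in R(M_m(\lambda_1^+)),\ C_n\in\RR^2\ \text{otherwise}\}$, closed of codimension two by the same smoothing estimate; no transversality holds in the naive sense, which is why this case is left open. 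The one genuinely delicate point — the main obstacle — is part (i): one must show the range is not merely non-closed but of infinite codimension, which needs the precise leading asymptotics of $M_n(\lambda_1^-)^{-1}$ and the fact that the first component loses exactly one derivative that cannot be recovered; the remainder is finite $2\times2$ linear algebra together with the convolution/Szegő estimates already established for Proposition \ref{propspec10}.
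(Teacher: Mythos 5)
Your proposal is correct and follows essentially the same route as the paper: the block structure of $DG(\lambda,0)$, the determinant asymptotics (linear in $n$ at $\lambda_1^-$, quadratic at $\lambda_1^+$ since $\lambda_1^+\notin\{1,(b_2/b_1)^2\}$), and the reduction of the range/codimension statements to the smoothing argument of Proposition \ref{propspec10}. The only cosmetic difference is that you check transversality in (iii) by directly comparing $(b_1,b_2)^{T}$ with $R(M_1(\lambda_1^+))=\RR\,(b_2,b_1)^{T}$, whereas the paper invokes the simple-eigenvalue criterion from Proposition \ref{propspec10}(iii); both verifications are equivalent.
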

\begin{proof}
$({\hbox{i}})$ According to \eqref{matrix00}, we obtain
\begin{equation*}
M_{n}(\lambda_1^-)\triangleq\begin{pmatrix}
b_1\big[-1+b_1^{2n}\big] & b_2\Big[\Big(\dfrac{b_2}{b_1}\Big)^{n}-(b_1b_2)^{n}\Big] \\
 -b_1\Big[\Big(\dfrac{b_2}{b_1}\Big)^{n}-(b_1b_2)^{n}\Big] & b_2\Big[n\Big(\Big(\dfrac{b_2}{b_1}\Big)^{n}-1\Big)+1-{b_2^{2n}}\Big]
\end{pmatrix}.
\end{equation*}
In this case we get that the determinant of $M_n(\lambda_1^-)$ behaves for large $n$ like $b_1 b_2\,  n$. Consequently we deduce from \eqref{invert-coef} the existence of $\alpha\neq0$ such that 
$$
a_{1,n}=\alpha c_{1,n}+ o(1),
$$
which means that the pre-image of an element of $Y_m$ by $DG(\lambda_1^-,0)$  is not in general more better than $C^\alpha(\mathbb{T})$. This implies that the range of the linearized operator is of infinite co-dimension. It follows that  one important condition of C-R theorem is violated and therefore the  bifurcation in this special case still unsolved.
\vspace{0,3cm}

$({\hbox{ii}})$ Let $b_2\in \mathcal{E}_{b_1}$. Then by definition there exists $m\geq2$ such that $P_m(\lambda_1^+)=0.$ This means that $\lambda_1^+$ coincides with one of the two numbers $\lambda_m^\pm.$ Therefore the kernel of $DG(\lambda_1^+,0)$ is given by the two dimensional vector space
$$
\hbox{Ker} DG(\lambda_1^+,0)= \hbox{Ker} M_1(\lambda_1^+)\oplus\hbox{Ker} M_m(\lambda_1^+)\overline{w}^{m-1}.
$$
Easy computations give that the expression
\begin{equation*}
M_{1}(\lambda_1^+)=b_2(1-b_1^2)\begin{pmatrix}
- \dfrac{b_2}{b_1}& \dfrac{b_2}{b_1} \\
 -1 &1
\end{pmatrix}.
\end{equation*}
Obviously the kernel of $M_1(\lambda_1^+)$ is spanned by the vector $ v_1=\left( \begin{array}{c}
 1 \\
1\end{array} \right).
$
However we know that $\hbox{Ker} M_m(\lambda_1^+)$ is spanned by the vector $v_m$ already seen in Proposition \ref{propspec10}.
To prove that the range is of co-dimension two we follow the same lines of Proposition \ref{propspec10} bearing in mind that  the determinant of  $M_n(\lambda_1^+)$ behaves for large $n$ like $c n^2$ with $c\neq 0$. We skip the details  which are left to the reader.
\vspace{0,3cm}

$({\hbox{iii}})$ Let $b_2\notin \mathcal{E}_{b_1},$ then $P_m(\lambda_1^+)$ does not vanish for any $m\geq2$. This means that the matrix $M_m(\lambda_1^+)$ is invertible  and therefore the kernel of $DG(\lambda_1^+,0)$ is one-dimensional  and given by 
$$
\hbox{Ker} DG(\lambda_1^+,0)= \hbox{Ker} M_1(\lambda_1^+)=\langle v_1\rangle.
$$
Similarly to the  Proposition  \ref{propspec10} we get that the range is of co-dimension one. In addition, the transversality condition is satisfied  since the eigenvalue $\lambda_1^+$ is simple ($\lambda_1^+\neq\lambda_1^-$) as it has been  discussed   in the proof of the point  $({\hbox{iii}})$ of Proposition \ref{propspec10}. The proof of Proposition \ref{propspec11} is now achieved and the result of Theorem \ref{thm-1fold} follows.

\end{proof}





%

%


\section{Numerical experiments} 

In order to obtain the $V$-states, we follow a similar procedure to that in \cite{H-F-M-V} and \cite{H-H-H}; therefore, we shall omit some details, which can be consulted in those references.

\subsection{Simply-connected $V$-states} \label{numsc}

\subsubsection{Numerical obtention}

Given a simply-connected domain $D$ with boundary $z(\theta)$, where $\theta\in[0,2\pi[$ is the Lagrangian parameter, and $z$ is counterclockwise parameterized, the condition of $D$ being a $V$-state rotating  with angular velocity $\Omega$ is given by \eqref{rotsq}, i.e.,
\begin{equation}
\label{e:condition0}
\begin{split}
\operatorname{Re}\Bigg\{\Bigg(2\Omega \overline{z(\theta)} & + \frac{1}{2\pi i}\int_0^{2\pi}\frac{\overline{z(\theta) - z(\phi)}}{z(\theta)-z(\phi)}z_\phi(\phi)d\phi
    \cr
& - \frac{1}{2\pi i}\int_0^{2\pi}\frac{|z(\phi)|^2}{1-z(\theta)z(\phi)}z_\phi(\phi)d\phi\Bigg) z_\theta(\theta)\Bigg\}=0.
\end{split}
\end{equation}

\noindent As in \cite{H-F-M-V} and \cite{H-H-H}, we use a pseudo-spectral method to find $m$-fold $V$-states from \eqref{e:condition0}. We discretize $\theta\in[0,2\pi[$ in $N$ equally spaced nodes $\theta_i = 2\pi i/N$, $i = 0, 1, \ldots, N-1$. Observe that the integrand in the first integral in \eqref{e:condition0} satisfies that
\begin{equation}
\label{e:limit0}
\lim_{\phi\to\theta}\frac{\overline{z(\theta) - z(\phi)}}{z(\theta)-z(\phi)}\Bigg|_{\theta = \phi} = \frac{\overline{z_\theta(\theta)}}{z_\theta(\theta)}.
\end{equation}

\noindent Therefore, bearing in mind \eqref{e:limit0}, we can evaluate numerically with spectral accuracy the integrals in \eqref{e:condition0} at a node $\theta = \theta_i$ by means of the trapezoidal rule, provided that $N$ is large enough:
\begin{equation}
\label{e:trapezoidal}
\begin{split}
\frac{1}{2\pi}\int_0^{2\pi}\frac{\overline{z(\theta_i) - z(\phi_j)}}{z(\theta_i)-z(\phi_j)}z_\phi(\phi_j)d\phi
& \approx \frac{1}{N}\Bigg(\overline{z_\theta(\theta_i)} + \mathop{\sum_{j = 0}^{N-1}}_{j\not=i}\frac{\overline{z(\theta_i) - z(\phi_j)}}{z(\theta_i)-z(\phi_j)}z_\phi(\phi_j)\Bigg),
    \\
\frac{1}{2\pi}\int_0^{2\pi}\frac{|z(\phi)|^2}{1-z(\theta_i)z(\phi)}z_\phi(\phi)d\phi
& \approx \frac{1}{N}\sum_{j = 0}^{N-1}\frac{|z(\phi_j)|^2}{1-z(\theta_i)z(\phi_j)}z_\phi(\phi_j).
\end{split}
\end{equation}

\noindent In order to obtain $m$-fold $V$-states, we approximate the boundary $z$ as
\begin{equation}
\label{e:z0cos}
z(\theta) = e^{i\theta}\left[b + \sum_{k = 1}^M a_{k}\cos(m\,k\,\theta)\right],
\end{equation}

\noindent where the mean radius is $b$; and we are imposing that $z(-\theta) = \bar z(\theta)$, i.e., we are looking for $V$-states symmetric with respect to the $x$-axis. For sampling purposes, $N$ has to be chosen such that $N \ge 2mM+1$; additionally, it is convenient to take $N$ a multiple of $m$, in order to be able to reduce the $N$-element discrete Fourier transforms to $N/m$-element discrete Fourier transforms. If we write $N = m2^r$, then $M = \lfloor (m2^r-1)/(2m)\rfloor = 2^{r-1}-1$.

We introduce \eqref{e:z0cos} into \eqref{e:condition0}, and approximate the error in \eqref{e:condition0} by an $M$-term sine expansion:
\begin{equation}
\label{e:V-State0conditions}
\begin{split}
\operatorname{Re}\Bigg\{\Bigg(2\Omega \overline{z(\theta)} & + \frac{1}{2\pi i}\int_0^{2\pi}\frac{\overline{z(\theta) - z(\phi)}}{z(\theta)-z(\phi)}z_\phi(\phi)d\phi
    \cr
& - \frac{1}{2\pi i}\int_0^{2\pi}\frac{|z(\phi)|^2}{1-z(\theta)z(\phi)}z_\phi(\phi)d\phi\Bigg) z_\theta(\theta)\Bigg\} \approx \sum_{k = 1}^M b_k\sin(m\,k\,\theta).
\end{split}
\end{equation}

\noindent This last expression can be represented in a very compact way as
\begin{equation}
\label{e:F0}
\mathcal F_{b,\Omega}(a_1, \ldots, a_M) = (b_1, \ldots, b_M),
\end{equation}

\noindent for a certain $\mathcal F_{b,\Omega} \ : \ \mathbb{R}^{M}\to\mathbb{R}^{M}$. Remark that, for any $\Omega$ and any $b\in]0,1[$, we have trivially $\mathcal F_{b,\Omega}(\mathbf 0) = \mathbf 0$, i.e., the circumference of radius $b$ is a solution of the problem. Therefore, the obtention of a simply-connected $V$-state is reduced to finding numerically a nontrivial root $(a_1, \ldots, a_M)$ of \eqref{e:F0}. To do so, we discretize the $(M\times M)$-dimensional Jacobian matrix $\mathcal J$ of $\mathcal F_{b,\Omega}$ using first-order approximations. Fixed $|h|\ll1$ (we have chosen $h = 10^{-10}$), we have that
\begin{equation}
\label{e:derivative0}
\frac{\partial \mathcal F_{b,\Omega}(a_1, \ldots, a_M)}{\partial a_1} \approx \frac{\mathcal F_{b,\Omega}(a_1 + h, \ldots, a_M) - \mathcal F_{b,\Omega}(a_1, \ldots, a_M)}{h}.
\end{equation}

\noindent Hence, the first $M$ coefficients of the sine expansion of \eqref{e:derivative0} form the first row of $\mathcal J$, and so on. Therefore, if the $n$-th iteration is denoted by $(a_1, \ldots, a_M)^{(n)}$, then the $(n+1)$-th iteration is given by
\begin{equation*}
(a_1, \ldots, a_M)^{(n+1)} = (a_1, \ldots, a_M)^{(n)} - \mathcal F_{b,\Omega}\left((a_1, \ldots, a_M)^{(n)}\right)\cdot [\mathcal J^{(n)}]^{-1},
\end{equation*}

\noindent where $[\mathcal J^{(n)}]^{-1}$ denotes the inverse of the Jacobian matrix at $(a_1, \ldots, a_M)^{(n)}$. This iteration converges in a small number of steps to a nontrivial root for a large variety of initial data $(a_1, \ldots, a_M)^{(0)}$. In particular, it is usually enough to perturb the unit circumference by assigning a small value to ${a_1}^{(0)}$, and leave the other coefficients equal to zero. Our stopping criterion is
\begin{equation*}
\max\left|\sum_{k = 1}^M b_k\sin(m\,k\,\theta)\right| < tol,
\end{equation*}

\noindent where $tol = 10^{-13}$. For the sake of coherence, we change eventually the sign of all the coefficients $\{a_k\}$, in order that, without loss of generality, $a_1>0$.

\subsubsection{Numerical discussion}

Given $m$ and $b$, Proposition \ref{propspec1} defines the value $\lambda_m$ at which we bifurcate from the circumference of radius $b$. Let us recall that $\lambda_m = 1 - 2\Omega_m$. Although working with $\lambda$ is more convenient from an analytical point of view, we use $\Omega = (1 - \lambda) / 2$ in the graphical representations of the $V$-states that follow, because $\Omega$ is a more natural parameter from a physical point of view. Therefore, we bifurcate at $\Omega_m = (m - 1 + b^{2m}) / (2m)$.
\begin{figure}[!htb]
\center
\includegraphics[width=0.5\textwidth, clip=true]{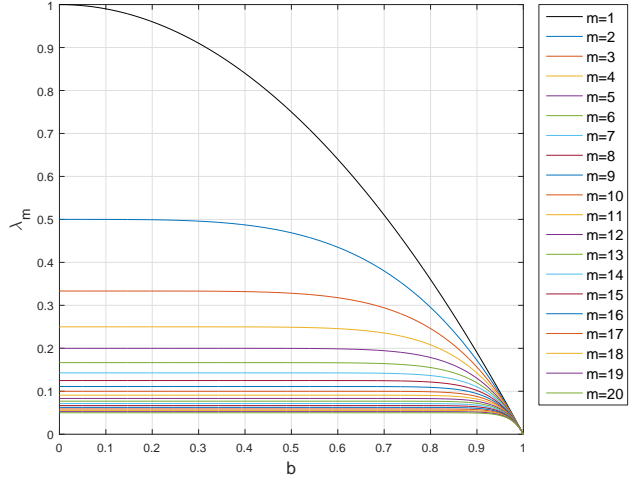}
\caption{$\lambda_m$ as a function of $b$, for $m = 1, \ldots, 20$.}
\label{f:blambdam}
\end{figure}

In Figure \ref{f:blambdam}, we have plotted $\lambda_m$ as a function of $b$, for $m = 1, \ldots, 20$. Figure \ref{f:blambdam} suggests that there are two different situations: $b$ \textit{close} to one, and $b$ \textit{not so close} to one; note that, in the latter case, the curves can be approximated by $\lambda_m \approx 1 / m$, i.e., $\Omega_m \approx (m - 1) / (2m)$, which is in agreement with \cite{DZ}.
\begin{figure}[!htb]
\center
\includegraphics[width=0.5\textwidth, clip=true]{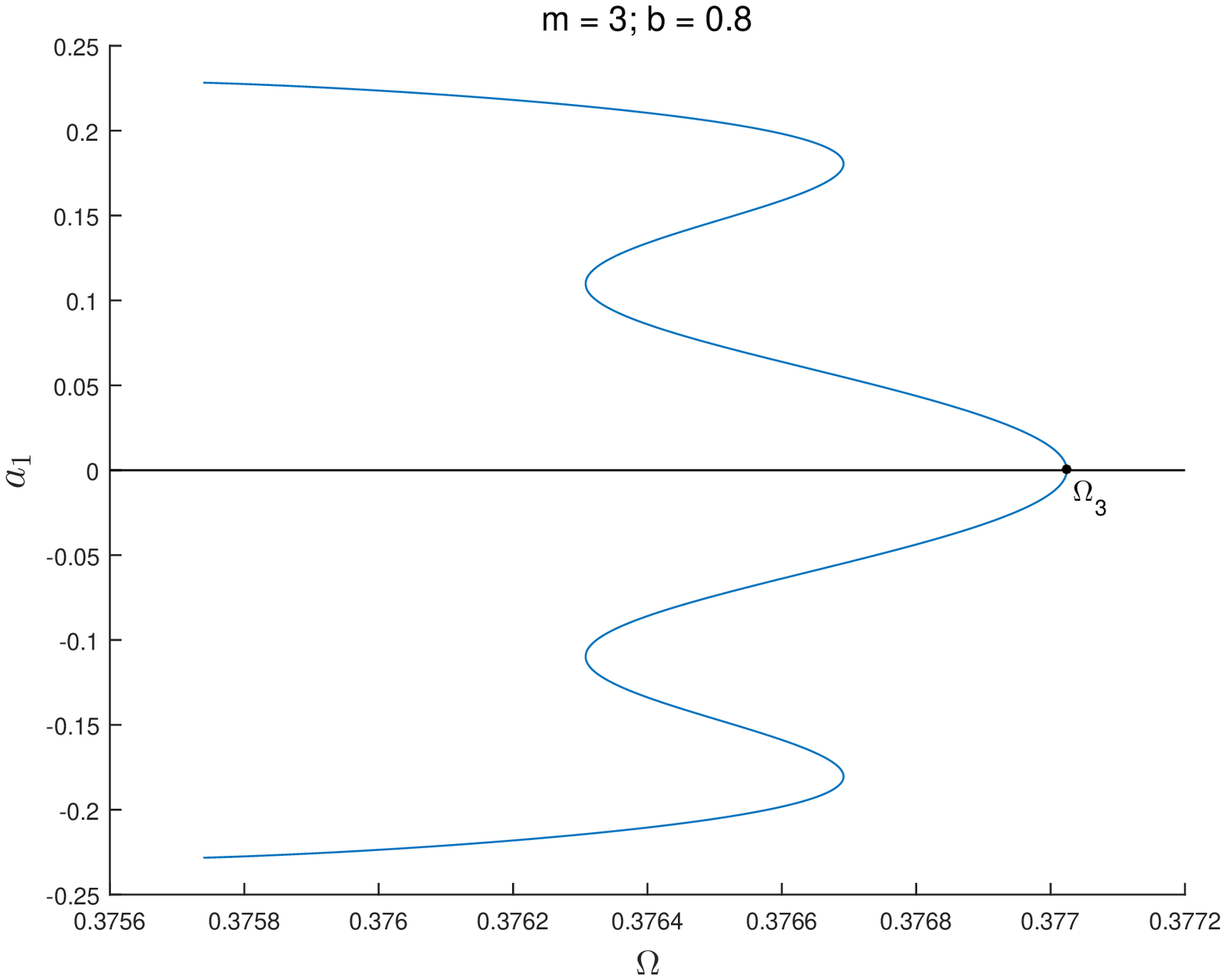}~
\includegraphics[width=0.5\textwidth, clip=true]{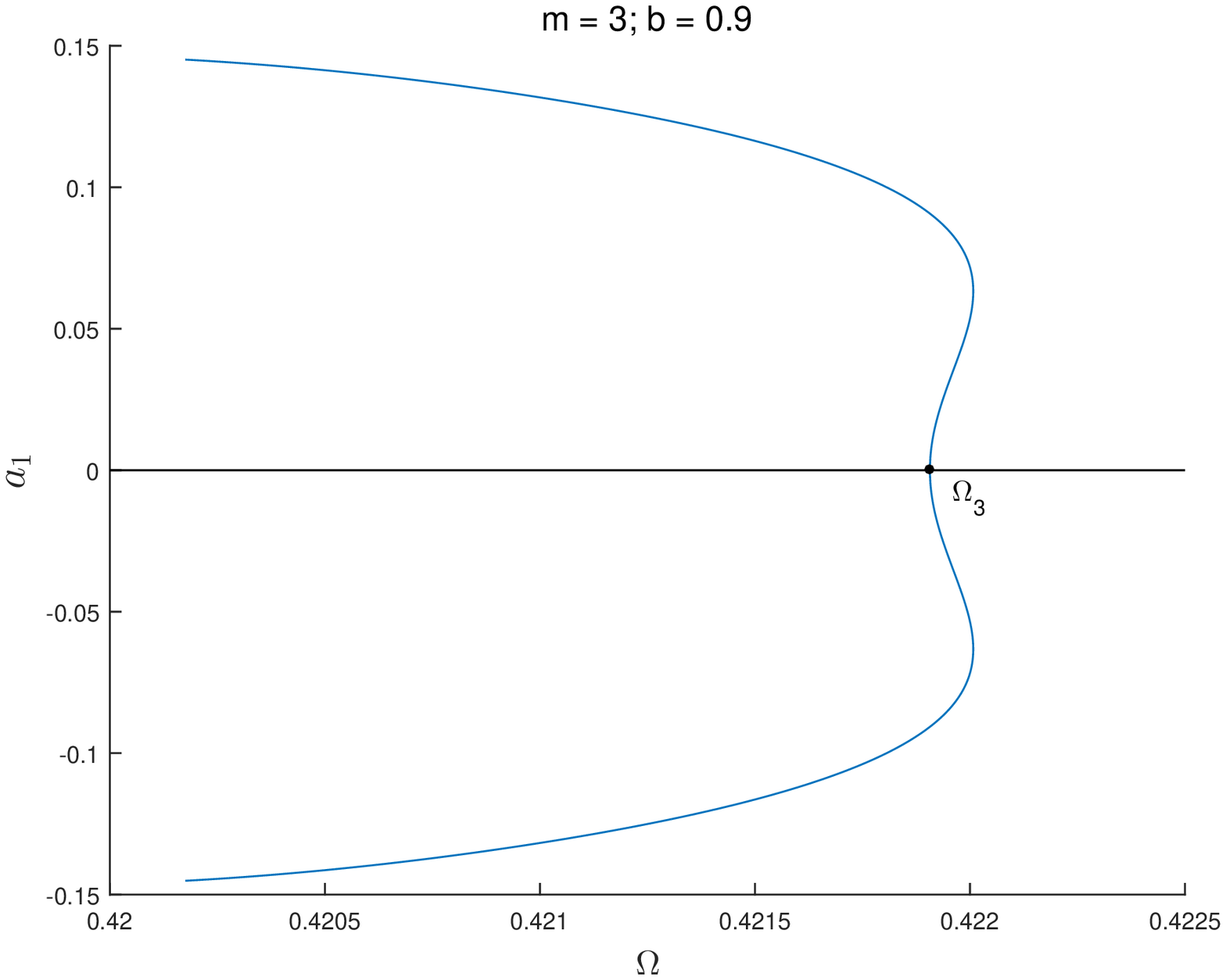}
\caption{Bifurcation diagrams corresponding to $m = 3$, $b = 0.8$ (left); and to $m = 3$, $b = 0.9$ (right). $N = 384$.}
\label{f:bifurcationm3b0_89}
\end{figure}

In order to illustrate how the shape of the simply-connected $V$-states depends on $b$, we consider the cases $1 \le m \le 4$; observe that everything said for $m = 3$ and $m = 4$ is valid for all $m \ge 3$. In general, fixed $m$ and $b$, we bifurcate  from the circumference with radius $b$ at $\Omega_m$. During the bifurcation process, there may be saddle-node bifurcation points \cite{Kil} appearing; in that case, we use the techniques described in \cite{H-H-H}. For instance, in Figure \ref{f:bifurcationm3b0_89}, we have plotted the bifurcation diagrams of the coefficient $a_1$ in \eqref{e:z0cos} against $\Omega$, for $m = 3$, $b = 0.8$ (left); and for $m = 3$, $b = 0.9$ (right). Note that, in the bifurcation diagrams, when starting to bifurcate at $\Omega_m$, we take sometimes $\Omega < \Omega_m$ (left), and other times $\Omega > \Omega_m$ (right), although the latter case may appear only when $b$ is \textit{large enough}. Note also that we may have several saddle-node bifurcation 
 points in the same bifurcation diagram, and, hence, more than two $V$-states corresponding to the same $\Omega$, and in the same bifurcation branch. For instance, the left-hand side of Figure \ref{f:bifurcationm3b0_89} tells us that there are three $V$-states corresponding to $m = 3$, $b = 0.8$, $\Omega = 0.3765$; which we have plotted in Figure \ref{f:Omega0.3765}.
\begin{figure}[!htb]
\center
\includegraphics[width=0.5\textwidth, clip=true]{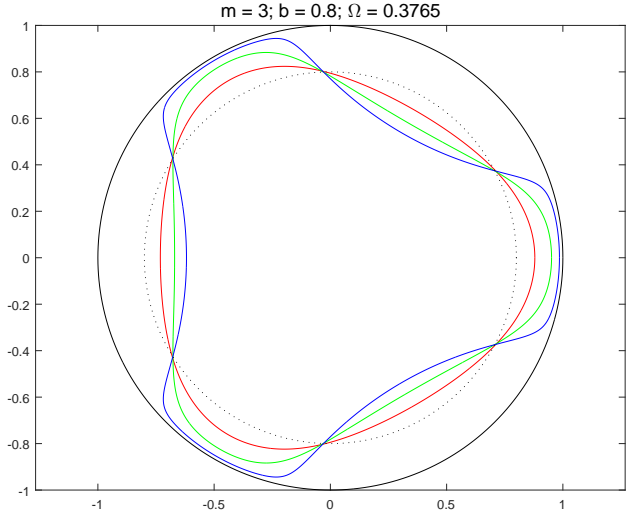}
\caption{$V$-states from the same bifurcation branch (left-hand side of Figure \ref{f:bifurcationm3b0_89}) corresponding to $m = 3$, $b = 0.8$, $\Omega = 0.3765$. $N = 768$.}
\label{f:Omega0.3765}
\end{figure}

We have approximated the limiting $V$-states occurring for $1\le m \le 4$, which are depicted in Figure \ref{f:limiting0holem1234}. Figure \ref{f:limiting0holem1234} confirms the observation on the size of $b$ made from Figure \ref{f:blambdam}. Loosely speaking, when $b$ is \textit{far enough} from one, the rigid boundary does not have any remarkable effect on the shape of the $V$-states. Take for instance the cases $m = 1$, $b = 0.4$; $m = 2$, $b = 0.4$; $m = 3$, $b = 0.6$; $m = 4$, $b = 0.7$: the approximations to the respective limiting $V$-states are clearly \textit{far away} from the unit circumference; whereas, in all the other cases, the distance to the unit circumference is smaller than $10^{-2}$. In fact, Figure \ref{f:limiting0holem1234} suggests that, from a certain $b$ on, we can obtain $V$-states arbitrarily close to the unit circumference, and that the limiting $V$-state is precisely the one whose distance to the unit circumference is zero in the limit. Moreover, as $b$
  grows towards one, the limiting $V$-states tend to cover an increasingly larger part of the unit circumference.

Continuing with Figure \ref{f:limiting0holem1234}, the cases $m = 1$ and $m = 2$ are pretty different from the other cases. Indeed, when $m \ge 3$ and $b$ is small enough, the limiting $V$-states resemble very much those in \cite{DZ}, and corner-shaped singularities seem to develop. It is remarkable that the rigid boundary only affects the shape of the $V$-states for $b$ pretty close to one; furthermore, the larger $m$ is, the larger $b$ has to be, in order that the influence of the rigid boundary becomes noticeable. On the other hand, when $m = 2$ and $b$ is small enough, the limiting $V$-states are infinity-shaped; whether some self-intersection actually occurs deserves further study. Finally, when $m = 1$ and $b$ is small enough, the limiting $V$-states seem to resemble an asymmetrical oval.
\begin{figure}[!htb]
\center
\includegraphics[width=0.5\textwidth, clip=true]{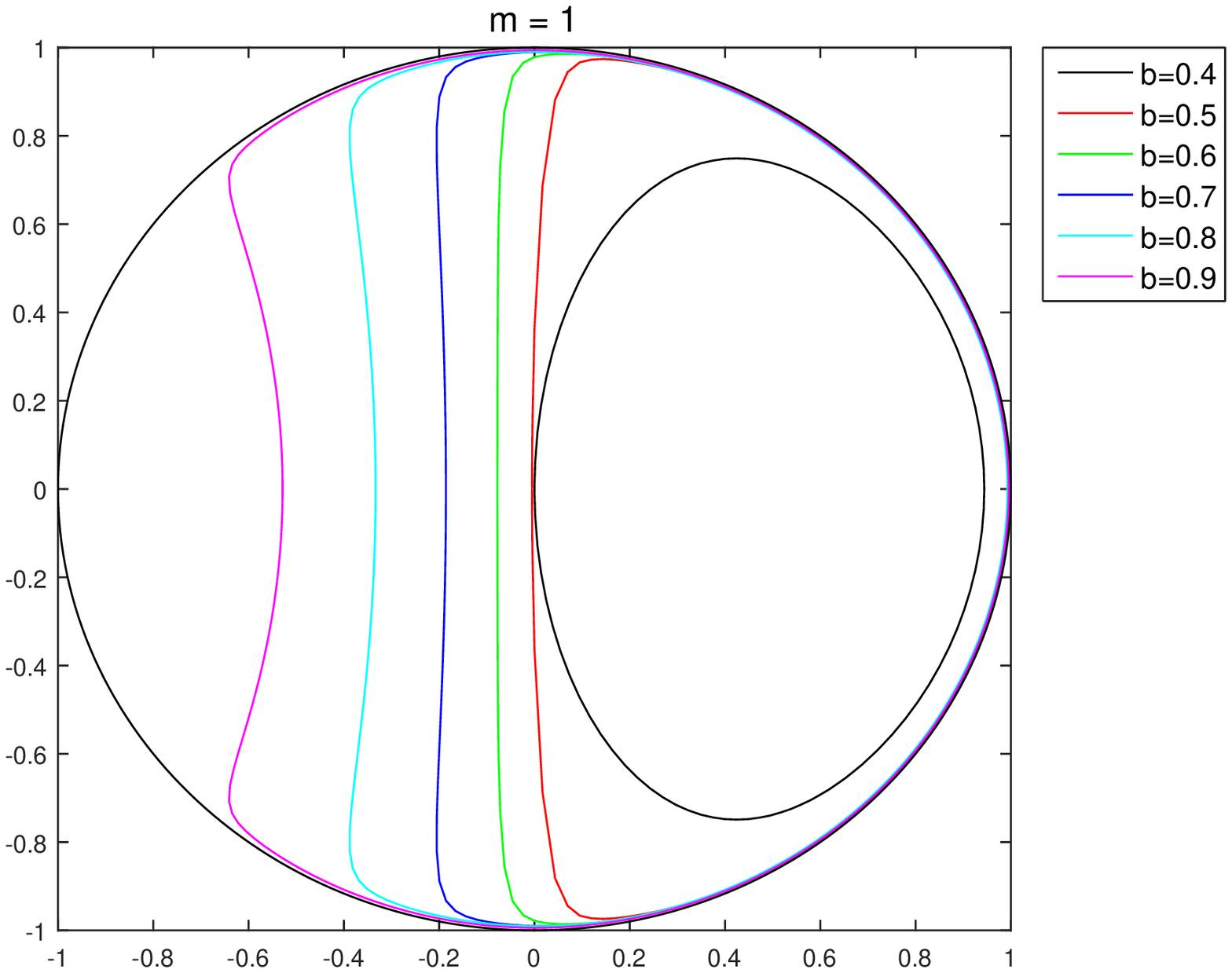}~
\includegraphics[width=0.5\textwidth, clip=true]{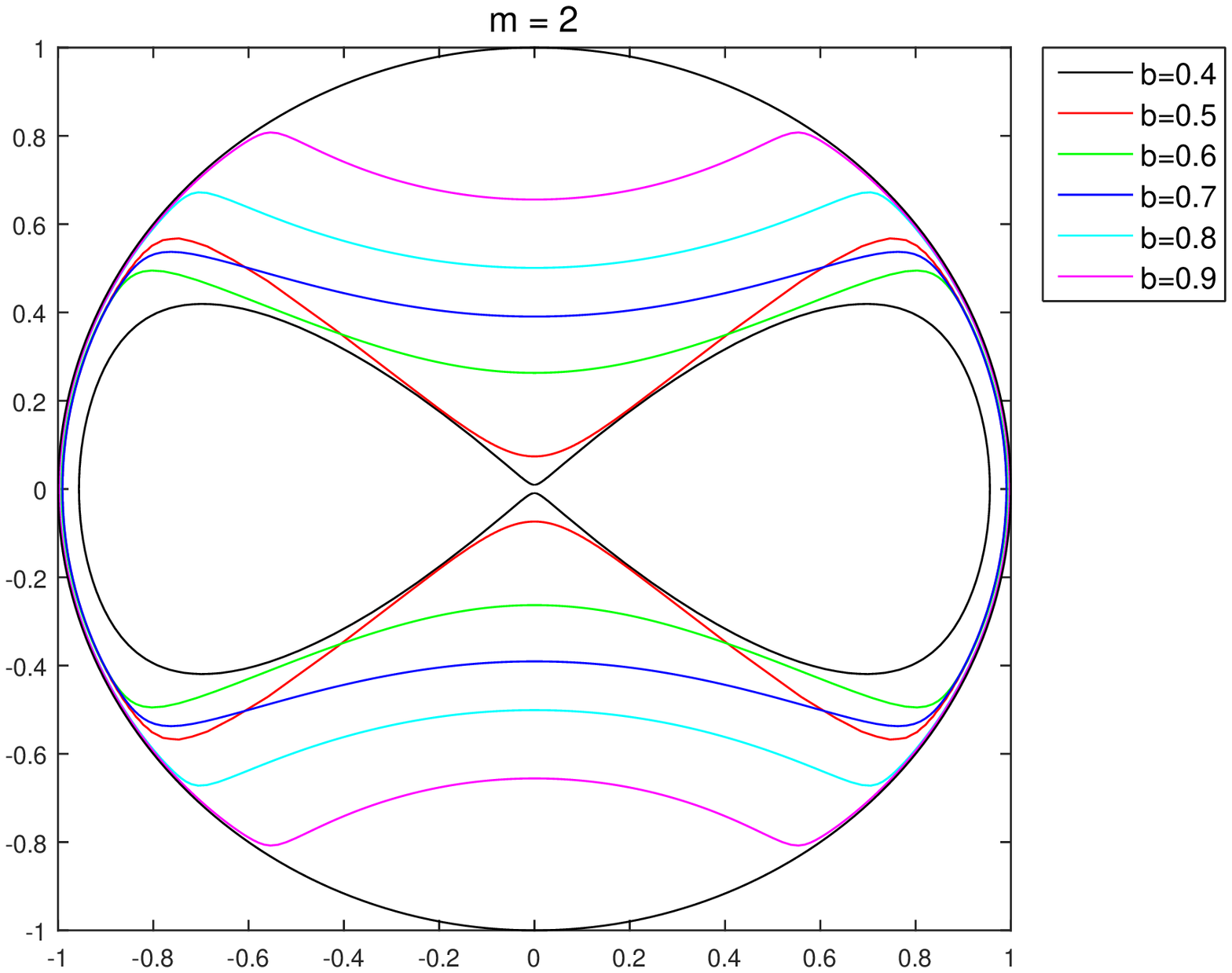}

\bigskip

\includegraphics[width=0.5\textwidth, clip=true]{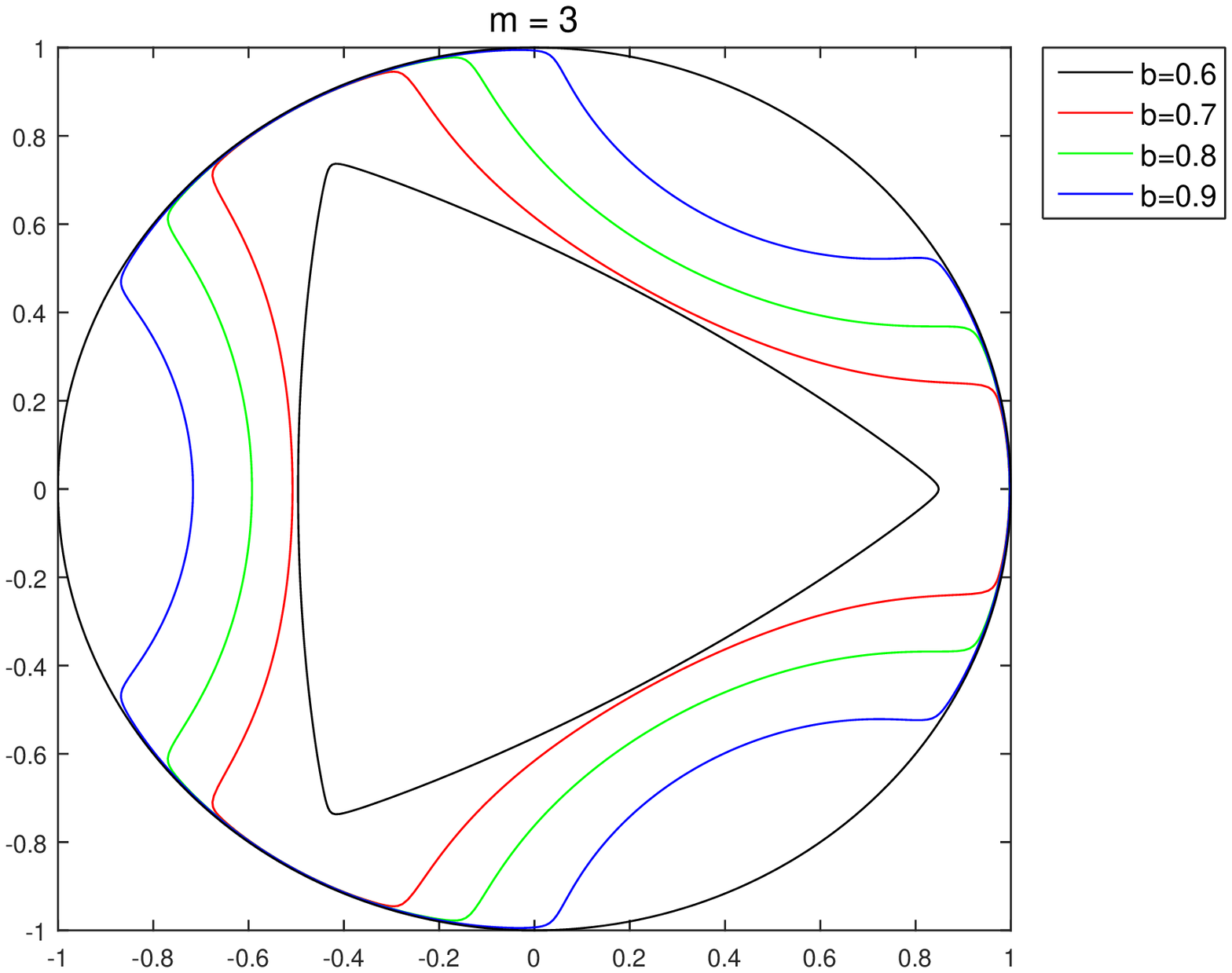}~
\includegraphics[width=0.5\textwidth, clip=true]{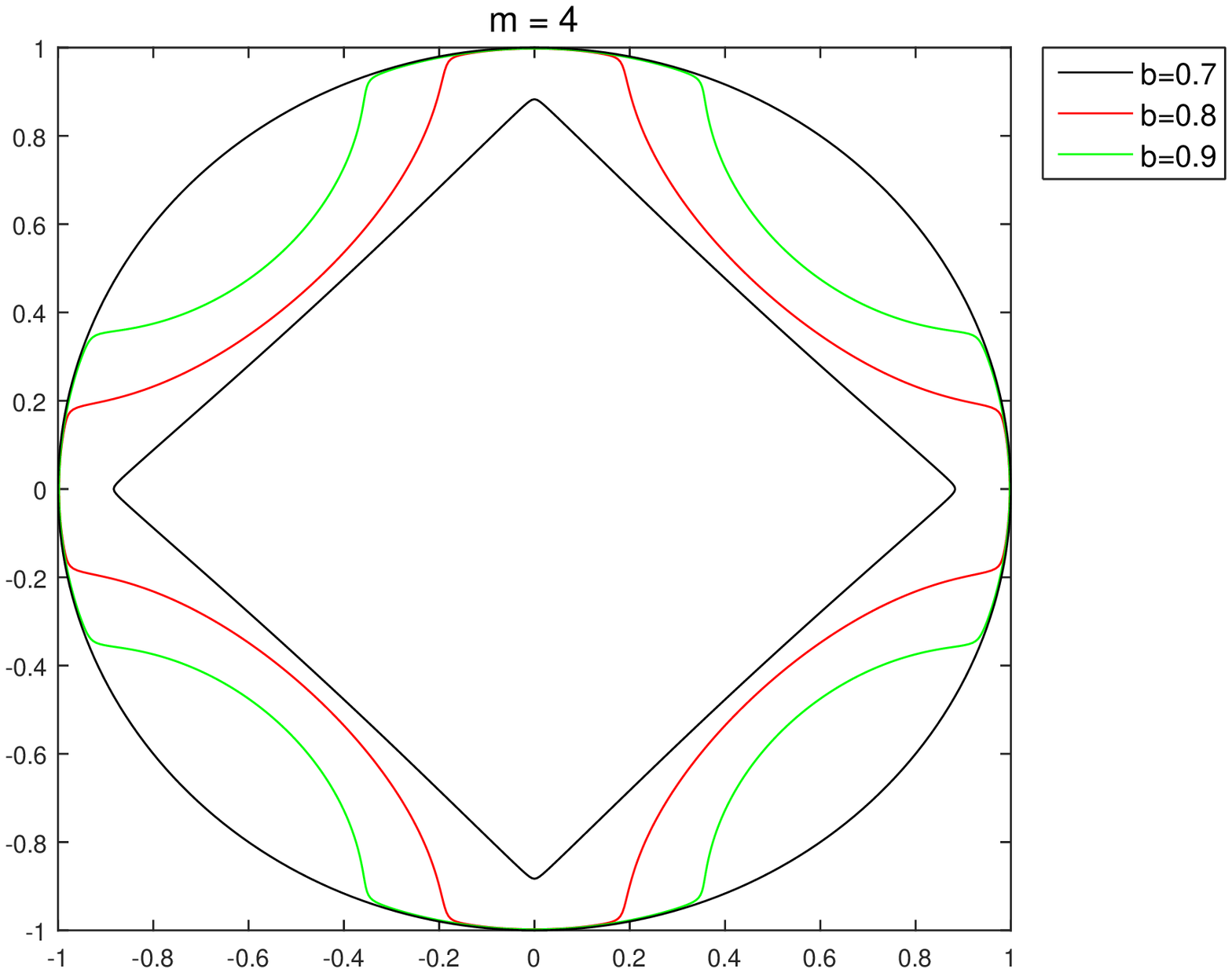}
\caption{Approximations to the limiting $V$-states corresponding to $1 \le m \le 4$, for different $b$. $N = 256\times m$. The values of $\Omega$ corresponding to the plots are given in Table \ref{t:LimitingOmegas0}.}
\label{f:limiting0holem1234}
\end{figure}

\begin{table}[!htb]
$$
\begin{tabular}{|c|c|c|c|c|}
\hline
b / $m$ & 1 & 2 & 3 & 4
    \\
\hline
0.9 & 0.3749 & 0.4057 & 0.4199 & 0.4283
    \\
\hline
0.8 & 0.3251 & 0.3589 & 0.3755 & 0.3859
    \\
\hline
0.7 & 0.2900 & 0.3163 & 0.3321 & 0.3650
    \\
\hline
0.6 & 0.2640 & 0.2731 & 0.3144 & 0.3572
    \\
\hline
0.5 & 0.2459 & 0.2363
    \\
\cline{1-3}
0.4 & 0.1964 & 0.2018
    \\
\cline{1-3}
\end{tabular}
$$
\caption{Values of $\Omega$ for the $V$-states plotted in Figure \ref{f:limiting0holem1234}.}
\label{t:LimitingOmegas0}
\end{table}

\subsection{Doubly-connected $V$-states}

\subsubsection{Numerical obtention}

Given a doubly-connected domain $D$ with outer boundary $z_1(\theta)$ and inner boundary $z_2(\theta)$, where $\theta\in[0,2\pi[$ is the Lagrangian parameter, and $z_1$ and $z_2$ are  parameterized, $D$ is a $V$-state if and only if its boundaries satisfy the following equations:
\begin{equation}
\label{e:condition1z1}
\begin{split}
\operatorname{Re}\Bigg\{\Bigg(2\Omega \overline{z_1(\theta)} & + \frac{1}{2\pi i}\int_0^{2\pi}\frac{\overline{z_1(\theta) - z_1(\phi)}}{z_1(\theta)-z_1(\phi)}z_{1,\phi}(\phi)d\phi - \frac{1}{2\pi i}\int_0^{2\pi}\frac{\overline{z_1(\theta) - z_2(\phi)}}{z_1(\theta)-z_2(\phi)}z_{2,\phi}(\phi)d\phi
    \cr
& - \frac{1}{2\pi i}\int_0^{2\pi}\frac{|z_1(\phi)|^2}{1-z_1(\theta)z_1(\phi)}z_{1,\phi}(\phi)d\phi
    \cr
& + \frac{1}{2\pi i}\int_0^{2\pi}\frac{|z_2(\phi)|^2}{1-z_1(\theta)z_2(\phi)}z_{2,\phi}(\phi)d\phi\Bigg) z_{1,\theta}(\theta)\Bigg\}=0,
\end{split}
\end{equation}

\begin{equation}
\label{e:condition1z2}
\begin{split}
\operatorname{Re}\Bigg\{\Bigg(2\Omega \overline{z_2(\theta)} & + \frac{1}{2\pi i}\int_0^{2\pi}\frac{\overline{z_2(\theta) - z_1(\phi)}}{z_2(\theta)-z_1(\phi)}z_{1,\phi}(\phi)d\phi - \frac{1}{2\pi i}\int_0^{2\pi}\frac{\overline{z_2(\theta) - z_2(\phi)}}{z_2(\theta)-z_2(\phi)}z_{2,\phi}(\phi)d\phi
    \cr
& - \frac{1}{2\pi i}\int_0^{2\pi}\frac{|z_1(\phi)|^2}{1-z_2(\theta)z_1(\phi)}z_{1,\phi}(\phi)d\phi
    \cr
& + \frac{1}{2\pi i}\int_0^{2\pi}\frac{|z_2(\phi)|^2}{1-z_2(\theta)z_2(\phi)}z_{2,\phi}(\phi)d\phi\Bigg) z_{2,\theta}(\theta)\Bigg\}=0.
\end{split}
\end{equation}

\noindent As in the simply-connected case, we use a pseudo-spectral method to find $V$-states. We discretize $\theta\in[0,2\pi[$ in $N$ equally spaced nodes $\theta_i = 2\pi i/N$, $i = 0, 1, \ldots, N-1$, where $N$ has to be large enough. Then, since $z_1$ and $z_2$ never intersect, all the integrals in \eqref{e:condition1z1} and \eqref{e:condition1z2} can be evaluated numerically with spectral accuracy at a node $\theta = \theta_i$ by means of the trapezoidal rule, exactly as in \eqref{e:trapezoidal}.

In order to obtain doubly connected $m$-fold $V$-states, we approximate $z_1$ and $z_2$ as in \eqref{e:z0cos}:
\begin{equation}
\label{e:z1z2cos}
z_1(\theta) = e^{i\theta}\left[b_1 + \sum_{k = 1}^M a_{1,k}\cos(m\,k\,\theta)\right], \qquad
z_2(\theta) = e^{i\theta}\left[b_2 + \sum_{k = 1}^M a_{2,k}\cos(m\,k\,\theta)\right],
\end{equation}

\noindent where the mean outer and inner radii are respectively $b_1$ and $b_2$; and we are imposing that $z_1(-\theta) = \bar z_1(\theta)$ and $z_2(-\theta) = \bar z_2(\theta)$, i.e., we are looking for $V$-states symmetric with respect to the $x$-axis. Again, if we choose $N$ of the form $N = m2^r$, then $M = \lfloor (m2^r-1)/(2m)\rfloor = 2^{r-1}-1$.

We introduce \eqref{e:z1z2cos} into \eqref{e:condition1z1} and \eqref{e:condition1z2}, and, as in \eqref{e:V-State0conditions}, we approximate the errors in \eqref{e:condition1z1} and \eqref{e:condition1z2} by their $M$-term sine expansions, which are respectively $\sum_{k = 1}^M b_{1,k}\sin(m\,k\,\theta)$ and $\sum_{k = 1}^M b_{2,k}\sin(m\,k\,\theta)$. Then, as in \eqref{e:F0}, the resulting systems of equations can be represented in a very compact way as
\begin{equation}
\label{e:F1}
\mathcal F_{b_1,b_2,\Omega}(a_{1,1}, \ldots, a_{1,M}\,,\, a_{2,1}, \ldots, a_{2,M}) = (b_{1,1}, \ldots, b_{1,M}\,,\, b_{2,1}, \ldots, b_{2,M}),
\end{equation}

\noindent for a certain $\mathcal F_{b_1,b_2,\Omega}\ : \ \mathbb{R}^{2M}\to\mathbb{R}^{2M}$. Remark that, for any $\Omega$, and any $0 < b_2 < b_1 < 1$, we have trivially $\mathcal F_{b_1b_2,\Omega}(\mathbf 0) = \mathbf 0$, i.e., any circular annulus is a solution of the problem. Therefore, the obtention of a doubly-connected $V$-state is reduced to finding numerically $\{a_{1,k}\}$ and $\{a_{2,k}\}$, such that $(a_{1,1}, \ldots, a_{1,M}\,,\, a_{2,1}, \ldots, a_{2,M})$ is a nontrivial root of \eqref{e:F1}. To do so, we discretize the $(2M\times2M)$-dimensional Jacobian matrix $\mathcal J$ of $\mathcal F_{b_1,b_2,\Omega}$ as in \eqref{e:derivative0}, taking $h = 10^{-9}$:
\begin{equation}
\label{e:derivative}
\begin{split}
& \frac{\partial \mathcal F_{b_1,b_2,\Omega}(a_{1,1}, \ldots, a_{1,M}\,,\, a_{2,1}, \ldots, a_{2,M})}{\partial a_{1,1}}
    \cr
& \approx \frac{\mathcal F_{b_1,b_2,\Omega}(a_{1,1} + h, a_{1,2}, \ldots, a_{1,M}\,,\, a_{2,1}, \ldots, a_{2,M}) - \mathcal F_{b_1,b_2,\Omega}(a_{1,1}, \ldots, a_{1,M}\,,\, a_{2,1}, \ldots, a_{2,M})}{h},
\end{split}
\end{equation}

\noindent Then, the sine expansion of \eqref{e:derivative} gives us the first row of $\mathcal J$, and so on. Hence, if the $n$-th iteration is denoted by $(a_{1,1}, \ldots, a_{1,M}\,,\, a_{2,1}, \ldots, a_{2,M})^{(n)}$, then the $(n+1)$-th iteration is given by
\begin{equation*}
\begin{split}
& (a_{1,1}, \ldots, a_{1,M}\,,\, a_{2,1}, \ldots, a_{2,M})^{(n+1)}
    \cr
& = (a_{1,1}, \ldots, a_{1,M}\,,\, a_{2,1}, \ldots, a_{2,M})^{(n)} - \mathcal F_{b_1,b_2,\Omega}\left((a_{1,1}, \ldots, a_{1,M}\,,\, a_{2,1}, \ldots, a_{2,M})^{(n)}\right)\cdot [\mathcal J^{(n)}]^{-1},
\end{split}
\end{equation*}

\noindent where $[\mathcal J^{(n)}]^{-1}$ denotes the inverse of the Jacobian matrix at $(a_{1,1}, \ldots, a_{1,M}\,,\, a_{2,1}, \ldots, a_{2,M})^{(n)}$. To make this iteration converge, it is usually enough to perturb the annulus by assigning a small value to ${a_{1,1}}^{(0)}$ or ${a_{2,1}}^{(0)}$, and leave the other coefficients equal to zero. Our stopping criterion is
\begin{equation*}
\max\left|\sum_{k = 1}^M b_{1,k}\sin(m\,k\,\theta)\right| < tol \quad \wedge \quad \max\left|\sum_{k = 1}^M b_{2,k}\sin(m\,k\,\theta)\right| < tol,
\end{equation*}

\noindent where $tol = 10^{-13}$. As in \cite{H-F-M-V} and \cite{H-H-H}, $a_{1,1}\cdot a_{2,1}<0$, so, for the sake of coherence, we change eventually the sign of all the coefficients $\{a_{1,k}\}$ and $\{a_{2,k}\}$, in order that, without loss of generality, $a_{1,1}>0$ and $a_{2,1}<0$.

\subsubsection{Numerical discussion}

Proposition \ref{asympbeh} states that, given $b_1\in]0,1[$ and $m\ge 2$, there is a certain $b_m^\star$, such that $b_2\in[0, b_m^\star]$. Let us recall that $b_m^\star$ is the only solution of
\begin{equation*}
m  = \frac{2+2(x/b_1)^m-(b_1^m+x^m)^2}{1-(x/b_1)^2}.
\end{equation*}

\noindent In Figure \ref{f:bmstar}, we have plotted $b_m^\star$ as a function of $b_1$, for $m = 2, \ldots, 20$.

\begin{figure}[!htb]
\center
\includegraphics[width=0.5\textwidth, clip=true]{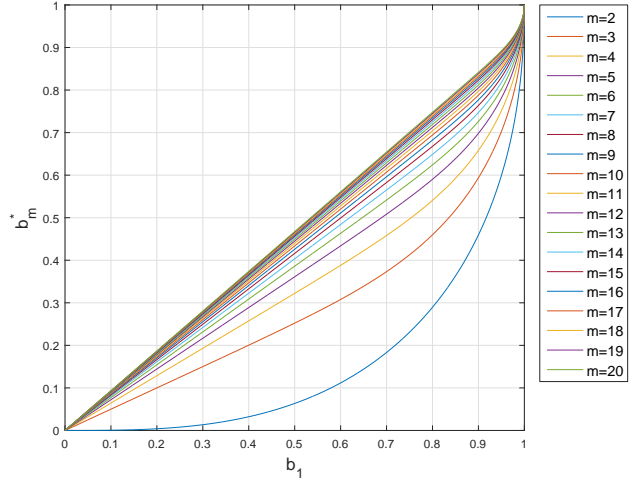}
\caption{$b_m^\star$ as a function of $b_1$, for $m = 2, \ldots, 20$.}
\label{f:bmstar}
\end{figure}

If we make $b_2 = b_m^\star$, then the discriminant $\Delta_m$ defined in Theorem \ref{thmV02} is equal to zero; and, in that case, $\Omega_m^+ = \Omega_m^-$, or, equivalently, $\lambda_m^+ = \lambda_m^-$.  Note that the relation between $\Omega^\pm$ and  $\lambda_m^\pm$ is given by
$$
\Omega_m^\pm=\frac12(1-\lambda_m^\mp).
$$In Figure \ref{f:lambda0.250.50.750.99}, we have plotted $\lambda_m^\pm$ as a function of $b_2\in[0, b_m^\star]$, for $m = 2, \ldots, 20$, and $b_1 \in \{0.25, 0.5, 0.75, 0.99\}$. 
We have also plotted in black the special case $m = 1$, where $b_2\in[0, b_1]$, $\lambda_1^+=1+b_2^2-b_1^2$, and $\lambda_1^-=(b_2/b_1)^2$. Observe that, whereas the curves $\lambda^+_m$ and $\lambda^-_m$ are disjoint for $m\ge 2$; $\lambda_1^+$ may intersect $\lambda_m^+$ or $\lambda_m^-$. It is particularly interesting to see what happens when $b_1$ is close to one; indeed, when $b_1 = 0.99$, the curves $\lambda_m^-$ become practically indistinguishable.
%
\begin{figure}[!htb]
\center
\includegraphics[width=0.5\textwidth, clip=true]{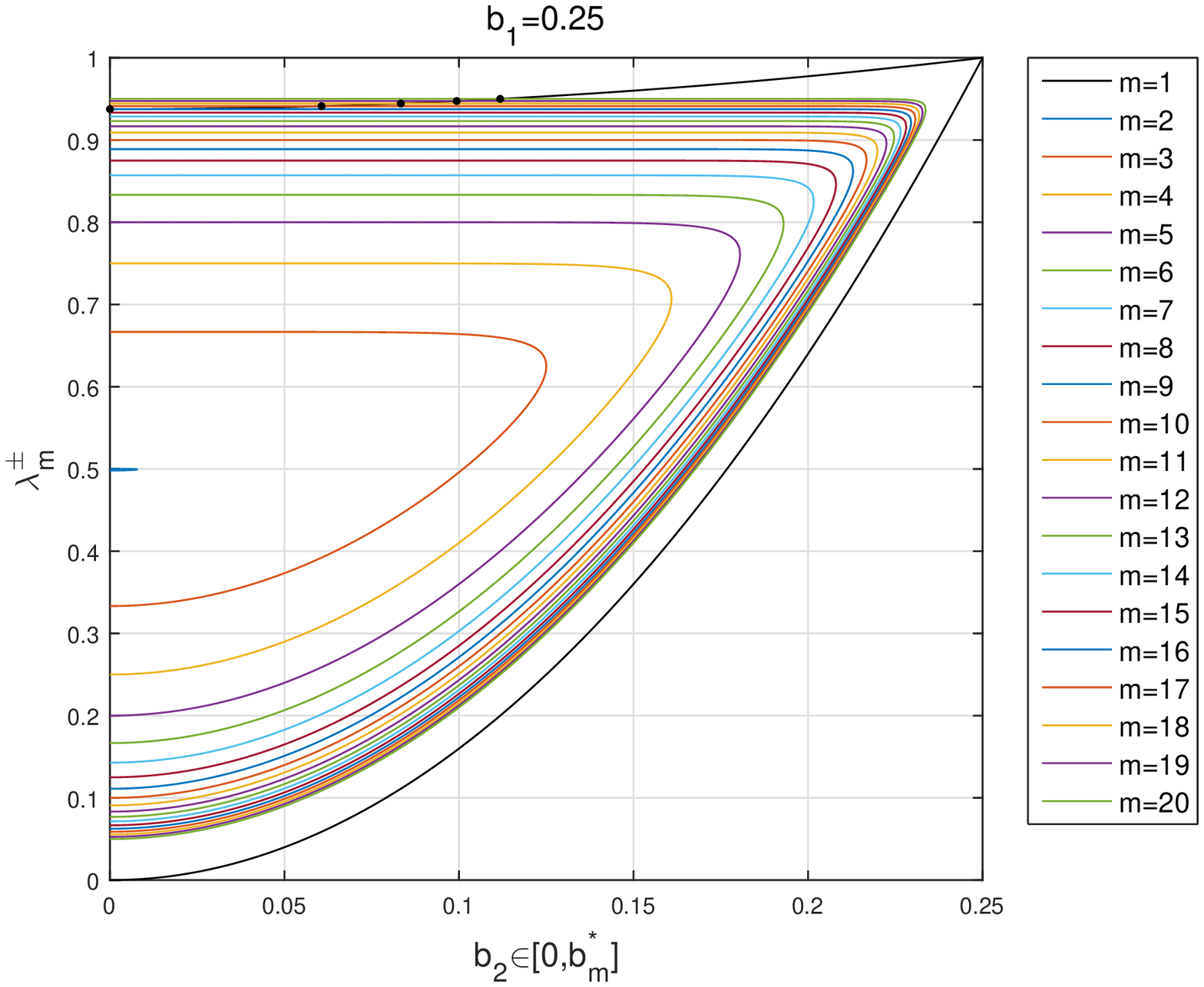}~
\includegraphics[width=0.5\textwidth, clip=true]{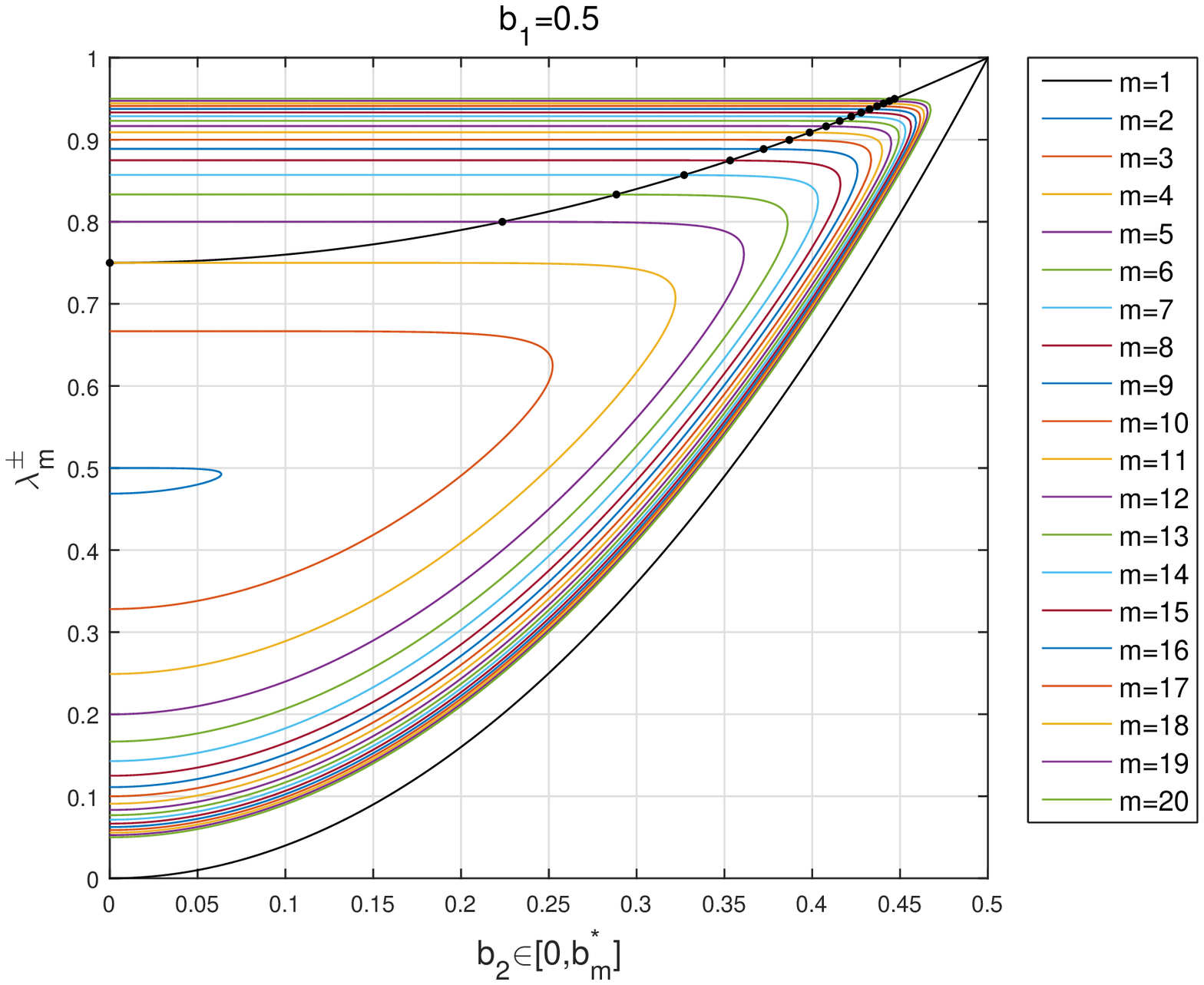}

\bigskip

\includegraphics[width=0.5\textwidth, clip=true]{T3.eps}~
\includegraphics[width=0.5\textwidth, clip=true]{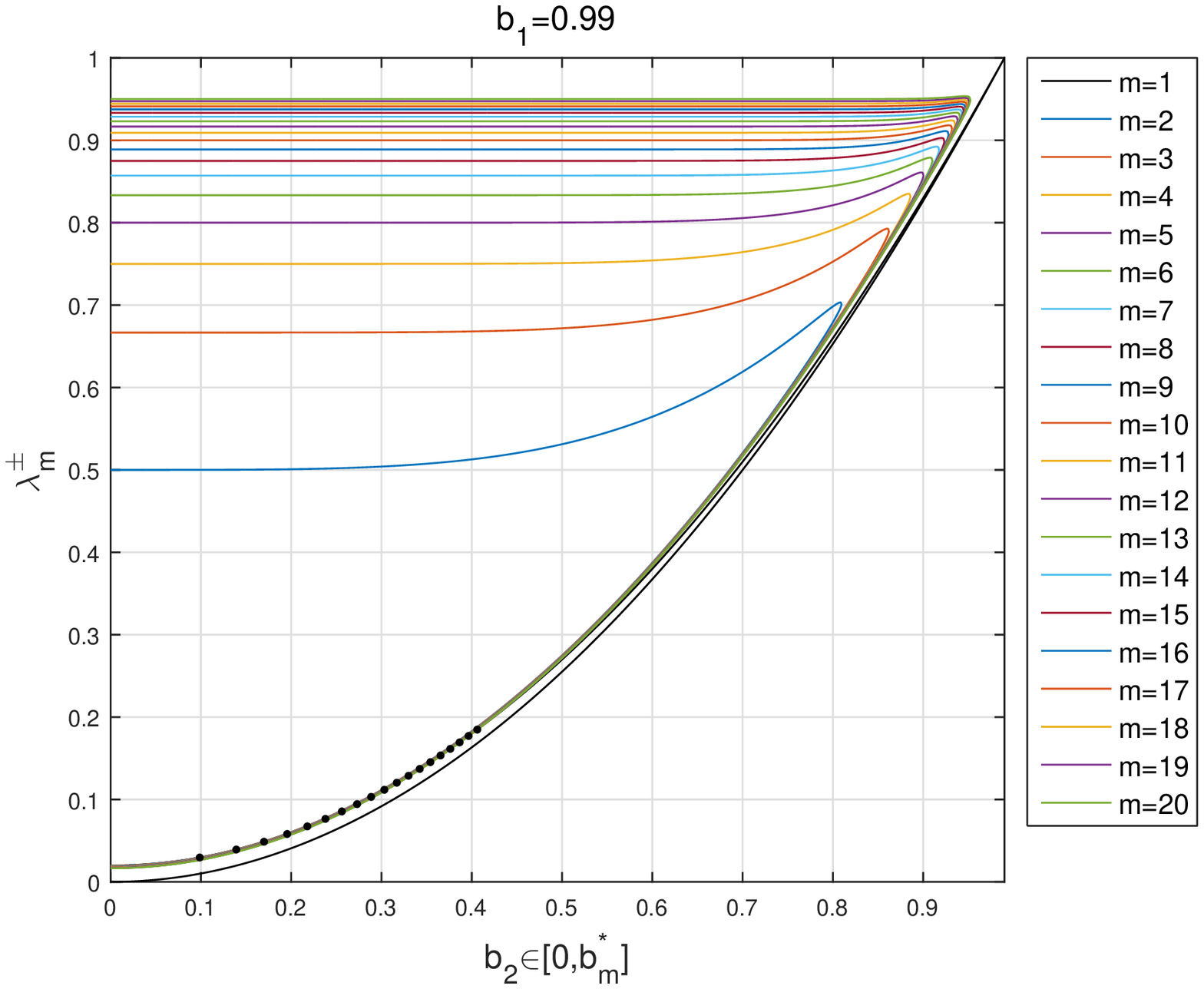}
\caption{$\lambda_m^\pm$ as a function of $b_2\in[0, b_m^\star]$, for $m = 2, \ldots, 20$, together with the case $m = 1$ (black), for $b_1 \in \{0.25, 0.5, 0.75, 0.99\}$. We have marked with a small black dot the intersections happening between the case $m = 1$ and the other cases.}
\label{f:lambda0.250.50.750.99}
\end{figure}

Although Figure \ref{f:lambda0.250.50.750.99} gives a fairly good idea of the structure of $\lambda_m^\pm$, it may be clarifying to show globally how the curves in Figure \ref{f:lambda0.250.50.750.99} behave as $b_1$ changes, for a fixed $m$. In Figure \ref{f:lambda3D}, we have plotted $\lambda_m^\pm$ as a function of $b_2\in[0, b_m^\star]$, for $m = 2, 3, 4$, and for all $b_1\in]0,1[$; in such a way that, for a given $b_1$, the intersection between $z = b_1$ and the resulting surfaces yields curves equivalent to those in Figure \ref{f:lambda3D}. In general, the surfaces corresponding to $m\ge 3$ are very similar. On the other hand, Figure \ref{f:lambda3D} shows that, when $m = 2$, and $b_1$ is \textit{not too large}, the size of the curves $(b_2, \lambda_2^\pm)$ is \textit{very small}; indeed, in Figure \ref{f:lambda0.250.50.750.99}, $(b_2, \lambda_2^\pm)$ is hardly visible, when $b_1 = 0.25$. A similar observation can be done with respect to the case $m = 2$ in \mbox{Figure \ref{f:bmstar},} which is markedly different from the others.
\begin{figure}[!htb]
\center
\includegraphics[width=0.3333\textwidth, clip=true]{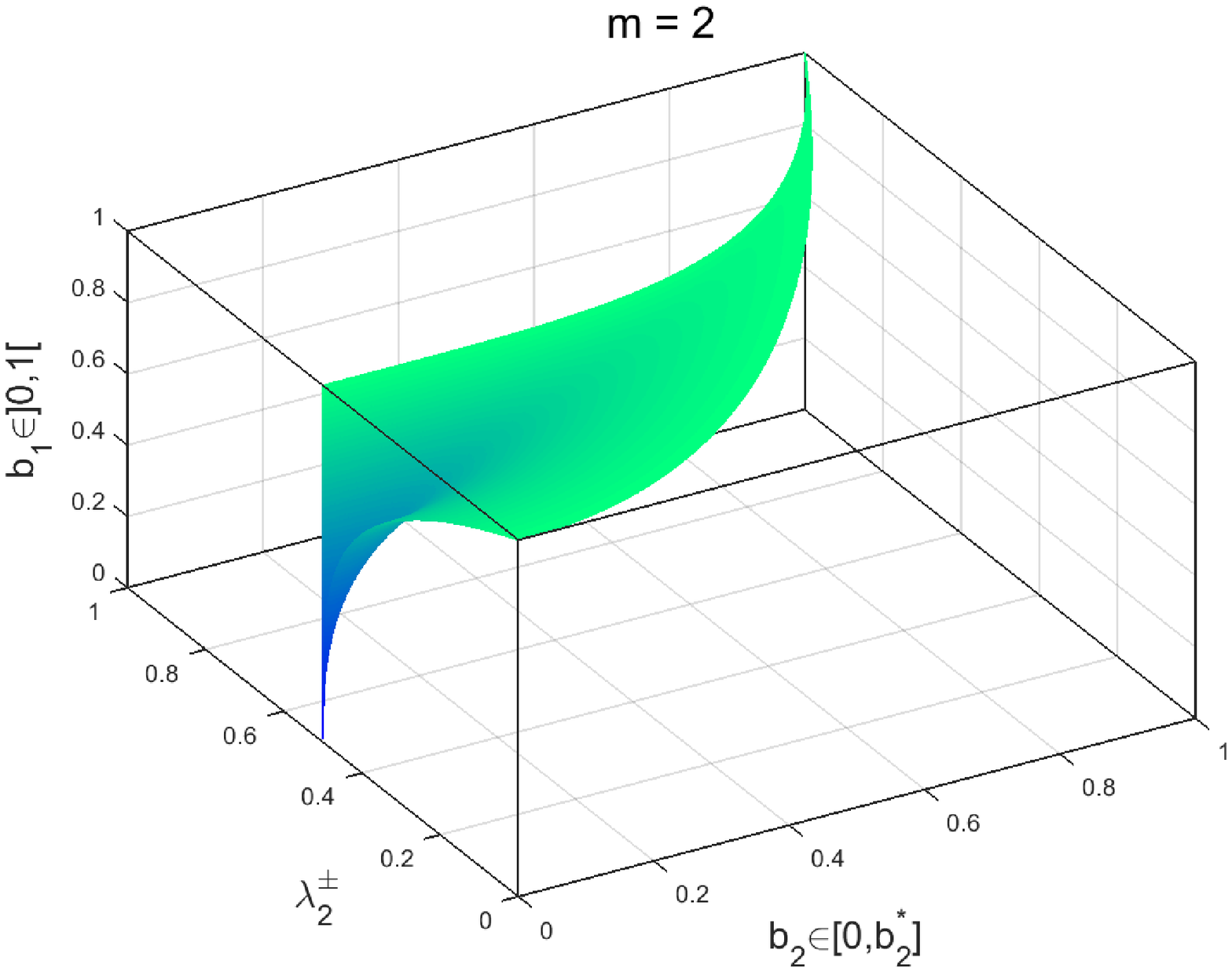}~
\includegraphics[width=0.3333\textwidth, clip=true]{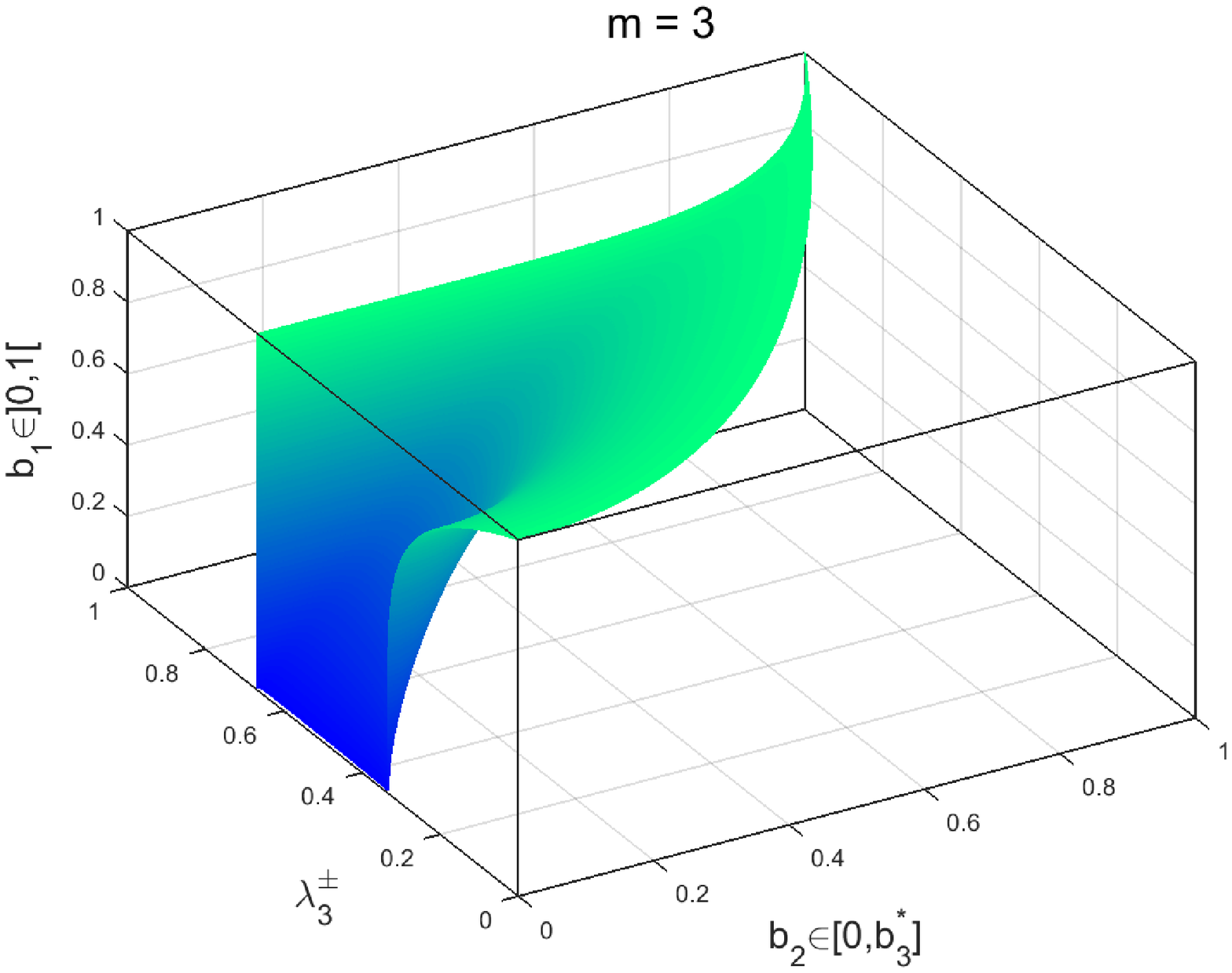}~
\includegraphics[width=0.3333\textwidth, clip=true]{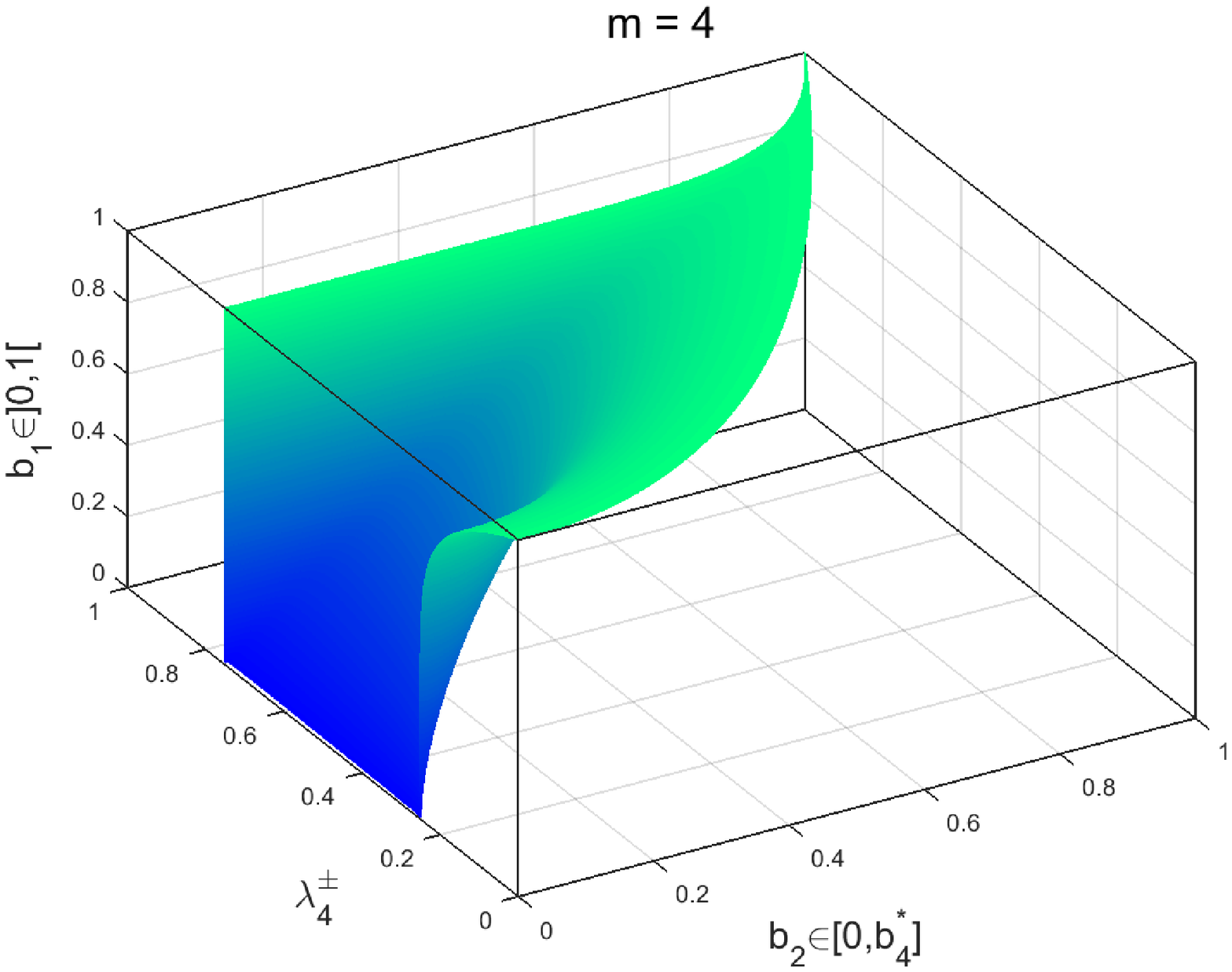}~
\caption{$\lambda_m^\pm$ as a function of $b_2\in[0, b_m^\star]$, for $m = 2, 3, 4$, and for all $b_1\in]0,1[$.}
\label{f:lambda3D}
\end{figure}

As in the simply-connected case, we use $\Omega = (1 - \lambda) / 2$ as our bifurcation parameter. In order to treat the saddle-node bifurcation points \cite{Kil} that may appear during the bifurcation process, we use again the techniques described in \cite{H-H-H}.

Before illustrating the shape of the doubly-connected $V$-states, let us mention that the situation is much more involved than in the simply-connected case, where there were roughly two situations for all $m$: $b$ \textit{close} to one, and $b$ \textit{not so close} to one. Indeed, we have to play now with both the proximity of $b_1$ to one, and that of $b_2$ to $b_m^\star$. Furthermore, we can start the bifurcation from the annulus of radii $b_1$ and $b_2$ at two different values of $\Omega$, i.e., $\Omega_m^+$ and $\Omega_m^-$. Finally, the case $m = 1$ needs to be studied individually. All in all, we have detected the following scenarios.

When $m \ge 3$, there are roughly three cases, when starting to bifurcate at $\Omega_m^+$; and two cases, when starting to bifurcate at $\Omega_m^-$. More precisely, if we start to bifurcate at $\Omega_m^+$, we have to distinguish whether:
\begin{itemize}
\item $b_2$ is \textit{very close} to $b_m^\star$. In that case, it seems possible to obtain $V$-states for all $\Omega\in]\Omega_m^-,\Omega_m^+[$, very much like in \cite{H-F-M-V}, irrespectively of the size of $b_1$. For example, in Figure \ref{f:VState1holen4b10.8b20.53}, we have calculated the $V$-states corresponding to $m = 4$, $b_1 = 0.8$, $b_2 = 0.53$. Observe that $b_4^\star = 0.5407\ldots$, i.e., we have chosen $b_2$ \textit{close enough} to $b_4^\star$. On the right-hand side, we have plotted the bifurcation diagram of the coefficients $a_{1,1}$ and $a_{2,1}$ in \eqref{e:z1z2cos} against $\Omega$, which shows that there is indeed a continuous bifurcation branch that joins $\Omega_m^-$ and $\Omega_m^+$, where $\Omega_4^- = 0.1335\ldots$, $\Omega_4^+ = 0.1671\ldots$. On the left-hand side, we have plotted $V$-states for four different values of $\Omega\in]\Omega_m^-,\Omega_m^+[$.
\begin{figure}[!htb]
\center
\includegraphics[width=0.5\textwidth, clip=true]{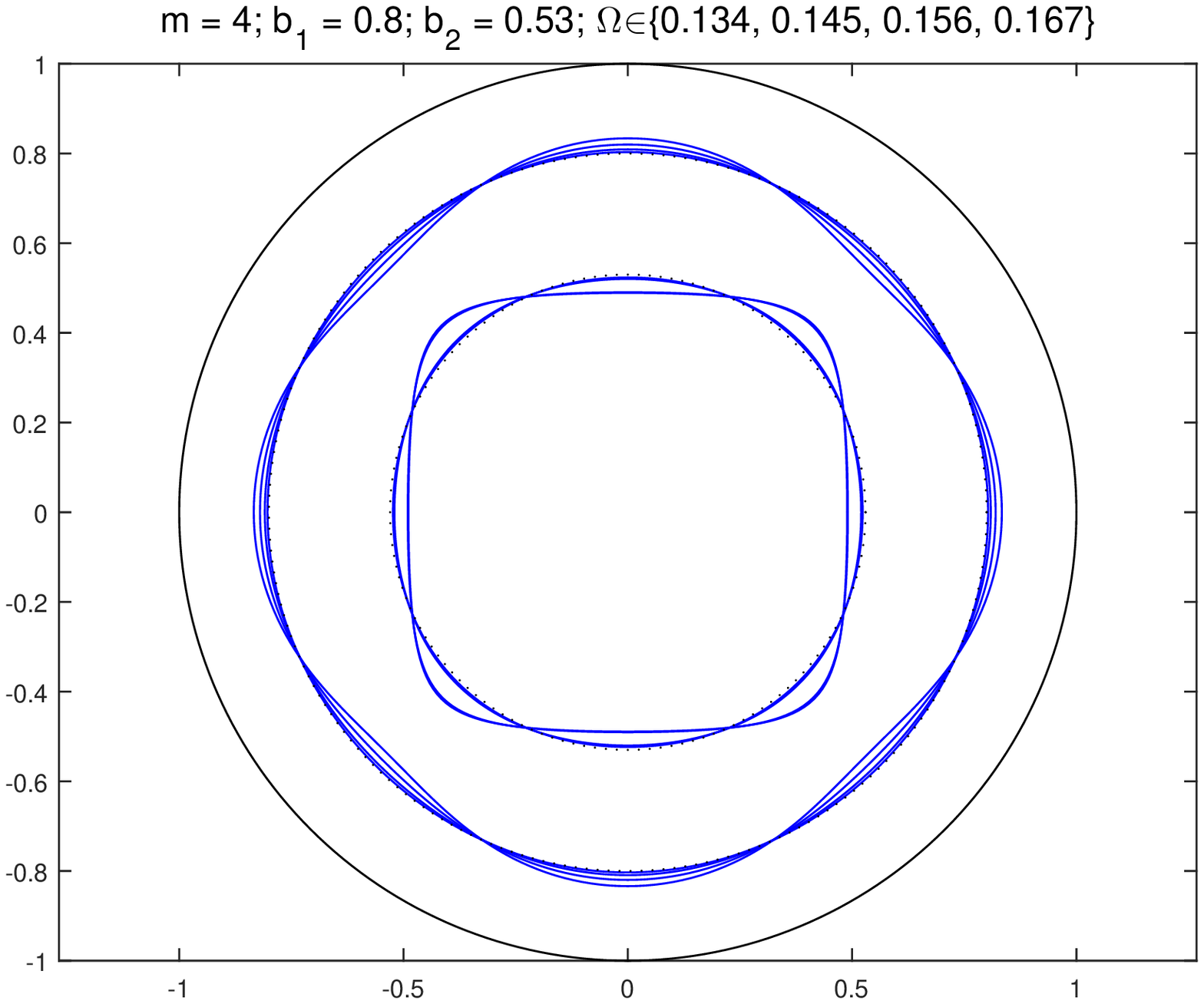}~
\includegraphics[width=0.5\textwidth, clip=true]{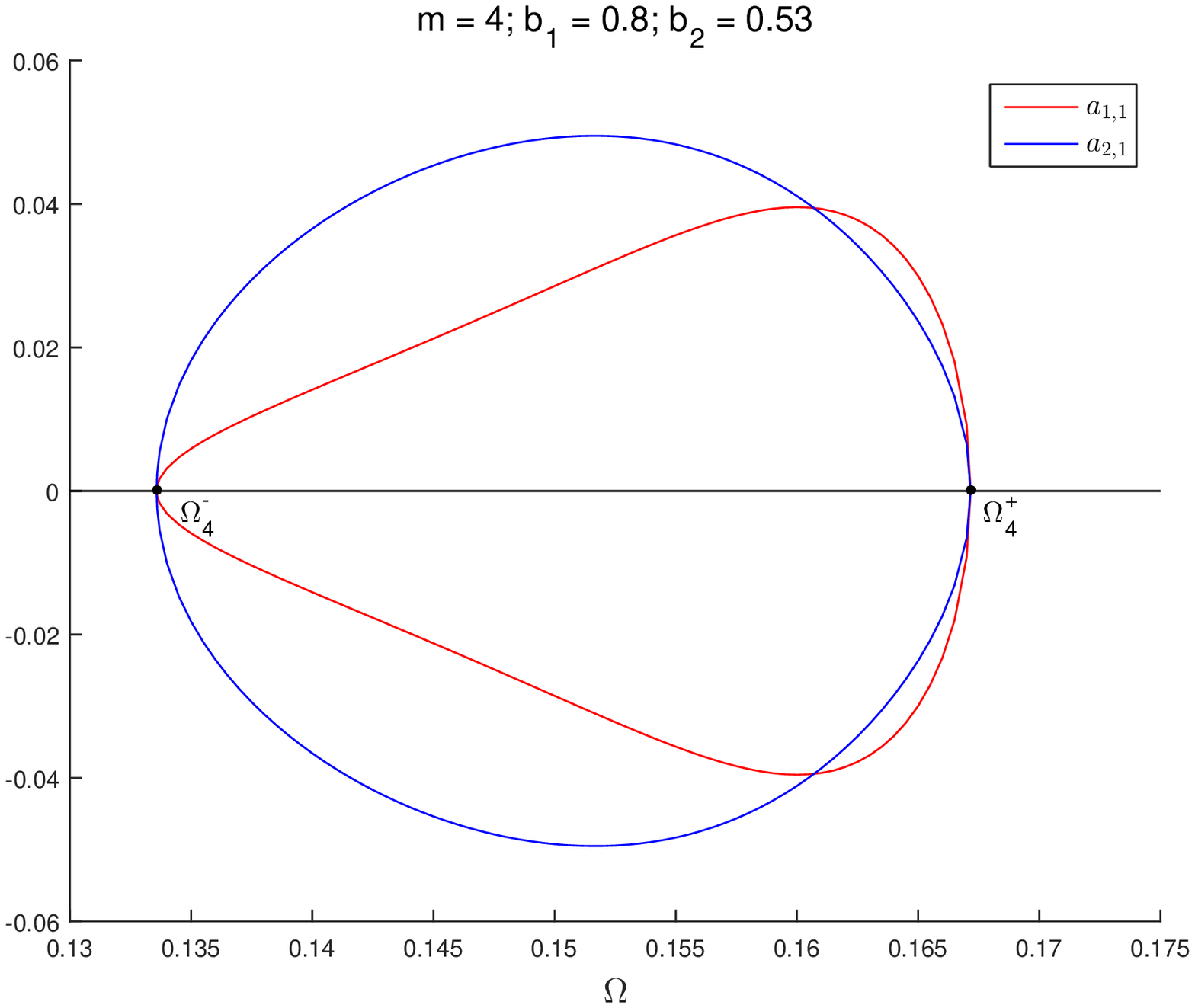}
\caption{Left: $V$-states corresponding to $m = 4$, $b_1 = 0.8$, $b_2 = 0.53$, and several values of $\Omega$. Right: bifurcation diagram. $N = 256$.
}
\label{f:VState1holen4b10.8b20.53}
\end{figure}

\item $b_1$ is \textit{close} to one, and $b_2$ is \textit{small enough}, there are limiting $V$-states, for which the distance between the outer boundary $z_1$ and the unit circumference tends to zero; but the inner boundary $z_2$ does not deviate greatly from the circumference of radius $b_2$. On the left-hand side of Figure \ref{f:limitn4b10_8b20_3}, we have approximated the limiting $V$-state corresponding to $m = 4$, $b_1 = 0.8$, $b_2 = 0.3$. The shape of $z_1$ is not very far from the case $m = 4$, $b = 0.8$ of Figure \ref{f:limiting0holem1234}.

\begin{figure}[!htb]
\center
\includegraphics[width=0.5\textwidth, clip=true]{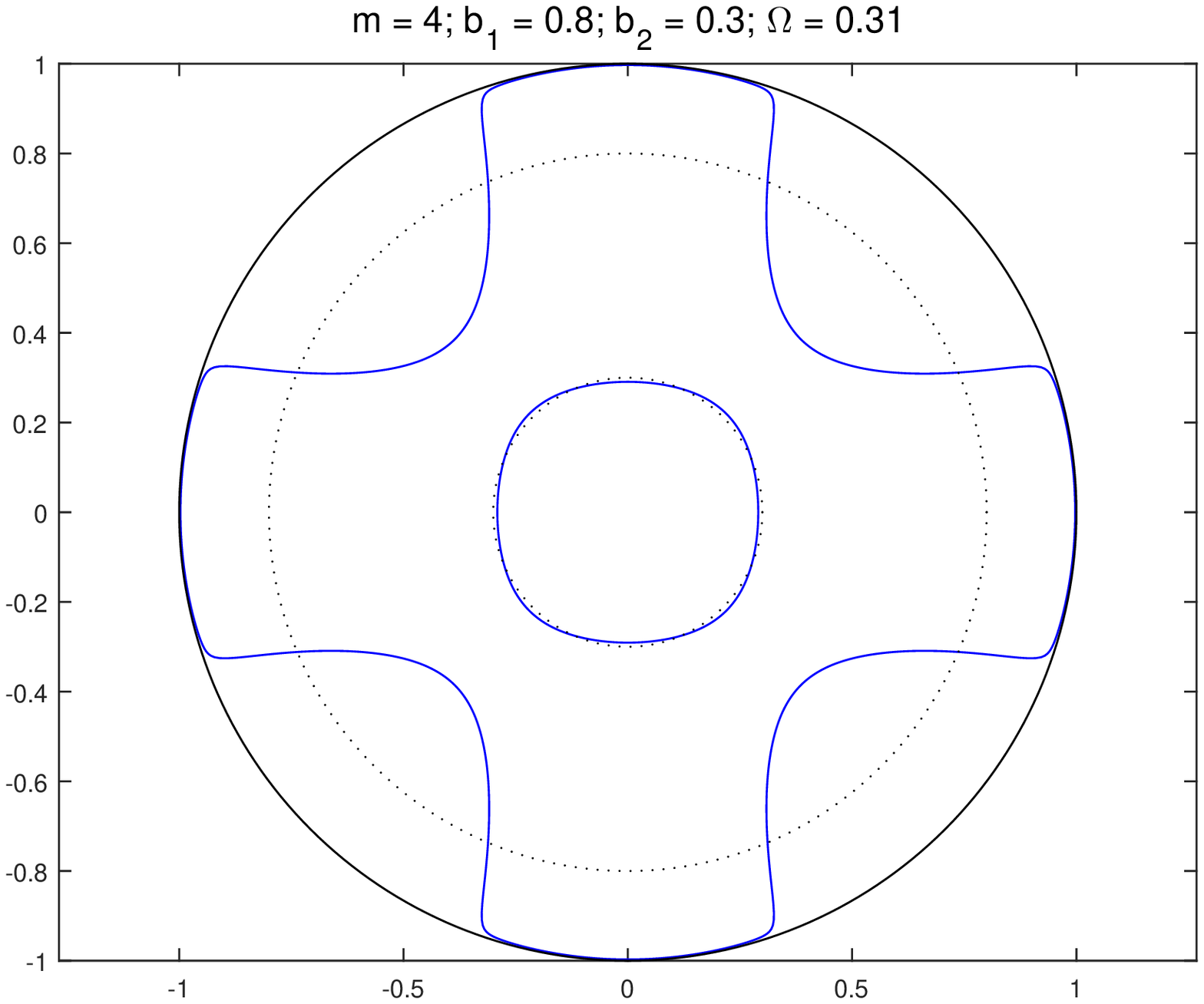}~
\includegraphics[width=0.5\textwidth, clip=true]{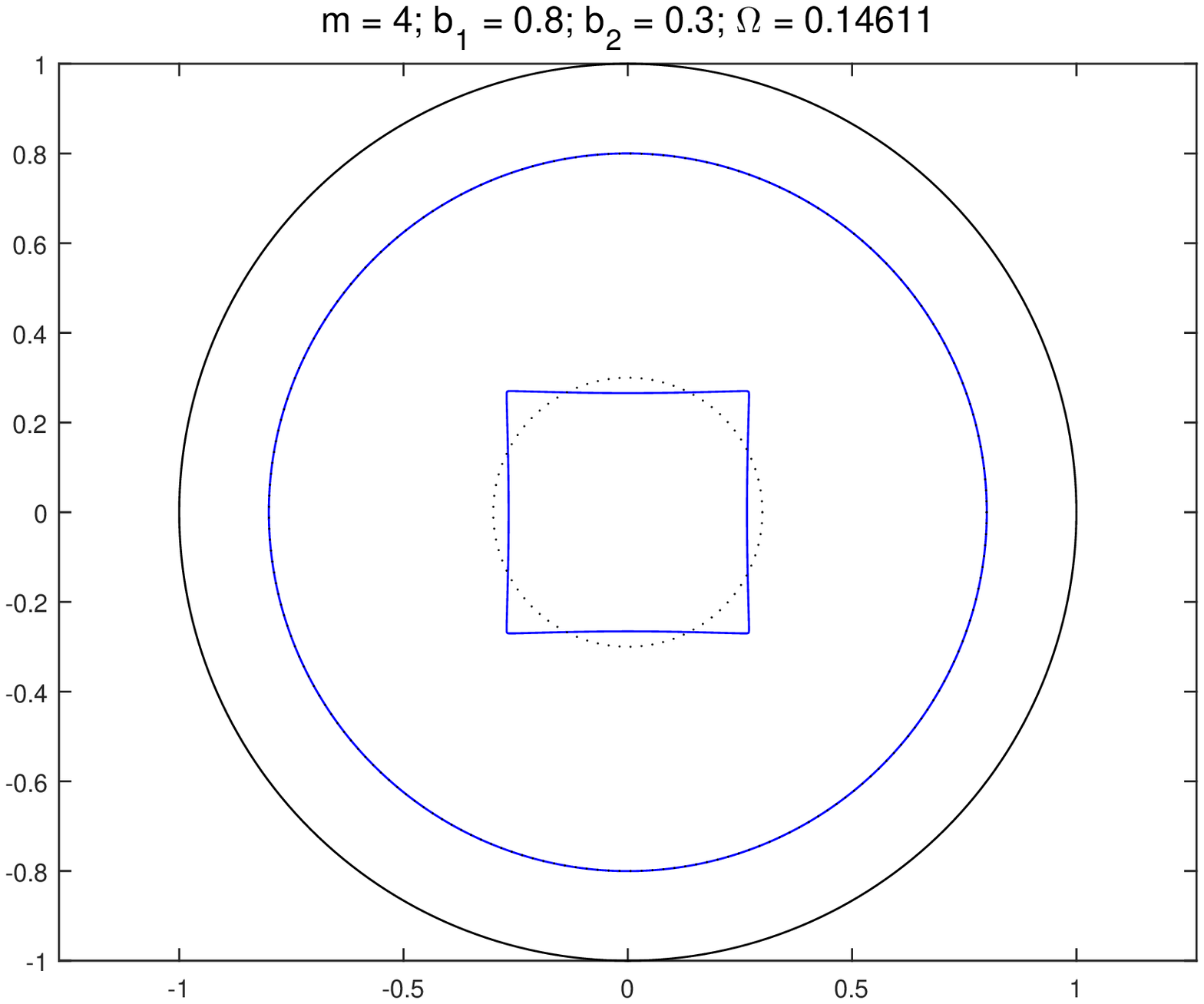}

\caption{Approximation to the limiting $V$-states corresponding to $m = 4$, $b_1 = 0.8$, $b_2 = 0.3$. Left: we have started to bifurcate at $\Omega_4^+ = 0.3256\ldots$, taking $\Omega < \Omega_4^+$. Right: we have started to bifurcate at $\Omega_4^- = 0.1250\ldots$, taking $\Omega > \Omega_4^-$. $N = 1024$.
}
\label{f:limitn4b10_8b20_3}
\end{figure}

\item $b_1$ and $b_2$ do not fit in the previous two cases. In that case, there are also limiting $V$-states, characterized by the appearance of corner-shaped singularities in $z_1$ or $z_2$. In Figure \ref{f:limitn4others}, we have approximated the limiting $V$-states corresponding to $m = 4$, $b_1 = 0.8$, $b_2 = 0.4$ (left); and to $m = 4$, $b_1 = 0.6$, $b_2 = 0.3$ (right). Observe that the influence of the rigid boundary seems less perceptible in the second example, which, accordingly, does not differ too much from those in \cite{H-F-M-V}.

\begin{figure}[!htb]
\center
\includegraphics[width=0.5\textwidth, clip=true]{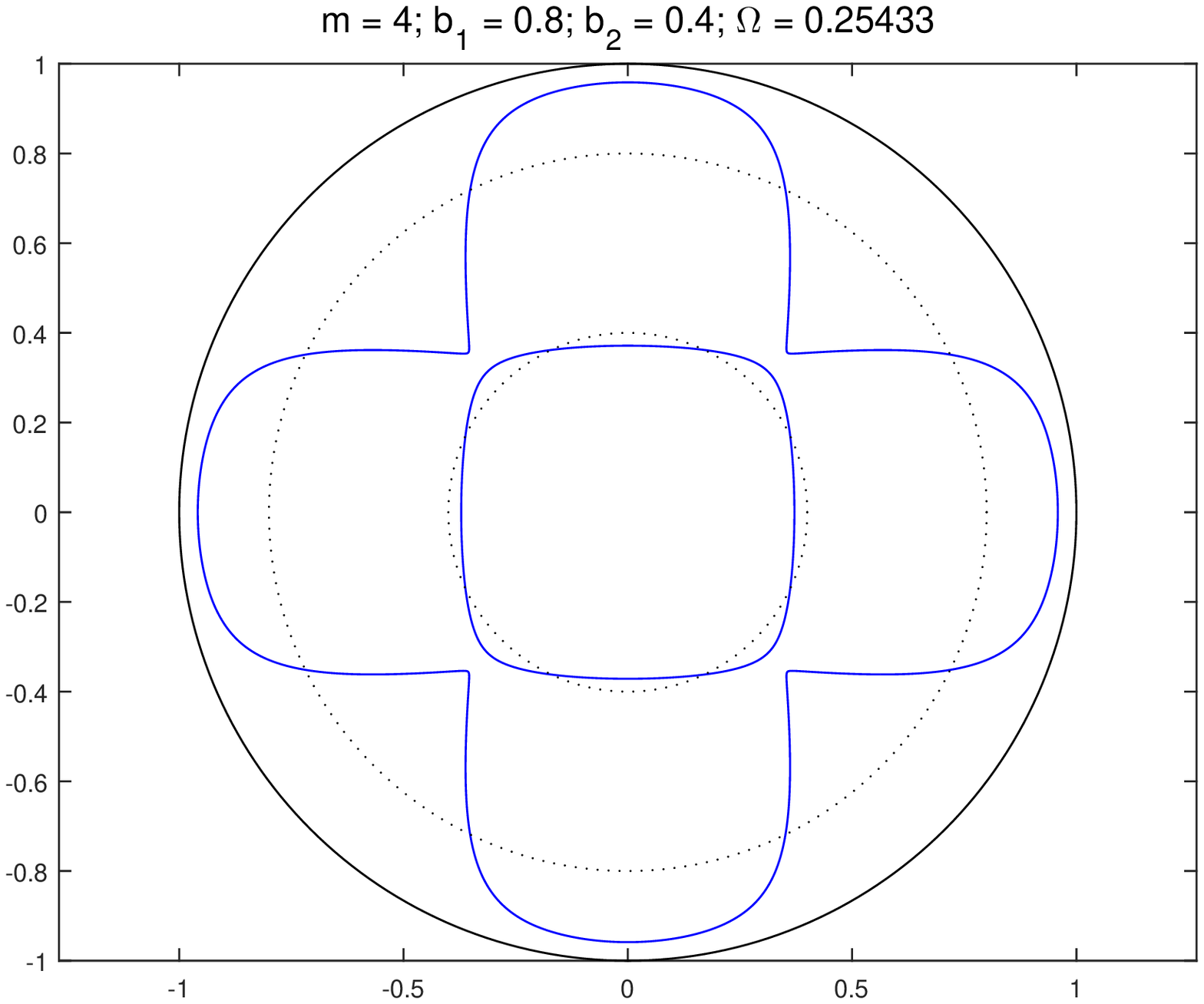}~
\includegraphics[width=0.5\textwidth, clip=true]{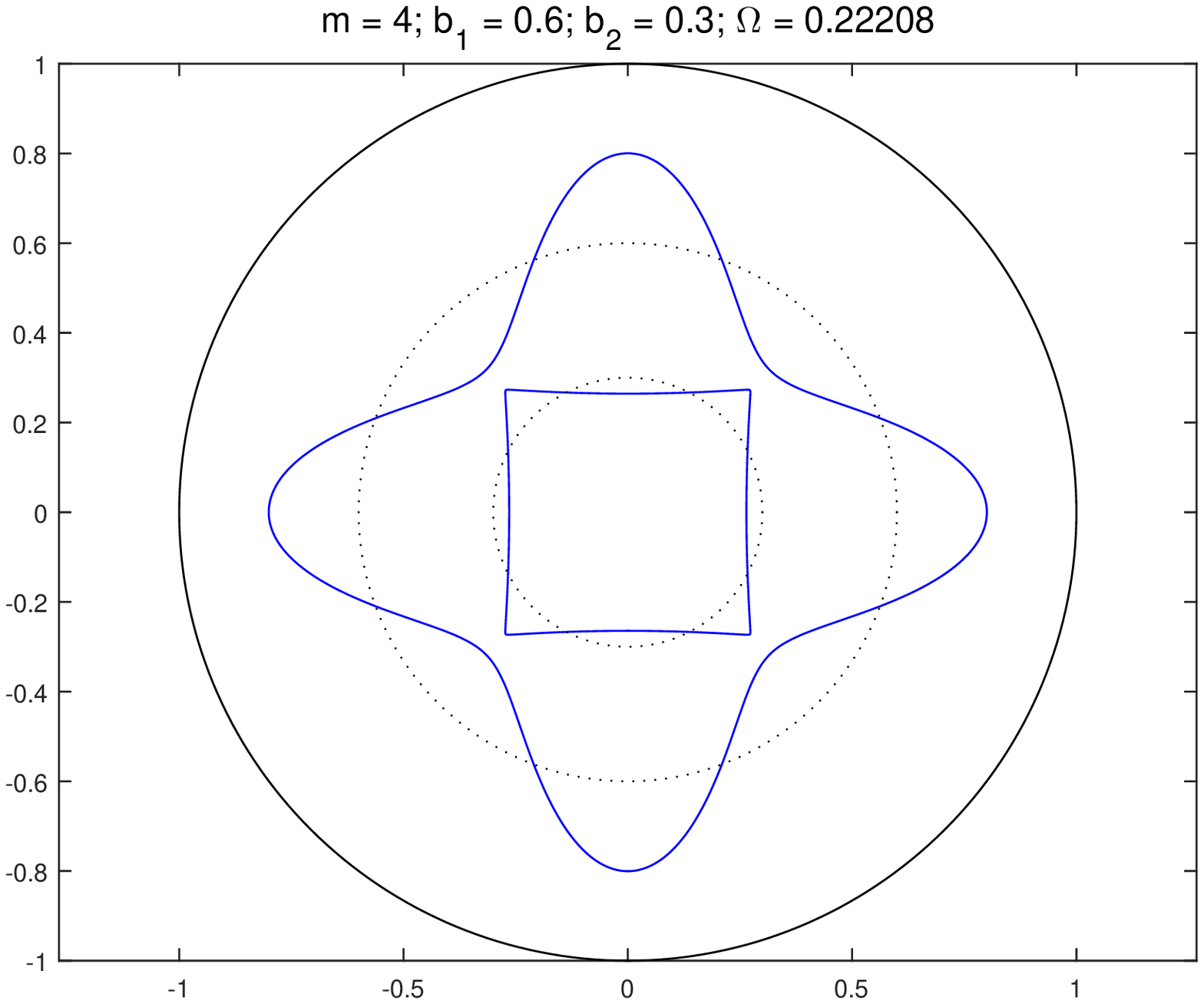}

\caption{Left: Approximation to the limiting $V$-state corresponding to $m = 4$, $b_1 = 0.8$, $b_2 = 0.4$, starting to bifurcate at $\Omega_4^+ = 0.2706$, taking $\Omega < \Omega_4^+$. Right: Approximation to the limiting $V$-state corresponding to $m = 4$, $b_1 = 0.6$, $b_2 = 0.3$, starting to bifurcate at $\Omega_4^+ = 0.2516$, taking $\Omega < \Omega_4^+$. $N = 1024$.
}
\label{f:limitn4others}
\end{figure}

    Although the distance between $z_1$ and the unit circumference is always strictly positive; the distance between $z_1$ and $z_2$ is sometimes very small, and we can not exclude in advance the existence of limiting $V$-states where $z_1$ and $z_2$ actually touch each other. For instance, after playing with the values of $b_1$ and $b_2$, we have found that the choice of $b_1 = 0.72$, $b_2 = 0.32$ enables us to find a $V$-state, such that the distance between $z_1$ and $z_2$ is of about $7\times10^{-3}$. This $V$-state is plotted in Figure \ref{f:limitn4selfintersection}, together with a zoom of one apparent intersection of the boundaries, that shows that there is really no intersection, and that the nodal resolution is adequate.

\begin{figure}[!htb]
\center
\includegraphics[width=0.5\textwidth, clip=true]{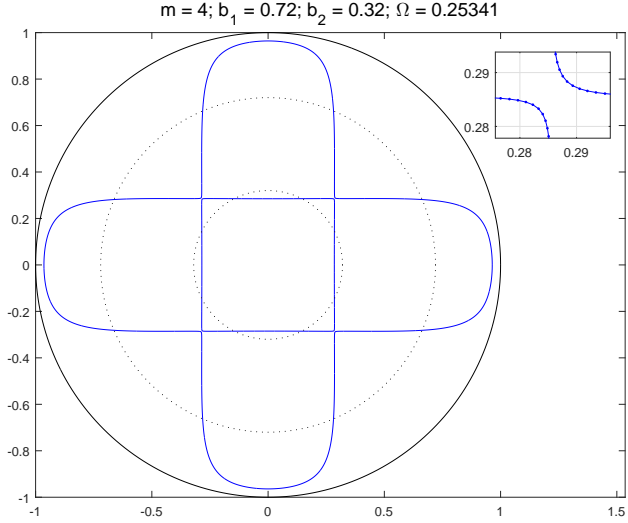}

\caption{Approximation to the limiting $V$-state corresponding to $m = 4$, $b_1 = 0.72$, $b_2 = 0.32$, starting to bifurcate at $\Omega_4^+ = 0.2851$, taking $\Omega < \Omega_4^+$. $N = 2048$. The zoom shows that that the boundaries are very close from each other, but there is no intersection.
}
\label{f:limitn4selfintersection}
\end{figure}

\end{itemize}

On the other hand, if we start to bifurcate at $\Omega_m^-$, we have to distinguish whether:
\begin{itemize}

\item $b_2$ is \textit{very close} to $b_m^\star$. This case has been explained above. In fact, it is irrelevant whether we start to bifurcate at $\Omega_m^-$ or at $\Omega_m^+$.

\item $b_2$ is not \textit{close enough} to $b_m^\star$. In that case, there are limiting $V$-states, characterized by the appearance of corner-shaped singularities in $z_2$, whereas the outer boundary $z_1$ does not deviate greatly from the circumference of radius $b_1$. On the right-hand side of Figure \ref{f:limitn4b10_8b20_3}, we have approximated the limiting $V$-state corresponding to $m = 4$, $b_1 = 0.8$, $b_2 = 0.3$. We have not bothered to plot the $V$-states corresponding to those in Figures \ref{f:limitn4others} and \ref{f:limitn4selfintersection}, but starting to bifurcate at $\Omega_m^-$, because they are virtually identical, up to a scaling of $z_2$. This case closely matches that in \cite{H-F-M-V}, and the inner boundary resembles the simply-connected $V$-states in \cite{DZ}.

\end{itemize}

Summarizing, if we compare the doubly-connected $V$-states just described, with those in \cite{H-F-M-V}, we conclude that the truly unique case here is when $b_1$ is \textit{close} to one, and $b_2$ is \textit{small enough}. In what regards the case $m = 2$, everything said above is applicable. For example, in Figure \ref{f:limitn2b10_9b20_2}, we have taken $b_1 = 0.9$ and $b_1 = 0.2$, i.e., a value of $b_1$ \textit{close} to one and a value of $b_2$ \textit{small enough}. On the left-hand side, we show an approximation to the limiting $V$-state appearing when starting to bifurcate at $\Omega_2^+$; note the clear parallelism with the case $m = 2$, $b = 0.9$ of Figure \ref{f:limiting0holem1234}, and with the left-hand side of Figure \ref{f:limitn4b10_8b20_3}. On the right-hand side, we show an approximation to the limiting $V$-state appearing when starting to bifurcate at $\Omega_2^-$; as in the right-hand side of Figure \ref{f:limitn4b10_8b20_3}, corner-shaped singularities seem to d
 evelop in $z_2$, whereas $z_1$ has barely deviated from a circumference.
\begin{figure}[!htb]
\center
\includegraphics[width=0.5\textwidth, clip=true]{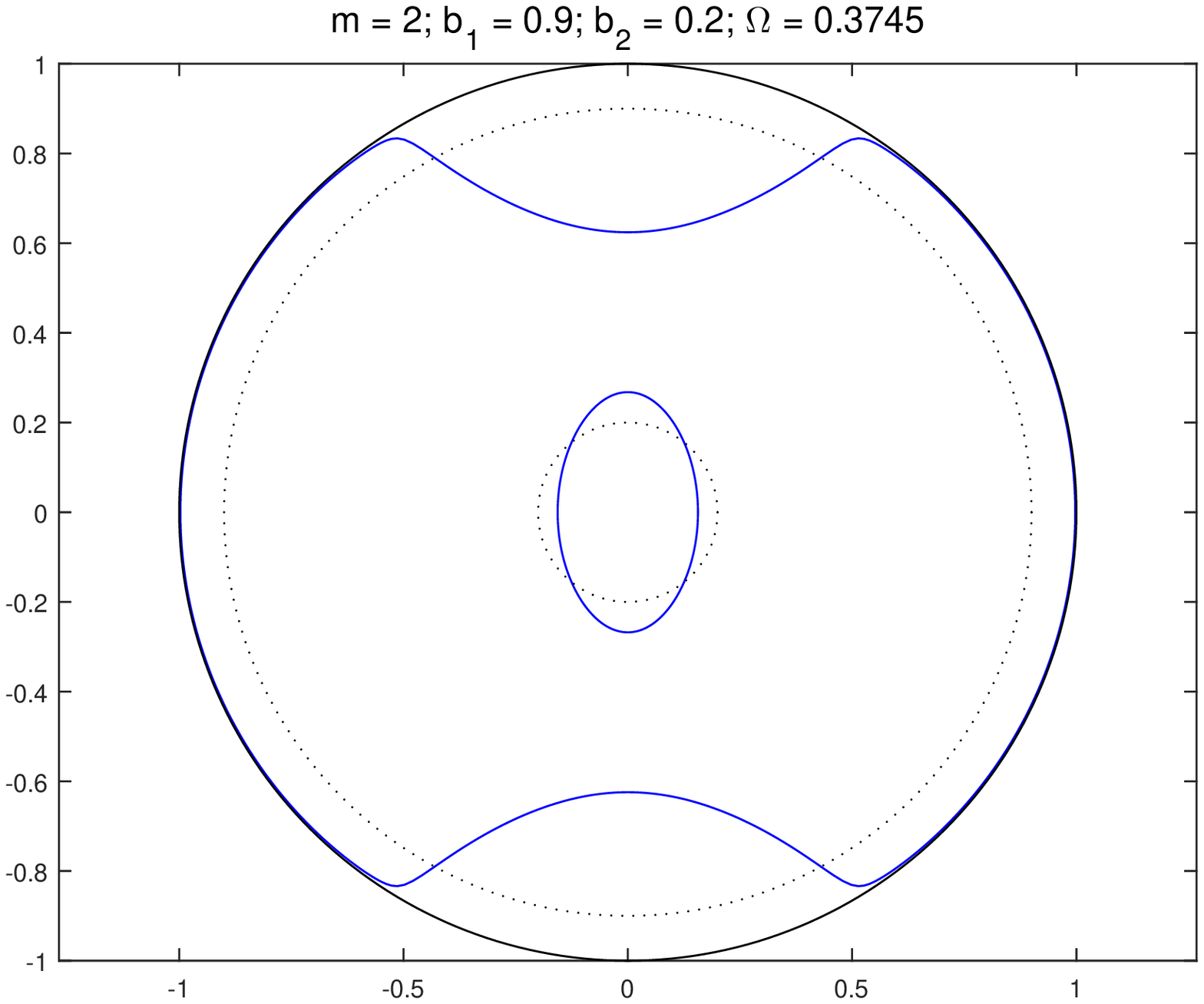}~
\includegraphics[width=0.5\textwidth, clip=true]{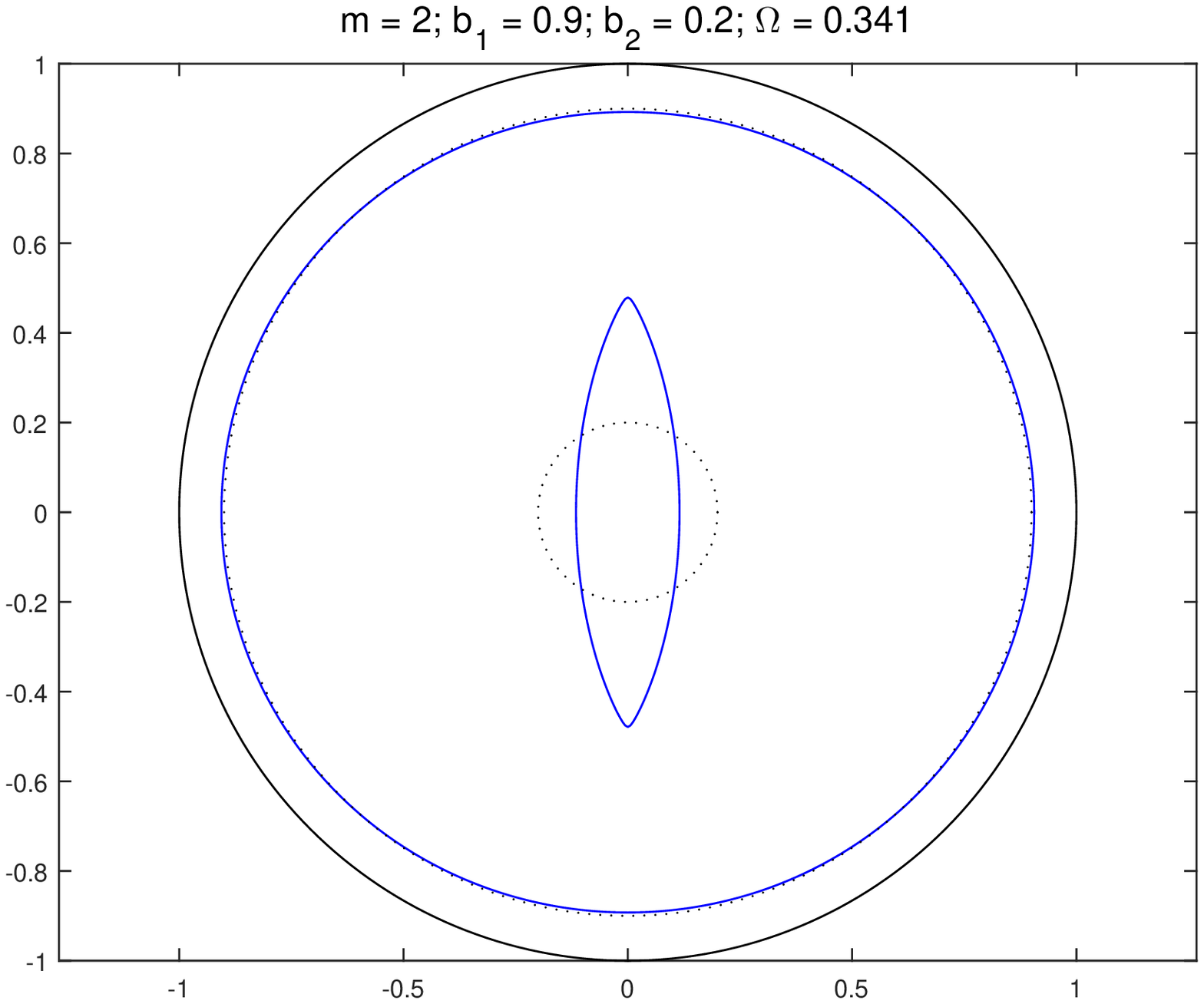}

\caption{Left: Approximation to the limiting $V$-states corresponding to $m = 2$, $b_1 = 0.9$, $b_2 = 0.2$, starting to bifurcate at $\Omega_2^+ = 0.3892\ldots$, taking $\Omega < \Omega_2^+$. Right: we have started to bifurcate at $\Omega_2^- = 0.2497\ldots$, taking $\Omega > \Omega_2^-$. $N = 512$.
}
\label{f:limitn2b10_9b20_2}
\end{figure}

The case $m = 1$ deserves also some comment. In Figure \ref{f:limitn1b10_9b20_3}, we have approximated the limiting $V$-states corresponding to $m = 1$, taking again a value of $b_1$ \textit{close} to one and a value of $b_2$ \textit{small enough}, more precisely, $b_1 = 0.9$, $b_2 = 0.3$. On the left-hand side, we have started to bifurcate at $\Omega_1^+$; and on the right-hand side, we have started to bifurcate at $\Omega_1^-$. It is remarkable that, in both cases, the distance of $z_1$ to the unit circumference is smaller than $10^{-2}$. Moreover, even if the $V$-state on the left-hand side is roughly in agreement with Figure \ref{f:limiting0holem1234}, and with the left-hand sides of Figures \ref{f:limitn4b10_8b20_3} and \ref{f:limitn2b10_9b20_2}; the $V$-state on the right-hand side exhibits a completely different, unexpected behavior.

\begin{figure}[!htb]
\center
\includegraphics[width=0.5\textwidth, clip=true]{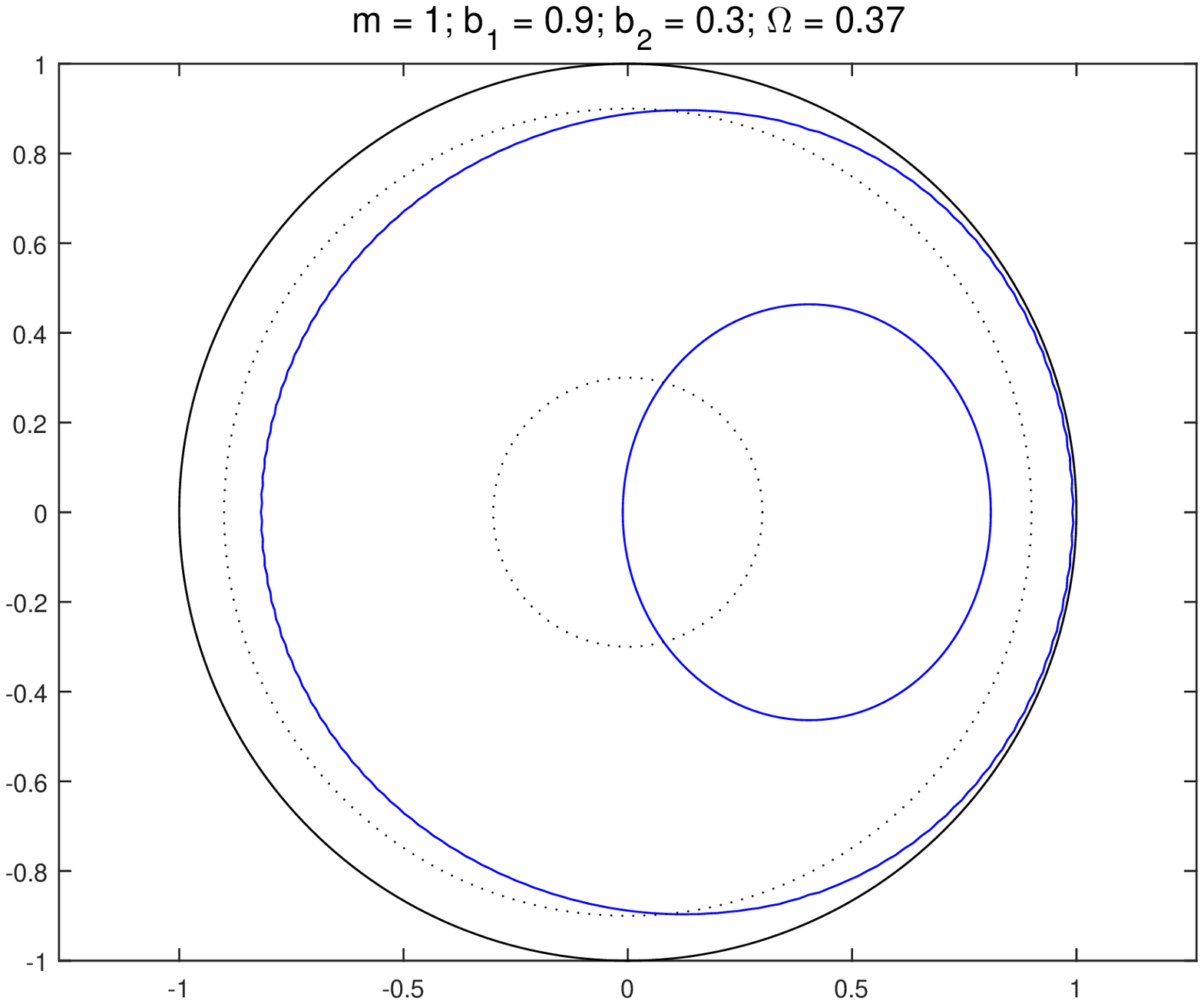}~
\includegraphics[width=0.5\textwidth, clip=true]{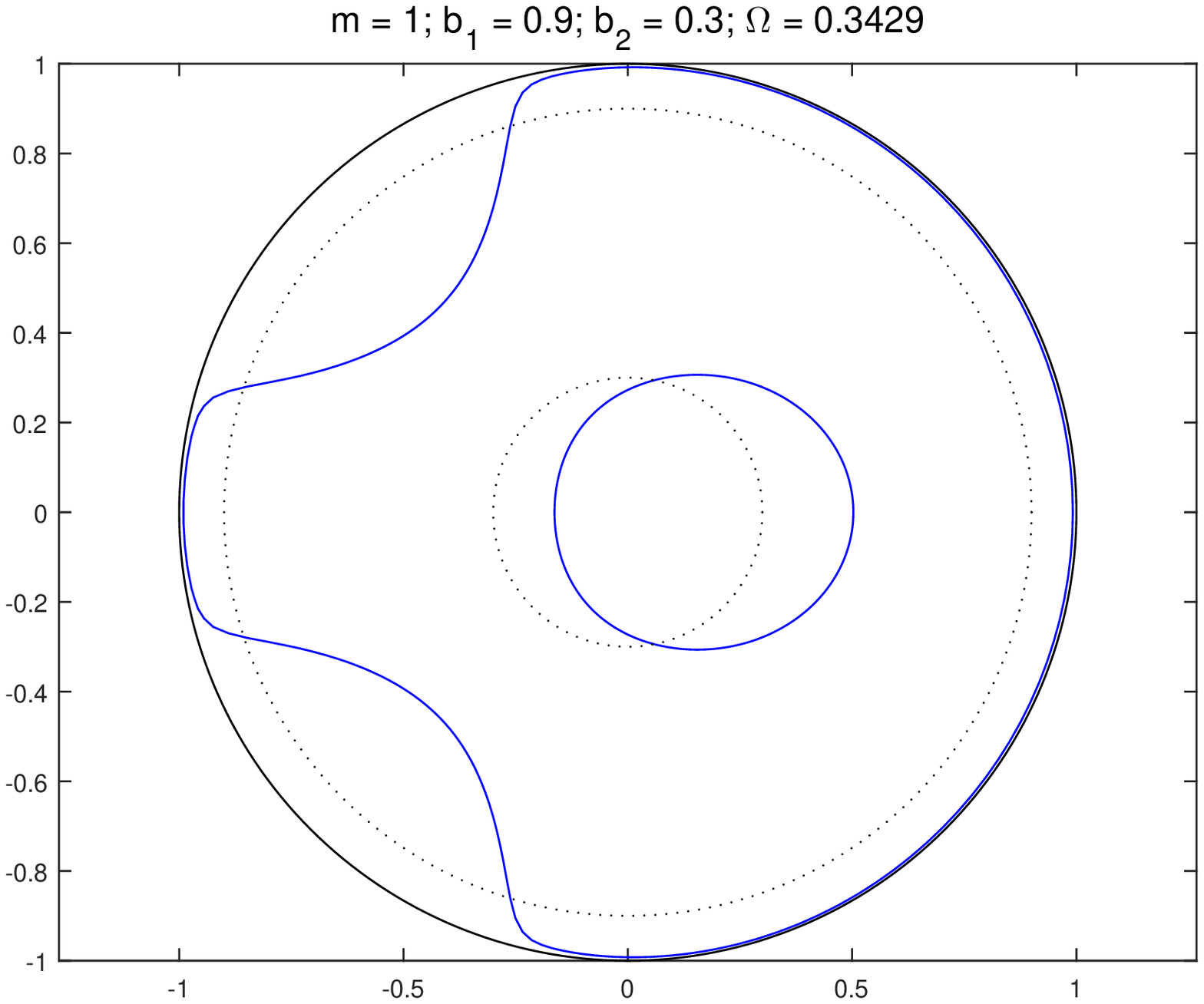}

\caption{Approximation to the limiting $V$-states corresponding to $m = 1$, $b_1 = 0.9$, $b_2 = 0.3$. Left: we have started to bifurcate at $\Omega_1^+ = 4 / 9$, taking $\Omega < \Omega_1^+$. Right: we have started to bifurcate at $\Omega_1^- = 0.36$, taking $\Omega > \Omega_1^-$. $N = 256$.
}
\label{f:limitn1b10_9b20_3}
\end{figure}








\begin{ackname}
Francisco de la Hoz was supported by the Basque Government, through the project IT641-13, and by the Spanish Ministry of Economy and Competitiveness, through the project MTM2014-53145-P. Taoufik Hmidi was partially supported by the ANR project Dyficolti ANR-13-BS01-0003- 01. Joan Mateu 
 was partially supported by the grants of Generalitat de Catalunya 2014SGR7, Ministerio de Econom{\'\i}a y Competitividad MTM 2013-4469,
ECPP7- Marie Curie project MAnET.
\end{ackname}

\end{document}